\numberwithin{equation}{section}
\newtheorem{theorem}{Theorem}[section]
\newtheorem{proposition}[theorem]{Proposition}
\newtheorem{lemma}[theorem]{Lemma}
\newtheorem{corollary}[theorem]{Corollary}
\newtheorem{problem}[theorem]{Problem}
\newtheorem{question}[theorem]{Question}
\newtheorem{conjecture}[theorem]{Conjecture}
\newtheorem*{theorem*}{Theorem}
\theoremstyle{definition}
\newtheorem{definition}[theorem]{Definition}
\newtheorem{example}[theorem]{Example}
\newtheorem{remark}[theorem]{Remark}
\newcommand{\PP}{ \ensuremath{\mathbb{P}}}
\newcommand{\cE}{\mathcal{E}}
\newcommand{\mA}{\mathcal{A}}
\newcommand{\mD}{\mathcal{D}}
\newcommand{\mI}{\mathcal{I}}
\newcommand{\mO}{\mathcal{O}}
\DeclareMathOperator{\Jac}{Jac}
\DeclareMathOperator{\coker}{coker}
\definecolor{MyDarkGreen}{cmyk}{0.7,0,1,0}
\def\cocoa{{\hbox{\rm C\kern-.13em o\kern-.07em C\kern-.13em o\kern-.15em A}}}
\begin{document}

\title[Line arrangements]{Line arrangements and configurations of points with an unusual geometric property}

\author{D.\ Cook II}
\address{Department of Mathematics and Computer Science\\
Eastern Illinois University\\
600 Lincoln Avenue\\
Charleston, IL 61920-3099 USA 
\newline 
\emph{Current address}: {Google, Inc., 111 Eighth Avenue, $4^{\rm th}$ Floor, New York, NY 10011}}
\email{dcook.math@gmail.com}

\author{B.\ Harbourne}
\address{Department of Mathematics\\
University of Nebraska\\
Lincoln, NE 68588-0130 USA}
\email{bharbour@math.unl.edu}

\author{J.\ Migliore} 
\address{Department of Mathematics \\
University of Notre Dame \\
Notre Dame, IN 46556 USA}
 \email{migliore.1@nd.edu}

\author{U.\ Nagel}
\address{Department of Mathematics\\
University of Kentucky\\
715 Patterson Office Tower\\
Lexington, KY 40506-0027 USA}
\email{uwe.nagel@uky.edu}

\begin{abstract} 
    The SHGH conjecture proposes a solution to the question of how many conditions a general union of fat points imposes on
    the complete linear system of curves in $\mathbb P^2$ of fixed degree $d$, and it is known to be true in many cases. We
    propose a new problem, namely to understand the number of conditions imposed by a general union of fat points on the
    incomplete linear system defined by the condition of passing through a given finite set of points $Z$ (not general). Motivated
    by work of Di Gennaro-Ilardi-Vall\`es and Faenzi-Vall\`es, we give a careful analysis for the case where there
    is a single general fat point, which has multiplicity $d-1$. There is an expected number of conditions imposed by this fat point, and we
    study those $Z$ for which this expected value is not achieved. We show, for instance, that if $Z$ is in linear general position
    then such unexpected curves do not exist.  We give criteria for the occurrence of such unexpected curves and describe the range
of values of $d$ for which they occur. Unexpected curves have a very particular structure, which we describe, and they are often unique for a given set of points. In particular, we give a criterion for when they are irreducible, and we exhibit examples both where they are reducible and where they are irreducible. Furthermore, we relate properties
    of $Z$ to properties of the arrangement  of lines dual to the points of $Z$. In particular, we obtain a new interpretation of
    the splitting type of a line arrangement. Finally, we use our results to establish a Lefschetz-like criterion for Terao's
    conjecture on the freeness of line arrangements.
\end{abstract}

\date{February 22, 2017}

\thanks{
{\bf Acknowledgements}: Harbourne was partially supported by NSA grant number  H98230-13-1-0213.
Migliore was partially supported by NSA grant number H98230-12-1-0204 and by Simons Foundation grant \#309556.
Nagel was partially supported by NSA grant number H98230-12-1-0247 and by Simons Foundation grant \#317096.
We thank I.\ Dolgachev, G. Ilardi, A.\ Langer, H.\ Schenck, A.\ Seceleanu, T.\ Szemberg, J.\ Szpond, S.\ Tohaneanu and J.\ Vall\`es 
for their comments on this paper.
We also thank the Mathematisches Forschunginstitut Oberwolfach and the Banff International Research Station for supporting the workshops 
which gave us the opportunity to discuss and present our results.}

\keywords{fat points, line arrangements, strong Lefschetz property, linear systems, stable vector bundle, splitting type}

\subjclass[2010]{14N20 (primary); 13D02, 14C20, 14N05, 05E40,  14F05 (secondary)}

\maketitle

%\tableofcontents

%%%%%%%%%%%%%%%%%%%%%%%%%%%%%%%%%%%%%%%%%%%%%%%%%%%%%%%%%%%%%%

\section{Introduction}

A fundamental problem in algebraic geometry is the study of the dimension of linear systems on projective varieties, and many tools have been developed by researchers to this end (e.g. the different versions of the Riemann-Roch theorem). It is usually the case that there is an {\em expected} dimension (or codimension), given by naively counting constants; understanding the  {\em special} linear systems, i.e., those whose actual dimensions are greater than the expected ones, is a subtle problem of substantial interest.

For example, consider the complete linear system $\mathcal V=\mathcal L_j$ of plane curves of degree $j$; its (projective) dimension is  $\binom{j+2}{2}-1$. For $j\geq m$,  the requirement that the curves all have multiplicity at least $m$ at a fixed point $P$ imposes $\binom{m+1}{2}$ linear conditions, and the   linear subsystem of all such curves indeed has codimension $\binom{m+1}{2}$ in $\mathcal V$,  so the actual and expected codimensions coincide. We will refer to this as the  linear subsystem of curves passing through a {\em fat point of multiplicity $m$}  supported at $P$. It is a very well-studied (but still open) problem to compute the dimension of the linear subsystem of $\mathcal L_j$ of curves of degree $j$ passing through a {\em general} set of $r$ fat points $P_1,\ldots,P_r$ with multiplicities $m_1,\dots, m_r$. The still open {\em SHGH Conjecture} gives a putative solution to this problem; we will recall  this conjecture in more detail below. 
When $m_1=\ldots=m_r=2$, results of Alexander-Hirschowitz not only confirm the SHGH Conjecture for those cases, but also solve the 
corresponding problem for double points in projective spaces of dimension greater than~2; however, little is known for fat points with arbitrary multiplicity in higher dimensions. 

Motivated by results in this paper described below, we propose a refinement of the above problem. That is, rather than beginning with $\mathcal V = \mathcal L_j$, we propose to begin with the linear system $\mathcal V= \mathcal L_{Z,j}$ of all plane curves of degree $j$  containing a fixed, reduced 0-dimensional scheme $Z$. We then impose the passage through a general set $X$ of fat points and ask for the dimension of the resulting linear subsystem.  The expected dimension depends only on the dimension of the homogeneous component $[I_Z]_j$ of degree $j$ of the ideal of $Z$ and the number of points of $X$, counted with multiplicity: each point of multiplicity $m$ is expected to impose $\binom{m+1}{2}$ independent conditions, as long as the expected dimension of the linear system is non-negative. 

The problem in this generality is currently inaccessible; the case that $X$ is an arbitrary finite general set of fat points and $Z=\varnothing$, for example, has only a conjectural solution, given by the still open SHGH Conjecture. So for this paper we begin a study of this problem by focusing on the first nontrivial case at the other extreme, namely, $X$ a single fat point  of multiplicity $j-1$ and $Z$  an arbitrary finite reduced set of points.
It is surprising (as the example of \cite{DIV} in the next paragraph shows) that already in this case, it is no longer true that the expected dimension is necessarily achieved, as it was when we began with $\mathcal V= \mathcal L_j$ (i.e., when $X$ is one fat point and $Z=\varnothing$). Since $Z$ is {\em not} assumed to be a general set of points, the problem obtains a new and central aspect, namely to understand how the geometry of $Z$ can affect the desired dimension. 
In this paper we  carefully analyze  this surprising behavior. Furthermore, we show that our results have interesting connections to the study of line arrangements. In particular, they give  new perspectives on Terao's freeness conjecture, including a generalization to non-free arrangements.

Our original inspiration came in two ways, from a paper of 
Di Gennaro, Ilardi, and Vall\`es  \cite{DIV}. 
The first was by an example of \cite{DIV}, in which
 they observe that the set of nine points in $\PP^2$  dual to 
 the so-called B3 arrangement has an unusual geometric property \cite[Proposition 7.3]{DIV}: For every point $P$ of the plane, there is a degree four curve passing through these nine points and vanishing to order three at  $P$. This is surprising because a naive dimension count suggests that the  linear system of curves of degree 4 containing the nine points and $3P$ should be empty except for a special locus of points $P$, but in fact it is nonempty for a general point~$P$. 

This led us to study finite sets of points 
$Z$ in the plane for which, for some integer $j$, the dimension of the linear system of plane
curves of degree $j+1$ that pass  through the points of $Z$ and have multiplicity $j$ at a general point $P$ is unexpectedly large. In this case, we say that $Z$ {\em admits (or has) an unexpected curve} of degree $j+1$ (see Definition \ref{def:unexpected curve}). We establish a numerical criterion for the occurrence of unexpected curves. It involves two invariants. The first one, which arose already in the work of Faenzi and Vall\'es \cite{FV2}, we call the \emph{multiplicity index}  $m_Z$ of $Z$. It  is the least integer $j$ such that the linear system of degree $j+1$ forms vanishing at $Z + jP$ (the scheme defined by the ideal $I_P^j\cap I_Z$) is not empty (see Definition \ref{def:mult speciality index}). 
The second invariant, which is new,  is $t_Z := \min \big\{j \ge 0 \; : \; h^0(\mathcal I_Z(j+1)) - \binom{j+1}{2} > 0 \big\}$ (see Definition \ref{def: tZ}). It depends only on the Hilbert function of $Z$.

It turns out that a set $Z$ of points can have unexpected curves of various degrees. To understand this range of degrees we introduce  another new invariant, $u_Z$, called the \emph{speciality index} of $Z$,  as the least integer $j$ such that the scheme $Z + j P$, where $P$ is a general point, imposes independent conditions on forms of degree $j+1$ (see Definition \ref{def:mult speciality index}). Our first main result (see Theorem~\ref{u_ZTheorem}) is: 

\begin{theorem}
     \label{mainThm1} 
$Z$ admits an unexpected curve if and only if $m_Z < t_Z$. Furthermore, in this case $Z$ has an 
unexpected curve of degree $j$ if and only if $m_Z < j \le u_Z$. 
\end{theorem}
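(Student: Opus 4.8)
The plan is to set up the relevant linear systems carefully and then translate the statement into inequalities on dimensions. Write $R = K[x,y,z]$ and for a general point $P$ let $W_j = I_P^j \cap I_Z$, the ideal of the scheme $Z + jP$. The "unexpected curve of degree $j$" condition is that $\dim [W_j]_j$ exceeds the expected value $\max\{0, \dim [I_Z]_j - \binom{j+1}{2}\}$. I would first record the key semicontinuity fact: since $P$ is general, the number of conditions $\binom{j+1}{2}$ would impose if independent is an upper bound, so $\dim [W_j]_j \ge \dim [I_Z]_j - \binom{j+1}{2}$ always, and "unexpected" means either this lower bound is negative while $[W_j]_j \ne 0$, or the bound is met with strict inequality. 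I then want the elementary monotonicity observations: $\dim [W_j]_j$ is nondecreasing in $j$ (multiply by a general linear form vanishing at $P$ to the appropriate order — more precisely, $[W_j]_j \hookrightarrow [W_{j+1}]_{j+1}$ by multiplying by a linear form through $P$), while the "expected defect" $\dim[I_Z]_j - \binom{j+1}{2}$ is eventually positive and, once positive, stays positive and grows.

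Next I would pin down the three indices in these terms. By definition $m_Z$ is the least $j$ with $[W_j]_j \ne 0$; $t_Z$ is the least $j$ with $\dim[I_Z]_j - \binom{j+1}{2} > 0$; and $u_Z$ is the least $j$ such that $Z + jP$ imposes independent conditions in degree $j+1$, i.e. the least $j$ for which $\dim[W_{j+1}]_{j+1} = \max\{0,\dim[I_Z]_{j+1} - \binom{j+2}{2}\}$ — and since by an auxiliary lemma (stated earlier) once conditions become independent they stay independent, $Z+jP$ imposes independent conditions in degree $j+1$ exactly for $j \ge u_Z$. The heart of the argument is then: $Z$ has an unexpected curve of degree $j$ iff $[W_j]_j \ne 0$ (so $j \ge m_Z$) AND $Z + (j-1)P$ fails to impose independent conditions on degree-$j$ forms. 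The first-half statement "$Z$ admits an unexpected curve iff $m_Z < t_Z$" I expect to extract as follows: if $m_Z < t_Z$ then in degree $j = m_Z$ the expected defect is still $\le 0$ (as $m_Z < t_Z$) but $[W_j]_j \ne 0$, so we have an unexpected curve; conversely if $m_Z \ge t_Z$ one shows no $j$ can be unexpected, using that for $j < m_Z$ there is no curve at all, for $j = m_Z \ge t_Z$ one must check the defect is actually achieved (this needs a short argument that a single fat point imposes independent conditions on $[I_Z]_j$ once $\dim[I_Z]_j$ is large enough relative to $\binom{j+1}{2}$, for general $P$), and for $j > m_Z$ one propagates.

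For the degree range "$m_Z < j \le u_Z$," the lower bound $j > m_Z$ is immediate (no curve exists below $m_Z$; and I'd check $j = m_Z$ itself is never unexpected when unexpected curves exist, i.e. $m_Z$ is never in the unexpected range — this should follow because at $j=m_Z$ the system just becomes nonzero and one argues the expected dimension there is exactly $0$, matching). The upper bound requires showing: $j \le u_Z$ is equivalent to $Z + (j-1)P$ failing independent conditions in degree $j$, which by the definition of $u_Z$ and the "stays independent" lemma is exactly $j - 1 < u_Z$, i.e. $j \le u_Z$. So an unexpected curve of degree $j$ forces $j \le u_Z$, and conversely for $m_Z < j \le u_Z$ one has both $[W_j]_j \ne 0$ (monotonicity from degree $m_Z$) and failure of independent conditions in degree $j$ for $Z+(j-1)P$, which together give unexpectedness — here one must be careful that failure of independence plus nonvanishing genuinely yields $\dim[W_j]_j$ strictly above the (possibly negative, hence $0$) expected value, which is where the clean equivalence "unexpected $\iff$ nonempty and non-independent" does the work.

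The main obstacle I anticipate is the bookkeeping around the two regimes of the "expected dimension" (the $\max$ with $0$): I must show that when $\dim[I_Z]_j - \binom{j+1}{2} \le 0$, a general fat point of multiplicity $j-1$ really does kill the system down to $0$ unless $Z$ is special — equivalently that "nonempty" already implies "unexpected" in that regime — and this is exactly the semicontinuity/generality input that makes the three-index criterion tick. Establishing that carefully (presumably via a dimension count on the incidence correspondence of $(P, \text{curve})$, or by exhibiting one good $P$) is the crux; the rest is the monotonicity propagation and matching definitions.
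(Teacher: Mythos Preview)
Your proposal has indexing slips throughout (you write $[W_j]_j$ where the definition requires $[W_{j-1}]_j$, and your descriptions of $m_Z$ and $t_Z$ are off by one), but these are cosmetic. The substantive problem is the step you flag as needing only ``a short argument'': showing that when $m_Z = t_Z$ the general fat point imposes independent conditions on $[I_Z]_{j+1}$ for $j\ge m_Z$. This is not short; it is the entire content of Section~\ref{sec:proof of 3.7} of the paper, and your proposed methods will not supply it.

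The heuristic you invoke---``a single general fat point imposes independent conditions on $[I_Z]_j$ once $\dim[I_Z]_j$ is large enough relative to $\binom{j+1}{2}$''---is false as stated: its failure is precisely the phenomenon of unexpected curves. So a semicontinuity/incidence-correspondence count, or ``exhibiting one good $P$'', cannot succeed, because for many $Z$ there \emph{is} no good $P$. What makes the implication ``$m_Z=t_Z \Rightarrow$ no unexpected curve'' true is not a general-position principle but a structural constraint on $Z$. The paper obtains it in two pieces: (i) the Faenzi--Vall\`es formula (Lemma~\ref{FVlemma}) giving $\dim[I_{Z+jP}]_{j+1}$ explicitly in terms of the splitting type $(a_Z,b_Z)$ of the derivation bundle, whence $m_Z=a_Z$ and $u_Z=b_Z-1$; and (ii) the classification (Proposition~\ref{prop:small tZ}) of those $Z$ with $h_Z(t_Z)<|Z|$, which feeds into Theorem~\ref{thm:tZ = dZ} to show such $Z$ have $m_Z=t_Z$ and admit no unexpected curves. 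The forward direction of the theorem then combines Theorem~\ref{thm:tZ = dZ} with Lemma~\ref{a_Z & t_Z}, both of which rest on Lemma~\ref{FVlemma}. Your outline mentions none of the splitting type, the derivation bundle, or the structural dichotomy for $h_Z(t_Z)<|Z|$; without at least one of these ingredients the argument cannot close. The degree-range assertion likewise is established in the paper by direct computation with Lemma~\ref{FVlemma}, not by the monotonicity/propagation you sketch.
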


In particular, the existence of an unexpected curve forces $m_Z < u_Z$. 
The converse is almost but not quite true. Example \ref{ctrex to DIV} gives a counterexample to the converse. 
It has $m_Z < u_Z$ and admits no unexpected curve. However, $Z$ has a subset of at least $m_Z+2$ collinear points. 
This led us to the following more geometric version of Theorem \ref{mainThm1} (see Corollary \ref{cor:geomVersion}):

\begin{theorem}
   \label{thm:geomVersion}
Let $Z \subset \PP^2$ be a finite set of points. Then 
$Z$ admits an unexpected curve if and only if
$2m_Z+2<|Z|$ but no subset of $m_Z+2$ (or more) of the points is collinear.
In this case, $Z$ has an 
unexpected curve of degree $j$ if and only if $m_Z < j \le |Z|-m_Z-2$. 
\end{theorem}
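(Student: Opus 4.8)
The plan is to deduce Theorem~\ref{thm:geomVersion} from Theorem~\ref{mainThm1} by translating the two conditions $m_Z < t_Z$ and $m_Z < j \le u_Z$ into the purely combinatorial language of collinearity and cardinality. The starting point is to understand the three invariants $m_Z$, $t_Z$, and $u_Z$ in terms of the Hilbert function of $Z$ and of the fat-point schemes $Z + jP$; I would assume that whatever is proved between Theorem~\ref{mainThm1} and Corollary~\ref{cor:geomVersion} gives me formulas (or at least sharp inequalities) for these in terms of $|Z|$ and the maximal number of collinear points.

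\medskip

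First I would establish the relevant identity for $u_Z$. Since $Z + jP$ imposes independent conditions on forms of degree $j+1$ exactly when $h^0(\mathcal I_{Z+jP}(j+1)) = \binom{j+3}{2} - |Z| - \binom{j+1}{2}$ and this quantity is the ``expected'' one as soon as it is nonnegative, i.e. as soon as $\binom{j+3}{2} - \binom{j+1}{2} = 2j+3 \ge |Z|$, a dimension count together with the ``ample/stable'' behavior for large $j$ should give $u_Z = |Z| - m_Z - 2$ precisely in the presence of unexpected curves; more generally one expects $u_Z \le |Z| - m_Z - 2$, with the reverse inequality forced once $m_Z < t_Z$. So the first key step is: \emph{assuming} an unexpected curve exists, show $u_Z = |Z| - m_Z - 2$. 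This converts the degree range $m_Z < j \le u_Z$ of Theorem~\ref{mainThm1} directly into $m_Z < j \le |Z| - m_Z - 2$, which is the degree statement of Theorem~\ref{thm:geomVersion}, and in particular (taking $j = m_Z+1$) shows the existence of an unexpected curve forces $m_Z + 1 \le |Z| - m_Z - 2$, i.e. $2m_Z + 2 < |Z|$.

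\medskip

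The second, and I expect harder, step is the equivalence of ``$m_Z < t_Z$'' with ``$2m_Z+2 < |Z|$ and no $m_Z+2$ points of $Z$ are collinear.'' For the forward direction: $m_Z < t_Z$ already gives $2m_Z+2 < |Z|$ via the $u_Z$ computation above; and if $m_Z + 2$ points of $Z$ were collinear on a line $\ell$, then I would argue that the line $\ell$ forces a decomposition of any form in $[I_Z]_{m_Z+1}$ — since such a form has degree $m_Z+1$ but meets $\ell$ in $m_Z+2$ points it must contain $\ell$ — and this rigidity pushes the first ``jump'' in the Hilbert function, which is measured by $t_Z$, down to at most $m_Z$, contradicting $m_Z < t_Z$; concretely one shows $h^0(\mathcal I_Z(m_Z+1)) \le \binom{m_Z+2}{2}$ so $t_Z \le m_Z$. (Here I must be careful to use the defining property of $m_Z$: $[I_Z]_{m_Z+1}$ is exactly the degree where forms vanishing on $Z+m_ZP$ first appear, so $h^0(\mathcal I_Z(m_Z+1)) > \binom{m_Z+1}{2}$ but $h^0(\mathcal I_Z(m_Z)) \le \binom{m_Z+1}{2}$, which pins the Hilbert function tightly enough to run the collinearity argument.) For the converse, assuming $2m_Z+2 < |Z|$ and no $m_Z+2$ collinear points, I must rule out $t_Z \le m_Z$: if $t_Z \le m_Z$ then $h^0(\mathcal I_Z(m_Z+1)) > \binom{m_Z+2}{2}$, which — together with the bound $h^0(\mathcal I_Z(m_Z)) \le \binom{m_Z+1}{2}$ coming from the definition of $m_Z$ — forces the degree-$(m_Z+1)$ part of $I_Z$ to be ``too big,'' and the standard argument (a general form in $[I_Z]_{m_Z+1}$, or comparison with the lex-segment/maximal-growth bound of Macaulay--Gotzmann) yields a linear factor common to a large subsystem, hence $m_Z+2$ collinear points of $Z$, a contradiction.

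\medskip

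The main obstacle I anticipate is precisely this last maximal-growth argument: extracting ``$m_Z+2$ collinear points'' from ``the Hilbert function of $Z$ jumps too early'' requires a clean statement relating the growth of $h^0(\mathcal I_Z(-))$ to the existence of a linear subspace containing many points of $Z$ — essentially a Bezout/Castelnuovo-type lemma, or the observation that maximal Hilbert-function growth in $\mathbb P^2$ is detected by a common linear factor. I would isolate this as a separate lemma (proved either by the classical Castelnuovo argument with a general line, or by invoking Macaulay's bound with the Gotzmann persistence theorem), and then the proof of Theorem~\ref{thm:geomVersion} becomes a short bookkeeping exercise combining that lemma with Theorem~\ref{mainThm1} and the computation $u_Z = |Z| - m_Z - 2$.
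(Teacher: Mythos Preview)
Your overall strategy—deduce the geometric criterion from Theorem~\ref{mainThm1} by translating $m_Z<t_Z$ into cardinality/collinearity language—is the right one, but the execution has a real gap in both directions, and the paper in fact takes a rather different route through the structure theory of the curve $C_P(Z)$ developed in Section~\ref{structure}.

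First a minor correction: the identity $u_Z=|Z|-m_Z-2$ holds \emph{unconditionally} (Lemma~\ref{m, t and u}(c)), not just in the presence of unexpected curves; you do not need to argue for it.

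The substantive problem is your forward implication. You correctly observe that if a line $\ell$ contains $m_Z+2$ points of $Z$ then $\ell$ divides every form in $[I_Z]_{m_Z+1}$, giving $h^0(\mathcal I_Z(m_Z+1))\le\binom{m_Z+2}{2}$. But this does \emph{not} yield $t_Z\le m_Z$: by definition $t_Z\le m_Z$ means $h^0(\mathcal I_Z(m_Z+1))>\binom{m_Z+1}{2}$, a \emph{lower} bound, and your upper bound is entirely compatible with $t_Z>m_Z$. The missing idea is that $\ell$ must in fact contain many more than $m_Z+2$ points—namely $|Z|-m_Z$ of them. The paper extracts this from the structure of the unexpected curve: $\ell$ is a component of $C_P(Z)$ (Bezout), and since $P\notin\ell$ the residual curve has degree $m_Z$ with multiplicity $m_Z$ at $P$, hence is a union of $m_Z$ lines through $P$, each meeting $Z$ in at most one point (Lemma~\ref{lem:initial deg curves}). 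Thus $|Z\setminus\ell|\le m_Z$, so $\ell$ carries $|Z|-m_Z$ points of $Z$. Only now does the Hilbert-function bound bite: with $Z'=Z\cap\ell$ one has $t_{Z'}=0$, and iterating Corollary~\ref{cor:change of t_Z} gives $t_Z\le t_{Z'}+|Z\setminus Z'|=m_Z$, the desired contradiction. A pure Hilbert-function argument of the kind you sketch cannot produce this, because Proposition~\ref{prop:small tZ} only detects collinear subsets of size $\ge t_Z+2$, and under $m_Z<t_Z$ a set of merely $m_Z+2$ collinear points is below that threshold.

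For the converse you invoke a Macaulay--Gotzmann maximal-growth argument, and indeed something like Proposition~\ref{prop:small tZ} (which is proved exactly that way) could be made to work: from $m_Z=t_Z$ and $2m_Z+2<|Z|$ one checks $h_Z(t_Z)<|Z|$ and then Proposition~\ref{prop:small tZ} forces either a complete intersection (ruled out numerically) or $|Z|-t_Z\ge m_Z+2$ collinear points. But the specific inequalities you write are off (e.g.\ $h^0(\mathcal I_Z(m_Z+1))>\binom{m_Z+2}{2}$ is false in general). The paper instead bypasses $t_Z$ entirely in this direction, again using the curve: from $m_Z<u_Z$ the curve $C_P(Z)$ is unique (Proposition~\ref{prop:vector space decomposition}), and if its irreducible kernel $C_P(Z')$ were a line ($m_{Z'}=0$) one counts the remaining linear components to reach a contradiction with $2m_Z+2<|Z|$; then Corollary~\ref{irredStructure}(b) gives unexpectedness.
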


As we will show (see Lemma \ref{m, t and u}(c)), $2m_Z+2<|Z|$ is equivalent to $m_Z<u_Z$.
Thus $m_Z<u_Z$ together with there being no large collinear subsets of $Z$
implies the occurrence of unexpected curves.

We also show that unexpected curves have a very particular structure. If $Z$ has any unexpected  
curve, then the unexpected curve of degree $m_Z  + 1$ is uniquely determined by $Z$ and the 
general point $P$. Denote it by $C_P (Z)$. Any other unexpected curve of $Z$ associated to $P$ contains $C_P (Z)$ (see 
Proposition \ref{prop:vector space decomposition}). Moreover, the curve $C_P (Z)$ is either  irreducible or 
it is the union of a reduced irreducible curve unexpected with respect to a proper subset $Z' \neq \emptyset$ of $Z$ and 
the $|Z \setminus Z'|$ lines through $P$ and a point of $Z \setminus Z'$ (see 
Theorem \ref{thm:unexp curve structure}). The curve $C_P (Z')$ is rational, and we give  a 
parametrization of it (see Proposition \ref{paramprop1}).

One conclusion that can be made from the aforementioned results is that understanding unexpected curves
reduces to understanding irreducible ones, since whenever $Z$ gives an unexpected
curve, then $Z$ uniquely determines a subset $Z'$ which gives an irreducible unexpected curve,
and $Z$ arises from $Z'$ in a prescribed way (see Remark \ref{constructing unexpected curves}).

By Theorem \ref{mainThm1}, checking for the existence of unexpected curves  requires computing $m_Z$ and $t_Z$.
Since $t_Z$ depends only on the fixed reduced scheme $Z$, it is typically easy to compute. 
In contrast $m_Z$ is much harder to compute rigorously (although one can get experimental evidence for its value
using randomly selected points $P$). Work of Faenzi and Vall\`es \cite{FV2} relates $m_Z$ to properties of
the arrangement of lines $\mA_Z$ dual to the points of $Z$.

Recall that associated to any line arrangement $\mA_Z$ is a locally free sheaf $\mD_Z$ of rank 
two, called the derivation bundle. Restricted to a general line $L$, it splits as 
$\mO_L (-a_Z) \oplus \mO_L (-b_Z)$ with $a_Z + b_Z = |Z| -1$. The pair $(a_Z, b_Z)$, where $a_Z \le b_Z$, is called the \emph{splitting type} of $\mD_Z$ or $\mA_Z$. Theorem 4.3 in  \cite{FV2}  
shows that the number $a_Z$ is equal to the multiplicity index $m_Z$. We observe that  
$b_Z=u_Z+1$ (see Lemma \ref{m, t and u}). This allows us to translate our results about finite sets 
of points into statements on line arrangements. In the other direction, we use methods for studying 
line arrangements to determine multiplicity indices of sets of points. For example, we determine the 
multiplicity index and the speciality index of a set of points in linearly general position and conclude 
that such a set does not admit any unexpected curves (see Corollary \ref{cor:dZ lin gen position}).  
We also show that the set of points dual to a Fermat configuration of $3 t \ge 15$ lines admits unexpected curves of degrees $t+2.\ldots,2t-3$, and that the unexpected curve of degree $t+2$ is irreducible (see Proposition \ref{FermatProp}). 
Furthermore, we exhibit a family of free line arrangements, defined over the rational numbers, with the property that any of the dual sets of points admits a unique unexpected curve which is in fact  irreducible (see Proposition~\ref{prop:unexpected irr}). This relies on new stability criteria for derivation bundles (see Lemma~\ref{hal thm}).\smallskip 

The second way that \cite{DIV} inspired us relates to a fundamental open problem in the study of 
hyperplane arrangements, namely, Terao's conjecture, which is open even for line arrangements. A 
line arrangement $\mA = \mA (f)$ is said to be \emph{free} if the Jacobian ideal of $f$ is saturated, 
where $f$ is the product of linear forms defining the lines in $\mA$. Terao conjectured that freeness 
is a combinatorial property, that is, it depends only on the incidence lattice of the lines in $\mA$. In 
\cite{DIV}, the authors give an equivalent version of Terao's conjecture in terms of Lefschetz properties. 
In trying to understand their proof we realized that some of the results used in \cite{DIV} to derive 
the claimed equivalence are not quite true as stated. We use our results on points to clarify and to 
adjust the needed results. For example, in Theorem~\ref{SLP condition} we show that the existence 
of an unexpected curve is equivalent to the failure of a certain Lefschetz property. We also 
 establish that Terao's conjecture is equivalent to a Lefschetz-like condition (see Proposition \ref{prop:Terao equiv}). This allows us to show that the (adjusted) Lefschetz condition given in \cite{DIV} implies Terao's conjecture (see Corollary \ref{cor:suff Terao}). We do not know if this condition is also necessary. We observe that the condition suggests that, for a set of points, having maximal multiplicity index is a combinatorial property. If that is true, then Terao's conjecture for line arrangements is a consequence (see Corollary \ref{cor:points to Terao}). 
%we do establish that Terao's conjecture follows if one can show that the splitting type of a free line arrangement is a combinatorial property (see Corollary \ref{cor:type of free comb}). We wonder, if, in fact, the splitting type of an arbitrary line arrangement is determined by its incidence lattice. 
\smallskip

We end the introduction with the more detailed discussion of the SHGH Conjecture which we promised above
in the context of the larger problem which frames the work we are doing here. 
Let $V = [R]_j$ be the vector space of degree $j$ forms in three variables, let $\mathcal L_j$ be its projectivization, and let $X = m_1 P_1 + \cdots m_r P_r$ be a fat point scheme supported on a set of $r$ points $P_1,\ldots,P_r$. Thus $X$ is defined by 
\[
I_X = I_{P_1}^{m_1} \cap \cdots \cap I_{P_r}^{m_r}.
\]
We say that $X$ {\em fails to impose the expected number of conditions} on $V$ (or on $\mathcal L_j$) if
\[
\dim_K [I_X]_j > \max \left \{0, \ \dim_K V - \sum_i \binom{m_i+1}{2} \right \} = 
\max \left \{0, \ \binom{j+2}{2}  - \sum_i \binom{m_i+1}{2} \right \}. 
\]

If the points $P_i$ are general, it is a well-known and difficult 
open problem to classify all $m_i$ and $j$ such that the subscheme
$X$ fails to impose the expected number of conditions on $V$, but a conjectural 
answer is given by the SHGH Conjecture \cite{segre, Ha1, G, Hi}. Segre's
version of the conjecture, which ostensibly gives only
a necessary criterion, is as follows.
\smallskip

\begin{conjecture}[SHGH Conjecture]\label{SHGHConj} For $X = m_1 P_1 + \cdots + m_r P_r$ 
with  general points $P_i$, $X$ fails to impose the expected number 
of conditions on $V$ only if $[I_X]_j\neq 0$ but
the base locus of $[I_X]_j$ contains a non-reduced curve.\smallskip
\end{conjecture}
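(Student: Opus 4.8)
Since the statement is the still-open SHGH Conjecture, what follows is a description of the program one would pursue rather than a complete argument, together with the place where I expect it to break down. The easy half of Segre's formulation is immediate: if $X$ fails to impose the expected number of conditions, then $\dim_K [I_X]_j$ strictly exceeds a maximum that is at least $0$, so $[I_X]_j \neq 0$. All of the content lies in the assertion that the base locus of $[I_X]_j$ must then contain a non-reduced curve.

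The first move is to translate into blow-up language. Let $\pi \colon Y \to \PP^2$ be the blow-up at the general points $P_1, \ldots, P_r$, with $H = \pi^* \mathcal O_{\PP^2}(1)$ and exceptional curves $E_1, \ldots, E_r$, and set $\mathcal L = dH - \sum_i m_i E_i$. Failure to impose the expected number of conditions is exactly the statement that $\mathcal L$ is effective and \emph{special}, i.e.\ $h^1(Y, \mathcal O_Y(\mathcal L)) > 0$. By the Harbourne--Hirschowitz reformulation (equivalent to Segre's), the non-reduced curve should always be the $\pi$-image of a $(-1)$-curve $C$ on $Y$ --- a smooth rational curve with $C^2 = C \cdot K_Y = -1$ --- that occurs in the fixed part of $|\mathcal L|$ with multiplicity $\ge 2$; equivalently, there should be a $(-1)$-class $C$ with $\mathcal L \cdot C \le -2$. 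So the target is: for $\mathcal L$ effective and special, exhibit such a $C$.

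The standard attack has two prongs. First, reduce $(d; m_1, \ldots, m_r)$ to a normal form under quadratic Cremona transformations based at triples of the points, each of which preserves effectivity and speciality; one iterates the transformation that decreases $d$ whenever $d < m_{i_1} + m_{i_2} + m_{i_3}$ for the three largest multiplicities, aiming to land on a Cremona-reduced tuple. Second, run an induction on $d$ and on $\sum_i m_i$ by degenerating some of the $P_i$ onto a fixed line $L$, using the restriction sequence $0 \to \mathcal I_{X'}(\mathcal L - L) \to \mathcal I_X(\mathcal L) \to \mathcal I_{X \cap L,\, L}(\mathcal L) \to 0$ to split the speciality question between a ``kernel'' system (fewer or lower-multiplicity points) and a one-dimensional ``trace'' system on $L \cong \PP^1$, controlling each by the inductive hypothesis and then invoking semicontinuity to return to general position. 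This Ciliberto--Miranda / Dumnicki type of argument, together with the separate bound technique of Alexander--Hirschowitz for the case $m_i = 2$ and explicit $(-1)$-curve bookkeeping for small $r$, underlies essentially all the known cases.

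The main obstacle is that neither prong is known to terminate in general. The Cremona orbit of a tuple can be infinite, and even once one reaches a reduced tuple, non-speciality there is itself an instance of the conjecture, so the reduction does not close the loop; and in the degeneration argument there is no uniform guarantee that the kernel and trace systems remain non-special, nor any known way to enumerate and bound the $(-1)$-classes competing for the base locus once $r \ge 9$, where the relevant Weyl group acting on the Picard lattice of $Y$ is already infinite. A complete proof would need either a new finiteness mechanism for the Cremona reduction or a degeneration scheme with provably terminating combinatorics, and I do not see how to supply either here.
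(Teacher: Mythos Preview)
Your assessment is correct: the statement is a conjecture, not a theorem, and the paper does not prove it. The paper explicitly introduces it as the ``still open \emph{SHGH Conjecture}'' and uses it only as motivation and context for the new problem studied (imposing a general fat point on the linear system $[I_Z]_{j+1}$ for a fixed reduced $Z$). There is therefore no proof in the paper to compare your proposal against.

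Your write-up accurately reflects the state of the art: the blow-up reformulation in terms of $(-1)$-curves, the Cremona reduction, and the degeneration techniques of Ciliberto--Miranda and Alexander--Hirschowitz are the standard toolkit, and you have correctly identified where the argument breaks down (infinite Cremona orbits for $r \ge 9$, no termination guarantee for the degeneration). The one easy observation you made---that failure of expected conditions forces $[I_X]_j \neq 0$---is valid and is all that can be said unconditionally.
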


In fact, the SHGH Conjecture as stated above is equivalent to
versions \cite{Ha1, G, Hi} that not only provide an explicit and complete list of all 
$(m_1,\ldots,m_r)$ and $j$ for which $[I_X]_j$ conjecturally fails to 
impose independent conditions on $V$ but which also conjecturally
determine the extent to which the conditions fail to be independent.
Although we will not discuss the details here, we note that it took 
40 years \cite{refCM} to recognize that the partial characterization 
as given in Conjecture \ref{SHGHConj} above actually provides a full quantitative 
conjectural solution. 

Similarly, our focus here will be on identifying failures
of independence in a generalized context, with a long term
goal of obtaining a more complete characterization. The generalized 
context is that we consider the case that $V$ is a subspace of $R_j$, 
in particular, $V= [I_Z]_j$, where $Z$ is a fat point subscheme. Then 
the overall problem becomes:

\smallskip

\begin{problem}
  \label{prob:intro}
 Characterize and then classify all  triples $(Z,X,j)$
where $Z=c_1 Q_1+\cdots +c_s Q_s$ for distinct points $Q_i$, $X = m_1 P_1 + \cdots m_r P_r$
for general points $P_i$, such that $X$ fails to impose the expected number of conditions on $V = [I_Z]_j$.  
\end{problem}

If $Z$ is the empty set, then $V=[R]_j$, so this is addressed by the SHGH Conjecture. If $Z$ is reduced,  $r = 1$, and $j = m_1 + 1$, this becomes the problem of deciding the existence of an unexpected curve of degree $j$. Our results give the following answer (see Theorem \ref{thm:introequiv}): 

\begin{theorem}
   \label{thm:intro}
Let $Z \subset \PP^2$ be a finite set of points whose dual is a line arrangement with splitting type $(a_Z, b_Z)$.  Let $P$ be a general point. Then the subscheme $X = m P$ fails to impose the expected number of conditions on  $V = [I_Z]_{m+1}$ if and only if 
\begin{itemize}
\item[(i)]  $a_Z \le m \le b_Z-2$; \quad and 

\item[(ii)] $h^1 (\mI_{Z} (t_Z)) = 0$. 
\end{itemize} 
\end{theorem}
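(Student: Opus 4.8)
The plan is to obtain the statement from Theorem~\ref{mainThm1} together with the dictionary of Lemma~\ref{m, t and u}, after which only a Hilbert‑function computation remains. Unwinding definitions, $X = mP$ fails to impose the expected number of conditions on $V = [I_Z]_{m+1}$ exactly when $Z$ admits an unexpected curve of degree $m+1$; by Theorem~\ref{mainThm1} this means $m_Z < t_Z$ \emph{and} $m_Z < m+1 \le u_Z$. Since $a_Z = m_Z$ and $b_Z = u_Z + 1$ (Lemma~\ref{m, t and u}), the second pair of inequalities is precisely condition~(i). If (i) fails, both sides of the asserted equivalence fail, so we may assume (i); then $a_Z \le b_Z - 2$, hence $m_Z < u_Z$, hence $2m_Z + 2 < |Z|$ by Lemma~\ref{m, t and u}(c). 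Thus it suffices to prove: \emph{assuming $2m_Z + 2 < |Z|$, one has $m_Z < t_Z$ if and only if $h^1(\mI_Z(t_Z)) = 0$.}

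Write $h_Z(d) = \dim_K(R/I_Z)_d$. From $0 \to \mI_Z \to \mO_{\PP^2} \to \mO_Z \to 0$ we get $h^1(\mI_Z(d)) = |Z| - h_Z(d)$, so (ii) says exactly $h_Z(t_Z) = |Z|$. Rewriting the inequality $h^0(\mI_Z(j+1)) > \binom{j+1}{2}$ defining $t_Z$, using $\dim_K[R]_{j+1} = \binom{j+3}{2} = \binom{j+1}{2} + 2(j+1) + 1$, gives $t_Z = \min\{\, j \ge 0 : h_Z(j+1) \le 2(j+1) \,\}$; hence, when $t_Z \ge 1$ (if $t_Z = 0$ then $Z$ is collinear, so $m_Z = 0 = t_Z$ and $h^1(\mI_Z(0)) = |Z| - 1 > 0$, and the equivalence is trivial), one has $2t_Z + 1 \le h_Z(t_Z) \le h_Z(t_Z + 1) \le 2t_Z + 2$. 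Also $m_Z \le t_Z$ always: at $j = t_Z$ the expected dimension of $[I_{Z+t_ZP}]_{t_Z+1}$ is positive, and the actual dimension is at least the expected one, so this system is nonzero. Finally, $|Z| = a_Z + b_Z + 1 = m_Z + u_Z + 2$.

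For the implication $h^1(\mI_Z(t_Z)) = 0 \Rightarrow m_Z < t_Z$ I argue by contraposition. Suppose $m_Z = t_Z$; then $Z$ has no unexpected curve, so by Theorem~\ref{thm:geomVersion} and $2m_Z + 2 < |Z|$ some line $\ell$ contains at least $m_Z + 2$ points of $Z$. A curve $C$ of degree $m_Z + 1$ through $Z + m_Z P$ exists ($m_Z$ being the multiplicity index); by B\'ezout $\ell \subseteq C$, so $C = \ell + C'$ with $\deg C' = m_Z$, and $C'$ has multiplicity $\ge m_Z$ at the general point $P \notin \ell$, hence is a union of $m_Z$ lines through $P$. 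These lines cover $Z \setminus \ell$, and for general $P$ each meets $Z$ in at most one point, so $|Z \setminus \ell| \le m_Z$. Using $h_Z(d) \le h_{Z \cap \ell}(d) + |Z \setminus \ell|$ and $h_{Z\cap\ell}(m_Z) = m_Z + 1$, we get $h_Z(m_Z) \le 2m_Z + 1 < |Z|$, that is $h^1(\mI_Z(t_Z)) = h^1(\mI_Z(m_Z)) > 0$.

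The converse, $m_Z < t_Z \Rightarrow h^1(\mI_Z(t_Z)) = 0$, is the main obstacle. Here I would suppose $h_Z(t_Z) < |Z|$ and derive a contradiction. Since $\Delta h_Z$ is the Hilbert function of an Artinian quotient of $K[x,y]$ --- so it is non‑increasing once it falls below the maximal value $d+1$, and stays $0$ once it reaches $0$ --- the constraints $2t_Z + 1 \le h_Z(t_Z) < |Z|$ and $h_Z(t_Z+1) \le 2t_Z + 2$ force $\Delta h_Z(d) = 1$ for every $d$ from $t_Z + 1$ up to the regularity index of $Z$. Thus $\Delta h_Z$ ends in a string of at least two $1$'s, and by Davis' decomposition theorem for points in $\PP^2$ this forces $Z$ to contain a collinear subset of size $\ge t_Z + 2 \ge m_Z + 3$, contradicting the fact --- from Theorem~\ref{thm:geomVersion} and $m_Z < t_Z$ --- that no $m_Z + 2$ points of $Z$ are collinear. (A residual case, in which $Z$ instead essentially lies on a conic, is disposed of by a separate direct computation showing $m_Z = t_Z$ there.) Making this precise --- or, equivalently, extracting from Lemma~\ref{m, t and u} the sharp comparison $t_Z \ge u_Z$ in the presence of an unexpected curve, which at once gives $h_Z(t_Z) \ge 2t_Z + 1 \ge 2u_Z + 1 \ge |Z|$ --- is where the real work lies; the rest is bookkeeping.
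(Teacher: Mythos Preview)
Your reduction is exactly the paper's: translate ``$mP$ fails to impose expected conditions on $[I_Z]_{m+1}$'' into ``$Z$ has an unexpected curve of degree $m+1$'', apply Theorem~\ref{mainThm1} and Lemma~\ref{m, t and u}, and reduce to showing that, under $m_Z<u_Z$, one has $m_Z<t_Z \iff h_Z(t_Z)=|Z|$. The paper then simply cites Lemma~\ref{a_Z & t_Z} for $(\Leftarrow)$ and Theorem~\ref{t_Z & unexp} for $(\Rightarrow)$; your proposal instead partially reproves these two ingredients.

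For $(\Leftarrow)$ your contrapositive argument via Theorem~\ref{thm:geomVersion} is correct and gives a pleasant geometric alternative to the paper's direct counting proof of Lemma~\ref{a_Z & t_Z}: from $m_Z=t_Z$ and $2m_Z+2<|Z|$ you extract a line $\ell$ with $\ge m_Z+2$ points of $Z$, then B\'ezout forces $|Z\setminus\ell|\le m_Z$, whence $h_Z(t_Z)\le 2m_Z+1<|Z|$.

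For $(\Rightarrow)$ your Davis-type sketch (the string of $1$'s in $\Delta h_Z$ forcing either a long collinear subset or the conic complete-intersection case) is precisely the content of Proposition~\ref{prop:small tZ} and Theorem~\ref{thm:tZ = dZ}, so you are on the paper's track. However, your parenthetical ``equivalent'' shortcut is wrong: Lemma~\ref{m, t and u}(e) gives $t_Z\le u_Z$ when $m_Z<t_Z$, \emph{not} $t_Z\ge u_Z$, so the chain $h_Z(t_Z)\ge 2t_Z+1\ge 2u_Z+1\ge |Z|$ does not go through. There is no cheap route here; the Davis/structure argument (or a direct citation of Theorem~\ref{t_Z & unexp}, which you are already implicitly relying on through Theorem~\ref{thm:geomVersion}) is genuinely needed.
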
 

\noindent
Notice that Condition (ii) is equivalent to the assumption $\dim_K [R/I_Z]_{t_Z} = |Z|$.
Theorem \ref{thm:intro} is essentially a restatement of Theorem \ref{mainThm1}, but with
$h^1 (\mI_{Z} (t_Z)) = 0$ replacing $m_Z<t_Z$. The connection between these two conditions is given
in Lemma \ref{a_Z & t_Z} and Theorem \ref{t_Z & unexp}.

Our results give criteria for when a general fat point $mP$ fails to impose the expected number of conditions on $[I_Z]_{m+1}$ for a reduced point scheme $Z$.
It would be interesting to understand exactly for which sets $Z$ 
such failures occur. 
Furthermore, our results strongly suggest that finding  answers to Problem \ref{prob:intro} in other cases is worth investigating. 
\smallskip 

Our paper is organized as follows: In Section \ref{sec: tZ} we introduce unexpected curves and the invariant $t_Z$, and we establish properties of this invariant. Section \ref{sec:criteria} is entirely geared towards establishing our criteria for the existence of unexpected curves. A key ingredient of the argument is shown in Section \ref{sec:proof of 3.7}. The structure of unexpected curves is described in Section \ref{structure}. In Section \ref {sec:examples} we use line arrangements to show that points in linearly general position do not admit unexpected curves and to exhibit configurations of points that do have unexpected curves. 
 The relation of the Lefschetz properties to the existence of unexpected curves and  to Terao's freeness conjecture is described in Section \ref{sec:connections}.

%%%%%%%%%%%%%%%%%%%%%%%%%%%%%%%%%%%%%%%%%%%%%%%%%%%%%%%%%%%%%%

\section{Unexpected curves and the invariant $t_Z$} 
\label{sec: tZ}

In this section we formally define the notion of an unexpected curve. Our main results on when such curves exist will require understanding a certain invariant, which we denote by $t_Z$. Here we also derive the elementary geometric properties of this invariant.

Let $K$ be an arbitrary infinite field (when necessary we will add assumptions) and let $Z=P_1+\cdots+P_d$ be a reduced subscheme of $\PP^2_K$ 
consisting of $d>0$ distinct points $P_i$, with homogeneous ideal $I_Z \subset K[\PP^2] = K[x,y,z] = R$. (In particular, $Z$ will always be nonempty.) For a general point $P$ we denote  by 
$X=Z+jP$ the scheme defined by the ideal $I_X=I_P^j\cap I_Z$. Throughout this paper, ``dimension" refers to the vector space dimension over $K$.
For any $j$ and a fixed $Z$, by semicontinuity there is a Zariski open subset of points $P$ on which 
the dimension of $\dim[I_{Z+jP}]_{j+1}$ takes its minimum value. Thus it makes sense to talk about the 
number of conditions imposed on $[I_{Z}]_{j+1}$ by a general fat point $jP$.

In each degree $t$, note that $\dim_K [I_X]_t \geq \dim_K [I_Z]_t-\binom{j+1}{2}$;
i.e., the forms in $[I_X]_t$ are obtained from those of $[I_Z]_t$ by imposing at most
$\binom{j+1}{2}$ linear conditions coming from $jP$. Typically, if $\dim_K [I_X]_t > \dim_K [I_Z]_t-\binom{j+1}{2}$ 
(i.e., if $jP$ imposes fewer than $\binom{j+1}{2}$ conditions on $[I_Z]_t$)
for a general point $P$, it is because $\dim_K [I_Z]_t<\binom{j+1}{2}$ and $\dim_K [I_X]_t = 0$.
For special choices of $Z$, however, it can happen that $jP$ imposes fewer than
$\binom{j+1}{2}$ conditions even though $P$ is general and $\dim_K [I_X]_t >0$.
We are interested in exploring this situation when the degree $t$ is $j+1$.
This motivates the following definition, where we denote the sheafification
of a homogeneous ideal $I$ by $\mathcal I$. Also, given a sheaf $\mathcal F$ on $\PP^2$, 
we will usually write $h^0(\PP^2, {\mathcal F})$ simply as $h^0({\mathcal F})$.
Thus, for example, $\mathcal I_Z \otimes \mathcal I_P^j=\mathcal I_X = \mathcal I_{Z+jP}$, $  h^0(\PP^2, {\mathcal I}_Z(t))=h^0({\mathcal I}_Z(t))=\dim_K [I_Z]_t$ and 
$h^0(\PP^2,(\mathcal I_Z \otimes \mathcal I_P^j)(t)) = h^0(\mathcal I_{Z+jP}(t)) = h^0(\mathcal I_X(t))=\dim_K [I_X]_t = \dim_K [I_{Z+jP}]_t$. From now on we will suppress the subscript $K$ in the dimension notation.

\begin{definition}
    \label{def:unexpected curve}
We say that a reduced finite set of points $Z \subset \PP^2$  {\em admits an unexpected curve} of degree $j+1$ if there is an integer $j>0$ such that, 
for a general point $P$, $jP$ fails to impose the expected number of conditions on the linear system of curves of 
degree $j+1$ containing $Z$. That is, $Z$ admits an unexpected curve of degree $j+1$ if 
\begin{equation}
  \label{eq:unexpected}
  h^0(\mathcal I_{Z+jP}(j+1))  > \max \left  \{ h^0(\mathcal I_Z(j+1)) - \binom{j+1}{2}, 0 \right \}.
\end{equation}
\end{definition}

\begin{remark}
While it certainly can be of interest to ask when different kinds of non-reduced schemes admit ``unexpected curves" of 
this sort, in this paper we are concerned only with the case where $Z$ is reduced, 
of degree at least two; i.e., $|Z|\geq 2$.
\end{remark}

\begin{remark} \label{interesting}
If $0\leq j\leq1$ and $P$ is general, then $h^0((\mathcal I_Z \otimes \mathcal I_P^j)(j+1)) > 0$ implies
$h^0((\mathcal I_Z \otimes \mathcal I_P^j)(j+1)) = h^0(\mathcal I_Z(j+1)) - \binom{j+1}{2}\geq 0$.
Thus unexpected curves must have degree at least 3. 
\end{remark}

\begin{example}\label{FanoExample}
By Remark \ref{interesting}, the least degree for which an unexpected curve can occur is 3. 
We now reprise an example of Serre (see \cite[Exercise III.10.7]{Hrt}) to show that unexpected curves of degree 3 can occur. 
Although the occurrence of unexpected curves is not purely a characteristic $p>0$ phenomenon
(later we will give examples in characteristic 0), we believe that 2 is the only characteristic for which
an unexpected curve of degree 3 can occur. In any case, for this example
assume $K$ has characteristic 2 and take $Z$ to be the seven points whose homogeneous coordinates 
$[a : b : c]$ consist of just zeros and ones. We now show that \eqref{eq:unexpected} holds with $j + 1 = 3$ and with the right hand side of \eqref{eq:unexpected} being 0.
Note that the seven points are the points of the Fano plane and that any line through two of them goes through a third.
There are only seven such lines, and they are projectively dual to the seven points.
Let $P=[\alpha:\beta:\gamma]\in \PP^2$ be a general point. One can check that
$Z$ imposes independent conditions on cubics (in fact, 
$I_Z=(yz(y + z),xz(x + z),xy(x + y))$). Since $Z+2P$ imposes 10 conditions,
one would expect that there would not be a cubic containing $Z$ having a double point at $P$.
But the conditions are not independent: one can easily check
that $F=\alpha^2yz(y + z)+\beta^2xz(x + z)+\gamma^2xy(x + y)$ defines a curve $C$ 
(reduced and irreducible in fact) which
is singular at $P$ and hence $C$ is an unexpected curve of degree 3 for $Z$.
\end{example}

Note for $j\geq 0$ that it is always true that 
\begin{equation} \label{always}
\begin{array}{rcl}
\dim [I_{Z+jP}]_{j+1} & \geq & \dim [I_Z]_{j+1} - \binom{j+1}{2} \\ \\
& = & \binom{j+3}{2} - h_Z(j+1) - \binom{j+1}{2} \\  \\
& \geq & \binom{j+3}{2} - |Z| - \binom{j+1}{2} 
\end{array}
\end{equation}
and
%\begin{equation} \label{useful}
%\binom{j+3}{2}-\binom{j+1}{2}-|Z|=(j-a_Z+1)+(j-b_Z+1),
%\end{equation}
%the latter because  $a_Z+b_Z = d-1$. It is also easy to see that
\begin{equation} \label{useful2}
\dim [I_Z]_{j+1} - \binom{j+1}{2} = 2j+3 - h_Z(j+1),
\end{equation}
where $h_Z (j) = \dim [R/I_Z]_j = \binom{j+2}{2} - \dim [I_Z]_j$ is the {\em Hilbert function} of $Z$.

The definition of an unexpected curve already suggests the importance of the following invariant.

\begin{definition}
      \label{def: tZ}
We define $t_Z$ to be the least $j$ such that $\dim [I_Z]_{j+1}>\binom{j+1}{2}$. 
\end{definition}

\begin{remark}
One sees immediately that $t_Z$ depends only on the Hilbert function of $Z$. However, the existence of an unexpected curve does not depend only on the Hilbert function, as one can see from easy examples.
\end{remark}

\begin{lemma} \label{bound for t}
\begin{itemize}

\item[(a)] $\displaystyle 0 \leq  t_Z \leq \left \lfloor \frac{|Z|-1}{2} \right \rfloor$

\item[(b)] $\displaystyle t_Z = \left \lfloor \frac{|Z|-1}{2} \right \rfloor$ if and only if $\displaystyle
\left \{
\begin{array}{ll}
h_Z(t_Z) = |Z| & \hbox{if } |Z| \hbox{ is odd} \\
h_Z(t_Z) \geq |Z|-1 & \hbox{if $|Z|$ is even}
\end{array}
\right.
$

\end{itemize}
\end{lemma}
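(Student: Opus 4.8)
The plan is to work directly from Definition 1.8 together with the elementary identity \eqref{useful2}, translating everything into statements about the Hilbert function $h_Z$. Recall $t_Z$ is the least $j$ with $\dim [I_Z]_{j+1} > \binom{j+1}{2}$, which by \eqref{useful2} is the least $j$ with $2j+3 - h_Z(j+1) > 0$, i.e.\ the least $j$ with $h_Z(j+1) < 2j+3$. Since $Z\neq\varnothing$, we have $h_Z(1)\le 3$, and in fact the key inputs are the standard facts that $h_Z$ is nondecreasing, that $h_Z(j) \le |Z|$ for all $j$, and that $h_Z$ strictly increases until it stabilizes at $|Z|$ (so $h_Z(j) \ge \min\{j+1,|Z|\}$ for $j\ge 0$, since a reduced scheme has $h_Z(0)=1$ and the first difference is positive up to the point of stabilization).

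For part (a), the lower bound $t_Z\ge 0$ is immediate since $j$ ranges over nonnegative integers in Definition 1.8 (one also checks $\dim[I_Z]_1 \le \binom{1}{2}=0$ cannot be exceeded only trivially; the real content is that $j\ge 0$). For the upper bound I would set $j_0 = \lfloor (|Z|-1)/2\rfloor$ and show $h_Z(j_0+1) < 2j_0+3$, which forces $t_Z \le j_0$. Using $h_Z(j_0+1)\le |Z|$, it suffices to check $|Z| < 2j_0+3$; writing $|Z| = 2j_0+1$ or $2j_0+2$ according to parity of $|Z|$, both cases give $|Z| \le 2j_0+2 < 2j_0+3$, so we are done.

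For part (b), the condition $t_Z = j_0$ means that $j_0$ is the \emph{least} $j$ with $h_Z(j+1) < 2j+3$; since part (a) already gives $t_Z \le j_0$, equality is equivalent to $h_Z(j) \ge 2(j-1)+1 = 2j-1$ failing to be violated for all $j \le j_0$, i.e.\ to $h_Z(j_0) \ge 2j_0 - 1$ (the earlier inequalities $h_Z(j)\ge 2j-1$ for $j<j_0$ then follow automatically, or can be seen directly from $h_Z(j)\ge\min\{j+1,|Z|\}$ once $|Z|$ is large, with the small cases handled separately). Now I translate $h_Z(j_0)\ge 2j_0-1$ using the parity of $|Z|$: when $|Z|=2j_0+1$ is odd, $2j_0-1 = |Z|-2$, and since $h_Z(j_0)\le|Z|$ and the Hilbert function increases by steps of exactly the right size, one checks $h_Z(j_0)\ge |Z|-2$ is in fact equivalent to $h_Z(j_0)=|Z|$ (the values $|Z|-1$ and $|Z|-2$ are excluded because stabilization at $|Z|$ by degree $j_0$ combined with strict increase before that would already force $h_Z(j_0-1)\ge |Z|-1$, contradicting minimality at $j_0$ — this needs a short case analysis); when $|Z|=2j_0+2$ is even, $2j_0-1 = |Z|-3$, and the same reasoning shows $h_Z(j_0)\ge|Z|-3$ is equivalent to $h_Z(j_0)\ge|Z|-1$.

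The main obstacle I anticipate is part (b): the inequality $h_Z(j_0)\ge 2j_0-1$ is not literally the stated condition, and bridging the gap requires using the rigidity of Hilbert functions of reduced zero-dimensional schemes — specifically that $h_Z$ cannot hover one or two below $|Z|$ for long, since its first difference is strictly positive until stabilization. Pinning down exactly which intermediate values of $h_Z(j_0)$ are ruled out, and organizing the odd/even cases so the bookkeeping is clean, is where the care is needed; the rest is routine arithmetic with floors.
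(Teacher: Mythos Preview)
Your plan for part (a) is fine and matches the paper. The trouble is in part (b), where an arithmetic slip derails the rest. The minimality of $t_Z$ at $j_0=\lfloor(|Z|-1)/2\rfloor$ says that for every $j<j_0$ one has $h_Z(j+1)\ge 2j+3$; substituting $k=j+1$ this reads $h_Z(k)\ge 2(k-1)+3=2k+1$ for $1\le k\le j_0$, not $2k-1$ as you wrote. With the correct constant, the case $k=j_0$ is $h_Z(j_0)\ge 2j_0+1$, and since $|Z|=2j_0+1$ when $|Z|$ is odd and $|Z|=2j_0+2$ when $|Z|$ is even, this is \emph{literally} the stated parity condition $h_Z(t_Z)=|Z|$ (odd) or $h_Z(t_Z)\ge|Z|-1$ (even), recalling that here $t_Z=j_0$. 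No Hilbert-function rigidity is needed; the obstacle you anticipated is an artifact of the off-by-two error.

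For the converse there is a second issue: the hypothesis is a condition on $h_Z(t_Z)$, not on $h_Z(j_0)$, and you do not yet know these coincide. Your plan to verify $h_Z(k)\ge 2k+1$ for all $k\le j_0$ is both awkward (the hypothesis gives you nothing at $j_0$ directly) and unnecessary. Instead, argue as the paper does: from $h_Z(t_Z)\ge|Z|-1$ (which holds in both parity cases) and the fact that $h_Z$ strictly increases until it stabilizes at $|Z|$, conclude $h_Z(t_Z+1)=|Z|$. The defining inequality $h_Z(t_Z+1)<2t_Z+3$ then gives $|Z|<2t_Z+3$, i.e.\ $t_Z\ge\lfloor(|Z|-1)/2\rfloor=j_0$, and combined with part (a) you are done.
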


\begin{proof}
The fact that $0 \leq t_Z$ follows from the definition. From (\ref{useful2}) we have
\[
\dim [I_Z]_{j+1} - \binom{j+1}{2} = 2j+3 - h_Z(j+1) \geq 2j+3-|Z|
\]
with equality if and only if $h_Z(j+1) = |Z|$. Thus
\[
\begin{array}{rcl}
t_Z & = & \min \{ j \ | \ 2j+3 - h_Z(j+1) > 0 \} \\ \\
& \leq & \min \{ j \ | \ 2j+3 - |Z| > 0 \} \\ \\
& = & \min \{ j \ | \ j \geq \frac{|Z|-2}{2} \} \\ \\
& = &  \lfloor \frac{|Z|-1}{2}  \rfloor.
\end{array}
\]
For (b), suppose first that $t_Z = \lfloor \frac{|Z|-1}{2} \rfloor$. Since $2(t_Z-1) +3-h_Z(t_Z) \leq 0$, we have 
\[
2  \left \lfloor \frac{|Z|-1}{2} \right \rfloor +1 - h_Z(t_Z) \leq 0.
\]
Recalling that $h_Z(j) \leq |Z|$ for all $j$, this gives:

\medskip

\begin{itemize}

\item If $|Z|$ is odd and $\displaystyle  t_Z = \left \lfloor \frac{|Z|-1}{2} \right \rfloor \hbox{ then } h_Z(t_Z) = |Z|.$ 

\medskip

\item If $|Z|$ is even and $\displaystyle  t_Z =  \left \lfloor \frac{|Z|-1}{2} \right \rfloor \hbox{ then } h_Z(t_Z) \geq |Z| -1 $.

\end{itemize}

\noindent For the converse, assume that the parity condition holds. 
In both cases, $h_Z(t_Z+1) = |Z|$ since $h_Z$ is strictly increasing until it reaches the value $|Z|$. So the only inequality in the calculation in (a) is an equality, and we are done.
\end{proof}

\begin{remark}
If $|Z|$ is even, 
both $h_Z(t_Z) = |Z|$ and $h_Z(t_Z) = |Z|-1$ are possible. For example, take $|Z|=6$ and choose $Z$ to be a set of 6 
general points versus a set of 6 points on a smooth conic. 
In both cases $t_Z = 2$, but $h_Z(t_Z) = |Z|$ for 6 general points, while $h_Z(t_Z) = |Z|-1$ for 6 points on the conic.
\end{remark}

\begin{example}
   \label{rem:tZ}
Here we evaluate $t_Z$ exactly when $Z$ lies on a curve of low degree.
\begin{enumerate}
\item[(i)] The definition immediately gives that $t_Z = 0$ if and only if the points of $Z$ are collinear,
so in this case $t_Z$ is as small as possible.
\item[(ii)] If $Z$ lies on an irreducible conic, then  it is not hard to check that $t_Z = \left \lfloor \frac{|Z| -1}{2} \right \rfloor$,
so in this case $t_Z$ is as large as possible. 
%Moreover, we again have $m_Z=t_Z$. (By Bezout's Theorem,
%any form of degree $t_Z$ vanishing on $Z$ must be divisible by the form defining the conic. But the quotient
%then has degree $t_Z-2$, which therefore cannot vanish to order $t_Z-1$ at a general point $P$. Thus $m_Z>t_Z-1$.)
\end{enumerate}
\end{example}

\begin{proposition}\label{prop:small tZ}
Let $Z \subset \PP^2$ be a reduced scheme consisting of a finite set of points.
Then the following conditions are equivalent: 
\begin{itemize}
	\item[(a)] $h_Z (t_Z) < |Z|$; 
	\item[(b)] 
\begin{itemize}
	\item[(i)] the scheme $Z$ is a complete intersection cut out by two curves
meeting transversely, of degree $2$ and $t_Z + 1$ respectively, with $t_Z>0$;  or 
	\item[(ii)] there is a line that contains precisely  $|Z| - t_Z \ge t_Z + 2$ points of $Z$. 
\end{itemize}
\end{itemize}
Furthermore, in case (b)(i) we have $t_Z  = \frac{|Z| -2}{2}$,
while for case (b)(ii) we have $t_Z \le \frac{|Z| -2}{2}$.
\end{proposition}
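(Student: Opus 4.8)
The plan is to prove the equivalence of (a) and (b) together with the supplementary statements about $t_Z$, using the characterization (\ref{useful2}) of $t_Z$ via the Hilbert function and a careful analysis of which Hilbert functions of finite point sets are compatible with $h_Z(t_Z) < |Z|$. First I would record the key numerical observation: by the defining inequality $\dim[I_Z]_{t_Z} \le \binom{t_Z}{2}$ and (\ref{useful2}) applied at $j = t_Z - 1$ (when $t_Z > 0$), we get $2t_Z + 1 - h_Z(t_Z) \le 0$, i.e. $h_Z(t_Z) \ge 2t_Z + 1$; combined with the standing hypothesis $h_Z(t_Z) < |Z|$ this already forces $|Z| \ge 2t_Z + 2$, which yields the asserted bound $t_Z \le \frac{|Z|-2}{2}$ in all of case (b). (When $t_Z = 0$, the hypothesis $h_Z(0) = 1 < |Z|$ is automatic and we are in case (b)(ii) with the line being any line through two points — but here $|Z| - t_Z = |Z|$ and we need $|Z| \ge 2$, consistent with Example~\ref{rem:tZ}(i).) The direction (b) $\Rightarrow$ (a) is the routine one: in case (b)(i), $Z$ a transverse complete intersection of type $(2, t_Z+1)$ has $|Z| = 2(t_Z+1)$ and a standard Hilbert function computation gives $h_Z(t_Z) = \binom{t_Z+2}{2} - 1 < 2t_Z+2 = |Z|$ exactly when $t_Z \ge 1$ (forcing in passing $t_Z = \frac{|Z|-2}{2}$); in case (b)(ii), if a line $L$ contains $|Z| - t_Z$ points then restricting to $L$ shows $h_Z(j) \le h_{Z\cap L}(j) + h_{Z \setminus L}(j) \le (j+1) + |Z\setminus L| = (j+1) + t_Z$ for $j \le |Z\cap L| - 1$, and one checks $|Z \cap L| = |Z| - t_Z \ge t_Z + 2 > t_Z + 1$, so at $j = t_Z$ this gives $h_Z(t_Z) \le 2t_Z + 1 < |Z|$.

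The substantive direction is (a) $\Rightarrow$ (b). Here I would argue that $h_Z(t_Z) < |Z|$ is a very restrictive condition. Set $s = t_Z$. From the first paragraph $h_Z(s) \ge 2s+1$, and from $h_Z(s) < |Z|$ together with the fact that $h_Z$ is strictly increasing until it plateaus at $|Z|$, the difference function $\Delta h_Z$ must still be positive at $s$; moreover the $O$-sequence constraints on $\Delta h_Z$ (Macaulay) squeeze $\Delta h_Z$ between the "generic" growth and the value forced by $h_Z(s) \ge 2s+1$. Concretely, $\Delta h_Z(j) \le j+1$ for $j \le s$ is the generic bound, and I expect to show $h_Z(s) \ge 2s+1$ with $h_Z(s) < |Z|$ forces $\Delta h_Z$ to be of one of exactly two shapes: either $\Delta h_Z = (1, 2, \dots, s, s, \dots)$ (a single "flat" at level $s$, which by the classification of Hilbert functions of points with a fixed-degree multiple point / by the theory of maximal growth corresponds to $Z$ having $|Z| - s$ collinear points — case (b)(ii)), or $\Delta h_Z = (1, 2, \dots, s, s+1, s, s-1, \dots)$ type behavior forced down by a conic, i.e. $\Delta h_Z(j) = \min(j+1, 2)$ eventually... — more precisely the pattern where $Z$ is contained in a conic and the Hilbert function is that of a complete intersection $(2, s+1)$, case (b)(i). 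The technical engine is the structure theorem for the Hilbert function of a set of points exhibiting "maximal growth" or containing a large linear/conic subset; I would invoke the standard result that if $\Delta h_Z(j) = \Delta h_Z(j+1) = c$ for some $j$ with $c \le j+1$ then $Z$ has a subset of $\ge$ (something) points on a curve of degree $c$, and run it with $c = 1$ and $c = 2$.

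The main obstacle, and where I would spend the most care, is pinning down exactly the two cases (b)(i) and (b)(ii) and no others — i.e. showing that $h_Z(t_Z) < |Z|$ cannot be achieved by any more exotic configuration. The worry is degree-$3$-and-higher curves masquerading as the cause; the resolution is the numerical tightness noted above ($h_Z(s) \ge 2s+1$, one less than $|Z|$ at most, but also $h_Z(s) \le |Z| - 1$), which leaves essentially no room: $|Z| - h_Z(s) \ge 1$ but the first-difference function, being an $O$-sequence that has already reached "flatness" or a small drop by degree $s$, must have been controlled by a curve of degree $1$ or $2$, because a degree-$d$ curve with $d \ge 3$ would require $\Delta h_Z$ to reach the value $d \ge 3$ and stay there, pushing $h_Z(s)$ up to at least $1 + 2 + \dots$ past the budget $|Z| - 1$ unless $|Z|$ is so large that $t_Z$ would have been larger — contradicting minimality of $t_Z$. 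I would formalize this by a short case analysis on the value $\Delta h_Z(t_Z)$ and the comparison $h_Z(t_Z+1) = |Z|$ (which holds since $h_Z$ plateaus by $t_Z + 1$, as in the proof of Lemma~\ref{bound for t}), using that $\Delta h_Z(t_Z + 1) = |Z| - h_Z(t_Z) \ge 1$ and a decreasing-differences/Macaulay argument to force the shape, then translate the shape back to geometry via the cited structure results on points with maximal growth. The final tallies $|Z| = 2(t_Z+1)$ in case (i) and $|Z| = |Z\cap L| + t_Z$ in case (ii) then give the displayed formulas for $t_Z$.
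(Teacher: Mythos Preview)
Your approach is the same as the paper's in spirit --- analyze the shape of $\Delta h_Z$ and invoke structure theorems (Davis, Bigatti--Geramita--Migliore, Campanella) to translate back to geometry --- and your direction (b) $\Rightarrow$ (a) is fine. But the direction (a) $\Rightarrow$ (b) contains a genuine error that derails the argument.

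You claim that $h_Z(t_Z+1) = |Z|$, i.e.\ that $h_Z$ has already plateaued by degree $t_Z+1$, citing the proof of Lemma~\ref{bound for t}. This is false: that lemma only asserts the plateau under the extra hypothesis that $t_Z$ is maximal ($t_Z = \lfloor (|Z|-1)/2 \rfloor$). In fact under (a) one typically has $h_Z(t_Z+1) = 2t_Z+2$, which is strictly less than $|Z|$ whenever $|Z|-t_Z > t_Z+2$ in case (b)(ii). The step you are missing is the precise squeeze: from $\dim[I_Z]_{t_Z} \le \binom{t_Z}{2}$ and $\dim[I_Z]_{t_Z+1} > \binom{t_Z+1}{2}$ one gets $2t_Z+1 \le h_Z(t_Z) < h_Z(t_Z+1) \le 2t_Z+2$, which forces $h_Z(t_Z) = 2t_Z+1$ and $h_Z(t_Z+1) = 2t_Z+2$ \emph{exactly}. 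In particular $\Delta h_Z(t_Z+1) = 1$, and then the standard fact (cf.\ \cite{DGM}) that for a reduced point set $\Delta h_Z$, once it drops to $1$, stays at $1$ until it becomes $0$ gives the actual shape of $\Delta h_Z$: it is $(1,2,\ldots,?,1,1,\ldots,1,0)$, not the shapes ``$(1,2,\ldots,s,s,\ldots)$'' or ``$(1,2,\ldots,s,s+1,s,\ldots)$'' you describe. From here the paper splits on whether $\Delta h_Z(2) = 3$ (no conic through $Z$) or $\Delta h_Z(2) = 2$ ($Z$ on a conic): in the first case one already has two consecutive values equal to $1$, and Davis' theorem \cite{Da} gives a common linear factor of $[I_Z]_{s-1}$, hence $|Z|-t_Z$ collinear points; in the second case the $h$-vector is completely determined and $Z$ is either the complete intersection of the conic with a form of degree $t_Z+1$, or the conic is reducible and one of its lines carries $|Z|-t_Z$ points.

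A minor correction: your formula $h_Z(t_Z) = \binom{t_Z+2}{2}-1$ for the complete intersection of type $(2,t_Z+1)$ is wrong (it happens to agree at $t_Z=2$ but not otherwise); the correct value is $2t_Z+1$, since $[I_Z]_{t_Z} = q\cdot [R]_{t_Z-2}$.
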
 

\begin{proof}
To simplify notation, put $t = t_Z$. 
We  use $\Delta h_Z$ to denote the first difference of the Hilbert function of $Z$; that is, 
$\Delta h_Z (j) = h_Z (j) - h_Z (j-1)$. 

First assume $t =0$. By Example \ref{rem:tZ}, the points of $Z$ are collinear, so
(a) holds if and only if $|Z|>1$, and (b) holds if and only if $|Z|\geq 2$, so (a) and (b)
are equivalent, and clearly $0=t \le \frac{|Z| -2}{2}$ for $|Z|>1$. Thus it is now enough to consider the case that 
$t \ge 1$, that is, that $Z$ is not collinear. 

Assume (a) holds. By the definition of $t_Z$, equation (\ref{useful2}) and the fact that $h_Z$ is strictly increasing
until it stabilizes at the value $|Z|$, this forces 
\[
2 t + 1 \le h_Z (t) < h_Z (t + 1) \le 2 t + 2,  
\]
and thus $h_Z (t + 1) = 2 t + 2 = 1 + h_Z (t)$. In particular, $\Delta h_Z (t + 1) = 1$. 
By standard results (see, for example, \cite[Proposition 3.9]{DGM}), this 
implies that the values of $\Delta h_Z$ are as follows (where $s$ is the regularity of $I_Z$): 
\begin{equation} 
   \label{eq:difference Hilb}
\begin{array}{ccccccccc}
j                      & :  & 0 & 1 & \ldots & t+1 & \ldots & s-1 & s\\
\Delta h_Z(j) & :  & 1 & 2 & \ldots & 1    & \ldots & 1 & 0
\end{array}
\end{equation}
Thus, $h_Z (t + 1) = 2 t + 2$ implies
\[
|Z| = 2 t + 2 + (s - t - 2) = s + t. 
\]
Using $h_Z (t + 1) = 2 t + 2 \le |Z|$, we conclude that $s = |Z| - t \ge t + 2$.

Now we consider two cases: 

\emph{Case 1}: Assume $Z$ does not lie on a  conic, that is, $\Delta h_Z (2) = 3$. Hence 
\[
2 t + 2 =  h_Z (t + 1) = \sum_{j=0}^{t+1} \Delta h_Z (j)
\]
forces $\Delta h_Z (t) = 1$, and thus $\Delta h_Z (s-2) = \Delta h_Z (s-1) = 1 > \Delta h_Z (s)$. 
By \cite[(2.3)]{Da} (or by applying results of \cite{BGM}), it follows that $[I_Z]_{s-1}$ has a linear form $\ell$ 
as a common divisor. Since $I_Z$ has a minimal set of homogeneous generators all of whose degrees are
at most $s$, there must be a generator $f$ of degree $s$ and by
\cite[Theorem 2.1]{Camp} there is only one generator of degree $s$ in a minimal set of homogeneous generators.
Moreover, since $Z$ is reduced, the curves defined by $f$ and $\ell$ must intersect transversely.
Thus the ideal $(\ell, f)$ defines a subset of $s$ collinear points of $Z$ and clearly $\ell$ vanishes at no point of $Z$
other than these $s$. Therefore, condition (ii) is satisfied. 

\emph{Case 2}: Assume $Z$ is contained in a conic, defined, say, by a homogeneous form $q$. 
Again taking into account $h_Z (t + 1) = 2t + 2$, we get 
\begin{equation}
  \label{eq:h-vector}
\Delta h_Z (j) = 
\begin{cases}
1 & \text{if $j=0$ or $t+1 \le j < s$} \\
2 & \text{if } 1 \le j \le t \\
0 & \text{otherwise.}
\end{cases}
\end{equation}
If $t=1$, then $\Delta h_Z=(1,2,1,\ldots,1)$. Thus $Z$ is either 4 general points
(i.e., a complete intersection) or $Z$ consists of 3 or more collinear points
and one point off the line; in both cases it is easy to check that the assertions hold.
Now assume that $t>1$.
It follows that $q$ is a common factor for $[I_Z]_j$ for $j\leq t$, but not for $j=t+1$,
so any minimal set of homogeneous generators for $I_Z$ must contain $q$ and a generator
$g$ of degree $t+1$. If $q$ and $g$ are coprime, then $|Z|\leq \deg(q)\deg(g)=2t+2$.
Since $|Z| = t+s$ and $s\geq t+2$, this means $s=t+2$ and $|Z|=2t+2$, so
$Z$ is a complete intersection as claimed in (i).
Otherwise, $q$ and $g$ have a linear common factor $\ell$ and $I_Z$ 
has another minimal generator $f$ of degree $s$. As in Case 1 we conclude that 
the line defined by $\ell$ contains precisely $s$ points of $Z$, and so condition (ii) is met. 
\smallskip

Conversely, assume one of the conditions in  (b) is true. Thus, $|Z| - t \ge t +2$
(and hence $t \le \frac{|Z| - 2}{2}$), by hypothesis for part (ii) and using the fact
that $Z$ is a transverse complete intersection of a conic with a curve of degree $t+1$
for part (i). Again, we consider two cases: 

If (i) is true, then  $h_Z (t)  = 2 t + 1 < 2 t + 2 = |Z|$, as desired. Moreover, here we have $t = \frac{|Z| - 2}{2}$.

Finally, assume (ii) is true, let $Y \subset Z$ be a subset of $|Z|-t$ collinear points
and let $U$ be the complement of $Y$ in $Z$. Then $t=|U|$ and $U$ is reduced, so
$U$ imposes independent conditions on forms of degree $t-1$; i.e.,
$h_U(t-1)=t$ and thus $\dim [I_U]_{t-1}=\binom{t+1}{2}-t$. But the linear form $\ell$ 
vanishing on $Y$ is, by Bezout's Theorem,
a common divisor of $[I_Z]_t$, so $\dim [I_Z]_t=\dim [I_U]_{t-1}$, and we have $h_Z(t)=2t+1<2t+2 \leq |Z|$. 
\end{proof} 

As a consequence we show that adding a point to $Z$ will change the invariant $t_Z$ by at most one. 

\begin{corollary}
     \label{cor:change of t_Z}
Let $Z \subset \PP^2$ be a finite  reduced scheme. If $Q \notin Z$ is any other point of $\PP^2$, then 
\[
t_Z \le t_{Z+Q} \le t_Z + 1. 
\] 
\end{corollary}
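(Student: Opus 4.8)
The plan is to compare the Hilbert functions of $Z$ and $Z+Q$, using the fact that adding a reduced point changes the Hilbert function in a very controlled way, and then to read off the effect on $t_Z$ from its characterization via \eqref{useful2}. Concretely, for any $j$ we have $h_{Z+Q}(j) \ge h_Z(j)$, and since $Q$ imposes at most one condition on forms of degree $j$, also $h_{Z+Q}(j) \le h_Z(j)+1$. Moreover, once $h_Z(j) = |Z|$ and $h_{Z+Q}(j) = |Z|+1$ it stays that way for all larger $j$, and in general $h_{Z+Q}(j) - h_Z(j)$ is nonincreasing in $j$ (it drops from $1$ to $0$ at the degree where $Q$ becomes a base point of the forms of that degree vanishing on $Z$, if that ever happens before stabilization). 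I would state these standard facts briefly and then translate them through the formula $\dim[I_W]_{j+1} - \binom{j+1}{2} = 2j+3 - h_W(j+1)$ from \eqref{useful2}.

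For the lower bound $t_Z \le t_{Z+Q}$: by definition of $t_{Z+Q}$ we have $2t_{Z+Q}+3 - h_{Z+Q}(t_{Z+Q}+1) > 0$, and since $h_Z(t_{Z+Q}+1) \le h_{Z+Q}(t_{Z+Q}+1)$ this gives $2t_{Z+Q}+3 - h_Z(t_{Z+Q}+1) > 0$, so $t_Z \le t_{Z+Q}$ by minimality of $t_Z$.

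For the upper bound $t_{Z+Q} \le t_Z + 1$: write $t = t_Z$. It suffices to show $2(t+1)+3 - h_{Z+Q}(t+2) > 0$, i.e. $h_{Z+Q}(t+2) \le 2t+4$. We know $h_{Z+Q}(t+2) \le h_Z(t+2) + 1$, so it is enough to show $h_Z(t+2) \le 2t+3$. Now the definition of $t_Z$ gives $2t+3 - h_Z(t+1) > 0$, i.e. $h_Z(t+1) \le 2t+2$, hence $\dim[I_Z]_{t+1} - \binom{t+1}{2} \ge 1$; equivalently $h_Z(t+1) < |Z|$ whenever there is actually something to prove, but more simply: if $h_Z(t+2) \le 2t+3$ we are done. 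The only way this can fail is $h_Z(t+2) \ge 2t+4$, which combined with $h_Z(t+1) \le 2t+2$ forces $\Delta h_Z(t+2) \ge 2$. But I expect the main obstacle to be exactly this case, and I would handle it using Proposition~\ref{prop:small tZ} together with the structure result \eqref{eq:difference Hilb}: the inequality $h_Z(t+1) \le 2t+2$ means we are in the situation analyzed there, so $\Delta h_Z(t+1) = 1$ and the first difference $\Delta h_Z$ is nonincreasing after degree $t+1$ by the general results on $\Delta h_Z$ of points (e.g. \cite[Proposition 3.9]{DGM}), forcing $\Delta h_Z(t+2) \le 1$ and hence $h_Z(t+2) \le h_Z(t+1) + 1 \le 2t+3$. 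Alternatively, if $h_Z(t+1) = |Z|$ already (the stabilized case), then $h_Z(t+2) = |Z| = h_Z(t+1) \le 2t+2 < 2t+4$ trivially. Either way $h_{Z+Q}(t+2) \le 2t+4$, giving $t_{Z+Q} \le t+1$.

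The one genuinely delicate point — and the step I expect to be the crux — is justifying that $\Delta h_Z$ cannot jump back up at degree $t+2$ after we know $h_Z(t+1) \le 2t+2$; this is precisely where the classification in Proposition~\ref{prop:small tZ} and the shape \eqref{eq:difference Hilb} of the $h$-vector of a finite set of points do the work, so I would lean on that proposition rather than reprove the Macaulay/Gotzmann-type constraints from scratch. Everything else is bookkeeping with \eqref{useful2}.
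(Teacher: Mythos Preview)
Your argument is correct and in fact more direct than the paper's. Both proofs handle the lower bound identically (it is immediate from $I_{Z+Q}\subseteq I_Z$). For the upper bound, the paper argues by contradiction: assuming $t_{Z+Q}\ge t_Z+2$, it deduces $h_{Z+Q}(t_Z+2)\ge 2t_Z+5$, hence $h_Z(t_Z+2)\ge 2t_Z+4$ and $h_Z(t_Z)<|Z|$ with $t_Z\le(|Z|-4)/2$; this puts $Z$ in case (b)(ii) of Proposition~\ref{prop:small tZ} (many collinear points), and that explicit geometric structure is then used to bound $h_{Z+Q}(t_Z+2)$ from above, yielding a contradiction.

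Your route bypasses the geometry entirely by controlling $\Delta h_Z$ directly: from $h_Z(t)\ge 2t+1$ (this holds for $t=0$ too, since $h_Z(0)=1$) and $h_Z(t+1)\le 2t+2$ you get $\Delta h_Z(t+1)\le 1$, and then the $O$-sequence property of $\Delta h_Z$ (Macaulay, or \cite[Proposition~3.9]{DGM}) forces $\Delta h_Z(t+2)\le 1$, so $h_Z(t+2)\le 2t+3$ and $h_{Z+Q}(t+2)\le 2t+4$. One small cleanup: you do not actually need Proposition~\ref{prop:small tZ} or \eqref{eq:difference Hilb} at all; the two inequalities above already give $\Delta h_Z(t+1)\le 1$ without any case analysis, and the Macaulay bound does the rest. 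Invoking that proposition is harmless but unnecessary. Your approach is shorter and purely numerical; the paper's approach, while longer, ties the obstruction to a concrete geometric configuration.
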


\begin{proof} 
By definition, we clearly have $t_Z \le t_{Z+Q}$. It remains to show the second inequality. Suppose $t_{Z+Q} \ge t_Z + 2$. Then the definition gives $h_Z (t_Z + 1) \le 2 t_Z + 2$ and 
$h_{Z+Q} (t_Z + 1) \ge 2 t_Z + 3$. Since the Hilbert functions of $Z$ and $Z + Q$ differ at most by 
one in each degree, we conclude that 
\[
h_{Z+Q} (j) = h_Z (j) + 1 \quad \text{whenever } j > t_Z,  
\]
and, in particular, $h_Z (t_Z + 1) = 2 t_Z + 2$.
Considering degree $t_Z + 2 \le t_{Z+Q}$, we get \linebreak $h_{Z+Q} (t_Z + 2) \ge 2 t_{Z} + 5$, which implies 
\[
|Z| \ge h_Z (t_Z + 2) \ge 2 t_Z + 4 = h_Z (t_Z + 1) +2. 
\]
It follows that $h_Z (t_Z) < |Z|$ and $t_Z \le \frac{|Z|-4}{2}$. Hence, 
Proposition \ref{prop:small tZ}  shows that $|Z| - t_Z$ of the points in $Z$ are 
collinear. Denote by $Y$ this subset of $Z$, and so $|Y| = |Z| - t_Z \ge t_Z + 4$. Now, using that 
the points in $Y$ are collinear, we obtain 
\[
h_{Z+Q} (t_Z +2) \le h_Y (t_Z+2) + |Z+Q-Y| = t_Z + 3 + t_Z + 1 = 2 t_Z + 4,
\]
contradicting our estimate above that $h_{Z+Q} (t_Z + 2) \ge 2 t_{Z} + 5$.
%This implies $t_{Z+Q} \le t_Z + 1$, a contradiction to our assumption. 
\end{proof}

%%%%%%%%%%%%%%%%%%%%%%%%%%%%%%%%%%%%%%%%%%%%%%%%%%%%%%%%%%%%%%

\section{Line arrangements and a criterion for unexpected curves}
\label{sec:criteria}

The following are the additional invariants that we will need.

\begin{definition}
      \label{def:mult speciality index}
Let $Z$ be a reduced 0-dimensional subscheme of $\PP^2$. 

\begin{itemize}

\item[(a)]{\cite[Definition 4.1]{FV2}} Given a point $P \notin Z$, we call 
\[
m_{Z, P} = \min\{ j \ge 0 \ |  \dim [I_{Z+jP}]_{j+1} > 0 \}
\]    
the \emph{multiplicity index of $Z$ with respect to $P$}. 
We define the {\em multiplicity index}, $m_Z$, to be 
\[
m_Z = \min \{ j \in \mathbb Z \ |  \dim [I_{Z+jP}]_{j+1} > 0\}
\]
for a general point $P$.

\item[(b)] Let $P \in \mathbb P^2$ be a general point. We define the {\em speciality index}, $u_Z$, to be the least $j$ such that $Z + jP$ imposes independent conditions on plane curves of degree $j+1$, i.e. the least $j$ such that 
\[
\dim [I_{Z+jP}]_{j+1}=\binom{j+3}{2}-\binom{j+1}{2}-|Z|.
\]
\end{itemize}
\end{definition}

\begin{remark} 
     \label{rem: u_z}
\begin{itemize}

\item[(i)] We note that $m_{Z,P}$ exists for each point $P\notin Z$, since
it is easy to see that  $\dim [I_{Z+jP}]_{j+1}  > 0$ holds
for all $j\geq |Z|$ (pick $j$ lines through $P$ which also 
go through the $|Z|$ points of $Z$), and hence $m_{Z,P}\leq |Z|$, so also $m_Z \leq |Z|$.
We also note that $\dim [I_{Z+jP}]_{j+1}$ is a nondecreasing function of $j$,
since we have an injection $[I_Z \cap I_P^j]_{j+1}\to [I_Z \cap I_P^{j+1}]_{j+2}$
given by multiplication by any linear form $\ell$ vanishing at $P$.
Thus if $\dim [I_{Z+jP}]_{j+1} =0$ then $m_{Z,P}>j$, hence $m_Z>j$ by semicontinuity.

\item[(ii)] Observe that  $u_Z$ can equivalently be defined as
\[
u_Z = \min \{ j \ | \ h^1(\mathbb P^2, \mathcal I_{Z+jP}(j+1)) = 0 \}.
\] 

\item[(iii)] We also note that $u_Z$ exists, and in fact 
$u_Z \leq |Z|-2$. Indeed, if  $Z$ is a set of $d$ points in $\PP^2$, and  $P \in \PP^2$ is a point that is not on any line 
through two of the points of $Z$, then we will show that 
$h^1(\PP^2,\mathcal I_{Z+(d-2)P}(d-1)) = 0$. 

To see this, we have to show that $Z + (d-2)P$ imposes $d + \binom{d-1}{2}$ conditions to the linear system of plane curves of degree $d-1$. Clearly $(d-2)P$ imposes $\binom{d-1}{2}$ conditions (since the regularity of $(d-2)P$ is $d-1$), so we want to show that the points of $Z$ impose $d$ independent conditions on the linear system, $\mathcal L$, of plane curves of degree $d-1$ vanishing to order $d-2$ at $P$. It is enough to show that given any point $Q$ of $Z$ there is a curve of degree $d-1$ vanishing to order $d-2$ at the general point $P$ and vanishing at each point of $Z \backslash \{Q\}$, but not vanishing at $Q$. This can be done (for instance) with a suitable union of $d-1$ lines, each joining $P$ and a point of $Z \backslash \{Q\}$.

\end{itemize}
\end{remark}

Next we bring in an important tool derived from a result of Faenzi and Vall\`es.
We continue with the assumption that $Z=P_1+\cdots+P_d$ is a reduced subscheme of $\PP^2_K$ consisting of distinct points $P_i$.
Let $\ell_i$ be the corresponding linear form dual to $P_i$ and $L_i$ the line defined by $\ell_i$, and define $f$  to be the product $f =\ell_1\cdots\ell_d$ (so $f$ is square free). We denote by $\mathcal A (f)$, or simply $\mathcal A$, the line arrangement in $\PP^2$ defined by $f$. In most cases we will not need to use different sets of variables for $Z$ and for $f$.

Note that when $\operatorname{char}(K)$ does not divide $d = \deg(f)$, then
$xf_x+yf_y+zf_z = d f$ is a non-zero scalar multiple of $f$. In the case when $\operatorname{char}(K)$ does divide $\deg(f)$, Euler's theorem gives $xf_x+yf_y+zf_z = 0$, i.e. a degree one syzygy on $f_x,f_y,f_z$. In this case it is not necessarily true that $f $ is in the 
ideal $\Jac(f)=(f_x, f_y, f_z)$ generated by its first partial derivatives, although it can happen. For instance, let
\[
F=xyz(x+y)=(x^2y+xy^2)z \ \hbox{ with } \  \hbox{char}(K) = 2.
\] 
Then $F$ is in $\Jac(F)=(y^2z, x^2z, x^2y+xy^2)$ since $F$ is $z$ times $(x^2y+xy^2)$. In fact, whenever there is only one factor with a $z$ in it, and that factor is $z$, we get this. In this situation it follows that $Z$ consists of all but one  of the points on a line. (We do not know if this, up to change of variables, is the only situation in which this behavior can happen.)

Let $J' =  \Jac(f) = (f_x,f_y,f_z)$. Let $J = (J',f)$. If $\operatorname{char}(K)$ divides $d$, we have seen that it may or may not happen that $J = J'$, and in any case $J'$ has a degree 1 syzygy coming from the Euler relation that does not occur when $\operatorname{char}(K)$ does not divide $d$. Nevertheless, it turns out that the issue of whether or not $\operatorname{char}(K)$ divides $d$ is less crucial than these considerations might lead one to expect, and in fact until section \ref{structure} we will make no assumption on the characteristic. The justification of this omission, and the role of the characteristic, seems to be known at least to the experts, but since we are not aware of a detailed reference in the literature, we include it as an appendix.

Define the submodule 
$D(Z)\subset R\frac{\partial}{\partial x}\oplus R\frac{\partial}{\partial y}\oplus R\frac{\partial}{\partial z}\cong R^3$ 
to be the $K$-linear derivations $\delta$ such that $\delta(f)\in Rf$.
In particular,  $D(Z)$ contains the Euler derivation 
$\delta_E =x\frac{\partial}{\partial x}+y\frac{\partial}{\partial y}+z\frac{\partial}{\partial z}$,
and $\delta_E$ generates a submodule $R\delta_E \cong R(-1)$. 
We can now define the quotient $D_0(Z)=D(Z)/R\delta_E$. Let $\mathcal D_Z$ be the sheafification of $D_0(Z)$, which we call the {\em derivation bundle} of $Z$. 

The following facts are shown in the appendix and will be used freely throughout this paper.

\begin{itemize}
\item $\mathcal D_Z$ is locally free of rank 2.

\item When $\operatorname{char}(K)$ does not divide $d$, $\mathcal D_Z$ is isomorphic to the syzygy bundle (suitably twisted) of $J'$.

\item In any case  $D(Z)$ is isomorphic to the syzygy module of $J$.

\item The restriction of $\mathcal D_Z$ to a general line splits as a direct sum $\mathcal O_{\PP^1}(-a_Z) \oplus \mathcal O_{\PP^1}(-b_Z)$ for positive integers $a_Z, b_Z$ satisfying $a_Z + b_Z = |Z|-1 = d-1$. We call the ordered pair $(a_Z, b_Z)$, with $a_Z \leq b_Z$, the {\em splitting type} of $Z$.
\end{itemize}

\begin{lemma}\label{FVlemma} 
Let $Z$ be a reduced 0-dimensional subscheme of $\PP^2$ and let $P$ be a general point.
Then one has, for each integer $j$,
$$
\dim [I_{Z+jP}]_{j+1} = \max \{0,j-a_Z+1 \}+\max \{0,j-b_Z+1\}.
$$
\end{lemma}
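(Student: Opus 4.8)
The plan is to translate the claimed dimension formula into a statement about the splitting of the derivation bundle $\mathcal D_Z$ restricted to a general line, and then use the fact (recorded just before the statement) that $D(Z)$ is the syzygy module of $J = (\Jac(f), f)$. The key point is that the scheme $Z + jP$ dual to the situation corresponds, on the line arrangement side, to imposing a vanishing condition of order $j$ at a general line $L$ (the line dual to $P$). Concretely, I would show that $\dim [I_{Z+jP}]_{j+1}$ equals the dimension of the space of global sections $H^0(\mathbb P^1, \mathcal D_Z|_L (j))$, or an equivalent sheaf-cohomological quantity on a general line, and then the formula follows by plugging in $\mathcal D_Z|_L \cong \mathcal O_{\mathbb P^1}(-a_Z) \oplus \mathcal O_{\mathbb P^1}(-b_Z)$: indeed $h^0(\mathbb P^1, \mathcal O_{\mathbb P^1}(j - a_Z) \oplus \mathcal O_{\mathbb P^1}(j - b_Z)) = \max\{0, j - a_Z + 1\} + \max\{0, j - b_Z + 1\}$.

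First I would set up the dictionary between forms vanishing on $Z + jP$ and derivations. A form $g \in [I_{Z+jP}]_{j+1}$ is a degree $j+1$ form vanishing at each point $P_i$ of $Z$ and to order $j$ at the general point $P$. Under projective duality, evaluating a derivation $\delta \in D(Z)$ of polynomial degree $j$ — i.e.\ $\delta = g_x \partial_x + g_y \partial_y + g_z \partial_z$ with the components of degree $j$ and $\delta(f) \in Rf$ — against the arrangement encodes exactly the condition that the components cut out a suitable syzygy of $\Jac(f)$; the vanishing to order $j$ at $P$ dualizes to the restriction of the associated rank-two bundle to the general line $L$ dual to $P$. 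I would make this precise by identifying $[I_{Z+jP}]_{j+1}$ with $H^0(L, \mathcal D_Z|_L(j))$ (up to accounting for the Euler derivation), arguing that generality of $P$ guarantees $L$ is a general line so that $\mathcal D_Z|_L$ splits in the stated way, and that higher cohomology issues along $L$ vanish because $\mathbb P^1$-bundles split. This is essentially Faenzi--Vall\`es Theorem~4.3 combined with the standard correspondence; I would cite \cite{FV2} for the core identification and supply the bookkeeping.

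The main obstacle, and the step I expect to require the most care, is handling the characteristic-$p$ case cleanly: when $\operatorname{char}(K)$ divides $d$, we saw that $f$ need not lie in $\Jac(f)$, the Euler relation becomes a genuine degree-one syzygy, and the bundle is a syzygy bundle of $J = (\Jac(f), f)$ rather than of $\Jac(f)$ alone. I would deal with this by working throughout with $J$ (not $J'$) and its syzygy module, which is exactly $D(Z)$ regardless of the characteristic per the bulleted facts; the derivation bundle $\mathcal D_Z$ is defined uniformly, is locally free of rank two, and has the same splitting formula $a_Z + b_Z = d - 1$ in all characteristics, so the section-counting argument goes through verbatim once phrased in terms of $\mathcal D_Z$. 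A secondary point is justifying semicontinuity: the formula is an equality for \emph{general} $P$, so I would note that $\dim [I_{Z+jP}]_{j+1}$ achieves its minimum on a dense open set (as already observed in Section~\ref{sec: tZ}), and that this open set can be taken to be the locus where the dual line $L$ is general enough for $\mathcal D_Z|_L$ to have balanced splitting type $(a_Z, b_Z)$; on that locus the two computations agree, which proves the lemma.
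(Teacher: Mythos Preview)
Your endpoint is right --- the formula is exactly $h^0\bigl(\PP^1,\mO_{\PP^1}(j-a_Z)\oplus\mO_{\PP^1}(j-b_Z)\bigr)$ --- and the dependence on \cite{FV2} is the same as in the paper. But the step you label the ``dictionary'' is where the actual work lies, and your description of it is not a proof. You assert that $[I_{Z+jP}]_{j+1}$ is identified with $H^0(L,\mD_Z|_L(j))$ via some duality between ``vanishing to order $j$ at $P$'' and ``restricting to the line $L$ dual to $P$'', but nothing in your write-up justifies this; the heuristic about matching a form $g$ to a derivation $\delta=g_x\partial_x+g_y\partial_y+g_z\partial_z$ does not produce an element of $D(Z)$ (there is no reason $\delta(f)\in Rf$), and the passage from order-$j$ vanishing at a point to restriction to a dual line is exactly the nontrivial content you are trying to cite away.

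The paper makes this step completely explicit by a different (geometric) route. It blows up $\PP^2$ at $P$ to get the Hirzebruch surface $q:Y\to\PP^2$, so that $[I_{Z+jP}]_{j+1}\cong H^0\bigl(Y,\mI_Z\otimes\mO_Y((j{+}1)H-jE)\bigr)$ tautologically; then it uses the $\PP^1$-bundle structure $p:Y\to L$ (the pencil of lines through $P$) and the projection formula to rewrite this as $H^0\bigl(L,p_*(\mI_Z\otimes\mO_Y(H))\otimes\mO_L(j)\bigr)$. Only at this point does \cite[Theorem~4.3]{FV2} enter, identifying the rank-two pushforward $p_*(\mI_Z\otimes\mO_Y(H))$ with $\mO_L(-a_Z)\oplus\mO_L(-b_Z)$. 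So the blow-up/pushforward is precisely the missing bridge in your outline; if you tried to fill in your dictionary honestly, you would end up reconstructing it.

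Two smaller points. Your worry about $\operatorname{char}(K)\mid d$ and the Euler relation is a red herring here: the proof above never touches $\Jac(f)$ or the syzygy description of $\mD_Z$ --- it only uses the splitting type $(a_Z,b_Z)$ as a black box via \cite{FV2}, so no case distinction is needed. And your semicontinuity remark is fine but unnecessary once you have the pushforward identification, since equality holds on the open locus where \cite[Theorem~4.3]{FV2} applies (namely where $P$ is general).
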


\begin{proof}
Let $q:Y\to \PP^2$ be the blow up of $\PP^2$ at the point $P$.
Let $H$ be the pullback of a line and $E=q^{-1}(P)$ the exceptional curve
coming from $P$. The proper transforms of the lines through $P$
gives the linear system $|H-E|$, which gives a morphism $p:Y\to L=\PP^1$
with fibers the elements of $|H-E|$, making $Y$ a $\PP^1$-bundle over 
$L$. (In fact, $Y$ is just the Hirzebruch surface $H_1$.)

Let $\mI_Z$ be the sheaf of ideals of $Z$ on $\PP^2$. 
Since $P$ is general (and hence the $t_{Z,y}$ appearing in \cite[Theorem 4.3]{FV2} is 0), 
we have the isomorphism
$p_*(q^*(\mI_Z(1)))\cong \mO_L(-a_Z)\oplus\mO_L(-b_Z)$
from  \cite[Theorem 4.3]{FV2}.
Since $P\not\in Z$, $q$ is an isomorphism on an open set containing $Z$,
so we can regard $Z$ as being on $\PP^2$ or on $Y$,
hence there is a natural identification of $\mI_Z$ with $q^*(\mI_Z)$. Under this identification
we can regard $q^*(\mI_Z(1))$ as being the sheaf $\mI_Z\otimes\mO_Y(H)$.
Thus we have $p_*(\mI_Z\otimes\mO_Y(H))\cong \mO_L(-a_Z)\oplus\mO_L(-b_Z)$.
Now tensor through by $\mO_L(j)$ to get 
\begin{align*}
p_*(\mI_Z\otimes\mO_Y((j+1)H-jE)) & \cong p_*(\mI_Z\otimes\mO_Y(H)\otimes p^*(\mO_L(j))) \cong p_*(\mI_Z\otimes\mO_Y(H))\otimes \mO_L(j)\\
 & \cong \mO_L(j-a_Z)\oplus\mO_L(j-b_Z).
\end{align*}
Since $p_*$ preserves global sections, taking global sections gives
\begin{align*}
[I_{Z+jP}]_{j+1}& \cong\Gamma(\PP^2, \mI_{Z+jP}\otimes\mO_{\PP^2}((j+1)H))\\ 
 & \cong\Gamma(\PP^2, \mI_Z\otimes \mI_{jP}\otimes\mO_{\PP^2}((j+1)H))\\ 
 & \cong\Gamma(Y, \mI_Z\otimes\mO_Y((j+1)H-jE)))\\ 
 & \cong \Gamma(L,p_*(\mI_Z\otimes\mO_Y((j+1)H-jE)))\\
 & \cong\Gamma(L,\mO_L(j-a_Z)\oplus\mO_L(j-b_Z))\\
 & \cong\Gamma(L,\mO_L(j-a_Z))\oplus \Gamma(L,\mO_L(j-b_Z))
\end{align*}
The result now follows by taking dimensions.
\end{proof}

\begin{remark} \label{1 or 2}

\begin{itemize}

\item[(i)]
From Lemma \ref{FVlemma} it follows immediately that $\dim [I_{Z+a_Z P}]_{a_Z+1}$ is either equal to 1 or to 2, and the latter holds if and only if $a_Z = b_Z$. 

\item[(ii)] It also follows immediately from Lemma \ref{FVlemma} and the fact that $a_Z + b_Z + 1 = |Z|$ that if $Z$ is a set of points with splitting type $(a_Z,b_Z)$ and $Q$ is a general point then $Z \cup Q$ has splitting type $(a_Z+1,b_Z)$ (noting that if $a_Z = b_Z$ then this should be written $(a_Z,b_Z+1)$ to preserve the proper inequality). 

\end{itemize}
\end{remark}

We record some immediate consequences of these observations.

\begin{lemma}
          \label{m, t and u}
Let $Z$ be a reduced set of points in $\mathbb P^2$.

\begin{itemize}
\item[(a)] $m_Z = a_Z$. 

\item[(b)]  $m_Z = 0$ if and only if the points of $Z$ lie on a line. 

\item[(c)]  $u_Z = b_Z-1$ (hence $m_Z-1\leq u_Z=|Z|-m_Z-2$).

\item[(d)] $\displaystyle m_Z\leq t_Z \leq \left  \lfloor \frac{|Z|-1}{2} \right \rfloor$.

\item[(e)] If $m_Z<t_Z$, then $t_Z\leq u_Z$.

\end{itemize}
\end{lemma}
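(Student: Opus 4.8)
The plan is to derive all five parts from Lemma \ref{FVlemma}, the relation $a_Z+b_Z=|Z|-1$, and the earlier elementary facts about $t_Z$. Parts (a)--(c) are essentially bookkeeping. For (a), apply Lemma \ref{FVlemma} with the definition of $m_Z$: since $\dim[I_{Z+jP}]_{j+1}=\max\{0,j-a_Z+1\}+\max\{0,j-b_Z+1\}$ and $a_Z\le b_Z$, this quantity is positive exactly when $j\ge a_Z$, so $m_Z=\min\{j:\dim[I_{Z+jP}]_{j+1}>0\}=a_Z$. Part (b) then follows by combining (a) with the stated fact $a_Z+b_Z=|Z|-1$: $m_Z=a_Z=0$ forces $b_Z=|Z|-1$, so $\dim[I_Z]_1=\dim[I_{Z+0\cdot P}]_1=\max\{0,1-a_Z\}+\max\{0,1-b_Z\}$ — more directly, $m_Z=0$ means some line contains $Z$, and conversely if $Z$ is collinear then the defining line lies in $[I_Z]_1=[I_{Z+0P}]_1$, giving $m_Z=0$; one can also just cite Example \ref{rem:tZ}(i) together with (d). For (c), again use Lemma \ref{FVlemma}: the speciality index $u_Z$ is the least $j$ with $\dim[I_{Z+jP}]_{j+1}=\binom{j+3}{2}-\binom{j+1}{2}-|Z|=2j+3-|Z|$; but the right-hand side of Lemma \ref{FVlemma} equals $2j+2-a_Z-b_Z=2j+3-|Z|$ precisely once $j\ge b_Z-1$ (so that both $\max$ terms are active, using $a_Z\le b_Z$), and is strictly smaller otherwise, hence $u_Z=b_Z-1$. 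The parenthetical $u_Z=|Z|-m_Z-2$ is then immediate from (a) and $a_Z+b_Z=|Z|-1$, and $m_Z-1\le u_Z$ follows from $a_Z\le b_Z$.

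For (d), the upper bound $t_Z\le\lfloor(|Z|-1)/2\rfloor$ is exactly Lemma \ref{bound for t}(a), so only $m_Z\le t_Z$ needs proof. By definition $t_Z$ is the least $j$ with $\dim[I_Z]_{j+1}>\binom{j+1}{2}$; for such $j$ we have $\dim[I_{Z+jP}]_{j+1}\ge\dim[I_Z]_{j+1}-\binom{j+1}{2}>0$ by inequality \eqref{always}, so $m_Z\le t_Z$ by the definition of $m_Z$ (using semicontinuity / Remark \ref{rem: u_z}(i)).

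For (e), assume $m_Z<t_Z$; I want $t_Z\le u_Z$. By (a) and (c) it suffices to show $t_Z\le b_Z-1$, equivalently (since $t_Z$ is defined by the first strict inequality) that $\dim[I_Z]_{b_Z}>\binom{b_Z}{2}$, i.e. that $t_Z\le b_Z-1$; equivalently, that the defining inequality for $t_Z$ holds at $j=b_Z-1$. Evaluate the right-hand side of Lemma \ref{FVlemma} at $j=b_Z-1$: it equals $(b_Z-a_Z)+1$, which since $a_Z=m_Z<t_Z\le b_Z-1$ (wait — I must be careful here). The cleaner route: I will argue contrapositively via \eqref{useful2}, $\dim[I_Z]_{j+1}-\binom{j+1}{2}=2j+3-h_Z(j+1)$. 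Suppose $u_Z<t_Z$; combined with (d) this gives $m_Z\le u_Z<t_Z$. Since $j=u_Z$ satisfies $\dim[I_{Z+u_ZP}]_{u_Z+1}=2u_Z+3-|Z|$ while $u_Z<t_Z$ forces $\dim[I_Z]_{u_Z+1}\le\binom{u_Z+1}{2}$, i.e. $h_Z(u_Z+1)\ge u_Z+2$ — I would then run the numerics to show $m_Z=u_Z$, contradicting $m_Z<t_Z\le u_Z$ unless... Actually the direct computation is cleanest: since $m_Z=a_Z<t_Z$, Remark \ref{1 or 2}(i) does not immediately help, so I would instead show $h_Z(t_Z)<|Z|$ is impossible-or-handled and conclude via Proposition \ref{prop:small tZ}; but the shortest argument is simply: by Lemma \ref{FVlemma} at $j=b_Z-1=u_Z$, $\dim[I_{Z+u_ZP}]_{u_Z+1}=b_Z-a_Z+1>0$, and since $\dim[I_{Z+jP}]_{j+1}=0$ for $j<a_Z=m_Z$ whereas it is positive and grows, combined with $m_Z<t_Z$ we get $a_Z\le t_Z-1<t_Z\le$ ?; thus $t_Z\le b_Z-1=u_Z$ follows once $\dim[I_Z]_{t_Z+1}>\binom{t_Z+1}{2}$ is reconciled with $t_Z\le b_Z$, which holds because $\dim[I_Z]_{j+1}-\binom{j+1}{2}\le\dim[I_{Z+jP}]_{j+1}$, and the latter is $0$ for $j<a_Z$ hence $\dim[I_Z]_{j+1}\le\binom{j+1}{2}$ for $j<a_Z=m_Z$, so $t_Z\ge m_Z$ (reproving (d)) — but for $j=b_Z$, Lemma \ref{FVlemma} gives $\dim[I_{Z+b_ZP}]_{b_Z+1}=2b_Z+2-|Z|=b_Z-a_Z+1$ and comparing with \eqref{useful2} one sees $\dim[I_Z]_{b_Z+1}\ge 2b_Z+3-|Z|=b_Z-a_Z+2>\binom{b_Z+1}{2}$ fails in general, so the honest statement is that $t_Z\le b_Z$ always and the hypothesis $m_Z<t_Z$ is what upgrades this to $t_Z\le b_Z-1=u_Z$; I would establish this by noting that if $t_Z=b_Z$ then $h_Z(t_Z)<|Z|$, whence Proposition \ref{prop:small tZ} gives that $Z$ is a complete intersection or has a large collinear subset, and in either case a direct check shows $m_Z=t_Z$, contradicting $m_Z<t_Z$.

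\textbf{Main obstacle.} Parts (a)--(d) are routine translations of Lemma \ref{FVlemma}. The real content is (e): showing that the \emph{strict} inequality $m_Z<t_Z$ forces $t_Z\le u_Z$ rather than merely $t_Z\le u_Z+1$. I expect to handle this by ruling out the boundary case $t_Z=u_Z+1=b_Z$: in that case $h_Z(t_Z)<|Z|$ (since $\dim[I_Z]_{t_Z}-\binom{t_Z}{2}\le 0$ forces, via \eqref{useful2}, $h_Z(t_Z)\ge 2t_Z+1$, while $t_Z=\lfloor(|Z|-1)/2\rfloor$ would be needed for equality with $|Z|$), so Proposition \ref{prop:small tZ} applies, and in each of its two structural conclusions I verify by hand that $\dim[I_{Z+t_ZP}]_{t_Z+1}>0$ — i.e. $m_Z\le t_Z-1$ fails, giving $m_Z=t_Z$ and the desired contradiction.
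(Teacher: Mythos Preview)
Parts (a)--(d) are fine and essentially match the paper's proof. The problem is (e): your argument there is both more complicated than necessary and contains real gaps.

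The paper's proof of (e) is a two-line observation that you in fact brush against and then abandon. Assume $m_Z=a_Z<t_Z$ and take any $j$ with $a_Z\le j<t_Z$. Since $j\ge m_Z$, we have $\dim[I_{Z+jP}]_{j+1}>0$. Since $j<t_Z$, \eqref{useful2} gives $2j+3-h_Z(j+1)\le 0$, so $|Z|\ge h_Z(j+1)\ge 2j+3$ and hence $\binom{j+3}{2}-\binom{j+1}{2}-|Z|\le 0<\dim[I_{Z+jP}]_{j+1}$. Thus $Z+jP$ fails to impose independent conditions in degree $j+1$, so $j<u_Z$. Taking $j=t_Z-1$ gives $t_Z\le u_Z$. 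No case analysis, no Proposition~\ref{prop:small tZ}.

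Your proposed route---rule out the boundary case $t_Z=b_Z$ by showing $h_Z(t_Z)<|Z|$ and invoking Proposition~\ref{prop:small tZ}---has two gaps. First, the justification you give for $h_Z(t_Z)<|Z|$ is backwards: you derive the \emph{lower} bound $h_Z(t_Z)\ge 2t_Z+1$, which does not yield an upper bound below $|Z|$. Second, even granting that, your contradiction step is misstated: ``$\dim[I_{Z+t_ZP}]_{t_Z+1}>0$'' only says $m_Z\le t_Z$, which you already know; what you would need is $\dim[I_{Z+(t_Z-1)P}]_{t_Z}=0$, and verifying that ``by hand'' in the two cases of Proposition~\ref{prop:small tZ} is essentially reproving a chunk of Theorem~\ref{thm:tZ = dZ}, which comes later in the paper.

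If you insist on the boundary-case strategy, there is a much simpler way to execute it that avoids Proposition~\ref{prop:small tZ} entirely. The hypothesis $m_Z<t_Z$ together with (d) forces $a_Z<\lfloor(|Z|-1)/2\rfloor+1$, and since $a_Z=b_Z$ would give $a_Z=(|Z|-1)/2=\lfloor(|Z|-1)/2\rfloor\ge t_Z$ (contradicting $m_Z<t_Z$), we must have $a_Z<b_Z$. Then $2b_Z>a_Z+b_Z=|Z|-1$ gives $b_Z-1\ge\lfloor(|Z|-1)/2\rfloor\ge t_Z$, i.e.\ $u_Z\ge t_Z$. Either this or the paper's argument replaces your final two paragraphs.
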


\begin{proof}
Part (a) follows immediately from Lemma \ref{FVlemma}, 
while (b) follows from the definition of $m_Z$. 
For (c), note that for any $j$,
\begin{equation} \label{useful}
\binom{j+3}{2} - \binom{j+1}{2} - |Z| = 2j+3-|Z| = (j-a_Z+1) + (j-b_Z+1),
\end{equation}
the latter since $a_Z + b_Z = |Z|-1$. Because $a_Z \leq b_Z$, the result follows from the definition of $u_Z$, Lemma \ref{FVlemma} and $a_Z+b_Z=|Z|-1$.
Part (d) comes from Lemma \ref{bound for t} and the definitions.

For (e), assume that $m_Z = a_Z < t_Z$. It is enough to prove that $a_Z \leq j < t_Z$ implies $j < u_Z$. But $a_Z \leq j < t_Z$ implies that $Z + jP$ does not impose independent conditions on forms of degree $j+1$, so $j < u_Z$ and we are done.
\end{proof}

\begin{lemma}\label{a_Z & t_Z}
Let $Z$ be a reduced 0-dimensional subscheme of $\PP^2$  and let $h_Z$ be its Hilbert function.
If $h_Z(t_Z)=|Z|$ and $m_Z<u_Z$, then $m_Z<t_Z$.
\end{lemma}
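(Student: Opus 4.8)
### Proof plan for Lemma \ref{a_Z & t_Z}

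The plan is to argue by contradiction, assuming $m_Z \ge t_Z$ while $h_Z(t_Z) = |Z|$ and $m_Z < u_Z$, and to derive a contradiction by combining the numerical constraints coming from $a_Z + b_Z = |Z| - 1$ with Lemma \ref{m, t and u}. First I would record the three relevant identities: $m_Z = a_Z$ (part (a)), $u_Z = b_Z - 1 = |Z| - m_Z - 2$ (part (c)), and $m_Z \le t_Z \le \lfloor (|Z|-1)/2 \rfloor$ (part (d)). Combining the contradiction hypothesis $m_Z \ge t_Z$ with part (d) gives $m_Z = t_Z$, so the whole argument reduces to showing that $m_Z = t_Z$ together with $h_Z(t_Z) = |Z|$ forces $m_Z \ge u_Z$, contradicting the hypothesis $m_Z < u_Z$.

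Next I would exploit the defining equation of $t_Z$ at $j = t_Z$: by Definition \ref{def: tZ} and \eqref{useful2}, $\dim[I_Z]_{t_Z+1} - \binom{t_Z+1}{2} = 2t_Z + 3 - h_Z(t_Z+1) > 0$, and since $h_Z$ has stabilized by degree $t_Z$ (as $h_Z(t_Z) = |Z|$), we have $h_Z(t_Z + 1) = |Z|$, hence $2t_Z + 3 - |Z| > 0$, i.e. $|Z| \le 2t_Z + 2$. On the other hand, Lemma \ref{bound for t}(a) gives $t_Z \le \lfloor (|Z|-1)/2 \rfloor$, so $|Z| \ge 2t_Z + 1$. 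Thus $|Z| \in \{2t_Z + 1, 2t_Z + 2\}$. Feeding $m_Z = t_Z$ into part (c), $u_Z = |Z| - m_Z - 2 = |Z| - t_Z - 2$; in the case $|Z| = 2t_Z + 1$ this gives $u_Z = t_Z - 1 < t_Z = m_Z$, and in the case $|Z| = 2t_Z + 2$ it gives $u_Z = t_Z = m_Z$. Either way $u_Z \le m_Z$, contradicting $m_Z < u_Z$. Therefore $m_Z < t_Z$.

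I expect the main (though modest) obstacle to be keeping the floor-function case analysis honest — in particular making sure that $h_Z(t_Z+1) = |Z|$ really does follow from $h_Z(t_Z) = |Z|$ (this is the standard fact, used repeatedly in the excerpt, that $h_Z$ is strictly increasing until it reaches $|Z|$ and constant thereafter), and that the two parity cases for $|Z|$ are both handled. An alternative, perhaps cleaner route avoiding the explicit case split: directly from $m_Z = t_Z$ and $h_Z(t_Z) = |Z|$, note that $Z + m_Z P$ satisfies $\dim[I_{Z+m_Z P}]_{m_Z+1} \ge 1 > 0$ by definition of $m_Z$, while the expected dimension $\binom{m_Z+3}{2} - \binom{m_Z+1}{2} - |Z| = 2m_Z + 3 - |Z| \ge 2m_Z + 3 - (2m_Z+2) = 1$ using $|Z| \le 2t_Z + 2 = 2m_Z + 2$; comparing these and using Remark \ref{1 or 2}(i) (which says the left side is $1$ or $2$) pins down the relation between $\dim[I_{Z+m_ZP}]_{m_Z+1}$ and the expected value, and hence whether $m_Z \ge u_Z$. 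I would present the first (counting) argument as the main line since it is the most transparent, and I would state explicitly the fact about $h_Z$ stabilizing so the reader need not reconstruct it.
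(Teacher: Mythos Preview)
Your proposal is correct and takes essentially the same approach as the paper. Both argue by contradiction, assuming $m_Z = t_Z$ (forced by Lemma \ref{m, t and u}(d)), and derive incompatible numerical constraints: the paper phrases it as $0 < 2a_Z + 3 - |Z| < 1$ (an impossible integer), while you equivalently show $|Z| \le 2t_Z + 2$ and hence $u_Z = |Z| - t_Z - 2 \le t_Z = m_Z$. Your packaging via Lemma \ref{m, t and u}(c) avoids re-invoking Lemma \ref{FVlemma} directly, and the case split on $|Z| \in \{2t_Z+1, 2t_Z+2\}$ is harmless but unnecessary (the inequality $u_Z \le m_Z$ follows immediately from $|Z| \le 2t_Z+2$); your sketched ``alternative route'' is in fact closer to the paper's own phrasing.
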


\begin{proof}
From Lemma \ref{m, t and u} we have $a_Z\leq t_Z$ and $u_Z = b_Z-1$.   Since $a_Z <  b_Z-1$, 
applying Lemma \ref{FVlemma} and (\ref{useful}) with $j=a_Z$ we get 
\[
1=\dim [I_{Z+a_ZP}]_{a_Z+1} > \binom{a_Z+3}{2}-|Z|-\binom{a_Z+1}{2}.
\]
Now suppose that $a_Z = t_Z$. 
Since $h_Z(t_Z)=|Z|$, the points of $Z$ impose independent conditions 
on curves of degree $t_Z = a_Z$ and hence also on curves of degree $a_Z+1$,
$\binom{a_Z+3}{2}-|Z|-\binom{a_Z+1}{2}=\dim [I_Z]_{a_Z+1} - \binom{a_Z+1}{2}>0$ (by definition of $t_Z$). Combining with the previous inequality, we obtain  
\[
1 >\binom{a_Z+3}{2}-|Z|-\binom{a_Z+1}{2} > 0,
\]
which is impossible since the middle expression  is an integer. Thus $m_Z = a_Z<t_Z$.
\end{proof}

The definition of unexpected curves implies already that if $Z$ admits an unexpected curve of degree $j+1$ then $Z+jP$ fails to impose independent conditions on plane curves of degree $j+1$. We will see that the converse is false. The following result is critical for our main theorems, but the proof is rather involved,
so we put off addressing it until the next section where we prove a stronger result
of which Theorem \ref{t_Z & unexp} is an immediate consequence.

\begin{theorem}\label{t_Z & unexp}
Let $Z$ be a reduced 0-dimensional subscheme of $\PP^2$.
If $h_Z(t_Z)<|Z|$, then $Z$ admits no unexpected curves.
\end{theorem}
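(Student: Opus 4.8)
The plan is to exploit the structure theorem for the Hilbert function of $Z$ that Proposition~\ref{prop:small tZ} provides under the hypothesis $h_Z(t_Z)<|Z|$: either (b)(i) $Z$ is a transverse complete intersection of a conic and a curve of degree $t_Z+1$ (with $t_Z = \frac{|Z|-2}{2}$), or (b)(ii) a line contains exactly $|Z|-t_Z \ge t_Z+2$ of the points of $Z$ (with $t_Z \le \frac{|Z|-2}{2}$). In either case I want to show that for a general point $P$ and for every relevant degree $j+1$, the inequality \eqref{eq:unexpected} fails; equivalently, that $jP$ imposes the expected number of conditions on $[I_Z]_{j+1}$. The natural tool is Lemma~\ref{FVlemma}: it reduces the whole question to computing the splitting type $(a_Z,b_Z)$, since $\dim[I_{Z+jP}]_{j+1} = \max\{0,j-a_Z+1\}+\max\{0,j-b_Z+1\}$, and by \eqref{useful} the right-hand side of \eqref{eq:unexpected} is $\max\{0,(j-a_Z+1)+(j-b_Z+1)\}$. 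Thus $Z$ admits an unexpected curve of degree $j+1$ exactly when $\max\{0,j-a_Z+1\}+\max\{0,j-b_Z+1\} > \max\{0,(j-a_Z+1)+(j-b_Z+1)\}$, which (since $a_Z \le b_Z$) happens if and only if $a_Z \le j < b_Z - 1$, i.e. $a_Z \le j \le b_Z-2$. So by Lemma~\ref{m, t and u}, $Z$ admits \emph{some} unexpected curve if and only if $m_Z = a_Z < b_Z - 1 = u_Z+1$, i.e. $m_Z \le u_Z$, equivalently (Lemma~\ref{m, t and u}(c)) $2m_Z + 2 < |Z|$. Hence the task is to show that $h_Z(t_Z) < |Z|$ forces $m_Z \ge u_Z$, i.e. $a_Z \ge b_Z - 1$, i.e. $|Z| - 1 = a_Z + b_Z \le 2a_Z + 1$, i.e. $2a_Z \ge |Z| - 2$, i.e. $a_Z = m_Z \ge \lfloor \frac{|Z|-1}{2}\rfloor$ when $|Z|$ is even and $a_Z \ge \frac{|Z|-1}{2}$ is impossible when odd so $a_Z = b_Z$ there—in short I must pin down $m_Z$ in each of the two structural cases.

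First I would handle case (b)(ii): a line $L$ (dual to a point, say $Q$) contains a subset $Y \subseteq Z$ with $|Y| = |Z| - t_Z$ collinear points and the complement $U = Z \setminus Y$ has $|U| = t_Z$ points in "general enough" position (reduced, imposing independent conditions, as used in the proof of Proposition~\ref{prop:small tZ}). The goal is to compute $m_{Z} = m_Z(P)$ for general $P$ directly from the definition: $m_Z$ is the least $j$ with $\dim[I_{Z+jP}]_{j+1} > 0$. Since $Y$ lies on the line $L$, any curve of degree $j+1$ through $Z$ and singular to order $j$ at $P$ that doesn't contain $L$ meets $L$ in at least $|Y| + j$ points by Bézout (counting the point $P$ with multiplicity $j$ if $P \in L$, but $P$ is general so $P \notin L$; then we just need $|Y|$ points, but actually the curve meets $L$ in the $|Y|$ points of $Y$ plus—no extra forced intersection). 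Let me instead argue via the factorization: if $|Y| > j+1$, i.e. $j < |Y| - 1 = |Z| - t_Z - 1$, then by Bézout the curve must contain $L$, and peeling off $L$ reduces to a curve of degree $j$ through $U$ with a point of multiplicity $j-1$ at $P$... wait, that's not quite right either since $P \notin L$ so singularity at $P$ is unaffected; peeling off $L$ gives degree $j$, still multiplicity $j$ at $P$—impossible unless the residual is zero. Hence for such small $j$ there is no curve at all, so $m_Z \ge |Z| - t_Z - 1$. Conversely at $j = |Z| - t_Z - 1$ one builds a curve explicitly (a union of lines through $P$ and points of $U$, together with $L$, or similar), so $m_Z = |Z| - t_Z - 1$. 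Then $u_Z = |Z| - m_Z - 2 = t_Z - 1$, so $m_Z = |Z| - t_Z - 1 \ge t_Z + 1 > t_Z - 1 = u_Z$ (using $|Z| - t_Z \ge t_Z + 2$). Thus $m_Z > u_Z$ and no unexpected curves exist—in fact this reproves Lemma~\ref{m, t and u}(e) in the contrapositive direction and shows $m_Z > t_Z$ here too.

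For case (b)(i), $Z$ is the transverse intersection of a conic $q = 0$ and a degree $t_Z+1$ curve; then $|Z| = 2t_Z + 2$ and I must show $a_Z = b_Z$, i.e. $m_Z = u_Z + 1 = t_Z$ fails strictly the unexpected inequality at every $j$ (since $a_Z = b_Z$ means the set of $j$ with $a_Z \le j \le b_Z - 2$ is empty). Equivalently $a_Z = b_Z = t_Z$, i.e. $m_Z = t_Z$. Lemma~\ref{m, t and u}(d) already gives $m_Z \le t_Z$, so I just need $m_Z \ge t_Z$, i.e. $\dim[I_{Z+jP}]_{j+1} = 0$ for $j < t_Z$ and general $P$. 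Again Bézout: a curve $C$ of degree $j+1 \le t_Z$ through $Z$; since $Z$ has $2t_Z+2 > 2(j+1)$ points and lies on an irreducible conic, $C$ shares the conic $q$ as a component (as in Case 2 of the proof of Proposition~\ref{prop:small tZ}); peeling off $q$ leaves a curve of degree $j - 1$ through the residual scheme, still with multiplicity $j$ at $P$ unless $P$ is on the conic—but $P$ general is not—so impossible. Hence no such curve, $m_Z = t_Z$, and $Z$ admits no unexpected curve.

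\textbf{Main obstacle.} The delicate point is the Bézout-with-multiplicity bookkeeping: showing that for small $j$ any degree-$(j+1)$ curve through $Z$ and $j$-fold at the general $P$ is \emph{forced} to contain the line $L$ (case (b)(ii)) or the conic $q$ (case (b)(i)), and that after removing that component the residual curve still carries a multiplicity at $P$ that the remaining degree cannot support. One must be careful that $P$ is general (so $P$ avoids $L$, the conic, and all lines spanned by pairs of points of $Z$), that the intersection of $C$ with $L$ (resp. $q$) counts the $|Y|$ (resp. $|Z| - t_Z$ wait, all of $Z$) points of $Z$ on it without hidden multiplicity—using reducedness of $Z$ and transversality—and that the count $|Y| + (\text{contribution of } P)$ strictly exceeds $\deg L \cdot \deg C$. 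The cleanest route is probably to invoke directly what the proof of Proposition~\ref{prop:small tZ} already establishes about the minimal generators and the common factor of $[I_Z]_j$, rather than re-deriving the Bézout estimates; alternatively one may simply cite Lemma~\ref{m, t and u}(e), whose contrapositive says $m_Z \ge u_Z$ or $m_Z \ge t_Z$, and then rule out $m_Z = u_Z$ using the integrality trick of Lemma~\ref{a_Z & t_Z}. I expect the published proof takes this shorter route via $t_Z$, $m_Z$, $u_Z$ inequalities and parity, deferring the hard geometric input to Proposition~\ref{prop:small tZ}, which has already done the work.
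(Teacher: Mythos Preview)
Your reduction in the first paragraph is the source of the trouble. You write that by \eqref{useful} the right-hand side of \eqref{eq:unexpected} is $\max\{0,(j-a_Z+1)+(j-b_Z+1)\}$. But \eqref{useful} computes $\binom{j+3}{2}-\binom{j+1}{2}-|Z|$, whereas the right-hand side of \eqref{eq:unexpected} involves $h^0(\mI_Z(j+1))-\binom{j+1}{2}=\binom{j+3}{2}-\binom{j+1}{2}-h_Z(j+1)$. These agree only when $h_Z(j+1)=|Z|$, which is \emph{exactly} what fails in a range of degrees under the hypothesis $h_Z(t_Z)<|Z|$. So your criterion ``$Z$ admits an unexpected curve iff $a_Z\le b_Z-2$'' is not valid here, and the whole strategy of proving $m_Z\ge u_Z$ cannot work: Example~\ref{ctrex to DIV} has $h_Z(t_Z)<|Z|$, $m_Z=3$, $u_Z=12$, and no unexpected curve.

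This error propagates into your case (b)(ii). Your B\'ezout argument is also miscomputed there: after peeling off the line $L$ you get a degree-$j$ curve with multiplicity $j$ at $P$, which is \emph{not} impossible---it is a union of $j$ lines through $P$. The actual constraint is that those $j$ lines must cover the $t_Z$ points of $U=Z\setminus Y$, so the correct conclusion is $m_Z=t_Z$, not $m_Z=|Z|-t_Z-1$. With $m_Z=t_Z$ and $u_Z=|Z|-t_Z-2\ge t_Z$, you have $m_Z\le u_Z$, so your criterion would wrongly predict an unexpected curve. What the paper does instead (Theorem~\ref{thm:tZ = dZ}) is to compute the Hilbert function explicitly in this case---$h_Z(j+1)=\min\{t_Z+j+2,|Z|\}$---and verify directly that $\dim[I_{Z+jP}]_{j+1}=\dim[I_Z]_{j+1}-\binom{j+1}{2}$ for all $j\ge m_Z$, using Lemma~\ref{FVlemma} for the left side and the explicit $h_Z$ for the right. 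Your case (b)(i) analysis is essentially correct (there $b_Z=a_Z+1$, not $a_Z=b_Z$, but the conclusion survives), because in that case $h_Z(j+1)=|Z|$ for all $j\ge t_Z$ and your reduction happens to be valid.
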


\begin{proof}
See Theorem \ref{thm:tZ = dZ}.
\end{proof}

\begin{remark}
Definition \ref{def:unexpected curve} leaves open the possibility that the points of $Z$ themselves do not impose independent conditions on curves of some degree $j+1$, and moreover the addition of the general fat point $jP$ fails to impose the expected number of conditions on the linear system defined by $[I_Z]_{j+1}$ (i.e. there is still an unexpected curve). Theorem \ref{t_Z & unexp} gives the surprising result that this is impossible.
\end{remark}

The following result restates Theorem \ref{mainThm1}.

\begin{theorem}
     \label{u_ZTheorem} 
Let $Z$ be a reduced 0-dimensional subscheme of $\PP^2$.
Then $Z$ admits an unexpected curve if and only if $m_Z < t_Z$. 
Furthermore, in this case $Z$ has an unexpected curve of degree $j+1$ if and only if $m_Z \leq  j < u_Z $. 
\end{theorem}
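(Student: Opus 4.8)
The plan is to reduce everything to the splitting pair $(a_Z,b_Z)$ and the Hilbert function of $Z$, and then to run a short case analysis on the degree. Writing $m_Z=a_Z$, $u_Z=b_Z-1$ (Lemma~\ref{m, t and u}), and using $a_Z+b_Z=|Z|-1$ together with the identity $\dim [I_Z]_{j+1}-\binom{j+1}{2}=2j+3-h_Z(j+1)$ from \eqref{useful2}, the statement ``$Z$ admits an unexpected curve of degree $j+1$'' becomes
\[
\max\{0,\,j-a_Z+1\}+\max\{0,\,j-b_Z+1\}\ >\ \max\{\,2j+3-h_Z(j+1),\,0\,\},
\]
the left-hand side being supplied by Lemma~\ref{FVlemma}. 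By \eqref{always} the left side always dominates the right, so ``unexpected'' means exactly that this inequality is strict.

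I would first dispatch the easy implication. If $m_Z=a_Z<t_Z$, then $a_Z<b_Z$ (otherwise $t_Z\le\lfloor(a_Z+b_Z)/2\rfloor=a_Z$), so Lemma~\ref{FVlemma} gives $\dim [I_{Z+a_ZP}]_{a_Z+1}=1$, while the definition of $t_Z$ forces $\dim [I_Z]_{a_Z+1}\le\binom{a_Z+1}{2}$, i.e. $\max\{2a_Z+3-h_Z(a_Z+1),0\}=0$. Hence the displayed inequality is strict at $j=a_Z$, and $Z$ has an unexpected curve of degree $m_Z+1$; this proves the ``if'' direction and the $j=m_Z$ case of the second assertion. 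For the converse, suppose $Z$ admits an unexpected curve. Theorem~\ref{t_Z & unexp} (contrapositive) gives $h_Z(t_Z)=|Z|$, hence $h_Z(k)=|Z|$ for all $k\ge t_Z$. Since $m_Z\le t_Z$ always (Lemma~\ref{m, t and u}(d)), the only alternative to $m_Z<t_Z$ is $m_Z=t_Z$; in that case Lemma~\ref{a_Z & t_Z} (contrapositive) yields $m_Z\ge u_Z$, i.e. $b_Z\le a_Z+1$. But then the displayed inequality is never strict: for $j<a_Z$ its left side is $0$, and for $j\ge a_Z=t_Z$ one has $h_Z(j+1)=|Z|$ and $j-b_Z+1\ge j-a_Z\ge 0$, so both sides equal $2j+3-|Z|$, which is $\ge 2a_Z+3-|Z|\ge 1$ since $|Z|=a_Z+b_Z+1\le 2a_Z+2$. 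Thus $Z$ would have no unexpected curve, a contradiction, so $m_Z<t_Z$.

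It remains to identify the degrees, so assume $m_Z<t_Z$; then $h_Z(t_Z)=|Z|$ as above and $t_Z\le u_Z$ by Lemma~\ref{m, t and u}(e), hence $a_Z\le b_Z-2$. I would split on $j$: (a)~if $j<a_Z=m_Z$, the left side of the displayed inequality is $0$, so no unexpected curve; (b)~if $a_Z\le j\le b_Z-2$, the left side equals $j-a_Z+1\ge 1$, while if $j<t_Z$ the right side is $0$ by the definition of $t_Z$ and if $j\ge t_Z$ the right side is $\max\{(j-a_Z+1)+(j-b_Z+1),0\}$, which is strictly below $j-a_Z+1$ because $j-b_Z+1\le -1$; either way the inequality is strict, so $Z$ has an unexpected curve of degree $j+1$; (c)~if $j\ge b_Z-1$, then $j+1\ge b_Z>t_Z$, so $h_Z(j+1)=|Z|$, and using $j-b_Z+1\ge 0$ one checks that both sides equal $2j+3-|Z|\ (\ge b_Z-a_Z\ge 2)$, so no unexpected curve. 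Since $u_Z=b_Z-1$, case (b) is exactly the range $m_Z\le j<u_Z$, as claimed.

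The only substantive inputs here are Lemma~\ref{FVlemma} (the Faenzi--Vall\`es translation) and Theorem~\ref{t_Z & unexp}, whose proof is deferred to Section~\ref{sec:proof of 3.7}; the latter is precisely what permits the reduction to $h_Z(t_Z)=|Z|$ and hence the control of $h_Z(j+1)$ in all relevant degrees. Everything after that is bookkeeping with the two maxima, the one point demanding care being the top endpoint $j=u_Z=b_Z-1$: there $\dim [I_{Z+jP}]_{j+1}=b_Z-a_Z$ may be large, and one must invoke $h_Z(b_Z)=|Z|$ to see that the \emph{expected} value $2j+3-|Z|$ equals $b_Z-a_Z$ as well, so that no unexpected curve appears at that degree.
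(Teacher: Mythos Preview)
Your proof is correct and follows essentially the same approach as the paper's: both reduce the problem to the splitting type via Lemma~\ref{FVlemma}, invoke Theorem~\ref{t_Z & unexp} to secure $h_Z(t_Z)=|Z|$, use Lemma~\ref{a_Z & t_Z}, and finish with a case analysis on $j$. The only organizational difference is in the forward implication: the paper first shows $m_Z<u_Z$ directly (by looking at the specific degree where the unexpected curve lives) and then applies Lemma~\ref{a_Z & t_Z}, whereas you assume $m_Z=t_Z$, apply the contrapositive of Lemma~\ref{a_Z & t_Z} to get $b_Z\le a_Z+1$, and then run the degree-by-degree check to rule out any unexpected curve; these are logically equivalent rearrangements of the same ingredients.
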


\begin{proof}
Assume $Z$ admits an unexpected curve. Then for a general point $P$ there is an $m\geq a_Z = m_Z$ such that
\[
\dim [I_{Z+mP}]_{m+1} > \max \left \{0, \ \dim [I_Z]_{m+1} - \binom{m+1}{2} \right \}.
\]
By Theorem \ref{t_Z & unexp} we have $h_Z(t_Z)=|Z|$. 
We now claim that $m_Z < u_Z$. Indeed, if it were true that $u_Z \leq m_Z \leq m$ then 
{\small \[
\begin{array}{rclll}
\displaystyle \max \left \{0, \ \dim [I_Z]_{m+1} - \binom{m+1}{2} \right \} & \geq & \displaystyle  \binom{m+3}{2}-\binom{m+1}{2}-|Z| & & \hbox{(by (\ref{always}))} \\ 
& = & \displaystyle  \dim [I_{Z+mP}]_{m+1} && \displaystyle  \hbox{(since $m \geq u_Z$)} \\ 
& > & \displaystyle  \max \left \{0, \ \dim [I_Z]_{m+1} - \binom{m+1}{2} \right \} && \hbox{(by choice of $m$).}
\end{array}
\]}
Lemma \ref{a_Z & t_Z} now implies $m_Z<t_Z$.

Conversely, if $m_Z<t_Z$, then 
\[
\dim [I_{Z+m_ZP}]_{m_Z+1} > 0 = \max \left \{0, \ \dim [I_Z]_{m_Z+1} - \binom{m_Z+1}{2} \right \},
\]
and so $Z$ admits an unexpected curve of degree $m_Z+1$.

For the rest, assume $m_Z<t_Z$ and $h_Z(t_Z)=|Z|$. Then $t_Z\leq u_Z$ by Lemma \ref{m, t and u} (e). If $m_Z\leq j<t_Z$, we have $\dim [I_{Z+jP}]_{j+1} > 0 = \max \left \{0, \ \dim [I_Z]_{j+1} - \binom{j+1}{2} \right \}$, and 
so there are unexpected curves for each such $j$.

Now assume that $t_Z\leq j<u_Z = b_Z-1$ (Lemma \ref{m, t and u} (c)). Since $h_Z(t_Z) = |Z|$, we know that $Z$ imposes independent conditions on curves of degree $j$. Then 
using Lemma~\ref{FVlemma},
we have 
\begin{align*}
\dim [I_{Z+jP}]_{j+1} &=j-m_Z+1 \\
& > (j+1-a_Z)+(j-b_Z +1) \\
&= \binom{j+3}{2}-|Z|-\binom{j+1}{2}\\
&=\dim [I_Z]_{j+1} - \binom{j+1}{2}\\
&=\max \left \{0, \ \dim [I_Z]_{j+1} - \binom{j+1}{2} \right \},
\end{align*}
hence there are unexpected curves for each such $j$. 

Finally, if $j\geq u_Z$, we have 
\[
\dim [I_{Z+jP}]_{j+1}=\binom{j+3}{2}-|Z|-\binom{j+1}{2}=\max \left \{0, \ \dim [I_Z]_{j+1} - \binom{j+1}{2} \right \},
\]
so there are no unexpected curves of any such degree $j+1$.
\end{proof}

Later we show that unexpected curves of degree greater than $m_Z +1$ are always reducible (see Corollary \ref{cor:unexpected curves only in initial deg}). 

An alternative characterization of the occurrence of unexpected curves is given by the 
following theorem (which is equivalent to Theorem \ref{thm:intro}).

\begin{theorem}
   \label{thm:introequiv}
Let $Z \subset \PP^2$ be a finite set of points whose dual is a line arrangement with splitting type $(a_Z, b_Z)$.  
Let $P$ be a general point. Then the subscheme $X = m P$ fails to impose the expected number of 
conditions on  $V = [I_Z]_{m+1}$ if and only if 
\begin{itemize}
\item[(i)]  $a_Z \le m < b_Z-1$; \quad and 

\item[(ii)] $h_Z(t_Z) = |Z|$. 
\end{itemize} 
\end{theorem}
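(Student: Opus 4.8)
The plan is to deduce this directly from the already-established Theorem~\ref{u_ZTheorem} (i.e.\ Theorem~\ref{mainThm1}) together with the dictionary $m_Z = a_Z$ and $u_Z = b_Z - 1$ from Lemma~\ref{m, t and u}(a),(c), noting that ``$X = mP$ fails to impose the expected number of conditions on $[I_Z]_{m+1}$'' is literally the statement that $Z$ admits an unexpected curve of degree $m+1$ in the sense of Definition~\ref{def:unexpected curve}. So the content to be unpacked is entirely the translation of condition ``$m_Z < t_Z$ and $m_Z \le m < u_Z$'' into ``$a_Z \le m < b_Z - 1$ and $h_Z(t_Z) = |Z|$.''

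First I would handle the forward direction. Suppose $mP$ fails to impose the expected number of conditions on $[I_Z]_{m+1}$; then $Z$ has an unexpected curve of degree $m+1$, so in particular $Z$ admits an unexpected curve. By Theorem~\ref{t_Z & unexp} (contrapositive), $h_Z(t_Z) < |Z|$ would force no unexpected curves, hence $h_Z(t_Z) = |Z|$, giving (ii). By Theorem~\ref{u_ZTheorem}, the existence of an unexpected curve of degree $m+1$ means precisely $m_Z < t_Z$ and $m_Z \le m < u_Z$; substituting $m_Z = a_Z$ and $u_Z = b_Z - 1$ yields $a_Z \le m < b_Z - 1$, which is (i). (The condition $m_Z < t_Z$ is not lost: it is implied by (ii) together with (i) via Lemma~\ref{a_Z & t_Z}, since $a_Z \le m < b_Z - 1$ forces $a_Z < b_Z - 1$, i.e.\ $m_Z < u_Z$, and then $h_Z(t_Z) = |Z|$ gives $m_Z < t_Z$ — this is the key point reconciling the two-condition form with the three-ingredient form.)

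Conversely, assume (i) and (ii). From (i), $a_Z \le m$ and $a_Z < b_Z - 1$, i.e.\ $m_Z \le m$ and $m_Z < u_Z$. Combined with (ii), $h_Z(t_Z) = |Z|$, Lemma~\ref{a_Z & t_Z} gives $m_Z < t_Z$. Then Theorem~\ref{u_ZTheorem} applies: $Z$ admits an unexpected curve, and since $m_Z \le m < u_Z$, it has an unexpected curve of degree $m+1$, which is exactly the assertion that $X = mP$ fails to impose the expected number of conditions on $V = [I_Z]_{m+1}$.

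The only real subtlety — and the one place where some care is needed rather than pure bookkeeping — is the equivalence between the hypothesis ``$m_Z < t_Z$'' of Theorem~\ref{u_ZTheorem} and the pair of conditions stated here; this rests on Lemma~\ref{a_Z & t_Z} (to get $m_Z < t_Z$ from $h_Z(t_Z) = |Z|$ and $m_Z < u_Z$) and on Theorem~\ref{t_Z & unexp} (to get $h_Z(t_Z) = |Z|$ back out when an unexpected curve exists). Everything else is a substitution via $m_Z = a_Z$, $u_Z = b_Z - 1$. I do not anticipate a genuine obstacle here, only the need to state the two directions cleanly so that the asymmetry between the ``$<$'' in $m < b_Z - 1$ and the degree $m+1$ is transparent.
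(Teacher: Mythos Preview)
Your proposal is correct and takes essentially the same approach as the paper: both directions are derived from Theorem~\ref{u_ZTheorem} via the identifications $m_Z=a_Z$, $u_Z=b_Z-1$, together with Theorem~\ref{t_Z & unexp} (to obtain $h_Z(t_Z)=|Z|$ in the forward direction) and Lemma~\ref{a_Z & t_Z} (to recover $m_Z<t_Z$ in the converse).
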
 

\begin{proof}
Assume $X$ fails to impose the expected number of conditions on $V = [I_Z]_{m+1}$;
i.e., $Z$ has an unexpected curve of degree $m+1$. Then $a_Z \le m < u_Z$ by Theorem \ref{u_ZTheorem}
and $h_Z(t_Z) = |Z|$ by Theorem \ref{t_Z & unexp}.

Conversely, if $a_Z<u_Z$ and $h_Z(t_Z)=|Z|$, then $a_Z<t_Z$ by Lemma \ref{a_Z & t_Z}, and hence
by Theorem~\ref{u_ZTheorem} there are unexpected curves in degrees $m+1$ for each
$m$ in the range $a_Z \le m < u_Z$.
\end{proof}

\begin{remark}
By Proposition \ref{prop:small tZ}, condition (ii) of the previous theorem imposes a very weak restriction.
\end{remark}

%%%%%%%%%%%%%%%%%%%%%%%%%%%%%%%%%%%%%%%%%%%%%%%%%%%%%%

\section{The proof of Theorem \ref{t_Z & unexp}}
\label{sec:proof of 3.7}

\begin{lemma}
  \label{lem:rewrite rhs} For each integer $j \ge 0$ we have
\[
h^1(\mathcal I_{Z+j P} (j +1)) = h^0(\mathcal I_{Z+j P} (j+1)) + |Z| - (2j +3).
\]
\end{lemma}

\begin{proof}
This follows from the exact sequence
\[
\begin{array}{ll}
0 \rightarrow H^0(\mathcal I_{Z+j P} (j+1)) \rightarrow H^0(\mathcal O_{\mathbb P^2} (j +1)) \rightarrow H^0(\mathcal O_{Z + j P}(j+1)) \\ \\
\hspace{4in} \rightarrow H^1 (\mathcal I_{Z + j P}(j+1)) \rightarrow 0.
 \end{array}
\]
\end{proof}

\begin{lemma} \label{lem:initial degree}
Let $Z  \subset \PP^2$ be a reduced scheme  consisting of a finite set of points. Then, for each general point $P \in \PP^2$,    
\[
\dim [ I_{Z+ jP} ]_j  > 0 \text{ if and only if } j \ge |Z|. 
\]  
In this case we have, furthermore, 
\[
\dim [I_{Z+jP}]_j 
 =  \dim [I_Z]_j  - \binom{j+1}{2} = j+1-|Z| 
\]
and
$h^1(\mathcal I_Z(j-1)) = h^1 (\mathcal I_{Z+jP}(j) )
 = 0$ for $j\geq |Z|$. 
\end{lemma}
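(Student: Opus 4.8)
The plan is to analyze the degree-$j$ piece of $I_{Z+jP}$ using the blow-up picture from the proof of Lemma \ref{FVlemma}, or more elementarily by a direct argument on the linear system. First I would establish the "only if" direction and the lower bound. A curve of degree $j$ vanishing to order $j$ at $P$ is, by Bezout applied to a general line through $P$ (which meets the curve with multiplicity $\ge j$ at $P$ and has degree $j$ total intersection), forced to be a union of $j$ lines through $P$. For such a union of $j$ lines through $P$ to contain all $d = |Z|$ points of $Z$, and since $P$ is general (so no two points of $Z$ are collinear with $P$), each line can absorb at most one point of $Z$; hence we need $j \ge d$. Conversely, when $j \ge d$ we can take $d$ of the $j$ lines to pass through the $d$ points of $Z$ and the remaining $j - d$ lines to be arbitrary lines through $P$, giving a $(j-d+1)$-dimensional family (choosing the extra lines), so $\dim[I_{Z+jP}]_j \ge j+1-|Z| > 0$.

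Next I would pin down the exact dimension. The computation above shows every element of $[I_{Z+jP}]_j$ is a product of $\ell_P^{\,j-d}$ (a power of a fixed... no: rather a product of $j$ linear forms through $P$, exactly $d$ of which are the fixed lines $\overline{PP_1},\dots,\overline{PP_d}$ and $j-d$ of which are free). Factoring out the fixed product $g = \prod_{i=1}^d \overline{PP_i}$ of degree $d$, we get $[I_{Z+jP}]_j = g \cdot [I_{(j-d)P}]_{j-d}$, and $[I_{(j-d)P}]_{j-d}$ is spanned by products of $j-d$ linear forms vanishing at $P$, which has dimension $j-d+1$. Alternatively, and perhaps cleaner for the write-up, I would just invoke Lemma \ref{FVlemma} at degree $t = j$... but note Lemma \ref{FVlemma} is stated for $[I_{Z+jP}]_{j+1}$, not $[I_{Z+jP}]_j$. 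So instead I would redo the blow-up argument of Lemma \ref{FVlemma} one twist down: on $Y$ we get $p_*(\mathcal I_Z \otimes \mathcal O_Y(jH - jE)) \cong \mathcal O_L(-1-a_Z) \oplus \mathcal O_L(-1-b_Z)$ after tensoring $p_*(\mathcal I_Z \otimes \mathcal O_Y(H-E))$... let me instead just tensor the basic iso by $\mathcal O_L(j-1)$ to get global sections $\max\{0, j-1-a_Z\} + \max\{0, j-1-b_Z\}$ worth of dimension. Since $a_Z + b_Z = |Z|-1$ and $b_Z \ge a_Z$, this sum equals $j+1-|Z|$ precisely when $j \ge 1 + b_Z$, i.e. when $j$ is large; and since $j+1-|Z| > 0 \iff j \ge |Z| \ge b_Z + 1$ (as $b_Z \le |Z|-1$), the two descriptions match. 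I will present whichever of these two routes (elementary line-count vs. blow-up) reads most cleanly, probably the elementary one since it is self-contained.

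Finally I would deduce the $h^1$ vanishing. From the exact sequence used in Lemma \ref{lem:rewrite rhs} (with $j$ in place of the degree), $h^1(\mathcal I_{Z+jP}(j)) = h^0(\mathcal I_{Z+jP}(j)) - h^0(\mathcal O_{\mathbb P^2}(j)) + \deg(Z+jP) = (j+1-|Z|) - \binom{j+2}{2} + \left(|Z| + \binom{j+1}{2}\right) = 0$, valid for $j \ge |Z|$ where the $h^0$ formula holds. For the statement $h^1(\mathcal I_Z(j-1)) = 0$ I would use that $Z$ is a set of $|Z|$ points with regularity at most $|Z|-1$ (e.g. $|Z|$ points in $\PP^2$ always impose independent conditions on forms of degree $\ge |Z|-1$), so $h^1(\mathcal I_Z(t)) = 0$ for all $t \ge |Z|-1$, in particular for $t = j-1 \ge |Z|-1$; alternatively this follows from the stated fact $u_Z \le |Z|-2$ in Remark \ref{rem: u_z}(iii) together with the semicontinuity/monotonicity arguments there. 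The main obstacle is just being careful about the off-by-one in the twist relative to Lemma \ref{FVlemma} and making sure the general position hypothesis on $P$ (no two points of $Z$ collinear with $P$, which holds for general $P$) is explicitly invoked where the line-counting argument needs it; everything else is bookkeeping with Hilbert functions.
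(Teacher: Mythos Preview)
Your elementary route is correct and begins exactly as the paper does: a degree-$j$ curve with a point of multiplicity $j$ is a cone of $j$ lines through $P$, and for general $P$ each of the $|Z|$ lines $\overline{PP_i}$ is distinct, forcing $j\ge |Z|$. Where you diverge is in computing the exact dimension: you factor out the fixed product $g=\prod_i \overline{PP_i}$ to get $[I_{Z+jP}]_j = g\cdot [I_P^{\,j-|Z|}]_{j-|Z|}$, of dimension $j-|Z|+1$. The paper instead establishes the case $j=|Z|$ directly (where the dimension is $1$) and then runs an induction on $j$ by adding a point to $Z$ and reapplying the base case. Your factorization is a bit more direct and avoids the induction; both yield the same formula, and your $h^1$ computations match the paper's.

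One caution: the blow-up detour does not work as you wrote it. Tensoring the Faenzi--Vall\`es isomorphism by $\mathcal O_L(j-1)$ computes $\dim[I_{Z+(j-1)P}]_j$, not $\dim[I_{Z+jP}]_j$, and there is no clean analog of Lemma~\ref{FVlemma} for the twist $jH-jE$ (restricted to a fiber this is $\mathcal O_{\PP^1}$, so the pushforward is no longer the same rank-two bundle). Indeed your proposed formula $\max\{0,j-1-a_Z\}+\max\{0,j-1-b_Z\}$ gives $2j-|Z|-1$ for large $j$, not the correct $j+1-|Z|$. Since you already prefer the elementary argument, just drop the blow-up aside entirely.
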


\begin{proof} 
%Since vector space dimensions do not change when extending scalars we may assume that the base field is algebraically closed. 
%Thus, a form of degree $j$ with multiplicity $j$ at a point $P$ is a product of $j$ 
%linear forms corresponding to  a set of $j$ lines concurrent at $P$. If 
%$P$ is general, no line through two distinct points of $Z$ passes  through $P$, so
%there is a set of $j$ lines concurrent at $P$ which vanish on $Z$ if and only if $j\geq |Z|$, 
%and the lines are uniquely determined if $j=|Z|$. 
%Thus $\dim [I_{Z+jP}]_j = 0$ if $j<|Z|$ and
%$\dim [I_{Z+|Z|\cdot P}]_{|Z|}  = 1$. In particular, by adding 
%suitable lines through $P$ we obtain the first assertion.

If $f \in [I_{Z+jP}]_j$ then any line joining $P$ to a point of $Z$ is a component of $f$, since the restriction of $f$ to a line is either zero or has at most $j$ roots up to multiplicity. If $P$ is general, any such line contains no other points of $Z$. Hence $\dim [I_{Z+jP}]_j = 0$ if $j<|Z|$ and $\dim [I_{Z+|Z|\cdot P}]_{|Z|}  = 1$. In particular, by adding suitable lines through $P$ we obtain the first assertion.

Now, 
$1= \dim [I_{Z+ |Z| \cdot P}]_{|Z|}  \geq  \dim [I_{Z}]_{|Z|} 
-\binom{|Z|+1}{2}\geq \binom{|Z|+2}{2}-|Z|-\binom{|Z|+1}{2} = 1$,
hence $Z+|Z| \cdot P$ (and thus $Z$) imposes independent conditions on forms of degree $|Z|$.
This means $h^1(\mathcal I_Z(j)) = 0$ for $j=|Z|$ (and hence for $j\geq |Z|$), and it means
$h^1(\mathcal I_{Z+jP} (j)) = 0$ for $j = |Z|$.
Replacing $Z$ by $Z+Q$ for any point $Q\notin Z$, we now get $h^1(\mathcal I_{Z+Q+jP}(j)) = 0$ for $j = |Z+Q|=|Z|+1$ and hence
$Z+Q+jP$ imposes independent conditions on forms of degree $j=|Z|+1$, and therefore
$Z+jP$ also imposes independent conditions on forms of degree $j=|Z|+1$.
Continuing in this way, we see that for any $j \geq |Z|$, $Z + jP$ imposes independent 
conditions on forms of degree $j$; hence for such $j$ we have $h^1(\mathcal I_{Z+jP}(j)) = 0$. 
Thus $\dim [I_{Z+jP}]_j = \binom{j+2}{2}-|Z|-\binom{j+1}{2}=j+1-|Z|$  as asserted. 
Since $h^1(\mathcal I_Z(j)) = 0$ for $j\geq|Z|$, we also have
$\dim [I_Z]_j = \binom{j+2}{2}-|Z|$, so $\dim [I_{Z+jP}]_j$
can in addition be written as $\dim [I_Z]_j -\binom{j+1}{2}$.

Since we have already shown that $h^1(\mathcal I_{Z+jP}(j)) = 0$ for 
$j \geq |Z|$, it remains only to prove that $h^1(\mathcal I_Z(j-1)) = 0$ for $j \geq |Z|$. 
But this is true for any finite set of points, so we are done.
\end{proof} 

\begin{theorem} \label{thm:tZ = dZ}
Let $Z \subset \PP^2$ be a reduced scheme  consisting of a finite set of points
such that $h_Z (t_Z) < |Z|$
and let $P\in\PP^2$ be a general point.
Then 
\[
m_Z = t_Z < \frac{|Z| - 1}{2}
\]
and $\dim  [I_{Z+m_Z P}]_{m_Z+1} = 1$.
Furthermore, 
\[
\dim [I_{Z+jP}]_{j+1}
 = \dim [I_Z]_{j+1} - \binom{j+1}{2}
\]
for all $j\geq m_Z$ (hence $Z$ admits no unexpected curves).
\end{theorem}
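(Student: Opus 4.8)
The plan is to exploit the structural dichotomy of Proposition~\ref{prop:small tZ}. The hypothesis $h_Z(t_Z)<|Z|$ puts us in case (b) of that proposition, so either (i) $Z$ is a transverse complete intersection of a conic $q$ and a curve $g$ of degree $t_Z+1$ (whence $|Z|=2(t_Z+1)$ and $t_Z=\tfrac{|Z|-2}{2}$), or (ii) some line $\ell$ contains precisely $|Z|-t_Z\ge t_Z+2$ points of $Z$. In either case $t_Z\le\tfrac{|Z|-2}{2}<\tfrac{|Z|-1}{2}$, which is the first asserted inequality. The value $t_Z=0$ is trivial: then $Z$ is collinear by Example~\ref{rem:tZ}, so $m_Z=0=t_Z$ by Lemma~\ref{m, t and u}(b); so I assume $t_Z\ge 1$ from now on.

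The core step is to prove $m_Z=t_Z$. Since $m_Z=a_Z\le t_Z$ always (Lemma~\ref{m, t and u}(d)), it is enough to show $\dim[I_{Z+(t_Z-1)P}]_{t_Z}=0$ for general $P$. In case (i), since $I_Z=(q,g)$ and $\deg g=t_Z+1>t_Z$, every form of $[I_Z]_{t_Z}$ is a multiple $qh$ of the conic with $\deg h=t_Z-2$; as general $P$ does not lie on $q$, such a form with multiplicity $\ge t_Z-1$ at $P$ would force $\operatorname{mult}_P(h)\ge t_Z-1>t_Z-2=\deg h$, hence $h=0$. In case (ii), any curve of degree $t_Z$ through the $\ge t_Z+2$ collinear points of $Z$ on $\ell$ must contain $\ell$ by B\'ezout; dividing out $\ell$ leaves a curve of degree $t_Z-1$ passing through the remaining $t_Z$ points of $Z$ and having multiplicity $\ge t_Z-1$ at $P$, hence a union of $t_Z-1$ lines through $P$. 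Because $P$ is general, these lines meet at most $t_Z-1$ of those $t_Z$ points, a contradiction. Thus $\dim[I_{Z+(t_Z-1)P}]_{t_Z}=0$, so $m_Z\ge t_Z$, and therefore $m_Z=t_Z$.

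Everything else is now bookkeeping. From $a_Z=m_Z=t_Z<\tfrac{|Z|-1}{2}=\tfrac{a_Z+b_Z}{2}$ we get $a_Z<b_Z$, so Remark~\ref{1 or 2}(i) (or directly Lemma~\ref{FVlemma}) gives $\dim[I_{Z+m_ZP}]_{m_Z+1}=1$. For the last assertion, $b_Z=|Z|-1-t_Z$ and $u_Z=b_Z-1=|Z|-t_Z-2$, so Lemma~\ref{FVlemma} gives, for $j\ge t_Z$,
\[
\dim[I_{Z+jP}]_{j+1}=(j-t_Z+1)+\max\{0,\,j-u_Z\},
\]
while $\dim[I_Z]_{j+1}-\binom{j+1}{2}=2j+3-h_Z(j+1)$. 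The Hilbert function computed in the proof of Proposition~\ref{prop:small tZ} (see \eqref{eq:difference Hilb}) has $h_Z(t_Z+1)=2t_Z+2$, $\Delta h_Z(k)=1$ for $t_Z+1\le k\le u_Z+1$, and $h_Z(k)=|Z|$ for $k\ge u_Z+1$; summing gives $h_Z(j+1)=j+t_Z+2$ for $t_Z\le j\le u_Z$ and $h_Z(j+1)=|Z|$ for $j\ge u_Z$. Comparing the two expressions case by case yields $\dim[I_{Z+jP}]_{j+1}=\dim[I_Z]_{j+1}-\binom{j+1}{2}$ for all $j\ge m_Z$; in particular $Z$ admits no unexpected curves.

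I expect the proof of $m_Z=t_Z$ to be the main obstacle: it seems to genuinely need the dichotomy of Proposition~\ref{prop:small tZ} plus the B\'ezout/multiplicity arguments, and one must watch the degenerate small values --- $t_Z=0$ (handled above) and $t_Z=1$ in case (i), where $[I_Z]_{t_Z}=0$ outright. The remaining verifications are routine once Lemma~\ref{FVlemma} and the shape of $h_Z$ are in hand.
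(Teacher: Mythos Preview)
Your proof is correct and follows essentially the same route as the paper: both invoke the dichotomy of Proposition~\ref{prop:small tZ}, use the same B\'ezout/multiplicity arguments to establish $m_Z=t_Z$ in each case, and verify the final equality via Lemma~\ref{FVlemma} and the Hilbert function. The only cosmetic differences are that the paper packages the Case~(ii) argument as an appeal to Lemma~\ref{lem:initial degree} (which you unwind directly), and in Case~(i) the paper verifies the final equality by a short independence-plus-adding-lines argument rather than reusing the $\Delta h_Z$ computation from \eqref{eq:difference Hilb} as you do; your uniform treatment via the Hilbert function is equally valid.
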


\begin{proof}
If $t_Z=0$, then the points of $Z$ are collinear, in which case it's not hard to check that the claims hold.
So we may assume $t_Z>0$.
By Proposition \ref{prop:small tZ} we have to consider two cases.

\emph{Case 1}: Assume $Z$ is defined by an ideal $I_Z = (q, g)$, where $q, g\in R$ 
are forms of degree $2$ and  $t_Z+1$, respectively. We have (from the proof of Proposition \ref{prop:small tZ})
\[
h_Z(t_Z) = 2t_Z+1, \ h_Z(t_Z+1) = 2t_Z +2 = |Z|.
\]
Then for $j+1 < t_Z+1$ we get, 
\[
[I_Z \cap I_P^j]_{j+1} = [(q)\cap I_P^j]_{j+1} = q \cdot [I_P^j]_{j-1} = 0, 
\]
which implies $m_Z = t_Z$ by Lemma \ref{m, t and u} (d). Since $a_Z + b_Z = |Z|-1 = 2t_Z +1$, we also obtain $b_Z = u_Z +1 = t_Z+1$. Then Lemma \ref{FVlemma} gives $\dim [I_{Z+m_Z P}]_{m_Z+1} = 1$ as desired.
And since $|Z|=2t_Z+2$, we have $t_Z < \frac{|Z| - 1}{2}$. 

To show $\dim [I_{Z+jP}]_{j+1} = \dim [I_Z]_{j+1} - \binom{j+1}{2}$ for $j\geq m_Z$, first note that we have
\[
\begin{array}{rcl}
1 & = &  \dim [I_{Z+m_Z P}]_{m_Z+1} \\ 
& \geq &     \dim [I_Z]_{m_Z+1} -\binom{m_Z+1}{2} \\
& \geq & \binom{t_Z+3}{2}-(2t_Z+2)-\binom{t_Z+1}{2} \\
& = & 1, 
\end{array}
\]
hence $Z+m_ZP$ imposes independent conditions on
forms of degree $m_Z+1$. 
This also means that the points of $Z$ impose independent conditions on $[I_P^{m_Z}]_{m_Z+1}$. By adding lines through $P$, it is then clear that the 
points of $Z$ also impose independent conditions on $[I_P^{m_Z+k}]_{m_Z+1+k}$, 
which implies $\dim [I_{Z+jP}]_{j+1} = \dim [I_Z]_{j+1} - \binom{j+1}{2}$   
for all $j \geq m_Z$ as desired.

\bigskip

\emph{Case 2}: Assume a line defined by a linear form $\ell \in R$ contains precisely 
$|Z| - t_Z \ge t_Z + 2$ points of $Z$ (and hence $t_Z < \frac{|Z| - 1}{2}$) 
and let $Y$ be the set of these points. Let $U \subset Z$ be the subset of the other $t_Z$ points. 
Then $I_Y = (\ell, f)$ for some form $f$, where $\deg f \ge t_Z + 2$. Thus, for 
each integer $j$ and any general point $P \in \PP^2$, we get
\[
[I_Z \cap I_P^j]_{j+1} = [(\ell, f) \cap I_U \cap I_P^j]_{j+1}. 
\]
Since $\deg f \ge t_Z + 2$, it follows for $j+1 \le t_Z + 1$ that
\[
[I_{Z+jP}]_{j+1} = [I_Z \cap I_P^j]_{j+1} = [(\ell) \cap I_U \cap I_P^j]_{j+1} = \ell \cdot [I_U \cap I_P^j]_{j}
\]
because $\ell$ does not vanish at $P$ or at any of the points in $U$. 
Since $|U| = t_Z$, Lemma \ref{lem:initial degree} gives  $\dim [I_{U+jP}]_j  \leq 1$ for $j\leq t_Z$, with equality exactly when $j=t_Z$. 
Thus $m_Z = t_Z$ and  $\dim [I_{Z+m_Z P}]_{m_Z+1} = 1$.

Now assume $j \geq m_Z = t_Z$. Using Equation \ref{eq:difference Hilb},  we have 
\[
h_Z(j+1) = \min \{ t_Z+j+2, |Z| \} = \min \{ m_Z+j+2, |Z|\}. 
\]
Hence
\[
\dim [I_Z]_{j+1} = \max \left \{ \binom{j+3}{2} -(m_Z+j+2), \binom{j+3}{2} - |Z| \right \}.
\]
Then
\[
\begin{array}{rcl} 
\displaystyle \dim [I_Z]_{j+1} - \binom{j+1}{2} & = & \displaystyle \max \{ 2j+3 - (m_Z+j+2), 2j+3 - |Z| \}\\ 
& = & \max \{ j+1-m_Z, (j+1-m_Z) + (j+1-b_Z) \} \\
& = & \dim [I_{Z+jP}]_{j+1},
\end{array}
\]
the latter thanks to Lemma \ref{FVlemma}. 
\end{proof}

\section{The structure of unexpected curves and relation to syzygies} 
\label{structure} 

We  now give a rather detailed description of unexpected curves of a finite set of points $Z \subset \PP^2$. It turns out that any such curve has exactly one irreducible component of degree greater than one, and that this irreducible curve is rational and is an unexpected curve of a subset of $Z$ (which can be equal to $Z$). 

We begin with a description of curves whose existence is guaranteed by the definition of the multiplicity index $m_Z$. When $P$ is a general point
and $\dim [I_{Z+m_ZP}]_{m_Z+1}=1$ we will for later use denote the unique curve defined by $[I_{Z+m_ZP}]_{m_Z+1}$ by $C_P(Z)$. 
Thus when $\dim [I_{Z+m_ZP}]_{m_Z+1}=1$, by the next result there is an open set of points $P$ such that for each $P$ there is a subset $Z''_P$ of $Z$ such that
$C_P(Z)$ is the union of the $|Z''_P|$ lines through $P$ and each point of $Z''_P$, together with an irreducible curve $C_P(Z'_P)$ of degree
$m_Z+1-|Z''_P|$ containing $Z'_P$, where $Z'_P=Z\setminus Z''_P$. 
Therefore, by semicontinuity applied to $\dim[I_{Y+(m_Z-|Y|)P}]_{m_Z+1-|Y|}$ for the various subsets $Y$ of $Z$,
there is a single subset $Z''$ of $Z$ such that for a nonempty open set of points $P$ we have $Z''_P=Z''$.
I.e., when $\dim[I_{Z+m_ZP}]_{m_Z+1}=1$, it makes sense to talk about the components of $C_P(Z)$ for a general point $P$.

\begin{lemma}
     \label{lem:initial deg curves}
Let $Z$ be a finite set of points of $\PP^2$, and let $P \in \PP^2$ be a general point.  
If  $C$ is a curve of degree $m_Z+1$ containing $Z$, with multiplicity $m_Z$ at a general point $P
$, then it is reduced and  either it is  irreducible, or it is a union of lines through $P$ and an irreducible curve $C'$ whose multiplicity at $P$ is $-1 + \deg C'$. The curve $C'$ is rational 
and smooth away from $P$. 

Furthermore, the set $Z' = Z \cap C'$ has multiplicity index $m_{Z'} = m_Z - |Z''|$, where $Z'' = Z - Z'$, and each of the 
components of $C$ other than $C'$ passes through exactly one of the points of $Z''$. In particular, $\deg C' = \deg C - |Z''|=m_{Z'}+1$. 
\end{lemma}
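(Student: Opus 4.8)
\textbf{Proof proposal for Lemma \ref{lem:initial deg curves}.}

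The plan is to start from a curve $C$ of degree $m_Z+1$ passing through $Z$ with multiplicity exactly $m_Z$ at the general point $P$, and to exploit the fact that a point of multiplicity $m_Z$ on a curve of degree $m_Z+1$ is extremely constraining. First I would observe that $C$ must be reduced: a nonreduced component would contribute at least twice to the degree, and by a Bezout/genus argument (a degree $d$ curve has arithmetic genus $\binom{d-1}{2}$, so a point of multiplicity $d-1$ already forces geometric genus $0$ and in fact forces the singular locus to be concentrated at $P$ with a single branch structure) there is no room; alternatively, any multiple line through $P$ would have to split off, reducing the problem. Next, decompose $C = L_1 \cup \cdots \cup L_k \cup C'$ into its irreducible components, where $L_1,\dots,L_k$ are the lines through $P$ appearing in $C$ (with multiplicity one each, by reducedness) and $C'$ collects the remaining components. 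Writing $e = \deg C'$ and $\mathrm{mult}_P C' = \mu$, additivity of multiplicity at $P$ gives $k + \mu = m_Z$ while $k + e = m_Z + 1$, so $\mu = e - 1$. Thus $C'$ is a (possibly reducible) curve of degree $e$ with a point of multiplicity $e-1$ and no line through $P$ among its components.

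The heart of the argument — and the step I expect to be the main obstacle — is showing that $C'$ is irreducible (hence rational and smooth away from $P$). Suppose $C' = C_1' \cup C_2'$ with $\deg C_i' = e_i$, $e_1 + e_2 = e$, and $\mathrm{mult}_P C_i' = \mu_i$, $\mu_1 + \mu_2 = e-1$. Since no component is a line through $P$, Bezout applied at $P$ to $C_1'$ and $C_2'$ (using that any two distinct components meet at $P$ with intersection multiplicity at least $\mu_1\mu_2$, and they must also meet elsewhere if $e_1,e_2 \ge 1$ generically, or one needs to track the full intersection cycle) forces $e_1 e_2 \ge \mu_1\mu_2 + (\text{contribution away from }P)$. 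Combined with $\mu_i \le e_i - 1$ and $\mu_1+\mu_2 = e-1 = e_1 + e_2 - 1$, we get $\mu_i = e_i - 1$ for both $i$ up to relabeling, and then $e_1 e_2 \ge (e_1-1)(e_2-1)$ plus positive extra intersection away from $P$, which pins down $e_1 e_2 - (e_1-1)(e_2-1) = e_1 + e_2 - 1$ exactly, leaving no slack and eventually a contradiction with the generality of $P$ (the components would be forced into a special position relative to $P$, contradicting that $P$ ranges over a dense open set while $C_1', C_2'$ vary algebraically). I would phrase this via the semicontinuity setup already established in the paragraph preceding the lemma, which lets me talk about a fixed decomposition type over an open set of $P$; the contradiction is that a proper sub-union would itself be ``unexpected'' for a proper subset with a smaller multiplicity index, and iterating reaches a base case where the remaining piece is a conic or line, handled directly. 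Once $C'$ is irreducible of degree $e$ with a point of multiplicity $e-1$, its geometric genus is $\le \binom{e-1}{2} - \binom{e-1}{2} = 0$, so $C'$ is rational; projecting from $P$ exhibits the normalization as $\PP^1 \to \PP^1$, and any further singularity of $C'$ away from $P$ would drop the genus below $0$, impossible, so $C'$ is smooth away from $P$.

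For the final paragraph of the statement, set $Z' = Z \cap C'$ and $Z'' = Z \setminus Z'$. Each point of $Z$ lies on $C$, hence on some component; a point not on $C'$ lies on one of the lines $L_i$, and since $P$ is general no $L_i$ passes through two points of $Z$, so the $L_i$ are exactly the $|Z''|$ lines joining $P$ to the points of $Z''$ and $k = |Z''|$. Then $\deg C' = \deg C - k = m_Z + 1 - |Z''|$ and $\mathrm{mult}_P C' = m_Z - |Z''| = \deg C' - 1$, so $C'$ is a curve through $Z'$ of degree $(m_Z - |Z''|) + 1$ with multiplicity $m_Z - |Z''|$ at the general point $P$; this shows $\dim[I_{Z' + (m_Z-|Z''|)P}]_{m_Z - |Z''| + 1} > 0$, giving $m_{Z'} \le m_Z - |Z''|$. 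For the reverse inequality I would argue that if $m_{Z'} < m_Z - |Z''|$, then taking the (nonzero) curve in $[I_{Z' + m_{Z'}P}]_{m_{Z'}+1}$ and adjoining the $|Z''|$ lines through $P$ and the points of $Z''$ produces a curve through $Z$ of degree $m_{Z'} + 1 + |Z''| < m_Z + 1$ with multiplicity $m_{Z'} + |Z''| < m_Z$ at $P$ — but one must be slightly careful, as this only shows $[I_{Z + (m_{Z'}+|Z''|)P}]_{m_{Z'}+|Z''|+1} \ne 0$, contradicting the definition of $m_Z$ directly. Hence $m_{Z'} = m_Z - |Z''|$ and $\deg C' = m_{Z'} + 1$, completing the proof.
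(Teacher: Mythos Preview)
Your overall structure matches the paper's, but you overcomplicate the irreducibility of $C'$ and put reducedness in the wrong place.

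For irreducibility: you already have the two ingredients that finish it in one line. Since no irreducible component of $C'$ is a line through $P$, each component $C_i'$ satisfies $\mathrm{mult}_P C_i' \le \deg C_i' - 1$. Summing over the components gives $\mathrm{mult}_P C' \le \deg C' - (\text{number of components})$; since $\mathrm{mult}_P C' = \deg C' - 1$, there is exactly one component. Your assertion ``we get $\mu_i = e_i - 1$ for both $i$'' is in fact impossible (it would force $\mu_1+\mu_2 = e-2$, not $e-1$), so your own inequalities already yield the contradiction before any Bezout or semicontinuity enters. This simple multiplicity count is exactly what the paper does, and it simultaneously shows that the unique non-line component $C'$ occurs with multiplicity one in $C$. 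The genus formula then gives rationality and smoothness away from $P$, as you say.

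For reducedness: your opening argument does not work as stated. A line through $P$ appearing with multiplicity $2$ contributes $2$ to both the degree and the multiplicity at $P$, so numerically there is room; the ``Bezout/genus'' heuristic does not rule this out. The paper instead defers reducedness to the end. After writing $f = f'\ell_1\cdots\ell_k$ (with the $\ell_i$ not yet known to be distinct), one has $f' \in [I_{Z'+(m_Z-k)P}]_{m_Z-k+1}$, hence $m_{Z'} \le m_Z - k$; combined with the trivial bound $m_{Z'} = m_{Z-Z''} \ge m_Z - |Z''|$ this gives $k \le |Z''|$, while the $k$ lines cover $Z''$ and each contains at most one point of $Z$ (since $P$ is general), giving $|Z''| \le k$. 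Thus $k = |Z''|$, the $k$ lines are in bijection with the points of $Z''$, hence pairwise distinct, and $C$ is reduced. This argument also closes the small gap where you assert ``the $L_i$ are exactly the $|Z''|$ lines'' without ruling out an extra line through $P$ missing $Z$ entirely.
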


\begin{proof}
Note that 
the multiplicity of an irreducible curve at a point is at most the degree of the curve and that  
the multiplicity is equal to the degree if and only if the curve is a line. 
Since the degree of $C$ at $P$ is precisely one more than its multiplicity at $P$, it follows that $C$ has a unique irreducible component $C'$ whose multiplicity at $P$ is $-1 + \deg C'$ and that this component has multiplicity one. Thus, $f = f' \cdot \ell_1 \cdots \ell_k$, where $f$ and $f'$ define the curves $C$ and $C'$, respectively, $k \ge 0$, and each $\ell_i$ is a linear form in $I_P$, so  $\deg(C')=\deg(C)-k=m_Z+1-k$. The genus formula implies that $C'$ is rational and smooth at all points other than $P$.

Put $Z' = Z \cap C'$. 
Since $P$ is general, each of the $k$ components of $C$ other than $C'$ passes through at most one 
point of $Z$. The union of these lines must contain $Z'' = Z - Z'$, and so $|Z''| \le k$. 

We have seen that $f'$ is in $[I_{Z' + (m_Z - k) P}]_{m_Z - k + 1}$, which implies $m_{Z'} \le m_Z - k$. Hence, the estimate $m_{Z'} = m_{Z - Z''} \ge m_Z - |Z''|$ gives $k \le |Z''|$. Therefore, we obtain $|Z''| = k$, so $m_{Z'} = m_Z - k=m_Z - |Z''|$ and $\deg(C')=\deg(C)-k=\deg(C)-|Z''|=m_Z+1-|Z''|=m_{Z'}+1$.
It also follows that each of the $k$ lines defined by some $\ell_i$ passes through one of the $k$ points of $Z''$, and no two lines pass through the same point of $Z''$. Thus, the curve $C$ is reduced. 
\end{proof}

Now we slightly improve Lemma \ref{FVlemma}. 
Recall by Lemma \ref{m, t and u} that $u_Z+1\geq m_Z$.

\begin{proposition} 
     \label{prop:vector space decomposition}  
Let $Z$ be a reduced 0-dimensional subscheme of $\PP^2$.      
If $P$ is a general point, then there is a plane curve $C$ defined by a form $f$ of degree $m_Z + 1$ 
that vanishes on $Z$ and to order $m_Z$ on $P$, and a plane curve $D$ defined by a form $g$ of degree 
$u_Z + 2$ that  vanishes on $Z$ and to order $u_Z+1$ on $P$, such that $C \cap D$ is a zero-dimensional 
subscheme, and, for all integers $j \ge 0$, there is an isomorphism of $K$-vector spaces
\[
[I_{Z+jP}]_{j+1} = \{ f \cdot [I_{(j-m_Z)P}]_{j-m_Z} \} \oplus \{ g \cdot [I_{ (j-u_Z-1)P}]_{j-u_Z-1} \}. 
\]
\end{proposition}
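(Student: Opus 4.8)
The plan is to produce explicit forms $f$ and $g$ and then verify the decomposition by a dimension count, using Lemma \ref{FVlemma} together with the identifications $m_Z = a_Z$ and $u_Z = b_Z - 1$ of Lemma \ref{m, t and u}. For $f$, take any nonzero element of $[I_{Z+m_ZP}]_{m_Z+1}$; this space is nonzero by the definition of $m_Z$. Moreover such an $f$ vanishes to order exactly $m_Z$ at $P$: if it vanished to higher order it would, having degree $m_Z+1$, be a union of $m_Z+1$ lines through $P$, forcing $Z$ into the union of $m_Z+1$ lines through the general point $P$ and hence $|Z| \le m_Z + 1 \le t_Z + 1$, contradicting $m_Z \le t_Z \le \lfloor (|Z|-1)/2 \rfloor$ (Lemma \ref{m, t and u}) when $|Z|\ge 2$; the case $|Z|=1$ is immediate. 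For $g$, Lemma \ref{FVlemma} with $j = u_Z + 1 = b_Z$ gives $\dim [I_{Z+(u_Z+1)P}]_{u_Z+2} = b_Z - a_Z + 2 \ge 2$, so a nonzero $g$ of the required type exists; the real content is to choose it coprime to $f$.

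\textbf{Coprimality (the crux).} To arrange $\gcd(f,g)=1$ I would invoke Lemma \ref{lem:initial deg curves}: the curve $C = V(f)$ is reduced and $f = f' \cdot \ell_1 \cdots \ell_k$, where $f'$ is irreducible of degree $m_{Z'}+1$ with $m_{Z'} = m_Z - k$ (writing $Z' = Z \cap C'$, $Z'' = Z \setminus Z'$, $k = |Z''|$), and $\ell_1, \dots, \ell_k$ are the distinct lines joining $P$ to the points of $Z''$, each meeting $Z$ in exactly one point. So $g$ shares a component with $f$ precisely when $f' \mid g$ or $\ell_i \mid g$ for some $i$. I would show each of these conditions cuts out a \emph{proper} subspace of $[I_{Z+(u_Z+1)P}]_{u_Z+2}$: dividing out the relevant factor and tracking vanishing at $P$ and at the points of $Z$ identifies $\{\, g : \ell_i \mid g \,\}$ with $\ell_i \cdot [I_{(Z\setminus Q_i)+u_ZP}]_{u_Z+1}$ and $\{\, g : f' \mid g \,\}$ with $f \cdot [I_{(b_Z-a_Z)P}]_{b_Z-a_Z}$, and Lemma \ref{FVlemma} shows both of these have dimension exactly $b_Z - a_Z + 1 < b_Z - a_Z + 2$. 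For the deletion $Z\setminus Q_i$ one uses that removing a single point changes the splitting-type entries by at most one, so its splitting type is $(a_Z - 1, b_Z)$ or $(a_Z, b_Z - 1)$, and both yield the same count. Since $K$ is infinite, a general $g$ avoids this finite union of proper subspaces and is therefore coprime to $f$; when $a_Z = b_Z$ the forms $f$ and $g$ lie in the same $2$-dimensional space and coprimality forces $\{f,g\}$ to be a basis of it.

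\textbf{The decomposition.} With $f$ and $g$ fixed, put $C = V(f)$, $D = V(g)$; since $\gcd(f,g)=1$ they have no common component, so $C \cap D$ is zero-dimensional. The inclusion
\[
f \cdot [I_{(j-m_Z)P}]_{j-m_Z} + g \cdot [I_{(j-u_Z-1)P}]_{j-u_Z-1} \subseteq [I_{Z+jP}]_{j+1}
\]
holds because in each summand the degrees add to $j+1$, the product vanishes on $Z$, and the orders of vanishing at $P$ add to at least $j$. The sum is direct: if $fA = gB$ with $A,B$ in the respective spaces, coprimality gives $f \mid B$, and writing $B = fB'$ we obtain $\mathrm{ord}_P(B) = m_Z + \mathrm{ord}_P(B') \le m_Z + \deg B' = j - u_Z - 2$, which is below the required $j - u_Z - 1$; hence $B' = 0$, so $B = 0$ and then $A = 0$ as $R$ is a domain (when $a_Z = b_Z$ the same computation applies with $u_Z$ replaced by $m_Z - 1$). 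Finally, multiplication by $f$ (resp.\ $g$) is injective, so the left-hand side has dimension $\max\{0, j-m_Z+1\} + \max\{0, j-u_Z\}$, which equals $\dim [I_{Z+jP}]_{j+1}$ by Lemma \ref{FVlemma} together with $m_Z = a_Z$ and $b_Z = u_Z+1$; combined with the inclusion this forces equality.

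\textbf{Main obstacle.} The step I expect to be the bottleneck is the coprimality argument: the structural description of $f$ from Lemma \ref{lem:initial deg curves} must be meshed carefully with the splitting-type bookkeeping for $Z$ and its one-point deletions, and one must confirm that \emph{every} irreducible factor of $f$ — not just the large component $f'$ — determines a proper subspace of the space containing $g$. Everything else (existence of $f$ and $g$, directness, and the final dimension comparison) is then routine given Lemma \ref{FVlemma}.
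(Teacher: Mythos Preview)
Your argument is correct and follows the same overall architecture as the paper's proof: use Lemma~\ref{FVlemma} for all dimension counts, invoke Lemma~\ref{lem:initial deg curves} for the factorization $f=f'\ell_1\cdots\ell_k$, and then establish coprimality and directness. The tactical choices differ in two places. For coprimality, the paper picks $g$ as a complement to $f\cdot[I_{(u_Z+1-m_Z)P}]_{u_Z+1-m_Z}$ and derives contradictions: if $f'\mid g$ one shows $f\mid g$ directly, while if $\ell_i\mid g$ one bootstraps by applying the already-proved range $m_Z\le j\le u_Z$ of the proposition to the smaller set $Z-P_i$ (using $m_{Z-P_i}=m_Z-1$, $u_{Z-P_i}=u_Z$); you instead show each divisibility condition cuts out a subspace of dimension exactly $b_Z-a_Z+1$ and pick $g$ generically. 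Your version avoids the bootstrap but relies on the splitting type of $Z\setminus Q_i$, which you should justify independently of Lemma~\ref{lem:change of index} (it is elementary: $m_{Z-Q}\le m_Z$ by inclusion of ideals, and $m_{Z-Q}\ge m_Z-1$ since adding the line $\overline{PQ}$ to any curve through $(Z-Q)+(m_Z-2)P$ would contradict minimality of $m_Z$). For directness, the paper splits into $j\le|Z|$ (syzygy degrees of $(f,g)$) and $j>|Z|$ (forcing $f$ to be a product of lines through $P$); your uniform order-of-vanishing argument using $\mathrm{ord}_P(f)=m_Z$ is cleaner and handles both ranges at once---note that it really does need the \emph{exact} order, which you correctly establish at the outset.
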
 

\begin{proof} 
The existence of a curve $C$ with the desired properties is guaranteed by the definition of the multiplicity index $m_Z$.  
Suppose $C$ is defined by a form $f$. Then $f \cdot [I_{(j-m_Z)P}]_{j-m_Z} \subset [I_{Z+jP}]_{j+1}$, and, comparing dimensions by using Lemma \ref{FVlemma}, we see that 
\[
[I_{Z+jP}]_{j+1} = f \cdot [I_{(j-m_Z)P}]_{j-m_Z} \quad \text{ if } \;  m_Z \le j \le u_Z. 
\]
Since, by definition,  $[I_{Z+jP}]_{j+1}  = 0$ if $j < m_Z$, this proves our claim if $j \le u_Z$. 
If $j = u_Z + 1$, then Lemma \ref{FVlemma} gives  that there is a form $g$ of degree $u_Z + 2$ such that 
\[
[I_{Z+(u_Z + 1)P}]_{u_Z+2} = \{ f \cdot [I_{(u_Z + 1-m_Z)P}]_{u_Z + 1-m_Z} \} \oplus \langle g \rangle. 
\]

We are now going to show that $f$ and $g$ are relatively prime. Using the notation of Lemma~\ref{lem:initial deg curves}, 
write $f = f' \cdot \ell_1 \cdots \ell_k$, where $f'$ defines the irreducible curve $C'$ and each $\ell_i$ 
defines a line though $P$ and one of the $k$ points of $Z'' = Z - Z'$. Assume first that $f'$ divides 
$g$. Then the curve defined by $\frac{g}{f'}$ has multiplicity $u_Z + 1 - (m_Z - k)  = \deg \frac{g}{f'}$. 
 Thus, $\frac{g}{f'}$ is a product of linear forms of $I_P$ that must vanish on $Z''$. Hence, 
 $\ell_1 \cdots \ell_k$ divides $\frac{g}{f'}$, which implies 
 $g \in f \cdot [I_{(u_Z + 1-m_Z)P}]_{u_Z + 1-m_Z}$, a contradiction to the choice of $g$. 

Second assume that $k \ge 1$ and that one of the linear forms $\ell_i$ divides $g$. Let $P_i \in Z''$ 
be the point of $Z$ on which $\ell_i$ vanishes. By Lemma~\ref{lem:initial deg curves}, we know 
that $m_{Z'} = m_{Z - Z''} = m_Z - |Z''|$. This gives $m_{Z-P_i} = m_Z - 1$, and thus 
$u_{Z - P_i} = u_Z$. It follows that $\frac{f}{\ell_i} \in [I_{Z-P_i + m_{Z-P_i} P}]_{m_{Z-P_i} + 1}$ and 
$\frac{g}{\ell_i} \in [I_{Z-P_i + u_{Z-P_i} P}]_{u_{Z-P_i}+1}$. Applying the part of the statement we have already shown to $Z - P_i$, we conclude that $\frac{g}{\ell_i} \in \frac{f}{\ell_i} \cdot [I_{(u_{Z - P_i}  - m_{Z - P_i})P}]_{u_{Z - P_i}  - m_{Z - P_i}} = \frac{f}{\ell_i} \cdot [I_{(u_{Z}  - m_{Z} + 1)P}]_{u_{Z}  - m_{Z} + 1}$, which is again a contradiction to the choice of $g$. 

Thus, we have shown that $f$ and $g$ form a regular sequence. 
We claim that 
\[
\{ f \cdot [I_{(j-m_Z)P}]_{j-m_Z} \} \cap \{ g \cdot [I_{(j-u_Z-1)P}]_{j-u_Z-1} \} = 0 \quad \text{ if } \;  j \ge u_Z +1. 
\]
This is clear if $j \le |Z|$ because the degrees of the syzygies of the ideal $(f,g)$ are at least $m_Z + 1 + u_Z + 2 = |Z| + 1$. Assume now that the claim is false for some $j \ge |Z| + 1$. That is, there are forms $h_1, h_2$ of suitable degrees such that $f h_1 = g h_2$, where $h_2$ is a product of $j - u_Z - 2 \ge m_Z + 1$ linear forms in $I_P$. Since $f$ and $g$ are coprime, it follows that $f$ is a product of $m_Z + 1$ linear forms in $I_P$. By the generality of $P$, each of these linear forms vanishes on at most one point of $Z$. However, by definition $f$ vanishes at each point of $Z$, which implies $m_Z + 1 \ge |Z| = m_Z + u_Z + 2$. This is a contradiction because $u_Z \ge 0$ (as $|Z| \ge 2$). Thus, the above claim is shown. It gives that the sum 
\[
\{ f \cdot [I_{(j-m_Z)P}]_{j-m_Z} \} +  \{ g \cdot [I_{ (j-u_Z-1)P}]_{j-u_Z-1} \} \subset [I_{Z+(u_Z + 1)P}]_{j+1}
\]
is a direct sum. Since both sides have the same dimension we get equality, as desired.  
\end{proof} 

As a first consequence, we see that if $u_Z = m_Z -1$ then  there is an irreducible curve of degree $m_Z+1$ that vanishes on $Z$ and at a general point $P$ to order $m_Z$. 

\begin{corollary} 
     \label{for:balance gives irr}
If $Z$ satisfies $u_Z = m_Z -1$ (i.e. $a_Z = b_Z$), then there is an irreducible curve of degree $m_Z+1$ that vanishes on $Z$ and at a general point $P$ to order $m_Z$. 
\end{corollary}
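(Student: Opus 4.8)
The plan is to produce the required curve as a \emph{general member of the pencil} of degree-$(m_Z+1)$ forms vanishing on $Z$ and to order $m_Z$ at $P$. First I would unwind the hypothesis: $u_Z = m_Z-1$, together with $u_Z = b_Z - 1$ and $a_Z = m_Z$ from Lemma~\ref{m, t and u}, forces $a_Z = b_Z = m_Z$, so Lemma~\ref{FVlemma} (see also Remark~\ref{1 or 2}(i)) gives $\dim [I_{Z+m_ZP}]_{m_Z+1} = 2$. Applying Proposition~\ref{prop:vector space decomposition} with $u_Z = m_Z-1$ furnishes coprime forms $f$ and $g$, both of degree $m_Z+1$ and both vanishing to order $m_Z$ at $P$ (the form named $g$ there has degree $u_Z+2 = m_Z+1$ and multiplicity $u_Z+1 = m_Z$), and the displayed decomposition at $j = m_Z$ identifies $[I_{Z+m_ZP}]_{m_Z+1}$ with $\langle f\rangle \oplus \langle g \rangle$. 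Hence the pencil $\mathcal{P} = \{\, V(\lambda f + \mu g) : [\lambda:\mu]\in\PP^1(K)\,\}$ has no fixed component, its base locus $V(f)\cap V(g)$ is $0$-dimensional, and $\mathcal{P}$ has infinitely many distinct members because $K$ is infinite.

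Next I would bound the reducible members of $\mathcal{P}$. Every member $C\in\mathcal{P}$ is a curve of degree $m_Z+1$ through $Z$ with multiplicity exactly $m_Z$ at $P$ (the multiplicity cannot exceed $m_Z$, since a degree-$(m_Z+1)$ curve of larger multiplicity at $P$ is a union of lines through $P$ and so cannot pass through all $2m_Z+1 = |Z|$ points of $Z$ for general $P$), so Lemma~\ref{lem:initial deg curves} applies and, whenever $C$ is reducible, $C$ has a component which is a line through $P$. I would then check that any such line is $\overline{PQ}$ for some $Q\in Z$: writing $C = L\cup C_1$ with $L$ a line through $P$, the generality of $P$ forces $|L\cap Z|\le 1$, and if $L\cap Z=\varnothing$ then the degree-$m_Z$ form cutting out $C_1$ vanishes on $Z$ and to order $m_Z-1$ at $P$, hence lies in $[I_{Z+(m_Z-1)P}]_{m_Z}$, which vanishes by the minimality in the definition of $m_Z$ — a contradiction. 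Finally, for each fixed line $L$ at most one member of $\mathcal{P}$ can contain $L$, since two independent forms in $\langle f,g\rangle$ vanishing along $L$ would place $L$ in the $0$-dimensional base locus. Therefore at most $|Z|$ members of $\mathcal{P}$ are reducible — at most one for each of the $|Z|$ lines $\overline{PQ}$, $Q\in Z$ — so $\mathcal{P}$ contains (infinitely many, hence at least one) irreducible member, which is the desired irreducible curve of degree $m_Z+1$ vanishing on $Z$ and to order $m_Z$ at $P$.

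The argument is essentially a counting argument and is short once Proposition~\ref{prop:vector space decomposition} and Lemma~\ref{lem:initial deg curves} are available; the step needing the most care is the two-part bookkeeping on line components — using the generality of $P$ to show every line component of a reducible member joins $P$ to a point of $Z$, and using the zero-dimensionality of the base locus (which rests on the coprimality of $f$ and $g$ supplied by Proposition~\ref{prop:vector space decomposition}) to show that each of these finitely many lines occurs in at most one member of the pencil. I do not anticipate any serious obstacle beyond this.
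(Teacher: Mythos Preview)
Your proof is correct and follows essentially the same route as the paper's own argument: both use Proposition~\ref{prop:vector space decomposition} to obtain two coprime forms spanning the pencil $[I_{Z+m_ZP}]_{m_Z+1}$, invoke Lemma~\ref{lem:initial deg curves} to see that any reducible member has a linear factor through $P$ and a point of $Z$, observe that a given line can occur in at most one member of the pencil (since the base locus is zero-dimensional), and conclude that at most $|Z|$ members are reducible. Your write-up is a bit more explicit than the paper's in places---you justify separately that the multiplicity at $P$ is exactly $m_Z$ and that any line component must hit $Z$, whereas the paper absorbs both of these into its citation of Lemma~\ref{lem:initial deg curves}---but the structure is the same.
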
 

\begin{proof}
By Proposition \ref{prop:vector space decomposition}, the vector space $[I_{Z+m_ZP}]_{m_Z+1}$ contains two polynomials, $F$ and $G$, that form a regular sequence. 
By Lemma \ref{lem:initial deg curves}, any $K$-linear combination of $F$ and $G$ which is not irreducible has a linear factor.
Suppose there are two such linear combinations, for example $aF+G$ and $bF+G$ for distinct scalars $a$ and $b$, which have a common linear factor $L$. 
We have $aF+G = LH_1$ and $bF+G = LH_2$ for some forms $H_1$ and $H_2$, so $(a-b)F = L(H_1 - H_2)$. Then $L$ is a factor of $F$, so from the equation $aF+G = LH_1$ we also have that $L$ is a factor of $G$, and hence of every curve in the linear system.
This contradicts the fact that $F$ and $G$ are a regular sequence. 

By Lemma \ref{lem:initial deg curves} there are only finitely many possible linear factors, each of which corresponds to a point of $Z$, so we can conclude that at most $|Z|$ curves 
in the pencil defined by $F$ and $G$ are reducible.
Since $K$ is infinite, the general element must be irreducible.
\end{proof}

\begin{example}
We give an example of Corollary \ref{for:balance gives irr}, which shows that
not all of the curves $C$ of Lemma \ref{lem:initial deg curves} need be irreducible.
Let $P$ be a general point of $\PP^2$.
Let $D$ be an irreducible plane quartic with a triple point $P'$. Let $B$ be a smooth cubic through $P'$ meeting
$D$ transversely at 9 points away from $P'$. Let $Z$ be any subset of 7 of those 9 points.
By Bezout's Theorem, there is no cubic through $Z$ singular at $P'$, hence 
$\dim [I_{Z+2P'}]_3 = 0$. Now by semicontinuity we have $\dim [I_{Z+2P}]_3 = 0$.
A dimension count shows that $\dim [I_{Z+3P}]_4 > 0$, hence $m_Z=3$.
Thus $u_Z=|Z|-m_Z-2=2$, so by Corollary \ref{for:balance gives irr},
$[I_{Z+3P}]_4$ contains an irreducible form. For each $P$, pick such an irreducible form
and let $D_P$ be the curve it defines.
Let $Y$ be any subset of 6 points of $Z$. Since the splitting type of $Z$ is $(3,3)$, the splitting type of 
$Y$ is $(2,3)$. Thus there is a cubic through $Y$ singular at $P$. Let $B_P$ be any such cubic. 
Then $B_P$ cannot contain $Z$ since then $B_P$ and $D_P$ would contain a common component.
However, if $L$ is the line through $P$ and the point of $Z$ not on $B_P$, then
$B_P+L$ is a quartic through $Z$ with a triple point at $P$, so we see that
not every form in $[I_{Z+3P}]_4$ is irreducible. Thus if $C$ is a curve defined by
a form in $[I_{Z+3P}]_4$, then either $C$ is irreducible or $C$ is reducible, and both cases occur.
By Lemma \ref{lem:initial deg curves}, if $C$ is irreducible, then $C=C'$, and if $C$ is reducible, then 
$C$ has one linear component containing $P$ and a point of $Z$ and $C'$ is an irreducible cubic singular at $P$ and containing the other 6 points of $Z$.
A priori, $C$ could have two linear components, each containing $P$ and a point of $Z$, with $C'$ being 
an irreducible conic through $P$ and containing the other 5 points of $Z$, or
$C$ could have three linear components, each containing $P$ and a point of $Z$, with $C'$ being a line that does not contain 
$P$ but does contain the other 4 points of $Z$.
Neither can occur here, though: if there were an irreducible conic through 5 points, that conic is the only conic through
those 5 points, so it cannot contain a general point $P$, and if there a line through 4 points of $Z$, then that line
would have to be a component of $B$.
\end{example}

Recall from Theorem \ref{u_ZTheorem} that $Z$ admits unexpected curves if and only if 
$Z$ has an unexpected curve of degree $m_Z+1$, the least degree possible, and that 
if $Z$ has any unexpected curves then the degrees $t$ in which they occur are exactly $m_Z+1\leq t\leq u_Z$. 
By the following result, understanding unexpected curves for $Z$ reduces to understanding them in degree $m_Z+1$.

\begin{corollary}  
    \label{cor:unexpected curves only in initial deg}
Let $Z$ be a finite set of points with an unexpected curve. 
Then $Z$ has a unique unexpected curve $C$ in degree $m_Z+1$, and for each $m_Z+1<t\leq u_Z$,
the unexpected curves of degree $t$ are precisely the curves $C+L_1+\cdots+L_r$, where
$r=t-m_Z-1$ and each $L_i$ is an arbitrary line through the point $P$ (i.e., the general point at which $C$ is singular). 
\end{corollary}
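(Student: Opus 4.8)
The plan is to read the result straight off Proposition~\ref{prop:vector space decomposition}, which already supplies an explicit $K$-basis of $[I_{Z+jP}]_{j+1}$ for every $j$, together with the one-dimensionality recorded in Remark~\ref{1 or 2}(i).

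\emph{First I would settle uniqueness in degree $m_Z+1$.} Since $Z$ admits an unexpected curve, Theorem~\ref{u_ZTheorem} gives $m_Z<t_Z$, and Lemma~\ref{m, t and u}(e) then gives $t_Z\le u_Z$, so $m_Z<u_Z$; by Lemma~\ref{m, t and u}(a),(c) this says $a_Z=m_Z<b_Z-1=u_Z$, whence $a_Z\neq b_Z$. Now Remark~\ref{1 or 2}(i) forces $\dim[I_{Z+m_ZP}]_{m_Z+1}=1$; let $f$ span this space and $C$ be the curve it cuts out. Because $m_Z<t_Z$, the right-hand side of \eqref{eq:unexpected} vanishes when $j=m_Z$, so $C$ is an unexpected curve of degree $m_Z+1$, and it is the only one since $[I_{Z+m_ZP}]_{m_Z+1}$ is a line.

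\emph{Next I would treat a fixed degree $t$ with $m_Z+1<t\le u_Z$.} Put $j=t-1$ and $r=j-m_Z=t-1-m_Z\ge 1$, so that $m_Z<j<u_Z$. Proposition~\ref{prop:vector space decomposition} then reads
\[
[I_{Z+jP}]_{j+1}=\bigl\{\, f\cdot[I_{(j-m_Z)P}]_{j-m_Z}\,\bigr\}\oplus\bigl\{\, g\cdot[I_{(j-u_Z-1)P}]_{j-u_Z-1}\,\bigr\}.
\]
Since $j<u_Z$, the exponent $j-u_Z-1$ is negative, the second summand is $0$, and we are left with $[I_{Z+jP}]_{j+1}=f\cdot[I_{rP}]_r$. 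It then remains only to recall the elementary fact that $[I_{rP}]_r$, the space of degree-$r$ forms vanishing to order $r$ at $P$, consists — after choosing coordinates with $P=[0:0:1]$ — exactly of the binary forms of degree $r$ in the first two variables, so the curves it cuts out are precisely the unions of $r$ lines through $P$. Consequently the curves of degree $t$ passing through $Z$ with multiplicity $\ge t-1$ at $P$ are exactly the $C+L_1+\cdots+L_r$ with each $L_i$ a line through $P$. Finally, since $m_Z+1\le t\le u_Z$, Theorem~\ref{u_ZTheorem} guarantees that $Z$ has an unexpected curve of degree $t$; hence every nonzero element of $[I_{Z+jP}]_{j+1}$ — equivalently, every such $C+L_1+\cdots+L_r$ — is an unexpected curve of degree $t$, and these exhaust them.

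I do not expect a genuine obstacle: the corollary is essentially a repackaging of Proposition~\ref{prop:vector space decomposition}. The two points that need attention are (a) checking that possessing an unexpected curve really forces $a_Z<b_Z$, which is exactly what makes $[I_{Z+m_ZP}]_{m_Z+1}$ one-dimensional and $C$ unique; and (b) staying in the range $m_Z<j<u_Z$ so that the $g$-summand in the decomposition drops out and nothing beyond unions of lines through $P$ can occur. One could also bypass Proposition~\ref{prop:vector space decomposition} altogether, noting that multiplication by $f$ embeds $[I_{rP}]_r$ into $[I_{Z+jP}]_{j+1}$ and that both spaces have dimension $r+1$ by Lemma~\ref{FVlemma}, hence the map is onto; but quoting Proposition~\ref{prop:vector space decomposition} is cleaner since its regular-sequence argument is precisely what supplies the injectivity and the correct image.
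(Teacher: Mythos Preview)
Your proof is correct and follows essentially the same approach as the paper's: both derive the corollary as an immediate consequence of Proposition~\ref{prop:vector space decomposition} together with the fact that the nonzero forms in $[I_{rP}]_r$ are precisely products of $r$ linear forms through $P$. You have simply spelled out in full the details the paper leaves implicit, including the uniqueness argument via Remark~\ref{1 or 2}(i) and the vanishing of the $g$-summand for $j<u_Z$.
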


\begin{proof}
This is an immediate consequence of Proposition \ref{prop:vector space decomposition} and the fact for $j\geq m_Z$ that
the nonzero forms in $[I_{(j-m_Z)P}]_{j-m_Z}$ are precisely the products of $j-m_Z$ linear forms vanishing at $P$
(the $j$ in Proposition \ref{prop:vector space decomposition} corresponds to $t-1$ here).
\end{proof}

We begin our quest to understand unexpected curves by, more generally, considering the case that 
there is a unique curve $C_P(Z)$ containing $Z$ with multiplicity $m_Z$ at a general point $P$.
By Proposition \ref{prop:vector space decomposition} this is exactly the case that $m_Z\leq u_Z$
(i.e., that $2m_Z+2\leq |Z|$, since $u_Z=|Z|-m_Z-2$).
Even when unexpected, the curve $C_P(Z)$ sometimes is and sometimes is not 
irreducible (see Example \ref{H19Example} and Propositions \ref{FermatProp}, 
 \ref{prop:unexpected irr}). The following result clarifies the connection between irreducibility and being unexpected.

\begin{corollary}\label{irredStructure}
Let $Z$ be a reduced 0-dimensional subscheme of $\PP^2$ with $m_Z\leq u_Z$, let $P\in\PP^2$ be a general point, let
$C=C_P(Z)$ be the unique curve containing $Z$ with multiplicity $m_Z$ at $P$ and let $t+1$ be the number of components of $C$. 
\begin{enumerate}
\item[(a)] The component $C'$ of $C$ given in Lemma \ref{lem:initial deg curves}
is the unique curve containing $Z'$ with multiplicity $m_{Z'}$ at $P$, where $Z'\subset Z$ is the subset given in the lemma; i.e., $C'=C_P(Z')$,
and we have $m_{Z'} +t\leq u_{Z'}$.

\item[(b)] $C$ is unexpected for $Z$ if and only if $1\leq m_{Z'}$ and $m_Z<u_Z$. (In particular, if $C$ is irreducible, then 
$C$ is unexpected for $Z$ if and only if $1\leq m_Z<u_Z$.)

\item[(c)] If $C$ is not irreducible, then $C'$ is unexpected for $Z'$ if and only if $1\leq m_{Z'}$.

\end{enumerate}
\end{corollary}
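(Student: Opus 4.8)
The plan is to deduce all three parts from the structural results already in hand, principally Lemma \ref{lem:initial deg curves}, Proposition \ref{prop:vector space decomposition}, and Theorem \ref{u_ZTheorem}, together with the arithmetic relation $u_Z = |Z| - m_Z - 2$ from Lemma \ref{m, t and u}(c). First I would set up notation: write $C = C'\cup L_1 \cup \cdots \cup L_t$ as in Lemma \ref{lem:initial deg curves}, where $C'$ is the unique irreducible component of multiplicity $\deg C' - 1$ at $P$, each $L_i$ is a line through $P$ and a distinct point of $Z'' = Z\setminus Z'$, and $|Z''| = t$. That lemma already gives $m_{Z'} = m_Z - t$ and $\deg C' = m_{Z'}+1$, so $C'$ has degree $m_{Z'}+1$, vanishes on $Z'$, and has multiplicity $m_{Z'}$ at $P$; this shows $C' = C_P(Z')$ provided $\dim[I_{Z'+m_{Z'}P}]_{m_{Z'}+1} = 1$, which follows from Remark \ref{1 or 2}(i) once we know $m_{Z'} \le u_{Z'}$ (otherwise $C_P(Z')$ need not be unique, but Proposition \ref{prop:vector space decomposition} still isolates its initial-degree piece). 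For the inequality $m_{Z'} + t \le u_{Z'}$: since $Z'$ is obtained from $Z$ by deleting the $t$ points of $Z''$, iterating Remark \ref{1 or 2}(ii) (or just using $u_W = |W| - m_W - 2$) gives $u_{Z'} = |Z'| - m_{Z'} - 2 = (|Z| - t) - (m_Z - t) - 2 = u_Z \ge m_Z = m_{Z'} + t$, so this is immediate; this proves (a).

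For part (b), I would argue as follows. By Theorem \ref{u_ZTheorem}, $C$ is unexpected for $Z$ if and only if $m_Z < u_Z$ and $m_Z < t_Z$; but actually the cleanest route is to use the structural dichotomy. If $m_{Z'} = 0$, then by Lemma \ref{m, t and u}(b) the points of $Z'$ are collinear, so $C'$ is a line (or $Z' = \emptyset$), and then $C$ is a union of $m_Z + 1$ lines through $P$ — i.e., $C$ is a product of $m_Z+1$ linear forms in $I_P$ — which is exactly the "expected" configuration of curves vanishing on the $|Z| = m_Z + u_Z + 2$ points of $Z$ with multiplicity $m_Z$ at $P$; one checks directly from \eqref{useful2} and Lemma \ref{FVlemma} that in this case $\dim[I_{Z+m_ZP}]_{m_Z+1}$ equals $\max\{0, \dim[I_Z]_{m_Z+1} - \binom{m_Z+1}{2}\}$, so $C$ is not unexpected. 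Conversely if $m_{Z'} \ge 1$ and $m_Z < u_Z$, then $t_Z > m_Z$: indeed $C'$ is a curve of degree $m_{Z'}+1 \ge 2$ through $Z'$ but not every line through $P$, so $C$ itself is not a product of linear forms through $P$, forcing $\dim[I_{Z+m_ZP}]_{m_Z+1} = 1 > \max\{0, 2m_Z + 3 - |Z|\} = \max\{0, \dim[I_Z]_{m_Z+1} - \binom{m_Z+1}{2}\}$ since $2m_Z + 3 - |Z| = m_Z - u_Z - 1 < 0$; hence $C$ is unexpected by Definition \ref{def:unexpected curve}. The parenthetical remark in (b) is the special case $t = 0$, $Z' = Z$. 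Part (c) is then the application of (b) to the pair $(Z', C')$ in place of $(Z, C)$: by (a) we have $m_{Z'} \le m_{Z'} + t \le u_{Z'}$ with the left inequality strict as soon as $t \ge 1$ (the case $C$ not irreducible), so the condition "$m_{Z'} < u_{Z'}$" of (b) is automatic, and $C'$ is unexpected for $Z'$ if and only if $1 \le m_{Z'}$, as claimed.

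The main obstacle I anticipate is part (b): making rigorous the claim that $m_{Z'} = 0$ forces $C$ to be the "expected" curve system requires carefully matching $\dim[I_{Z+m_ZP}]_{m_Z+1}$ (which Lemma \ref{FVlemma} gives as $m_Z - a_Z + 1 = 1$ when $a_Z = m_Z < b_Z$, but as $2$ when $a_Z = b_Z$) against the expected value $\max\{0, 2m_Z + 3 - |Z|\}$, and checking these coincide exactly when $C'$ is a line — equivalently, one must verify that $m_{Z'} = 0$ is incompatible with $m_Z < u_Z$ unless the deleted lines account for the entire discrepancy. Concretely, $m_{Z'} = 0$ and $m_{Z'} + t \le u_{Z'} = u_Z$ forces $t \le u_Z$, while $m_Z = t$, so $m_Z \le u_Z$, and then $\dim[I_{Z+m_ZP}]_{m_Z+1} = 1$; but one computes that when the unique such curve is a union of $m_Z+1$ lines through $P$, the points of $Z$ fail to be "unexpectedly" constrained precisely because $h_Z(t_Z) < |Z|$ would be needed for genuine unexpectedness in degree $m_Z + 1$ — here I would invoke Theorem \ref{t_Z & unexp} / Theorem \ref{thm:tZ = dZ} to close the loop, noting that $Z'$ collinear forces $Z$ to contain $\ge m_Z + 2$ collinear points, which by Proposition \ref{prop:small tZ} and Theorem \ref{thm:tZ = dZ} precludes unexpected curves. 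This last step is where I expect to spend the most care.
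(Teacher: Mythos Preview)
Your arguments for (a) and (c) are correct and essentially match the paper's. Part (b), however, contains two errors that your obstacle paragraph only partly addresses.

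First, when $m_{Z'}=0$ you assert that $C$ is a union of $m_Z+1$ lines through $P$, i.e.\ a product of linear forms in $I_P$. This is false: $C'$ has multiplicity $m_{Z'}=0$ at $P$, so $C'$ is a line \emph{not} containing $P$ (namely the line through the $|Z'|=u_Z+2$ collinear points of $Z'$). Thus $C$ consists of one line off $P$ together with $t=m_Z$ lines through $P$. Second, in your forward implication you write $\max\{0,\dim[I_Z]_{m_Z+1}-\binom{m_Z+1}{2}\}=\max\{0,2m_Z+3-|Z|\}$; by \eqref{useful2} this equality requires $h_Z(m_Z+1)=|Z|$, which you have not established. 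Without it you cannot conclude that the expected dimension in degree $m_Z+1$ is zero, so the inequality $1>0$ you want does not follow.

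The paper resolves both issues by the route you sketch (but misapply) in your obstacle paragraph. Assuming $1\le m_{Z'}$ and $m_Z<u_Z$, it suffices by Theorem~\ref{thm:intro} to rule out $h_Z(t_Z)<|Z|$. If $h_Z(t_Z)<|Z|$, then $m_Z=t_Z$ by Theorem~\ref{thm:tZ = dZ}, and Proposition~\ref{prop:small tZ} gives two cases: in case~(i), $|Z|=2t_Z+2=2m_Z+2$ forces $u_Z=m_Z$, contradiction; in case~(ii), there is a line through $|Z|-t_Z=|Z|-m_Z\ge m_Z+2$ points of $Z$, which by Bezout is a component of $C$ not through $P$, hence is $C'$ by Lemma~\ref{lem:initial deg curves}, giving $m_{Z'}=0$, contradiction. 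Your obstacle paragraph names the right ingredients but invokes them only for the direction $m_{Z'}=0\Rightarrow$ not unexpected (and still repeats the ``lines through $P$'' mistake there); the case analysis is needed precisely to justify the forward implication.
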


\begin{comment}
\begin{proof} 
The condition $m_Z \leq u_Z$ implies uniqueness by Proposition \ref{prop:vector space decomposition}. Using Lemma \ref{lem:initial deg curves}, it remains to show that the curve $C'$ given there is indeed the unique curve $C_P (Z')$. 

By Lemma \ref{lem:initial deg curves}, we know $m_{Z'} = m_Z -  |Z''| = \deg C' - 1$. Thus, Lemma \ref{m, t and u} gives $u_{Z'} = u_{Z} \ge m_Z \ge m_{Z'}$ and so $C'$ is the unique curve through $Z'$ of degree $m_{Z'}+1$ that has multiplicity $m_{Z'}$ at $P$. 
\end{proof} 
\end{comment}

\begin{proof} (a) Since $C'$ is a component of $C$ and is defined by an element of $[I_{Z'+m_{Z'}P}]_{m_{Z'}+1}$,
we see $\dim [I_{Z'+m_{Z'}P}]_{m_{Z'}+1}=1$, so $C'=C_P(Z')$. Moreover, $m_{Z'}+t=m_Z$ and 
$m_Z+u_Z+2=|Z|=|Z'|+t=m_{Z'}+u_{Z'}+2+t$, so $u_{Z'}=u_Z$. 
Thus we have $m_{Z'}+t=m_Z\leq u_Z=u_{Z'}$.

(b) We first show unexpectedness. If $h_Z(t_Z) = |Z|$ then by Theorem \ref{thm:intro} we obtain that $Z$ has an unexpected curve
if $m_Z<u_Z$. (We  did not explicitly need to use $1\leq m_{Z'}$ yet.) 
Thus, it remains to rule out that $h_Z (t_Z) < |Z|$. Indeed, if $h_Z (t_Z) < |Z|$, then Theorem  \ref{thm:tZ = dZ} gives $m_Z = t_Z$.
By Proposition \ref{prop:small tZ}, there are two cases.

\begin{itemize}

\item[-] In case (b)(i) we have that $Z$ is the complete intersection of a conic and a curve of degree 
$t_Z+1$, so $|Z| = 2t_Z+2 = 2m_Z +2$. But then $u_Z +1 = |Z| - m_Z -1 = 2 m_Z +2 - m_Z -1 = m_Z+1$, contradicting our hypothesis.
(We still did not explicitly need to use $1\leq m_{Z'}$.)

\medskip

\item[-] In case (b)(ii), there is a line through $|Z|-t_Z \geq t_Z+2$ of the points, i.e. through $|Z| - m_Z \geq m_Z+2$ of the points. 
Thus this line is a component of any curve of degree $m_Z+1$ containing $Z + m_ZP$, but does not itself contain $P$;
i.e., this line is $C'$, hence $m_{Z'}=0$, contrary to assumption.
\end{itemize} 

Conversely, $0<m_Z$ by Remark \ref{interesting}, and $m_Z<u_Z$ by Theorem \ref{u_ZTheorem}.

(c) We have $m_{Z'}<u_{Z'}$ by (a) and $C'$ is irreducible, so $C'$ is unexpected for $Z'$ by (b) if $1\leq m_{Z'}$. 
Conversely, if $C'$ is unexpected for $Z'$, then $1\leq m_{Z'}$ by Remark \ref{interesting}.
\end{proof} 

We can now prove Theorem \ref{thm:geomVersion}.

\begin{corollary}
   \label{cor:geomVersion}
Let $Z \subset \PP^2$ be a finite set of points. Then 
$Z$ admits an unexpected curve if and only if
$2m_Z+2<|Z|$ but no subset of $m_Z+2$ (or more) of the points is collinear.
In this case, $Z$ has an 
unexpected curve of degree $j$ if and only if $m_Z < j \le |Z|-m_Z-2$. 
\end{corollary}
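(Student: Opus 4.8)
The plan is to derive Corollary \ref{cor:geomVersion} by combining Theorem \ref{u_ZTheorem} with the numerical translations already at hand, and by using Proposition \ref{prop:small tZ} to convert the inequality $m_Z < t_Z$ into the stated geometric condition. First I would recall from Lemma \ref{m, t and u}(c) that $u_Z = |Z|-m_Z-2$, so that the condition $m_Z < u_Z$ is literally the same as $2m_Z+2 < |Z|$, and the degree range $m_Z < j \le u_Z$ in Theorem \ref{u_ZTheorem} becomes $m_Z < j \le |Z|-m_Z-2$. Thus the only real work is to show that, \emph{assuming} $2m_Z+2 < |Z|$ (equivalently $m_Z < u_Z$), the existence of an unexpected curve (i.e. $m_Z < t_Z$, by Theorem \ref{u_ZTheorem}) is equivalent to no subset of $m_Z+2$ or more points of $Z$ being collinear.

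For the forward direction, suppose $Z$ admits an unexpected curve. By Theorem \ref{u_ZTheorem} we have $m_Z < t_Z$, hence $m_Z < u_Z$, giving $2m_Z+2 < |Z|$. For the collinearity claim, suppose to the contrary that some line $\ell$ contains at least $m_Z+2$ points of $Z$; let $Y \subset Z$ be the points of $Z$ on $\ell$, with $|Y| = k \ge m_Z+2$. Then $\ell$ divides every form in $[I_{Z+jP}]_{j+1}$ for $j$ up to $k-1$ (by Bezout, since such a form restricted to $\ell$ has degree $j+1 < k$ but vanishes at the $k \ge j+2$ points of $Y$... here I must be a little careful and instead argue via the explicit description), so that $[I_{Z+jP}]_{j+1} = \ell \cdot [I_{(Z\setminus Y)+jP}]_j$; applying Lemma \ref{lem:initial degree} to $Z \setminus Y$ (which has $|Z|-k$ points) shows this is zero for $j < |Z|-k$ and nonzero for $j = |Z|-k$, forcing $m_Z = |Z|-k$. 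Then $|Z|-m_Z = k \ge m_Z+2$, i.e. we are exactly in case (b)(ii) of Proposition \ref{prop:small tZ} (with $t_Z$ replaced by $m_Z$), which by that proposition gives $h_Z(m_Z) < |Z|$ and $m_Z = t_Z$ — contradicting $m_Z < t_Z$. For the reverse direction, assume $2m_Z+2 < |Z|$ (so $m_Z < u_Z$) and that no $m_Z+2$ points of $Z$ are collinear; I want $m_Z < t_Z$. If instead $m_Z = t_Z$, then Lemma \ref{bound for t} forces $t_Z < \lfloor (|Z|-1)/2\rfloor$ (since $m_Z < u_Z$ means $2t_Z+2 < |Z|$), so $h_Z(t_Z) < |Z|$ by comparison with the bound, and then Proposition \ref{prop:small tZ} applies: case (b)(i) gives $|Z| = 2t_Z+2 = 2m_Z+2$, contradicting $2m_Z+2 < |Z|$; case (b)(ii) gives a line through $|Z|-t_Z = |Z|-m_Z \ge m_Z+2$ points, contradicting the no-collinearity hypothesis. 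Hence $m_Z < t_Z$ and Theorem \ref{u_ZTheorem} supplies the unexpected curve together with the precise degree range.

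The main obstacle I anticipate is getting the case-bookkeeping exactly right in the forward direction — specifically, confirming that a large collinear subset really does force $m_Z = |Z|-k$ and hence lands us squarely in case (b)(ii) of Proposition \ref{prop:small tZ} with $t_Z = m_Z$, rather than in some degenerate boundary situation; this is precisely the content of the computation in Case 2 of the proof of Theorem \ref{thm:tZ = dZ}, so I would cite that argument rather than repeat it. A secondary subtlety is making sure the equivalence "$h_Z(t_Z) < |Z| \iff$ (b)(i) or (b)(ii)" is invoked in the correct direction in each half of the argument: in the forward direction we produce a (b)(ii) configuration and deduce $h_Z(t_Z) < |Z|$ hence $m_Z = t_Z$ (via Theorem \ref{thm:tZ = dZ}), while in the reverse direction we \emph{assume} $m_Z = t_Z$, deduce $h_Z(t_Z) < |Z|$ from the Hilbert-function bound, and then extract the forbidden geometric configuration. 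Once these are pinned down the rest is the purely formal substitution $u_Z = |Z|-m_Z-2$ into Theorem \ref{u_ZTheorem}. I would also remark, as the text already does, that $2m_Z+2 < |Z|$ is equivalent to $m_Z < u_Z$ by Lemma \ref{m, t and u}(c), so that the corollary can be read as: $m_Z < u_Z$ plus the absence of large collinear subsets is equivalent to the presence of unexpected curves.
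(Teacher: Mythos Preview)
Your reverse direction is correct and in fact cleaner than the paper's: you avoid the structure theory of $C_P(Z)$ entirely, instead using the contrapositive of Lemma~\ref{bound for t}(b) to force $h_Z(t_Z)<|Z|$ and then letting Proposition~\ref{prop:small tZ} produce a forbidden configuration. The paper instead argues via Corollary~\ref{irredStructure}(b), checking $m_{Z'}\ge 1$ by a component count on $C_P(Z)$.

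Your forward direction has a genuine gap at the end. After your (correct) computation that the line $\ell$ carries exactly $k=|Z|-m_Z$ points of $Z$, you try to conclude $t_Z=m_Z$ by saying you are ``in case (b)(ii) of Proposition~\ref{prop:small tZ} with $t_Z$ replaced by~$m_Z$''. But Proposition~\ref{prop:small tZ}(b)(ii) is a statement about a line through exactly $|Z|-t_Z$ points; you only know the line has $|Z|-m_Z$ points, and you cannot substitute $m_Z$ for $t_Z$ without already knowing they are equal. Your proposed patch---citing Case~2 of the proof of Theorem~\ref{thm:tZ = dZ}---has the same problem, since that case also takes ``line with $|Z|-t_Z$ points'' as its hypothesis.

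The fix is one line. Once you know $|Z\setminus Y|=m_Z$, apply Corollary~\ref{cor:change of t_Z} iteratively: since $Y$ is collinear, $t_Y=0$ by Example~\ref{rem:tZ}(i), so $t_Z\le t_Y+|Z\setminus Y|=m_Z$, whence $t_Z=m_Z$, contradicting $m_Z<t_Z$. This is exactly the step the paper uses (after obtaining $|Z''|=m_Z$ by a different route, via Lemma~\ref{lem:initial deg curves} applied to $C_P(Z)$ rather than via your Lemma~\ref{lem:initial degree} argument). So your overall strategy is sound and more elementary than the paper's; you just reached for the wrong tool at the final step.
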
 

\begin{proof} 
 For convenience we set $d=|Z|$, the number of points.
Since $d-m_Z-2=u_Z$ by Lemma \ref{m, t and u}(c), the range of degrees in which unexpected curves can occur is due to Theorem \ref{u_ZTheorem}.
Now assume $Z$ admits an unexpected curve. Then it has one (call it $C$) of degree $m_Z+1$, so by Corollary \ref{irredStructure}(b),
$m_Z<u_Z$ and hence $2m_Z+2<d$. However, if there were a subset of $m_Z+2$ (or more) of points of $Z$ on a line $L$, 
let $Z'$ be the points of $Z$ on $L$ and let $Z''$ be the rest. 
By Bezout's Theorem, $L$ is a component of $C_P(Z)$ 
%UN no 
not 
through $P$, so
%$L=C_P(Z')$ and $d-|Z'|=|Z''|=m_Z$ by Lemma \ref{lem:initial deg curves}. But 
%%UN $m_{Z'}=t_Z=0$, 
%$m_{Z'}=t_{Z'}=0$, 
%and $m_Z\leq t_Z\leq t_{Z'}+|Z''|=m_Z$
%by Corollary \ref{cor:change of t_Z}. Thus $m_Z=t_Z$ so $Z$ cannot admit an unexpected curve, by Theorem \ref{mainThm1},
%contrary to hypothesis.
$L = C_P(Z')$ by Lemma \ref{lem:initial deg curves}. Then clearly $m_{Z'} = 0$, and $ t_{Z'} = 0$ by 
Example \ref{rem:tZ}, so again by Lemma \ref{lem:initial deg curves} we obtain $|Z''| = m_Z$. 
Furthermore,  $m_Z\leq t_Z\leq t_{Z'}+|Z''|=m_Z$
by Corollary \ref{cor:change of t_Z}. Thus $m_Z=t_Z$ so $Z$ cannot admit an unexpected curve, by Theorem \ref{mainThm1},
contrary to hypothesis.

Conversely, assume $2m_Z+2<d$ but no subset of $m_Z+2$ (or more) of the points is collinear.
Then $m_Z<u_Z$, hence $\dim [I_{Z+m_ZP}]_{m_Z+1}=1$ by Proposition \ref{prop:vector space decomposition}
so we can speak of $C_P(Z)$. If we now show that $m_{Z'}\geq1$, then $C_P(Z)$ 
is unexpected by Corollary \ref{irredStructure} and we will be done.
If $m_{Z'}=0$, then $C_P(Z)$ consists of the line $C_P(Z')$ through $s\leq m_Z+1$ points of $Z$,
plus $m_Z$ additional lines, one each for the remaining $d-s\geq d-m_Z-1 > 2m_Z+2-m_Z-1=m_Z+1$.
Thus $m_Z\geq d-s>m_Z+1$, which contradicts $m_{Z'}=0$. 
\end{proof} 

\begin{remark}\label{rem:NegCurves}
The hypothesis $2m_Z+2<|Z|$ of Corollary \ref{cor:geomVersion}
is equivalent to $(m_Z+1)^2-m_Z^2-|Z|<-1$. If we let $X\to\PP^2$
be the blow up of the points of $Z$ and a general point $P$, then
$C^2=(m_Z+1)^2-m_Z^2-|Z|$, where $C$ is the proper transform of 
the curve defined by an element of $[I_{Z+m_ZP}]_{m_Z+1}$. 
Thus, for example, if $C$ is reduced and irreducible with $m_Z>0$, then 
$Z$ admits an unexpected curve if and only if $C^2<-1$.
More generally, if $C$ has fewer than $m_Z+1$ components, then
$Z$ admits an unexpected curve if and only if $C^2<-1$.
\end{remark} 

We summarize part of our results from Lemma \ref{lem:initial deg curves} and Corollary \ref{irredStructure} as follows: 

\begin{theorem} 
        \label{thm:unexp curve structure}
Let $Z$ be a reduced 0-dimensional subscheme of $\PP^2$ that admits an unexpected curve and let $P\in\PP^2$ be a general point.
Then there is a unique unexpected curve of degree $m_Z+1$, namely $C_P(Z)$,
and there is a unique subset $Z' \subset Z$ such that $C_P(Z)$ is the union of 
$C_P(Z')$ and $|Z \setminus Z'|$ lines, where $C_P(Z' )$ is irreducible and is the unique unexpected 
curve of $Z'$ of degree $m_{Z'}+1$. Furthermore, $C_P(Z')$ is 
rational and smooth away from $P$. 
\end{theorem}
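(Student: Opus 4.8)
The plan is to assemble the statement from the results already established, principally Theorem~\ref{u_ZTheorem}, Corollary~\ref{cor:unexpected curves only in initial deg}, Lemma~\ref{lem:initial deg curves} and Corollary~\ref{irredStructure}; no essentially new argument is required, so the work lies in checking that these pieces fit together and that no degenerate configuration slips through the cracks.

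First I would pin down the curve in the minimal degree. Since $Z$ admits an unexpected curve, Theorem~\ref{u_ZTheorem} gives $m_Z<t_Z$, and Lemma~\ref{m, t and u}(e) then gives $t_Z\le u_Z$, so in particular $m_Z<u_Z$. By Lemma~\ref{FVlemma} (equivalently Remark~\ref{1 or 2}(i), or the $j=m_Z$ case of Proposition~\ref{prop:vector space decomposition}), $\dim[I_{Z+m_ZP}]_{m_Z+1}=1$, so there is a unique plane curve $C_P(Z)$ of degree $m_Z+1$ that contains $Z$ and has multiplicity $m_Z$ at $P$. Because $m_Z<t_Z$, the definition of $t_Z$ forces $\dim[I_Z]_{m_Z+1}\le\binom{m_Z+1}{2}$, so the right-hand side of \eqref{eq:unexpected} vanishes in degree $m_Z+1$ while the left-hand side equals $1$; hence $C_P(Z)$ is unexpected, and by Corollary~\ref{cor:unexpected curves only in initial deg} it is the only unexpected curve of that degree.

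Next I would feed $C=C_P(Z)$ into Lemma~\ref{lem:initial deg curves}: $C$ is reduced, and it is either irreducible or the union of a single irreducible curve $C'$ of multiplicity $\deg C'-1$ at $P$ together with $|Z''|$ lines through $P$, one through each point of $Z''=Z\setminus Z'$, where $Z'=Z\cap C'$ and $\deg C'=m_{Z'}+1$; moreover $C'$ is rational and smooth away from $P$. The component $C'$ with this multiplicity property is unique, so $Z'=Z\cap C'$ is uniquely determined by $Z$ and $P$, which gives the uniqueness of $Z'$. If $C$ is irreducible we take $Z'=Z$ and $Z''=\varnothing$ and are done. If $C$ is reducible, Corollary~\ref{irredStructure}(a) identifies $C'=C_P(Z')$ and yields $m_{Z'}<u_{Z'}$; applying part~(b) to the fact that $C$ is unexpected for $Z$ gives $1\le m_{Z'}$; and part~(c) then gives that $C_P(Z')$ is unexpected for $Z'$. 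Running the first step again, now with $Z'$ in place of $Z$, shows that $C_P(Z')$ is the unique unexpected curve of $Z'$ of degree $m_{Z'}+1$, completing the proof. The rationality and smoothness away from $P$ were already delivered by Lemma~\ref{lem:initial deg curves} (the genus formula).

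The only point that needs care — and the closest thing to an obstacle — is the bookkeeping between the irreducible and reducible cases, in particular excluding the degenerate possibility that $C'$ is merely a line: the inequality $1\le m_{Z'}$ forces $\deg C'=m_{Z'}+1\ge 2$ (in fact $\ge 3$, since unexpected curves have degree at least $3$ by Remark~\ref{interesting}), so $C'$ genuinely is the unique component of $C$ of degree $\ge 2$ and the subset $Z'=Z\cap C'$ is unambiguous. Everything else is a direct citation of the lemmas and corollaries above.
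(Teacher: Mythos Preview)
Your proposal is correct and follows exactly the route the paper takes: the theorem is presented there explicitly as a summary of Lemma~\ref{lem:initial deg curves} and Corollary~\ref{irredStructure}, with no new argument, and you have simply written out the assembly in more detail than the paper does. Your extra care in verifying $1\le m_{Z'}$ via Corollary~\ref{irredStructure}(b) so that $C'$ is not a line (and hence $Z'$ is unambiguously the intersection $Z\cap C'$) is appropriate and matches the spirit of the paper's development.
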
 

Since by Remark \ref{interesting} the degree of any unexpected curve is at least three, it follows in combination with 
Theorem \ref{thm:unexp curve structure} that every unexpected curve of a finite set $Z \subset \PP^2$ has exactly one irreducible component of degree greater than one. This component is a rational curve that is an unexpected curve of 
the unique subset $Z'\subset Z$ such that $m_{Z'}=m_Z-(|Z|-|Z'|)$.
There is a very natural parametrization of this curve, which works more generally for the curve
$C_P(Z)$ when $m_Z \le u_Z$.

So let $Z$ be a reduced scheme of $d$ points $P_i\in\PP^2$ with $m_Z \le u_Z$.
For each point $P_i$ we have the dual line $L_i \subset (\PP^2)^{\vee}$ defined by linear form $\ell_i \in R = K[x,y,z]$. 
Set $f = \ell_1 \cdots \ell_d$, and let $\ell = \ell_P \in R$ be a general linear form, defining a line $L \subset (\mathbb P^2)^\vee$ that is dual to a general point $P \in \PP^2$. 

\begin{proposition}
       \label{paramprop1}
Assume that the characteristic of $K$ does not divide $|Z|$, that $K$ is algebraically closed, and  that $Z$ satisfies $m_Z \le u_Z$. 
Consider a syzygy 
\[
s_0 f_x + s_1 f_y + s_2 f_z + s_3 \ell = 0
\] 
of least degree of $\Jac (f)+(\ell)=(f_x,f_y,f_z, \ell)$,  and a rational map 
\[
\phi = (t_0 : t_1 : t_2):  (\PP^2)^{\vee} \dashrightarrow \PP^2, 
\]
where $t_0=ys_2-zs_1$, $t_1=-(xs_2-zs_0)$,  and $t_2=xs_1-ys_0$. Then the image of the restriction of $\phi$ 
to the line $L$ defined by $\ell$  is the irreducible curve $C_P(Z')$ determined by the subset $Z' \subset Z$  specified in 
Theorem \ref{thm:unexp curve structure}.
\end{proposition}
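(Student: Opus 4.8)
The triple $(t_0,t_1,t_2)$ is, up to sign and reordering, the cross product $(x,y,z)\times(s_0,s_1,s_2)$, so it satisfies two identities: $xt_0+yt_1+zt_2=0$ and $s_0t_0+s_1t_1+s_2t_2=0$. Reading a point $[x:y:z]\in(\PP^2)^{\vee}$ as the linear form $xX+yY+zZ$ on $\PP^2$, the first identity says that $\phi(Q)$ always lies on the line of $\PP^2$ that $Q$ represents. Since $L=\{\ell=0\}$ is dual to $P$, every $Q\in L$ represents a line through $P$, so $\phi|_L$ carries $L$ into the union of the lines through $P$, with $\phi(Q)$ lying on the line corresponding to $Q$. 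The plan is to read off the geometry of $C:=\overline{\phi(L)}$ from this description together with the syzygy relation.

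\textbf{Birationality.} Let $\psi:\PP^2\dashrightarrow L$ be the projection from $P$, sending $R\neq P$ to the line $\overline{PR}$ viewed as a point of $L$. For general $Q\in L$ we have $\phi(Q)\neq P$, and by the previous paragraph $\psi(\phi(Q))=Q$; hence $\psi\circ\phi|_L=\mathrm{id}_L$ generically. As $\phi|_L$ extends to a morphism $\PP^1\to\PP^2$ (clearing the common factor of the $t_i|_L$), its image $C$ is an irreducible rational curve and $\psi|_C$ — projection from $P$ — is birational, which forces $\deg C-\mathrm{mult}_P(C)=1$. Thus $C$ is exactly a curve of the type produced by Lemma~\ref{lem:initial deg curves}: irreducible, rational, smooth away from $P$, with $\mathrm{mult}_P(C)=\deg C-1$.

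\textbf{Passing through $Z'$.} For each $i$ put $Q_i=L_i\cap L$ and $\delta=s_0\partial_x+s_1\partial_y+s_2\partial_z$, so the syzygy reads $\delta(f)=-s_3\ell$. Evaluating at $Q_i$ gives $\delta(f)(Q_i)=0$ since $\ell(Q_i)=0$; expanding $\delta(f)=\sum_k\big(\prod_{j\neq k}\ell_j\big)\delta(\ell_k)$ and using $\ell_i(Q_i)=0$ while $\ell_j(Q_i)\neq 0$ for $j\neq i$, this forces $\delta(\ell_i)(Q_i)=0$. Together with $\ell_i(Q_i)=0$ this says the coordinate vector of $P_i$ is orthogonal to both $Q_i$ and $(s_0,s_1,s_2)(Q_i)$, hence is proportional to $(t_0,t_1,t_2)(Q_i)$ whenever the latter is nonzero; that is, $\phi(Q_i)=P_i$ unless $Q_i$ lies in the base locus of $\phi|_L$. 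I would then show that the base points of $\phi|_L$ on $L$ are precisely the $Q_i$ with $P_i\in Z'':=Z\setminus Z'$, so that $Z'\subset C$. This step, and the degree computation below, rest on the fact that the least degree of a non‑Koszul syzygy of $(f_x,f_y,f_z,\ell)$ equals $a_Z=m_Z$; this follows from the dictionary between $\mathcal D_Z$, $\Jac(f)$ and the splitting type recorded before Lemma~\ref{FVlemma} (proved in the appendix), once one notes that adjoining the general linear form $\ell$ corresponds, on the Hirzebruch‑surface side of Lemma~\ref{FVlemma}, to restricting to the fiber over the general point of $L$.

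\textbf{Degree and conclusion.} The forms $t_0,t_1,t_2$ are homogeneous of degree $1+\deg s_0=m_Z+1$, so $\deg C=(m_Z+1)-\deg\gcd(t_0|_L,t_1|_L,t_2|_L)=(m_Z+1)-|Z''|=m_{Z'}+1$, the last equality by Lemma~\ref{lem:initial deg curves}. Thus $C$ is an irreducible curve of degree $m_{Z'}+1$ through $Z'$ with multiplicity $m_{Z'}$ at the general point $P$. Since $m_{Z'}\le u_{Z'}$ (Corollary~\ref{irredStructure}(a), or trivially when $Z'=Z$), Proposition~\ref{prop:vector space decomposition} applied to $Z'$ shows that there is exactly one such curve, namely $C_P(Z')$, the curve singled out in Theorem~\ref{thm:unexp curve structure}. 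Hence $\overline{\phi(L)}=C_P(Z')$. I expect the main obstacle to be the identification of $\deg s_0$ with $m_Z$ together with the description of the base locus of $\phi|_L$ as $\{Q_i:P_i\in Z''\}$ — i.e.\ verifying there are no extra base points and none at the $Q_i$ with $P_i\in Z'$ — which is where the minimality of the chosen syzygy and the appendix's syzygy/derivation dictionary must be used in an essential way.
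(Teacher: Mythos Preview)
Your birationality step is a genuinely nicer argument than the paper's. The paper interprets $t(p)=p\times\sigma(p)$ too, but then argues indirectly: it bounds $\mathrm{mult}_P C'$ from below using the divisor $L\cap\sigma(L)$, writes $\mathrm{mult}_P C'=\epsilon+(m_Z-n)/\delta$ with $\delta=\deg\phi'$, and squeezes $\delta\epsilon\le 1$ to force $\delta=1$ and $\epsilon=0$. Your observation that $\psi\circ\phi|_L=\mathrm{id}$ (projection from $P$ is a left inverse) gives both $\delta=1$ and $\deg C-\mathrm{mult}_P C=1$ in one stroke, since the composite $\PP^1\to C\to\PP^1$ then has degree $\delta\cdot(\deg C-\mathrm{mult}_P C)=1$.

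Where you lose ground is in the base-locus step, which you flag yourself. Two points. First, you are missing the one place where the characteristic hypothesis is really used beyond $\deg s_i=m_Z$: if $p\in L$ is a base point of $\phi|_L$ then $\sigma(p)\parallel p$, hence $0=\sigma(p)\cdot\nabla f(p)=p\cdot\nabla f(p)=d\,f(p)$ by Euler, so $p=Q_i$ for some $i$. Without this, you have no control over ``extra'' base points, and your degree count $\deg C=(m_Z+1)-\deg\gcd(\bar t_j)$ cannot be tied to $|Z''|$. Second, you make life harder by trying to match the base locus to the \emph{predetermined} $Z''$ from Theorem~\ref{thm:unexp curve structure}. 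The paper goes the other way: once the Euler argument shows the base locus is $\{Q_1,\ldots,Q_n\}\subset\{Q_i\}$, it \emph{defines} $Z''=\{P_1,\ldots,P_n\}$ and $Z'=Z\setminus Z''$, shows (as you do) that $\phi'(Q_i)=P_i$ for $Q_i\notin Y''$, and then observes that $C'\cup\bigcup_{j\le n}\overline{PP_j}$ is a degree $m_Z+1$ curve through $Z$ with multiplicity $m_Z$ at $P$. Uniqueness of $C_P(Z)$ and of its irreducible decomposition (Lemma~\ref{lem:initial deg curves}, Corollary~\ref{irredStructure}) then forces $C'=C_P(Z')$ with the ``correct'' $Z'$. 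This route also disposes of the squarefreeness of $\gcd(\bar t_j)$ for free: if $\deg\gcd>n$ then $C'\cup\bigcup\overline{PP_j}$ would have degree $<m_Z+1$ with multiplicity one less at $P$, contradicting the minimality of $m_Z$. So rather than proving your harder matching claim, reorder: Euler $\Rightarrow$ base locus $\subset\{Q_i\}$, define $Z''$ accordingly, then run your birationality and degree arguments, and finish by uniqueness.
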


\begin{proof} 
The assumption on the characteristic guarantees that the derivation bundle $\mathcal D_Z$ is isomorphic to the syzygy bundle of $\Jac (f)$ (cf.\ Lemma \ref{syzygy bundle}), and thus each form $s_i$ has degree $m_Z = a_Z$. 

For a polynomial $g \in R$, denote by $\bar{g}$ its restriction to $L$. It is a polynomial in two 
variables, and thus a product of linear forms since $K$ is algebraically closed. The fact that $\deg s_i = a_Z$ implies (by definition of $a_Z$) that 
$\bar{\sigma} = (\bar{s}_0, \bar{s}_1, \bar{s}_2)$ is a syzygy of minimal degree of the 
restriction of $\Jac (f)$. It follows that 
the ideal generated by $\bar{s}_0, \bar{s}_1$ and $\bar{s}_2$ has codimension two, 
that is, that these polynomials do not have a common factor. Hence, the rational map 
\[
\sigma=(s_0 : s_1 :s_2): (\PP^2)^{\vee} \dashrightarrow (\PP^2)^{\vee}
\]
induces a morphism $\bar{\sigma}: L \to (\PP^2)^{\vee}$. 

For each $i$, $1 \leq i \leq d$, let $Q_i$ be the point of intersection of $L$ with $L_i$. Since $L$ is general, the $Q_i$ are distinct. 
Note that the line $L_{Q_i}$ dual to $Q_i$ contains $P$ and $P_i$.

Put $t=(t_0,t_1,t_2)$ and let $p$ be a point of $L$. 
Abusing notation, regard $t$ and $p$ as vectors in $K^3$;
then $t(p) = p\times\sigma(p)$ with $\sigma (p) \neq 0$. Hence, $t(p) = 0$ (i.e. $\phi(p)$ is undefined) if and only if  
$\sigma(p)=p$ as points of $\PP^2$.  Assume that this is the case. 
Then since $\sigma$ is a syzygy modulo $\ell$, we have $0=\sigma(p)\cdot\nabla f(p)=p\cdot\nabla f(p)= d \cdot  f(p)$, where $d = |Z|$. This proves the following: 

\begin{quote}
{\em If $p \in L$ and $\phi(p)$ is undefined then $p = L_i \cap L = Q_i$ for some $i$. }
\end{quote}
Notice that it does {\em not} follow that $\phi(Q_i)$ is undefined for all $i$.
For future reference, let $Y'' = \{Q_1,\ldots,Q_n\}$ be the set of  points on $L$ at which the map 
$\phi$ is not defined, and let  $Z'' = \{P_1,\ldots,P_n\}$ be the corresponding subset of $Z$. Furthermore, set $Z' = Z - Z''$, 
and let $Y'$ be comprised of the corresponding points $Q_i = L \cap L_i$ with $P_i \in Z'$. 

It follows that  $h = \bar{\ell_1} \cdots \bar{\ell_n}$ is a greatest common 
divisor of $\bar{t_0}$, $\bar{t_1}$, and $\bar{t_2}$ and that the map $\phi' : L \to \PP^2$ defined by 
$(\bar{t_0}/h :  \bar{t_1}/h : \bar{t_2}/h)$ is a morphism.  Let $\delta=\deg(\phi')$ be the degree of the mapping
(i.e., the degree of the inverse image of $\phi'(p)$ for a general $p\in L$). Then $\phi'(L)$ is an irreducible curve $C'$ 
of degree $(m_Z + 1 -n)/\delta$ that is equal to the Zariski closure of $\phi (L)$. 

Next we show that $\sigma(Q_i)$ is on the line $L_i$ for each $Q_i \in Y = Y' + Y''$. 
Indeed, since $Q_i$ is on $L$, the above syzygy gives $\sigma(Q_i) \cdot \nabla f(Q_i) = 0$.
Now write $f = \ell_ig$. Since $\nabla\ell_i=P_i$, the Leibniz rule gives 
$\nabla f = gP_i + \ell_i\nabla g$. As $Q_i$ is on $L_i$, we get 
$\nabla f(Q_i) = g(Q_i)P_i$. Since $g(Q_i) \neq 0$, from
$0=\sigma(Q_i) \cdot \nabla f(Q_i) =g(Q_i)\sigma(Q_i) \cdot P_i$ we conclude $\sigma(Q_i) \cdot P_i=0$,
hence $\sigma(Q_i) \in L_i$, as desired. 

Notice that if $Q \in L \backslash Y''$ (so $\sigma(Q) \neq Q)$) then $(t_0,t_1,t_2)$ are the coordinates 
of the point dual to the line through the points $Q$ and $\sigma(Q)$. Hence $\phi'$  is a morphism that maps a point $Q \in L \backslash Y''$ to the point 
that is dual to the line through the points $Q$ and $\sigma (Q)$. In particular, if $Q_i\in Y'$, then 
$L_i$ is the line through $Q_i$ and $\sigma(Q_i)$, and hence $\phi' (Q_i) = P_i$. Thus, we see that the curve $C'$ contains $Z'$.

Now we compute the multiplicity of $C'$ at $P$.
We have seen that for $p \in L$, $\phi(p)$ is undefined if and only if $\bar \sigma (p) = p$, and there 
are $n$ such points, namely the set $Y''$. We have also seen that for $p \in L \backslash Y''$, $\phi'$ 
maps $p$ to the point dual to the line through $p$ and $\bar \sigma(p)$. 
Thus the points of $L$ mapping to $P$ include all points of $(L\cap\sigma(L))\backslash Y''$.
Since each $s_i$ has degree $m_Z$, $D = \sigma (L) \cap L$ is a divisor of degree $m_Z$.
Therefore, the multiplicity of $C'$ at $P$ is at least $(m_Z-n)/\delta$. Let $\epsilon+(m_Z-n)/\delta$ be the multiplicity of $C'$ at $P$.
Thus $\epsilon+(m_Z-n)/\delta < \deg(C')=(m_Z-n+1)/\delta$, so $\delta\epsilon+m_Z-n\leq m_Z-n+1$.
Since $\delta\epsilon$ is a nonnegative integer, we must have $\epsilon=0$, hence
$(m_Z-n)/\delta$ and $(m_Z-n+1)/\delta$ are integers so $\delta=1$.
Thus $\deg(C')=m_Z-n+1$ and $C'$ has multiplicity $m_Z-n$ at $P$.

We now have that $C'\cup(\cup_{j = 1}^n L_{Q_{i_j}})$ has degree $m_Z+1$, contains $Z$ and has $P$
as a point of multiplicity $m_Z$.  Thus, $C'\cup(\cup_{j = 1}^n L_{Q_{i_j}})$ is the unique curve $C_P (Z)$ with these properties, 
and $C' = C_P (Z')$ by Corollary \ref{irredStructure}.
\end{proof}

In the proof of the result above, if none of the $\bar{\ell_i}$ is a common factor for $\bar{t_0}, \bar{t_1}$ and $\bar{t_2}$,
then $n=0$ so $\deg(C_P(Z'))=m_Z+1=\deg(C_P(Z))$, hence $C_P(Z')=C_P(Z)$ is irreducible.
If, in the above result, $\sigma$ is a global syzygy (i.e., $s_3=0$),
then $\sigma$ and $\ell$ become independent of each other, and
a minor modification of the argument above then gives us the following
criterion for irreducibility. 
The advantage here is not having to work modulo a general linear form $\ell$,
which can be a computational convenience when testing irreducibility explicitly.

\begin{proposition}\label{paramprop2} 
Assume that the characteristic of $K$ does not divide $|Z|$, that $K$ is algebraically closed, and that $Z$ satisfies $m_Z \le u_Z$. 
Suppose further that $\Jac (f)$ has a syzygy $s_0 f_x + s_1 f_y + s_2 f_z = 0$, where each $s_i$ has degree $m_Z$. If none of the forms 
$\ell_i$ is a common divisor of $t_0=ys_2-zs_1$, $t_1=-(xs_2-zs_0)$,  and $t_2=xs_1-ys_0$, 
then the curve $C_P(Z)$ is irreducible. 
\end{proposition}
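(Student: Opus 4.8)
Proposition \ref{paramprop2} should follow by running the proof of Proposition \ref{paramprop1} almost verbatim, using the simplification that $\sigma=(s_0:s_1:s_2)$ is now a \emph{global} rational map $(\PP^2)^{\vee}\dashrightarrow(\PP^2)^{\vee}$, independent of $\ell$, and that the syzygy identity $\sigma(p)\cdot\nabla f(p)=0$ holds at every point $p$, not merely modulo a general linear form. First I would fix a general linear form $\ell$, let $L\subset(\PP^2)^{\vee}$ be the line it defines and $P\in\PP^2$ the dual (general) point. Since $\operatorname{char}(K)\nmid|Z|$, the derivation bundle $\mathcal D_Z$ is the syzygy bundle of $\Jac(f)$, so any syzygy with $\deg s_i=m_Z=a_Z$ restricts on $L$ to a syzygy $\bar\sigma=(\bar s_0,\bar s_1,\bar s_2)$ of the restricted Jacobian ideal of minimal degree $a_Z$; hence the $\bar s_i$ have no common factor and $\sigma$ induces a genuine morphism $\bar\sigma\colon L\to(\PP^2)^{\vee}$, exactly as in \ref{paramprop1}.

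Next I would reproduce the base-point analysis. For $p\in L$ with $\sigma(p)\ne 0$, the value $\phi(p)=(\bar t_0(p):\bar t_1(p):\bar t_2(p))$ is undefined precisely when $p\times\sigma(p)=0$, i.e.\ $\bar\sigma(p)=p$ in $\PP^2$; in that case the global relation and Euler's identity give $0=\sigma(p)\cdot\nabla f(p)=p\cdot\nabla f(p)=|Z|\,f(p)$, so $f(p)=0$, and hence (for general $L$) $p$ is one of the points $Q_i=L\cap L_i$. The new input is the translation of the hypothesis: "$\ell_i$ is not a common divisor of $t_0,t_1,t_2$" means some $t_k$ is not divisible by $\ell_i$, hence $t_k$ does not vanish identically on the line $L_i$, so $V(t_k)\cap L_i$ is finite; for general $L$ the point $Q_i=L\cap L_i$ avoids it, so $t_k(Q_i)\ne 0$ and $\phi$ is actually \emph{defined} at $Q_i$. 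Combining the two observations, on a general $L$ the only candidate base points of $\phi|_L$ are the $Q_i$, and the hypothesis rules each of them out; thus $\phi|_L\colon L\to\PP^2$ is a morphism (this is the case "$n=0$" in the notation of the proof of \ref{paramprop1}).

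The remainder is then the bookkeeping of \ref{paramprop1} with $n=0$. Since $\phi|_L$ is given by the degree-$(m_Z+1)$ forms $\bar t_0,\bar t_1,\bar t_2$ with no common zero, its image $C'$ is an irreducible curve; comparing $\deg C'=(m_Z+1)/\delta$ with the multiplicity $\ge m_Z/\delta$ of $C'$ at $P$, where $\delta$ is the degree of $L\to C'$, forces $\delta=1$, so $\deg C'=m_Z+1$ and $C'$ has multiplicity $m_Z$ at $P$. For each $Q_i$, writing $f=\ell_i g$ and using $Q_i\in L_i$ (so $\nabla f(Q_i)=g(Q_i)P_i$ with $g(Q_i)\ne 0$), the relation $\sigma(Q_i)\cdot\nabla f(Q_i)=0$ gives $\sigma(Q_i)\in L_i$, whence $\phi(Q_i)$ is the point dual to the line through $Q_i$ and $\sigma(Q_i)$, namely $P_i$; so $C'$ passes through all of $Z$. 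Thus $C'$ is an irreducible curve of degree $m_Z+1$ through $Z$ with multiplicity $m_Z$ at the general point $P$, and by the uniqueness of such a curve supplied by Proposition \ref{prop:vector space decomposition} (here the hypothesis $m_Z\le u_Z$ enters) it must be $C_P(Z)$, which is therefore irreducible.

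I expect the only genuinely delicate step to be the second paragraph: pinning down, with the correct genericity of $L$, that the global non-divisibility hypothesis on $t_0,t_1,t_2$ exactly annihilates the base locus of $\phi|_L$. Everything else is a transcription, with mild simplifications, of the already-proved Proposition \ref{paramprop1}.
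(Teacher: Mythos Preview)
Your proposal is correct and matches the paper's approach exactly. The paper does not give a separate proof of Proposition~\ref{paramprop2}; it simply remarks that when $\sigma$ is a global syzygy ($s_3=0$), $\sigma$ and $\ell$ become independent, and ``a minor modification of the argument above'' yields the irreducibility criterion---precisely the modification you spell out, namely that the global non-divisibility of $t_0,t_1,t_2$ by each $\ell_i$ forces $n=0$ on a general line $L$, after which the degree and multiplicity bookkeeping of Proposition~\ref{paramprop1} goes through unchanged.
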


\begin{remark}\label{recoveryRem}
Given the parametrization $\phi$ in Proposition \ref{paramprop1}, keeping in mind that 
$\bar{\sigma}\cdot\overline{\nabla f}=0$, we can recover 
$\bar{\sigma}$ using facts about triple vector products. Working formally, 
the first component of $(p \times \sigma(p))\times \nabla f$ is
$$f_y(xs_1-ys_0)+f_z(xs_2-zs_0)=x(f_ys_1+f_zs_2)-s_0(yf_y+zf_z).$$
But $x(f_ys_1+f_zs_2)= -xf_xs_0$ modulo $\ell$, so the first component
modulo $\ell$ is $-s_0(xf_x+yf_y+zf_z)=-\deg(f)fs_0$. In the same way
the second and third components are $-\deg(f)fs_1$ and $-\deg(f)fs_2$.
Thus for all $p\in L$ we have 
\[
(\phi \times \overline{\nabla f})(p) = (p \times \bar{\sigma}(p)) \times (\overline{\nabla f}(p)) = -\deg(f)(\bar{f}\bar{\sigma})(p);
\]
i.e., 
%UN $\bar{\phi'} \times \overline{\nabla f} = -\deg(f)\frac{\bar{f}}{h}\bar{\sigma}$, 
$\phi' \times \overline{\nabla f} = -\deg(f)\frac{\bar{f}}{h}\bar{\sigma}$, 
where $h$ is a greatest common 
divisor of $\bar{t_0}$, $\bar{t_1}$, and $\bar{t_2}$. 

Similarly, in Proposition \ref{paramprop2} we have $\phi \times {\nabla f} = -\deg(f)\frac{f}{h}\sigma$.
\end{remark}

We now consider the change 
in the multiplicity index if one adds a point to a given set of points. 

\begin{lemma} \label{lem:change of index}
Let $P_1,\ldots,P_s,Q$ be distinct points of $\PP^2$ and let $Z = P_1+\ldots+P_s$. Then $m_{Z+Q} = m_Z$ if either $u_Z = m_Z - 1$ or 
$Q \in \bigcap_{P \in P^2} C_P(Z)$. Otherwise $m_{Z+Q} = m_Z + 1$.
\end{lemma}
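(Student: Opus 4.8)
The plan is to use the splitting-type machinery, specifically Lemma~\ref{FVlemma}, together with the structure result for $C_P(Z)$ in Proposition~\ref{prop:vector space decomposition}. Recall that $m_Z = a_Z$ and $u_Z = b_Z - 1$ by Lemma~\ref{m, t and u}, and that adding a general point shifts the splitting type from $(a_Z, b_Z)$ to $(a_Z + 1, b_Z)$ by Remark~\ref{1 or 2}(ii). The point $Q$ in the statement is arbitrary (not general), so the crux is to understand when $\dim [I_{(Z+Q) + m_Z P}]_{m_Z + 1} > 0$, i.e., when there is a curve of degree $m_Z + 1$ through $Z$, through $Q$, and singular of order $m_Z$ at a general point $P$. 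Since $\dim [I_{Z + m_Z P}]_{m_Z + 1} \geq 1$ always (this is $a_Z$ being the multiplicity index), adding the single linear condition of passing through $Q$ either keeps the space nonzero (forcing $m_{Z+Q} = m_Z$, the minimum possible by Lemma~\ref{lem:change of index}'s trivial lower bound $m_{Z+Q}\geq m_Z$) or kills it.

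First I would dispose of the case $u_Z = m_Z - 1$, i.e., $a_Z = b_Z$. Here Lemma~\ref{FVlemma} gives $\dim [I_{Z + m_Z P}]_{m_Z + 1} = 2$, so the subspace of forms additionally vanishing at $Q$ is at least one-dimensional, hence $m_{Z+Q} = m_Z$. (Equivalently: $Z + Q$ has splitting type $(a_Z, b_Z + 1) = (m_Z, m_Z+1)$, so $m_{Z+Q} = a_{Z+Q} = m_Z$ directly.) Next, assume $u_Z \geq m_Z$, so that $C_P(Z)$ is the unique curve of degree $m_Z + 1$ through $Z + m_Z P$ (Proposition~\ref{prop:vector space decomposition}), for $P$ in a dense open set. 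Then there is a curve through $(Z + Q) + m_Z P$ of degree $m_Z + 1$ for general $P$ if and only if $C_P(Z)$ passes through $Q$ for general $P$, i.e., $Q \in \bigcap_{P} C_P(Z)$ (intersecting over the dense open set of "good" $P$, which by semicontinuity/definition is what the statement means). In that case $m_{Z+Q} = m_Z$. Otherwise, for general $P$ the one condition "pass through $Q$" is nontrivial on the one-dimensional space $[I_{Z+m_ZP}]_{m_Z+1}$, so $\dim[I_{(Z+Q)+m_ZP}]_{m_Z+1} = 0$ and hence $m_{Z+Q} > m_Z$; combined with $m_{Z+Q} = a_{Z+Q} = a_Z + 1 = m_Z + 1$ from Remark~\ref{1 or 2}(ii) applied at a general point — wait, that gives the value only when $Q$ is general. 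Here is where care is needed: $m_{Z+Q} \le m_Z + 1$ must be argued for arbitrary $Q$. I would get this from Corollary~\ref{cor:change of t_Z}-style reasoning, or more directly: take the curve $C_P(Z)$ of degree $m_Z+1$ singular to order $m_Z$ at general $P$, and adjoin the line $L$ through $P$ and $Q$; then $C_P(Z) \cup L$ has degree $m_Z + 2$, vanishes on $Z + Q$, and has multiplicity $m_Z + 1$ at $P$, showing $\dim[I_{(Z+Q)+(m_Z+1)P}]_{m_Z+2} > 0$, hence $m_{Z+Q} \le m_Z + 1$.

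Combining: if $u_Z = m_Z - 1$ or $Q \in \bigcap_P C_P(Z)$ then $m_{Z+Q} = m_Z$; otherwise $m_Z < m_{Z+Q} \le m_Z + 1$, forcing $m_{Z+Q} = m_Z + 1$. I expect the main obstacle to be the bookkeeping around "general $P$" and the intersection $\bigcap_{P\in\PP^2} C_P(Z)$: one must be careful that $C_P(Z)$ is only defined for $P$ in a dense open set, that semicontinuity of $\dim[I_{(Z+Q)+jP}]_{j+1}$ lets one test at general $P$, and that the set $\bigcap_P C_P(Z)$ is interpreted correctly (as $Q$ lying on $C_P(Z)$ for \emph{every} $P$ in the relevant open set, equivalently for a general $P$ since the $C_P(Z)$ vary in an algebraic family). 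A secondary subtlety is the converse direction of the "$Q\in\bigcap_P C_P(Z)$" clause: one needs that if $Q$ lies on $C_P(Z)$ for general $P$, then the resulting one-dimensional family of curves through $Z+Q$ persists for \emph{all} general $P$, which is exactly semicontinuity of $\dim[I_{(Z+Q)+m_ZP}]_{m_Z+1}$ together with its being $\ge 1$ on a dense set.
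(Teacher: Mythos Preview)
Your proposal is correct and follows essentially the same approach as the paper's proof: both argue that $m_{Z+Q}\ge m_Z$ trivially, split into the cases $\dim[I_{Z+m_ZP}]_{m_Z+1}=2$ (equivalently $u_Z=m_Z-1$) versus $=1$, observe that the single linear condition of passing through $Q$ drops the dimension by at most one, and obtain the upper bound $m_{Z+Q}\le m_Z+1$ by adjoining the line through $P$ and $Q$ to a form in $[I_{Z+m_ZP}]_{m_Z+1}$. Your self-correction away from Remark~\ref{1 or 2}(ii) is exactly right, and your explicit discussion of the semicontinuity bookkeeping around ``general $P$'' is slightly more careful than the paper's own proof.
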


\begin{proof}
Note that $\dim_K [I_Z\cap I_P^{m_Z}]_{m_Z+1}>1$ if and only if $u_Z = m_Z - 1$.
For each integer $j \ge 0$, one has $I_{Z+Q} \cap I_P^j \subset I_Z \cap I_P^j$,
hence $m_{Z+Q,P}\geq m_Z$. If $Q$ 
lies on $C_P$ for all $P$ 
then $\dim[I_{Z+Q}\cap I_P^{m_Z}]_{m_Z+1}= \dim_K [I_Z\cap I_P^{m_Z}]_{m_Z+1}\geq 1$,
so $m_{Z+Q,P}=m_Z$. If $\dim_K [I_Z\cap I_P^{m_Z}]_{m_Z+1}>1$, then, since
$\dim_K [I_{Z+Q}\cap I_P^{m_Z}]_{m_Z+1}$ drops by at most 1, we have 
$\dim[I_{Z+Q}\cap I_P^{m_Z}]_{m_Z+1}\geq 1$, and again $m_{Z+Q,P}=m_Z$.
If $Q$ does not lie on $C_P$ for some (hence for general) $P$ 
then the dimension
drops exactly one, so if also $\dim_K [I_Z\cap I_P^{m_Z}]_{m_Z+1}=1$
we get $\dim[I_{Z+Q}\cap I_P^{m_Z}]_{m_Z+1}= 0$, hence $m_{Z+Q,P}\geq m_Z+1$.
But let $f \neq 0$ be a form of degree $m_Z + 1$ 
in $I_Z \cap I_P^{m_Z}$. Let $\ell$ be a linear form that defines the line through $P$ and $Q$. Then 
$\ell f \neq 0$ is in $[I_{Z+Q} \cap I_P^{m_Z+1}]_{m_Z+2}$, which shows $m_{Z+Q, P} \le m_Z+1$. 
\end{proof} 

See Example \ref{H19Example} for an illustration of how $Q$ can lie on all the curves 
$C_P(Z)$.

Thus, given $m_Z$, there are only two possible values of $m_{Z+Q}$. 
When the number 
of points of $Z$ is odd and  $m_Z$ is as large as 
possible, we can say which of these values occurs for an arbitrary point $Q$.

\begin{corollary} \label{prop:change of mZ, max mZ}
Let $Z$ be a finite reduced subscheme of $\PP^2$.
If $m_Z =  \frac{|Z| - 1}{2}$, then $m_{Z + Q} = m_Z$ for any point $Q$ not in $Z$.
\end{corollary}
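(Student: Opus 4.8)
The plan is to reduce the statement immediately to the dichotomy in Lemma~\ref{lem:change of index}, whose first alternative ($u_Z = m_Z - 1$) I claim is forced by the hypothesis. First I would record that $m_Z = \frac{|Z|-1}{2}$ is only possible when $|Z|$ is odd, and that by Lemma~\ref{m, t and u}(a) we have $a_Z = m_Z$. Combining this with the defining relation $a_Z + b_Z = |Z| - 1$ of the splitting type, I get $b_Z = |Z| - 1 - a_Z = 2m_Z - m_Z = m_Z = a_Z$; that is, the splitting type of $Z$ is balanced.

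Next I would translate the balanced splitting type into the statement $u_Z = m_Z - 1$. This is exactly Lemma~\ref{m, t and u}(c), which gives $u_Z = b_Z - 1 = a_Z - 1 = m_Z - 1$. (Alternatively, Remark~\ref{1 or 2}(i) says that $\dim_K [I_{Z+m_Z P}]_{m_Z + 1}$ equals $2$ precisely when $a_Z = b_Z$, and $\dim_K[I_Z \cap I_P^{m_Z}]_{m_Z+1} > 1$ is the condition ``$u_Z = m_Z - 1$'' that opens the first case of Lemma~\ref{lem:change of index}.)

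Finally, since $u_Z = m_Z - 1$, the first alternative of Lemma~\ref{lem:change of index} applies to every point $Q \notin Z$, so $m_{Z+Q} = m_Z$, as asserted. There is no real obstacle here: the only point requiring a moment's care is the elementary passage from $m_Z = \frac{|Z|-1}{2}$, via $m_Z = a_Z$ and $a_Z + b_Z = |Z|-1$, to $a_Z = b_Z$ and hence $u_Z = m_Z - 1$; once that is in place the corollary is an immediate instance of Lemma~\ref{lem:change of index}.
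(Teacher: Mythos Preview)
Your proof is correct and follows essentially the same approach as the paper: both derive $u_Z = m_Z - 1$ from the hypothesis and then invoke Lemma~\ref{lem:change of index}. The paper is slightly more direct, using the formula $u_Z = |Z| - m_Z - 2$ from Lemma~\ref{m, t and u}(c) without passing through the splitting type, but your route via $a_Z = b_Z$ amounts to the same computation.
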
 

\begin{proof}
If $m_Z = \frac{|Z|-1}{2}$, then $|Z|=2m_Z+1$, so $u_Z=|Z|-2-m_Z=m_Z-1$. 
Now the result follows by Lemma~\ref{lem:change of index}.
\end{proof}

If  $m_Z < \frac{|Z|-1}{2}$ and $Q$ is a \emph{general} point,  we now find the value of $m_{Z+Q}$.

\begin{corollary} \label{conj:change of dZ}
Let $Z$ be a finite reduced subscheme of $\PP^2$ and let $Q$ be a general point.
If $m_{Z} < \frac{|Z| - 1}{2}$, then $m_{Z + Q} = m_Z + 1$.
\end{corollary}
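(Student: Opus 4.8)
The plan is to deduce Corollary~\ref{conj:change of dZ} directly from Lemma~\ref{lem:change of index}, by ruling out the two exceptional alternatives when $Q$ is general. Recall that Lemma~\ref{lem:change of index} asserts $m_{Z+Q} = m_Z$ precisely when either $u_Z = m_Z - 1$ or $Q \in \bigcap_{P \in \PP^2} C_P(Z)$, and otherwise $m_{Z+Q} = m_Z + 1$. So it suffices to check that, under the hypothesis $m_Z < \frac{|Z|-1}{2}$ and with $Q$ general, neither of those conditions can hold.

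First I would dispose of the condition $u_Z = m_Z - 1$. By Lemma~\ref{m, t and u}(c) we have $u_Z = |Z| - m_Z - 2$, so $u_Z = m_Z - 1$ is equivalent to $|Z| = 2m_Z + 1$, i.e. $m_Z = \frac{|Z|-1}{2}$. This is exactly excluded by our hypothesis $m_Z < \frac{|Z|-1}{2}$. Hence $u_Z \ge m_Z$, so in particular $m_Z \le u_Z$ and, by Proposition~\ref{prop:vector space decomposition}, for a general point $P$ there is a unique curve $C_P(Z)$ of degree $m_Z+1$ vanishing on $Z$ with multiplicity $m_Z$ at $P$; thus $C_P(Z)$ genuinely makes sense in this setting.

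Next I would rule out $Q \in \bigcap_{P} C_P(Z)$. The point here is that $Q$ is general, so it is enough to show that the intersection $\bigcap_{P} C_P(Z)$ — taken over general $P$, or equivalently the set of points lying on $C_P(Z)$ for all $P$ in a dense open set — is a proper closed subset of $\PP^2$, hence cannot contain a general point. Concretely, I would argue that for two distinct general points $P_1, P_2$ the curves $C_{P_1}(Z)$ and $C_{P_2}(Z)$ share no common component: indeed $C_{P_i}(Z)$ has multiplicity $m_Z$ at $P_i$ and degree $m_Z+1$, so any common component would have to pass through both $P_1$ and $P_2$ to high order, and a careful bookkeeping of multiplicities (using the structure from Lemma~\ref{lem:initial deg curves}, namely that the only component of $C_P(Z)$ through $P$ with high multiplicity is the irreducible piece $C_P(Z')$, while the remaining components are the lines joining $P$ to the points of $Z''$, which move with $P$) shows this is impossible once $m_Z < \frac{|Z|-1}{2}$, i.e. $u_Z > m_Z - 1$, forcing $C_P(Z)$ to have fewer than $m_Z+1$ components or to be irreducible for generic $P$. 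Since $C_{P_1}(Z) \cap C_{P_2}(Z)$ is then a finite set (of at most $(m_Z+1)^2$ points), it certainly does not contain the general point $Q$. Therefore Lemma~\ref{lem:change of index} gives $m_{Z+Q} = m_Z + 1$.

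The main obstacle I anticipate is the second step: making rigorous the claim that $\bigcap_P C_P(Z)$ is a proper subset, i.e. that the family $\{C_P(Z)\}$ genuinely moves with $P$ rather than having a fixed component or a fixed base point locus containing a general point. The cleanest route is probably to invoke Lemma~\ref{lem:initial deg curves} and Corollary~\ref{irredStructure} to describe the components of $C_P(Z)$ precisely (an irreducible rational curve $C_P(Z')$ of degree $m_{Z'}+1$ plus $|Z''|$ lines through $P$), and then observe that the lines through $P$ obviously vary with $P$ and $C_P(Z')$ is smooth away from $P$ — so a point $Q$ fixed by all these curves would have to be one of the finitely many points of $Z$, or would force a numerical contradiction with $m_Z < \frac{|Z|-1}{2}$ as in the proof of Corollary~\ref{cor:geomVersion}. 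Once that description is in hand, the conclusion is immediate.
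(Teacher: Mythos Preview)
Your approach is essentially the same as the paper's: invoke Lemma~\ref{lem:change of index} and rule out the two exceptional alternatives. Your treatment of the first alternative ($u_Z=m_Z-1$) is identical to the paper's.

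For the second alternative you are overworking the problem. The paper's proof is one sentence: it simply asserts that $\bigcap_P C_P(Z)$ is a finite set, hence cannot contain a general $Q$. In fact the ``finite set'' claim is not literally needed (and is not always true---if, say, $|Z|-m_Z$ points of $Z$ lie on a line $L$, then $L$ is a component of every $C_P(Z)$). What matters, as you correctly state, is only that $\bigcap_P C_P(Z)$ is a \emph{proper} closed subset of $\PP^2$, and this is trivial: the intersection is contained in any single $C_{P_0}(Z)$, which is a curve of degree $m_Z+1$. So a general $Q$ avoids it. Your attempt to show that $C_{P_1}(Z)$ and $C_{P_2}(Z)$ have no common component is both unnecessary and, as the collinear example shows, not always true; you can delete that discussion entirely and replace it with the one-line observation above.
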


\begin{proof}
If $m_Z < \frac{|Z|-1}{2}$, then $|Z|>2m_Z+1$, so $u_Z=|Z|-2-m_Z>m_Z-1$. Moreover, $\bigcap_{P \in P^2} C_P(Z)$ is a finite set. 
Hence the result follows from Lemma \ref{lem:change of index}. 
\end{proof}

\begin{remark}\label{constructing unexpected curves}
We can describe more precisely how unexpected curves arise.
Assume a reduced point scheme $Z$ has an unexpected curve $C$ of some degree $t$.
By Theorem \ref{u_ZTheorem} and Proposition \ref{prop:vector space decomposition},
$m_Z<t\leq u_Z$ and $C$ is the union of $C_P(Z)$ with $t-m_Z-1$ lines though $P$
(indeed, the linear system of curves corresponding to $[I_{Z+m_ZP}]_{m_Z+1}$ is the union of $C_P(Z)$ with 
all choices of $t-m_Z-1$ lines though $P$, and so they are all unexpected).
Moreover, by Lemma \ref{lem:initial deg curves} and Corollary \ref{irredStructure},
there is a unique subset $Z'\subseteq Z$ such that $C_P(Z')$ is irreducible and unexpected for $Z'$;
it has degree $m_{Z'}+1= m_Z+1-(|Z|-|Z'|)$ and we have
that $C_P(Z)$ is the union of $C_P(Z')$ with the lines through $P$ and the $|Z|-|Z'|$ points of $Z$ not in $Z'$.

Thus every $Z$ with an unexpected curve $C$ comes from a $Z'$ with an irreducible unexpected curve,
and $Z=Z'+Q_1+\cdots+Q_r$ for some set of $r$ distinct points $Q_i$ not in $Z'$. 
Since $m_Z<u_Z$, $m_{Z'}+r=m_Z$ and $m_Z+u_Z+2=|Z|=|Z'|+r=m_{Z'}+u_{Z'}+2+r$,
we see that $m_{Z'}+r=m_Z<u_Z=u_{Z'}$, so $r\leq u_{Z'}-(m_{Z'}+1)$.

In fact, if $Z$ has an unexpected curve, then $Z+Q_1+\cdots+Q_i$ also has an unexpected curve
for any distinct points $Q_i$ not in $Z$, for any $0\leq i\leq u_Z-(m_Z+1)$.
To see this, assume $u_Z>m_Z+1$ and let $Y=Z+Q$ for any point $Q\not\in Z$.
By induction it is enough to show $Y$ has an unexpected curve and that $u_Y-m_Y\geq u_Z-m_Z-1$. 
But 
%UN $m_Z\leq m_Y\leq m_Y+1$ 
$m_Z\leq m_Y\leq m_Z+1$
by Lemma \ref{lem:change of index}, so $u_Y\geq u_Z$ 
(since $m_Z+u_Z+2=|Z|$ and $m_Y+u_Y+2=|Y|=|Z|+1$), hence
$u_Y-m_Y\geq u_Z-m_Z-1$.

Assume that $Y$ does not have an unexpected curve. Then Theorem \ref{u_ZTheorem} gives $m_Y \ge t_Y$, and Corollary \ref{cor:geomVersion} shows that at least $m_Y + 2$ points of $Y$ are on a line $L$. Hence $L$ contains at least $m_Y + 1$ points of $Z$. If $m_Y > m_Z$, then at least $m_Z + 2$ points of $Z$ are collinear, which contradicts the assumption that $Z$ has an unexpected curve, using again Corollary \ref{cor:geomVersion}. We conclude that $m_Y = m_Z$. Now Theorem \ref{u_ZTheorem} and Corollary \ref{cor:change of t_Z} yield $m_Y = m_Z < t_Z \le t_Y$, a contradiction to $m_Y \ge t_Y$. Hence, $Y$ has an unexpected curve, as claimed. 
\end{remark}

We will  observe on more than one occasion below that it is of interest to know when 
$Z$ admits an {\em irreducible} unexpected curve of minimal degree $m_Z+1$. 
This motivates the next result.

\begin{corollary} 
       \label{irred curve least deg}
Assume that $Z$ is a finite set of points in $\mathbb P^2$ and let $P \in \PP^2$ be a general point.
Then every nonzero form in $[I_{Z+m_ZP}]_{m_Z+1}$ is irreducible if and only if $m_{Z - Q} = m_Z$ for each point $Q \in Z$.
\end{corollary}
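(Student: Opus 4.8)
The plan is to establish the equivalent statement obtained by negating both sides: \emph{some nonzero form in $[I_{Z+m_ZP}]_{m_Z+1}$ is reducible if and only if $m_{Z-Q} < m_Z$ for some $Q \in Z$.} First I would take $P$ general not merely with respect to $Z$ but simultaneously with respect to each of the finitely many subschemes $Z - Q$, $Q \in Z$; this is harmless and guarantees $m_{Z,P} = m_Z$ and $m_{(Z-Q),P} = m_{Z-Q}$ for all $Q$. I would also record the elementary fact $m_{Z-Q} \le m_Z$, immediate from $I_Z \cap I_P^j \subseteq I_{Z-Q} \cap I_P^j$ in each degree; in particular $[I_{Z+m_ZP}]_{m_Z+1} \ne 0$, so the statement is not vacuous, and ``$m_{Z-Q}\neq m_Z$'' is the same as ``$m_{Z-Q}<m_Z$''.

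For the ``if'' direction, suppose $m_{Z-Q} < m_Z$ for some $Q \in Z$. Then $[I_{(Z-Q)+m_{Z-Q}P}]_{m_{Z-Q}+1}$ contains a nonzero form $g$; that is, $g \in I_{Z-Q} \cap I_P^{m_{Z-Q}}$ has degree $m_{Z-Q}+1$. Let $\ell_0$ be a linear form cutting out the line through $P$ and $Q$ (defined, since the general point $P$ avoids $Z$), and let $\ell_1, \dots, \ell_r$ be arbitrary linear forms vanishing at $P$, with $r = m_Z - m_{Z-Q} - 1 \ge 0$. Then $F = g\,\ell_0 \ell_1 \cdots \ell_r$ is nonzero of degree $m_Z+1$; it vanishes on $Z = (Z-Q) \cup \{Q\}$ and lies in $I_P^{m_{Z-Q} + 1 + r} = I_P^{m_Z}$, so $F \in [I_{Z+m_ZP}]_{m_Z+1}$; and since $g$ has degree $\ge 1$ and $\ell_0 \cdots \ell_r$ has degree $m_Z - m_{Z-Q} \ge 1$, the form $F$ is reducible.

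For the ``only if'' direction, suppose $f \in [I_{Z+m_ZP}]_{m_Z+1}$ is nonzero and reducible (so $m_Z \ge 1$). Because $P$ is general, Lemma \ref{lem:initial deg curves} describes the curve of $f$ as $C' \cup L_1 \cup \cdots \cup L_k$ with $k \ge 1$, where $C'$ is irreducible of multiplicity $\deg C' - 1$ at $P$ and each $L_i$ is a line through $P$ meeting $Z$ in exactly one point $P_i$, the $P_i$ being distinct, while $\deg C' + k = m_Z+1$ and $C'$ passes through all of $Z$ except $P_1, \dots, P_k$. Writing $f = f' \ell_1 \cdots \ell_k$ accordingly and dividing by $\ell_1$, the form $f/\ell_1 = f' \ell_2 \cdots \ell_k$ is nonzero of degree $m_Z$, vanishes on $Z \setminus \{P_1\}$, and has multiplicity $(\deg C' - 1) + (k-1) = m_Z - 1$ at $P$. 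Hence $\dim[I_{(Z-P_1)+(m_Z-1)P}]_{m_Z} > 0$, which forces $m_{Z-P_1} = m_{(Z-P_1),P} \le m_Z - 1 < m_Z$, as needed.

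The geometric content is entirely carried by Lemma \ref{lem:initial deg curves}, which identifies the reducible members of the linear system $[I_{Z+m_ZP}]_{m_Z+1}$ as unions of a line through $P$ and a point $Q$ of $Z$ with a curve through $Z - Q$ of multiplicity $m_Z-1$ at $P$; granting that, the equivalence is just bookkeeping with degrees and multiplicities at $P$. I do not anticipate a serious obstacle. The one point requiring attention is the genericity convention: $P$ must be chosen general for $Z$ and for all the subschemes $Z-Q$ simultaneously, so that the invariants $m_{Z-Q}$ are actually computed by this particular $P$.
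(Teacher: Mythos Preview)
Your proof is correct and follows essentially the same route as the paper's: both directions hinge on Lemma~\ref{lem:initial deg curves}, with the reducible case producing a linear component through $P$ and some $Q\in Z$ that can be peeled off. The only difference is that for the construction of a reducible form you allow $m_{Z-Q}$ to be any value below $m_Z$ and pad with extra lines through $P$, whereas the paper first invokes Lemma~\ref{lem:change of index} to pin down $m_{Z-Q}=m_Z-1$ and multiplies by a single line; your version avoids that dependency but is otherwise the same argument.
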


\begin{proof}
Assume $m_{Z-Q} = m_Z$ for all $Q \in Z$. 
Let $C$ be a curve of degree $m_Z+1$ containing $Z$ and having multiplicity $m_Z$  at the general point $P$.   
By Lemma \ref{lem:initial deg curves},
if $C$ is not irreducible then there is at least one component of $C$ consisting of a line 
joining $P$ and a point $Q \in Z$. Removing this point and this line shows that 
$m_{Z-Q} < m_Z$, giving a contradiction.

Now assume $m_{Z-Q} \neq m_Z$ for some $Q$ (hence $m_{Z-Q} = m_Z -1$ by Lemma \ref{lem:change of index}), 
let $0\neq F\in[I_{Z-Q+(m_Z-1)P}]_{m_Z}$
and let $\ell$ be the linear form defining the line joining $Q$ to $P$. 
Then $\ell  F\in [I_{Z+m_ZP}]_{m_Z+1}$ is not irreducible.
\end{proof}

We now give a different criterion from the dual point of view (compared to Lemma \ref{lem:change of index}) 
concerning when the addition of a point increases the multiplicity index $m_Z$ (recall this is equal to $a_Z$)  and when it does not.
Let $Z$ be a reduced scheme of points, and let $Y=Z+Q$ for some $Q=(a : b :c)$ 
not in $Z$. 
Note the line dual to $Q$ is 
%UN $L_Q=ax+by+cz$. 
defined by $\ell_Q = ax+by+cz$. 
%UN
%Let $G$ be the product of the linear forms dual to the points of $Z$ 
%so $F = L_Q G$ is
%the product of the linear forms dual to the points of~$Y$. Let $\ell$ be a 
%%BH general linear form. 
%generic linear form. 
%Let $G_x, G_y, G_z$ be the first order partial derivatives of $G$, and $g_x, g_y, g_z$ their 
%restrictions modulo $\ell$ (these are not necessarily derivatives), and analogously for $F$. 
%Let $\ell_Q$ be the restriction of $L_Q$ modulo $\ell$. Let $g,f$ be the restrictions of $G, F$ respectively. Assume $\deg G = d$.
 Let $g$ be the product of the linear forms dual to the points of $Z$, and  
so $f = \ell_Q g$ is
the product of the linear forms dual to the points of~$Y$. Let $\ell$ be the general linear form dual to a general point $P \in \PP^2$. Denote the image of a polynomial $h \in R$ in $\overline{R} = R/\ell R$ by $\overline{h}$.

\begin{proposition}\label{my=mzDualCriterion}
Assume that $m_Z \le u_Z$,   that the characteristic of $K$ does not divide $|Z|$ nor $|Y|$, and that $K$ is algebraically closed. Then one has:   

\begin{itemize}

\item[(a)] For a general linear form $\ell$,  
consider a syzygy of least degree $r g_x + s g_y + t g_z + u \ell = 0$ of $\Jac (g) + (\ell)$, and so $r,s,t \in [R]_{m_Z}$. 
Then $\overline{\ell}_Q$ divides $\overline{ar+bs+ct}$ in $\overline{R}$ if and only if  $m_Y = m_Z$.

\item[(b)] Assume $\Jac (g)$ has a syzygy $r g_x + s g_y + t g_z = 0$ with $r,s,t \in [R]_{m_Z}$ (this will certainly be the case if the line arrangement dual to $Z$ is free). 
Then $m_Y = m_Z$ if and only if $\ell_Q$ divides $ar+bs+ct$.
\end{itemize}
\end{proposition}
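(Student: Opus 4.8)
\textbf{Setup and reduction.} The plan is to compare the multiplicity indices $m_Z = a_Z$ and $m_Y = a_Y$ using the description of $\mathcal D_Z$ as the syzygy bundle of $\Jac(g)$ (available since $\operatorname{char} K \nmid |Z|$, by Lemma \ref{syzygy bundle}) and the analogous statement for $Y$. By Remark \ref{1 or 2}(ii) we know $m_Y \in \{m_Z, m_Z+1\}$, so the task is to distinguish these two cases. For part (a), work modulo a general linear form $\ell$ dual to a general point $P$; a syzygy of least degree $rg_x + sg_y + tg_z + u\ell = 0$ of $\Jac(g)+(\ell)$ has $r,s,t$ of degree $m_Z$ (this is exactly the characterization of $a_Z$ via the splitting type restricted to the general line $L$). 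The key computation is to produce, from such a syzygy, a candidate syzygy for $\Jac(f) + (\ell)$ where $f = \ell_Q g$. First I would differentiate $f = \ell_Q g$: $f_x = a g + \ell_Q g_x$, and similarly for $f_y, f_z$. The natural way to kill the $g$-terms is to take the combination $a \cdot (\text{first}) + b \cdot(\text{second}) + c\cdot(\text{third})$ against the original syzygy, which yields (using the syzygy relation for $g$) something of the form $\ell_Q \cdot(\text{syzygy for }\Jac g) \pm u\,\ell_Q\,\ell + (ar+bs+ct)\,\ell_Q$-type terms; more precisely I expect to get that $\big(\ell_Q r,\ \ell_Q s,\ \ell_Q t,\ *\big)$ is a syzygy of $(f_x,f_y,f_z,\ell)$, and also that $(ar+bs+ct)$ appears as the coefficient measuring whether one can do better.

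\textbf{The dichotomy.} The heart of the argument is: $m_Y = m_Z$ iff $\Jac(f)+(\ell)$ already has a syzygy of degree $m_Z$ (not $m_Z+1$), i.e. iff a suitable linear combination of the ``obvious'' degree-$(m_Z+1)$ syzygy $\ell_Q\cdot(\text{syzygy of }g)$ with lower-degree material drops the degree — and the obstruction to dropping degree is precisely divisibility of $\overline{ar+bs+ct}$ by $\overline{\ell_Q}$ in $\overline R = R/\ell R$. I would argue: if $\overline{\ell_Q} \mid \overline{ar+bs+ct}$, exhibit an explicit syzygy of $\Jac(f)+(\ell)$ in degree $m_Z$ by dividing out $\overline{\ell_Q}$, forcing $a_Y \le m_Z$ hence $a_Y = m_Z$; conversely, if $a_Y = m_Z$, take a minimal syzygy $r'f_x + s'f_y + t'f_z + u'\ell = 0$ with $r',s',t'$ of degree $m_Z$, restrict to $L$, and by uniqueness/codimension-two properties of the minimal syzygy of $\overline{\Jac g}$ (the $\bar r,\bar s,\bar t$ have no common factor, as in the proof of Proposition \ref{paramprop1}) deduce that $(\bar r',\bar s',\bar t')$ must be a scalar multiple of $\overline{(\ell_Q r, \ell_Q s, \ell_Q t)}$ up to the minimal syzygy of $\overline{\Jac g}$, and then read off that $\overline{\ell_Q} \mid \overline{ar+bs+ct}$. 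Part (b) is the ``global'' version: if $\Jac(g)$ itself has a genuine (non-Euler, $u=0$) syzygy $rg_x+sg_y+tg_z=0$ with $r,s,t$ of degree $m_Z$ — automatic when $\mathcal A_Z$ is free since then $\mathcal D_Z$ splits as $\mathcal O(-a_Z)\oplus\mathcal O(-b_Z)$ globally — then I can run the same argument without restricting to $\ell$, and divisibility of $ar+bs+ct$ by $\ell_Q$ in $R$ itself characterizes $m_Y = m_Z$.

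\textbf{Expected main obstacle.} The delicate point is the converse direction in (a): showing that \emph{any} minimal syzygy of $\Jac(f)+(\ell)$ of degree $m_Z$, after restriction to $L$, is forced into the one-dimensional family spanned (modulo the restricted syzygies of $\overline{\Jac g}$) by $\overline{(\ell_Q r,\ell_Q s,\ell_Q t)}$. This requires knowing that the space of degree-$m_Z$ syzygies of $\overline{\Jac g}$ on $L$ is exactly one-dimensional — which follows from $\overline{p_*q^*(\mathcal I_Z(1))} \cong \mathcal O_L(-a_Z)\oplus\mathcal O_L(-b_Z)$ with $a_Z < b_Z$ when $m_Z < u_Z$, and needs separate care (reducing to $m_Z \le u_Z$, with the boundary case $a_Z=b_Z$ handled by noting the statement of the proposition already assumes $m_Z \le u_Z$). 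I would isolate this linear-algebra-over-$\overline R$ fact as the technical core, then the divisibility bookkeeping relating $\overline{ar+bs+ct}$ to the degree drop is a direct, if slightly fiddly, computation with the Leibniz rule and the Euler relation $xf_x+yf_y+zf_z = |Y| f$ (using $\operatorname{char} K \nmid |Y|$).
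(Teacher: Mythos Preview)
Your forward direction is morally right but misses a key correction term. The triple $(\ell_Q r,\ell_Q s,\ell_Q t)$ is \emph{not} a syzygy of $(f_x,f_y,f_z,\ell)$: a direct computation using $\nabla f = gQ+\ell_Q\nabla g$ gives
\[
\ell_Q r f_x+\ell_Q s f_y+\ell_Q t f_z=(ar+bs+ct)f-\ell_Q^2 u\ell,
\]
and $(ar+bs+ct)f$ is not a multiple of $\ell$. The fix is the Euler relation $xf_x+yf_y+zf_z=(d+1)f$: subtracting $\frac{1}{d+1}(ar+bs+ct)(x,y,z)$ produces a genuine degree-$(m_Z+1)$ syzygy of $\Jac(f)+(\ell)$, and divisibility $\overline{\ell_Q}\mid\overline{ar+bs+ct}$ is then exactly what allows division by $\ell_Q$ to drop the degree to $m_Z$. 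This is the computation the paper makes explicit.

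Your converse argument has a real gap. The claim that a minimal syzygy $(\bar r',\bar s',\bar t')$ of $\overline{\Jac f}$ in degree $m_Z$ is ``a scalar multiple of $\overline{(\ell_Q r,\ell_Q s,\ell_Q t)}$ up to the minimal syzygy of $\overline{\Jac g}$'' does not parse: the degrees differ by one, and $\overline{(\ell_Q r,\ell_Q s,\ell_Q t)}$ is not a syzygy of $\overline{\Jac f}$ anyway. What actually works is to expand $\bar r'\bar f_x+\bar s'\bar f_y+\bar t'\bar f_z=0$ via Leibniz to get $\overline{\ell_Q}\mid\overline{(ar'+bs'+ct')}\,\overline g$, hence $\overline{\ell_Q}\mid\overline{(ar'+bs'+ct')}$; writing $\overline{(ar'+bs'+ct')}=\overline{\ell_Q}\,\overline h$ and using Euler for $g$ shows $(\bar r',\bar s',\bar t')+\tfrac{\overline h}{d}(\bar x,\bar y,\bar z)$ is a degree-$m_Z$ syzygy of $\overline{\Jac g}$, hence a scalar multiple $\lambda(\bar r,\bar s,\bar t)$ by one-dimensionality (here $m_Z\le u_Z$ is essential). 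Dotting with $Q$ and eliminating then gives $\lambda\,\overline{(ar+bs+ct)}=\tfrac{d+1}{d}\,\overline{\ell_Q}\,\overline h$, and $\lambda=0$ forces $(\bar r',\bar s',\bar t')=0$, a contradiction.

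The paper takes a more geometric route for the converse: since $m_Y=m_Z\le u_Z$, the curves $C_P(Z)$ and $C_P(Y)$ coincide, and Proposition~\ref{paramprop1} parametrizes both via cross products $(\bar x,\bar y,\bar z)\times(\bar r,\bar s,\bar t)$ and $(\bar x,\bar y,\bar z)\times(\bar m,\bar n,\bar o)$ respectively. Equality of these cross products (up to scalar) forces $\overline{(m,n,o)}=\overline{(r,s,t)}+\overline h\,\overline{(x,y,z)}$ directly, bypassing the syzygy-module dimension count; dotting with $\overline{\nabla f}$ and using $\overline f=\overline{\ell_Q}\,\overline g$ finishes. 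Both approaches land on the same key relation, but your outline does not correctly identify how to get there.
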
 

\begin{proof} 
By Euler's theorem we have  $x f_x + y f_y + z f_z = (d+1)f$, where $d = |Z|$. 
Abusing notation, regard $Q = (q, b, c)$ as a vector in $K^3$. As observed above, the Leibniz rule gives 
$\nabla f = g Q + \ell_Q \nabla g$.  

We first prove (a). Consider the dot product:
\begin{align*}
\hspace{4cm}&\hspace{-4cm} 
\displaystyle  \left [ \ell_Q (r, s, t)-\frac{1}{d+1} \big (Q \cdot (r, s, t) \big ) (x,y,z) \right ] \cdot \nabla f \\
& =  \ell_Q (r, s, t) \cdot \left [ g Q + \ell_Q \nabla g \right ] - \big (Q \cdot (r, s, t) \big ) f \\
& =  \ell_Q g \big (Q \cdot (r, s, t) \big ) +  \ell_Q^2  \big (\nabla g \cdot (r, s, t) \big ) - \big (Q \cdot (r, s, t) \big ) f\\
& =  - \ell_Q^2 u \ell. 
\end{align*}
This equation represents a syzygy of $\Jac (f) + (\ell)$. If $\ell_Q$ divides 
$ar+bs+ct = Q \cdot (r, s, t)$ modulo~$\ell$, then canceling $\ell_Q$ gives a syzygy, where the 
coefficients of the partial derivatives of $f$ have degree $m_Z$. 
Hence  we conclude  $m_Y=m_Z$. 

Conversely, assume now $m_Y=m_Z$. Thus, there is a syzygy $m f_z + n f_y + o f_z + p \ell = 0$  
with $m, n, o \in [R]_{m_Z}$. The assumption $m_Z \le u_Z$ implies that $C_P (Z) = C_P (Y)$. 
Proposition \ref{paramprop1} gives a parametrization of its irreducible component 
$C_P (Z') = C_P (Y')$. It is obtained from the cross product of $(r, s, t)$ and $(x, y, z)$ and of 
$(m, n, o)$ and $(x, y, z)$, respectively. It follows that there is a form $h \in R$ such that 
\[
\overline{(m, n, o)} = \overline{(r, s, t)}  + \overline{h} \, \overline{(x, y, z)}. 
\] 
Taking the dot product with $\overline{\nabla f} = \overline{\ell}_Q \overline{\nabla g} + \overline{g} \,\overline{Q}$, we obtain in $\overline{R}$
\[
0 = \big (\overline{(r, s, t)} \cdot \overline{Q}  \big ) \overline{g} + (d+1) \overline{h} \, \overline{f}. 
\]
Since $\overline{f} = \overline{\ell}_Q \overline{g}$, we conclude that $\overline{\ell}_Q$ divides $\overline{(r, s, t) \cdot Q}$, as claimed. 

We now prove (b). By assumption, $ar+bs+ct$ and $\ell_Q$ are independent of $\ell$. Hence, part (a) gives the desired conclusion. 
\end{proof}

\begin{remark} 
Let $Z$ be a finite set of points such that $m_Z\leq u_Z$.
%UN capital to lower case
Let $g$ be the product of the linear forms dual to the points of $Z$, and 
assume there exists a syzygy $r g_x + s g_y + t g_z = 0$ of degree $m_Z$.
Then Proposition \ref{my=mzDualCriterion} gives a way to compute
$\cap_{P\in\PP^2}C_P(Z)$. One just finds the locus of all $(a,b,c)$
such that $ax+by+cz$ divides $ar+bs+ct$. For example, to find all such
$(a,b,c)$ with $a\neq0$, just plug $-(by+cz)/a$ in for $x$ in $ar+bs+ct$
and regard the result as a polynomial with coefficients in $K(b/a,c/a)$.
The locus is given by the vanishing of these coefficients.
\end{remark}

%%%%%%%%%%%%%%%%%%%%%%%%%%%%%%%%%%%%%%%%%%%%%%%%%%%%%%

\section{Examples}
\label{sec:examples}

In this section we use the theory of line arrangements to present   examples that illustrate some of the ideas in the preceding sections, including the role of the characteristic. 
We also establish new stability results and show that points in linearly general position do not have unexpected curves. These examples make it clear that sets of points that admit 
unexpected curves are special, but nevertheless they occur surprisingly often.

We first exhibit a line arrangement that is not free and is dual to a set of points that has a unique unexpected curve, which is reducible. 

\begin{example}\label{H19Example}
For this example we assume our ground field has characteristic 0.
Consider the line configuration given by the lines defined by the following 19 linear forms:
$x$, $y$, $z$, $x+y$, 
$x-y$, $2x+y$, $2x-y$, $x+z$, 
$x-z$, $y+z$, $y-z$, $x+2z$, 
$x-2z$, $y+2z$, $y-2z$, $x-y+z$, 
$x-y-z$, $x-y+2z$, $x-y-2z$, shown in Figure \ref{H19Fig}.
Let $Z$ be the corresponding reduced scheme consisting of the 19 points dual to the lines,
sketched in Figure \ref{dualH19Fig}.

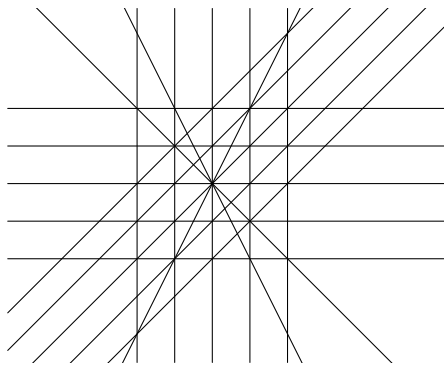
\begin{figure}[htbp]
\begin{center}
\begin{tikzpicture}[line cap=round,line join=round,>=triangle 45,x=.5cm,y=.5cm]
\clip(-5.44,-4.76) rectangle (6.16,4.66);
\draw [domain=-5.44:6.16] plot(\x,{(-0.0-1.0*\x)/-1.0});
\draw (0.0,-4.76) -- (0.0,4.66);
\draw [domain=-5.44:6.16] plot(\x,{(-0.0-0.0*\x)/1.0});
\draw [domain=-5.44:6.16] plot(\x,{-2.0*\x});
\draw [domain=-5.44:6.16] plot(\x,{(-0.0--2.0*\x)/1.0});
\draw [domain=-5.44:6.16] plot(\x,{(-0.0--1.0*\x)/-1.0});
\draw [domain=-5.44:6.16] plot(\x,{(-2.0-0.0*\x)/-1.0});
\draw [domain=-5.44:6.16] plot(\x,{(-1.0-0.0*\x)/-1.0});
\draw [domain=-5.44:6.16] plot(\x,{(--1.0--0.0*\x)/-1.0});
\draw [domain=-5.44:6.16] plot(\x,{(--2.0--0.0*\x)/-1.0});
\draw (-2.0,-4.76) -- (-2.0,4.66);
\draw (-1.0,-4.76) -- (-1.0,4.66);
\draw (1.0,-4.76) -- (1.0,4.66);
\draw (2.0,-4.76) -- (2.0,4.66);
\draw [domain=-5.44:6.16] plot(\x,{(--1.0--1.0*\x)/1.0});
\draw [domain=-5.44:6.16] plot(\x,{(--2.0--1.0*\x)/1.0});
\draw [domain=-5.44:6.16] plot(\x,{(-1.0--1.0*\x)/1.0});
\draw [domain=-5.44:6.16] plot(\x,{(-4.0--2.0*\x)/2.0});
\end{tikzpicture}
\caption{A configuration of 19 lines (the line at infinity, $z=0$, is not shown).}
\label{H19Fig}
\end{center}
\end{figure}

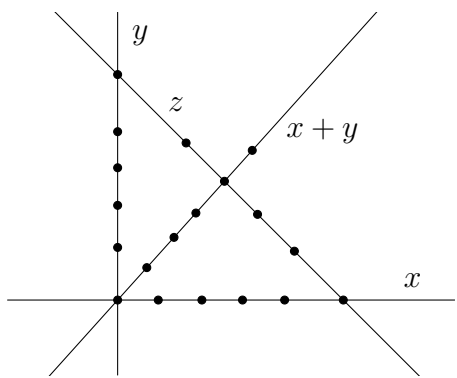
\begin{figure}[htbp]
\begin{center}
\begin{tikzpicture}[line cap=round,line join=round,>=triangle 45,x=1.0cm,y=1.0cm]
\clip(-1.463688897430969,-0.9982420398488032) rectangle (4.481599077455316,3.829741815619195);
\draw (0.0,-0.9982420398488032) -- (0.0,3.829741815619195);
\draw [domain=-1.463688897430969:4.481599077455316] plot(\x,{(--9.0-3.0*\x)/3.0});
\draw [domain=-1.463688897430969:4.481599077455316] plot(\x,{(-0.0-0.0*\x)/3.0});
\draw [domain=-1.463688897430969:4.481599077455316] plot(\x,{(-0.0--1.58*\x)/1.42});
\draw (3.6615593567813454,0.5) node[anchor=north west] {$x$};
\draw (0.5249074252034089,2.845694150810431) node[anchor=north west] {$z$};
\draw (0.05,3.8) node[anchor=north west] {$y$};
\draw (2.1,2.5586802485745417) node[anchor=north west] {$x+y$};
\begin{scriptsize}
\draw [fill=black] (0.0,0.0) circle (1.5pt);
\draw [fill=black] (0.0,3.0) circle (1.5pt);
\draw [fill=black] (3.0,0.0) circle (1.5pt);
\draw [fill=black] (0.0,0.7) circle (1.5pt);
\draw [fill=black] (0.0,1.26) circle (1.5pt);
\draw [fill=black] (0.0,1.76) circle (1.5pt);
\draw [fill=black] (0.0,2.24) circle (1.5pt);
\draw [fill=black] (1.42,1.58) circle (1.5pt);
\draw [fill=black] (0.91,2.09) circle (1.5pt);
\draw [fill=black] (1.86,1.140) circle (1.5pt);
\draw [fill=black] (2.35,0.65) circle (1.5pt);
\draw [fill=black] (0.38753589789044496,0.43120191455415724) circle (1.5pt);
\draw [fill=black] (0.7492696330437865,0.8336943804290022) circle (1.5pt);
\draw [fill=black] (1.0403935472433967,1.157620989186315) circle (1.5pt);
\draw [fill=black] (1.7896631802871834,1.9913153696153176) circle (1.5pt);
\draw [fill=black] (0.54,0.0) circle (1.5pt);
\draw [fill=black] (1.12,0.0) circle (1.5pt);
\draw [fill=black] (1.66,0.0) circle (1.5pt);
\draw [fill=black] (2.22,0.0) circle (1.5pt);
\end{scriptsize}
\end{tikzpicture}
\caption{A sketch of the points dual to the lines of the line configuration given in Figure \ref{H19Fig}.}
\label{dualH19Fig}
\end{center}
\end{figure}

\noindent It is not hard to verify that the first difference of the Hilbert function of $Z$ is $\Delta h_Z = (1,2,3,4,4,4,1)$, from which  we find that $t_Z=9$. Picking a random point $P$, Macaulay2 \cite{M2} finds that $[I_{Z+7P}]_8=0$. By upper semicontinuity, this means $m_Z>7$. Thus we have $8\leq m_Z\leq t_Z=9$. We claim that in fact $m_Z = 8$, i.e. that the splitting type is $(8,10)$. 

For a  general linear form $\ell$, set $\bar{R} = R/\ell R$ and $\bar{J} = \frac{J+(\ell)}{(\ell)}$, where $J \subset R$ is the Jacobian ideal. 
Consider the graded exact sequence induced by multiplication by $\ell$
\[
(R/J) (-1)  \stackrel{\ell}\longrightarrow R/J \to \bar{R}/\bar{J} \to 0. 
\]
Using a computer algebra system,  one gets $\dim_K [R/J]_{25} = 243$ and $\dim_K [R/J]_{26} = 244$. Hence, the above exact sequence, considered in degree 26, gives  $[\bar{R}/\bar{J}]_{26} \neq 0$. The minimal free resolution of $\bar{R}/\bar{J}$ over $\bar{R}$ has the form
\[
0 \to \mathbb F_2 \to \bar{R}^3 (-18) \to \bar{R} \to \bar{R}/\bar{J} \to 0. 
\]
Since $[\bar{R}/\bar{J}]_{26} \neq 0$, we obtain $[\mathbb F_2]_{26} \neq 0$. It follows that the splitting type is $(26-18,28-18) = (8,10)$ as claimed. Thus there is an unexpected curve only 
in degree 9. One can verify using Corollary \ref{irred curve least deg} and a computer algebra program that the unexpected curve is not irreducible, and indeed has two components, one of 
which is a line. 
Indeed, using \cite{cocoa} we have seen that the linear component is the line joining the general point $P$ to the point $[2,1,0]$. 

\end{example}

\begin{example} \label{example20}
It is interesting to note (based on computer experiments) that the arrangement of Example \ref{H19Example} is not free, but that if we either (i) remove $2x+y$ alone or (ii) replace $2x+y$ by $2y-x$ or (iii) add $(2y-x)$ to the configuration of 19 lines, these new configurations are free with splitting type (respectively) $(7,10)$,  $(7, 11)$ or $(8,11)$.

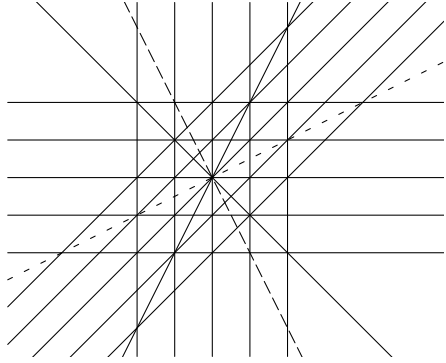
\begin{figure}[htbp] %\label{config20}
\begin{center}
\begin{tikzpicture}[line cap=round,line join=round,>=triangle 45,x=.5cm,y=.5cm]
\clip(-5.44,-4.76) rectangle (6.16,4.66);
\draw [domain=-5.44:6.16] plot(\x,{(-0.0-1.0*\x)/-1.0});
\draw (0.0,-4.76) -- (0.0,4.66);
\draw [domain=-5.44:6.16] plot(\x,{(-0.0-0.0*\x)/1.0});
%\draw [dotted,domain=-5.44:6.16] plot(\x,{-2.0*\x});
\draw [dash pattern=on 5pt off 2pt,domain=-5.44:6.16] plot(\x,{-2.0*\x});
\draw [domain=-5.44:6.16] plot(\x,{(-0.0--2.0*\x)/1.0});
\draw [domain=-5.44:6.16] plot(\x,{(-0.0--1.0*\x)/-1.0});
\draw [domain=-5.44:6.16] plot(\x,{(-2.0-0.0*\x)/-1.0});
\draw [domain=-5.44:6.16] plot(\x,{(-1.0-0.0*\x)/-1.0});
\draw [domain=-5.44:6.16] plot(\x,{(--1.0--0.0*\x)/-1.0});
\draw [domain=-5.44:6.16] plot(\x,{(--2.0--0.0*\x)/-1.0});
\draw [dash pattern=on 2pt off 5pt] (-5.44,-2.72) -- (6.16,3.08);
\draw (-2.0,-4.76) -- (-2.0,4.66);
\draw (-1.0,-4.76) -- (-1.0,4.66);
\draw (1.0,-4.76) -- (1.0,4.66);
\draw (2.0,-4.76) -- (2.0,4.66);
\draw [domain=-5.44:6.16] plot(\x,{(--1.0--1.0*\x)/1.0});
\draw [domain=-5.44:6.16] plot(\x,{(--2.0--1.0*\x)/1.0});
\draw [domain=-5.44:6.16] plot(\x,{(-1.0--1.0*\x)/1.0});
\draw [domain=-5.44:6.16] plot(\x,{(-4.0--2.0*\x)/2.0});
\end{tikzpicture}
\caption{A configuration of 20 lines (the line at infinity, $z=0$, is not shown).}
\label{H20Fig}
\end{center}
\end{figure}

In fact, in Figure \ref{H20Fig}

\begin{itemize}

\item the arrangement of 18 solid lines is free and irreducible but not complete (``irreducible'' meaning $C_P(Z)$ is  irreducible for a general point $P$, where $Z$ is the point scheme dual to the 18 lines, and ``not complete'' meaning  there is a point $Q$ not in $Z$ such that $m_{Z+Q}=m_Z$).

\item The arrangement of 18 solid lines plus the short-dashed line is free, irreducible and complete (i.e., if $Z$ is the point  scheme dual to the 19 lines, then $m_{Z+Q}=m_Z+1$ for all points $Q$ not in $Z$).

\item The arrangement of 18 solid lines plus the long-dashed line is not free, not irreducible and not complete.

\item The arrangement of all 20 lines is free and complete, but not irreducible.

\end{itemize}

These observations suggest the following question: Is the line arrangement ${\mathcal L}_Z$ for $Z$ always free if ${\mathcal L}_Z$ is irreducible or complete?
Or the converse?

On the dual side, taking $Z$ from Example \ref{H19Example}, if we set $Z_1 = Z \backslash \{ [2,1,0] \}$ and $Z_2 = Z_1 \cup \{ [-1,2,0] \}$ and $Z_3 = Z \cup \{ [-1,2,0] \}$, we obtain $m_{Z_1} = m_{Z_2} = 7$ and $m_Z = m_{Z_3} = 8$. Checking the Hilbert functions, one can show that these sets all have unexpected curves ($t_{Z_1} = 8, t_Z = t_{Z_2} = t_{Z_3} = 9$) and using the results of Section~\ref{structure} one can verify that the unexpected curve for $Z_1$ is irreducible and coincides with the unexpected curve for $Z_2$, while the unexpected curve for $Z_3$ coincides with that for $Z$ and is not irreducible. As the general point $P$ varies, all unexpected curves for $Z_1$ also contain $[-1,2,0]$.

\end{example}

In order to derive our next results we need the concept of a stable vector bundle. Since we need the Grauert-M\"ulich theorem, we will assume now that $K$ has characteristic zero and is algebraically closed. 
For unexplained terminology on vector bundles we refer to \cite{OSS}. Stable vector bundles of rank two can be characterized cohomologically. 

\begin{lemma} [{\cite[Lemma 3.1]{H}}] 
          \label{stable lemma}
A reflexive sheaf $\mathcal F$ of rank two over $\mathbb P^n$ is stable if and only if $H^0(\mathcal F_{norm}) = 0$.  If $c_1(\mathcal F)$ is even, then $\mathcal F$ is semistable iff $H^0(\mathcal F_{norm}(-1)) = 0$.  If $c_1(\mathcal F)$ is odd then semistability and stability coincide.
\end{lemma}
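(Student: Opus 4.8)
This is \cite[Lemma 3.1]{H}, and the plan is to reprove it by reducing slope stability of a rank-two sheaf to a single cohomological vanishing and then tracking the parity of $c_1$. Throughout, $r = \rank \mathcal F = 2$, slopes $\mu(\mathcal G) = c_1(\mathcal G)/\rank \mathcal G$ are taken against $\mathcal O_{\PP^n}(1)$, and $\mathcal F_{norm} = \mathcal F(k_{\mathcal F})$ denotes the unique twist with $-1 \le c_1(\mathcal F_{norm}) \le 0$, so that $c_1(\mathcal F) + 2 k_{\mathcal F} \in \{-1, 0\}$ and hence $k_{\mathcal F} = -\lceil c_1(\mathcal F)/2 \rceil$. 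Recall that $\mathcal F$ is stable (resp.\ semistable) exactly when every rank-one subsheaf $\mathcal E \subset \mathcal F$ has $\mu(\mathcal E) < \mu(\mathcal F)$ (resp.\ $\le$), since for rank two the rank-one subsheaves are the only ones to test.

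First I would reduce this to testing line subbundles. Given a rank-one subsheaf $\mathcal E \subset \mathcal F$, pass to its saturation $\overline{\mathcal E}$ and then to the reflexive hull $\overline{\mathcal E}^{\,\vee\vee}$; neither step lowers $\deg c_1$, and since $\PP^n$ is smooth (hence locally factorial) $\overline{\mathcal E}^{\,\vee\vee}$ is a line bundle $\mathcal O_{\PP^n}(a)$. So $\mathcal F$ is (semi)stable if and only if, for every integer $a$ admitting an injection $\mathcal O_{\PP^n}(a) \hookrightarrow \mathcal F$, one has $2a < c_1(\mathcal F)$ (resp.\ $2a \le c_1(\mathcal F)$). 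Next I would identify such injections with sections: $\Hom(\mathcal O_{\PP^n}(a), \mathcal F) \cong H^0(\mathcal F(-a))$, and a nonzero homomorphism is automatically injective because $\mathcal F(-a)$ is torsion-free (here reflexivity of $\mathcal F$ enters) --- its kernel would be an ideal sheaf, and a nonzero such kernel makes the section factor through a torsion sheaf mapping into a torsion-free one, forcing the section to vanish. Multiplying by a general linear form shows $h^0(\mathcal F(-a))$ is nonincreasing in $a$.

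With these reductions in hand, the proof finishes by bookkeeping. Stability of $\mathcal F$ becomes: $H^0(\mathcal F(-a)) = 0$ for every $a$ with $2a \ge c_1(\mathcal F)$; by monotonicity this is equivalent to the single vanishing at the least such $a$, namely $a_0 = \lceil c_1(\mathcal F)/2 \rceil = -k_{\mathcal F}$, i.e.\ to $H^0(\mathcal F_{norm}) = 0$, uniformly in the parity of $c_1(\mathcal F)$. Semistability becomes $H^0(\mathcal F(-a)) = 0$ for every $a$ with $2a > c_1(\mathcal F)$, equivalently the vanishing at $a_1 = \lfloor c_1(\mathcal F)/2 \rfloor + 1$. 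When $c_1(\mathcal F)$ is even, $a_1 = c_1(\mathcal F)/2 + 1 = -k_{\mathcal F} + 1$, giving $H^0(\mathcal F_{norm}(-1)) = 0$; when $c_1(\mathcal F)$ is odd, $a_1 = (c_1(\mathcal F)+1)/2 = \lceil c_1(\mathcal F)/2 \rceil = a_0$, so the semistability and stability tests are the very same condition.

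I do not expect a genuine obstacle --- this is Hartshorne's argument --- but the two steps needing care are the reduction to line subbundles (one must know that saturating and taking the reflexive hull replaces a rank-one subsheaf by a line bundle without decreasing $\deg c_1$, which uses local factoriality of $\PP^n$) and the parity bookkeeping that matches $a_0$ and $a_1$ to $\mathcal F_{norm}$ and $\mathcal F_{norm}(-1)$. Everything else is formal.
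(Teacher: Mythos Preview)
The paper does not give its own proof of this lemma; it is simply quoted from \cite[Lemma 3.1]{H}. Your argument is correct and is essentially Hartshorne's: reduce destabilizing rank-one subsheaves to line subbundles via saturation and reflexive hull, identify these with sections of twists, and then read off the critical twist from the parity of $c_1$.
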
 

Stability is related to the existence of unexpected curves as we see now. 

\begin{proposition}
   \label{prop:unexpected, so unstable}
Let $\mA$ be a line arrangement with splitting type $(a_Z, b_Z)$, dual to a set of points $Z$.  If $Z$ admits an unexpected curve, then $b_Z \ge a_Z + 2$. In particular, the derivation bundle of $\mA$ is not semistable.     
\end{proposition}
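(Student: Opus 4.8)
The plan is to derive both conclusions from results already established in the paper. First I would observe that the hypothesis ``$Z$ admits an unexpected curve'' lets us invoke Theorem \ref{u_ZTheorem}, which gives $m_Z < t_Z$, and combined with Lemma \ref{m, t and u}(e) yields $t_Z \le u_Z$, hence $m_Z < u_Z$. Now translate this into the language of the splitting type: by Lemma \ref{m, t and u}(a) we have $m_Z = a_Z$, and by Lemma \ref{m, t and u}(c) we have $u_Z = b_Z - 1$. Therefore $m_Z < u_Z$ reads $a_Z < b_Z - 1$, i.e. $a_Z \le b_Z - 2$, which is exactly $b_Z \ge a_Z + 2$. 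This disposes of the first assertion with essentially no computation.

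For the statement about semistability, I would recall that the derivation bundle $\mathcal D_Z$ is locally free of rank $2$ (stated in the bulleted facts before Lemma \ref{FVlemma}) and that its restriction to a general line $L$ splits as $\mathcal O_L(-a_Z) \oplus \mathcal O_L(-b_Z)$ with $a_Z + b_Z = |Z| - 1$. The key input is the Grauert--M\"ulich theorem (available under the standing characteristic-zero, algebraically-closed hypotheses of this part of the paper): if a rank-two bundle $\mathcal E$ on $\PP^2$ is semistable, then its splitting type $(a, b)$ on a general line satisfies $|a - b| \le 1$. Applying this to $\mathcal D_Z$: if $\mathcal D_Z$ were semistable we would need $b_Z - a_Z \le 1$, contradicting $b_Z \ge a_Z + 2$ just established. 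Hence $\mathcal D_Z$ is not semistable.

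I do not expect a serious obstacle here, since everything reduces to citing Theorem \ref{u_ZTheorem}, Lemma \ref{m, t and u}, and the Grauert--M\"ulich theorem. The only point requiring a little care is making sure the normalization conventions for the splitting type match the ``balanced'' conclusion of Grauert--M\"ulich; since the paper has fixed $a_Z \le b_Z$ with $a_Z + b_Z = |Z| - 1$, the generic splitting type of a semistable rank-two bundle is forced to be $(a_Z, b_Z)$ with $b_Z - a_Z \in \{0, 1\}$, so the contradiction is immediate. One could alternatively phrase the last step via Lemma \ref{stable lemma}, but invoking Grauert--M\"ulich directly is the cleanest route and is the tool the paper explicitly flags as the reason for the characteristic-zero hypothesis.
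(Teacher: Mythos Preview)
Your proposal is correct and follows essentially the same approach as the paper: derive $m_Z < u_Z$ from the existence of an unexpected curve via Theorem~\ref{u_ZTheorem}, translate this to $b_Z - a_Z \ge 2$ using Lemma~\ref{m, t and u}, and then apply Grauert--M\"ulich to rule out semistability. The paper's proof is terser (it cites Theorem~\ref{mainThm1} directly for $b_Z - a_Z \ge 2$ without spelling out the chain through $t_Z$), but the underlying logic is identical.
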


\begin{proof}
We have seen in Theorem \ref{mainThm1} that if $Z$ has an unexpected curve then $b_Z -a_Z \geq 2$. If the derivation bundle of $\mA$ were semistable, then the Grauert-M\"ulich theorem \cite{GM} gives $b_Z - a_Z \le 1$, hence the result.
\end{proof} 

The following result is useful for establishing stability.

\begin{lemma}
           \label{hal thm}
Let $\mathcal A$ be $(\mathcal A', \mathcal A, \mathcal A'')$ a triple of line arrangements, where $\mA$ consists of $d$ lines.  Then one has: 

\begin{itemize}

\item[(a)] $(${\cite[Theorem 4.5(a)] {hal}}$)$ If $d$ is odd, then $\mathcal D$ is stable if $\mathcal D'$ is stable and $|\mathcal A''| > \frac{d+1}{2}$.

\item[(b)] If $d$ is odd, then $\mathcal D$ is semistable if $\mathcal D'$ is stable.  

\item[(c)] $($\cite[Theorem 4.5(c)] {hal}$)$ If $d$ is even, then $\mathcal D$ is stable if $\mathcal D'$ is semistable and $|\mathcal A''| > \frac{d}{2}$. 

\item[(d)] If $d$ is even, then $\mathcal D$ is stable if $\mathcal D'$ is stable. 

\end{itemize}

\end{lemma}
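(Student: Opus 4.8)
### Proof Proposal for Lemma~\ref{hal thm}

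\textbf{The plan is to} reduce all four parts to the Grauert--M\"ulich-type restriction behavior of derivation bundles under deletion/restriction of one line, using the addition--deletion framework for the short exact sequence relating $\mathcal{D}'$, $\mathcal{D}$, and $\mathcal{D}''$, together with the cohomological characterization of (semi)stability in Lemma~\ref{stable lemma}. Recall that for a triple $(\mathcal{A}', \mathcal{A}, \mathcal{A}'')$ there is an exact sequence of derivation modules (after sheafifying) of the shape
\[
0 \to \mathcal{D}'(-1) \to \mathcal{D} \to \mathcal{D}''_{L} \to 0,
\]
or dually one relating the splitting types $(a', b')$ of $\mathcal{A}'$ and $(a, b)$ of $\mathcal{A}$, with $a + b = d - 1$ and $a' + b' = d - 2$; restriction to a general line gives $a' \le a \le a' + 1$ (this is the content of Remark~\ref{1 or 2}(ii) translated to arrangements). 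Parts (a) and (c) are quoted from \cite{hal}, so the real work is (b) and (d).

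\textbf{First I would} handle part (b). Here $d$ is odd, so $c_1(\mathcal{D})$ has a fixed parity and, by Lemma~\ref{stable lemma}, for bundles of rank two with $c_1$ odd semistability and stability coincide; thus it suffices to prove $\mathcal{D}$ is semistable. Suppose $\mathcal{D}'$ is stable. Since $\mathcal{A}'$ is obtained from $\mathcal{A}$ by deleting one line, $|\mathcal{A}'| = d - 1$ is even, and stability of $\mathcal{D}'$ forces its generic splitting type to be balanced: $b' - a' \le 1$, and in fact (by Grauert--M\"ulich applied to the semistable $\mathcal{D}'$, whose $c_1$ is now even) $b' = a'$ or $b' = a' + 1$. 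Passing to $\mathcal{A}$ adds one line, so by Remark~\ref{1 or 2}(ii) the splitting type $(a, b)$ of $\mathcal{A}$ satisfies $b - a \le b' - a' + 1 \le 2$. But I want $b - a \le 1$ for semistability. The extra leverage is that adding a line can only increase $a'$ by $1$ while $b'$ stays fixed (or if already balanced, increase $b'$), so starting from $b' - a' \le 1$ one lands in $\{b - a = 0, b - a = 1, b - a = 2\}$; the case $b - a = 2$ arises only from $b' - a' = 1$ with the added line forcing $b$ to grow, which contradicts the mechanics of Remark~\ref{1 or 2}(ii) (adding a general-position line increments the smaller invariant). I would argue that since $d$ is odd, $b - a$ is odd, ruling out $b - a = 0$ and $b - a = 2$, hence $b - a = 1$ and $\mathcal{D}$ is semistable, and therefore stable.

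\textbf{Then I would} treat part (d): $d$ even, $\mathcal{D}'$ stable, conclude $\mathcal{D}$ stable. Now $|\mathcal{A}'| = d - 1$ is odd, so stability of $\mathcal{D}'$ gives (since $c_1$ is odd, and Grauert--M\"ulich) the balanced splitting $b' - a' = 1$, i.e. $(a', b') = ((d-2)/2, d/2)$. Adding one line to get $\mathcal{A}$: by Remark~\ref{1 or 2}(ii) the splitting becomes $(a, b)$ with $a' \le a$, $b' \le b$, $a + b = d - 1$, so $(a, b) = (a' + 1, b') = (d/2, d/2)$ — wait, that gives $a = b$, i.e. $b - a = 0$, which for $d$ even ($c_1$ even) is exactly the semistable-but-not-stable borderline. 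To upgrade to stability I would invoke the cohomological criterion of Lemma~\ref{stable lemma}: $\mathcal{D}$ is stable iff $H^0(\mathcal{D}_{norm}(-1)) = 0$; using the deletion exact sequence $0 \to \mathcal{D}(-1) \to \mathcal{D}' \to (\text{torsion/restriction term}) \to 0$ appropriately twisted, the vanishing $H^0(\mathcal{D}'_{norm}) = 0$ coming from stability of $\mathcal{D}'$ propagates to $H^0(\mathcal{D}_{norm}(-1)) = 0$ provided the connecting/restriction term contributes nothing in that degree, which follows from a degree count since the restriction sits in degree governed by the single added line. So $\mathcal{D}$ is stable.

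\textbf{The hard part will be} pinning down the precise form of the addition--deletion exact sequence for derivation bundles in the generality needed (arbitrary characteristic issues are sidestepped since we are in characteristic zero here) and verifying that the relevant $H^0$ or $H^1$ term in that sequence vanishes in the critical twist — in part (d) especially, where one must rule out a destabilizing sub-line-bundle $\mathcal{O}(-a)$ with $a = d/2$ by showing it cannot lift from $\mathcal{D}'$. I expect this to hinge on a clean lemma: for a general line $L$, the restriction map $H^0(\mathcal{D}(k)) \to H^0(\mathcal{D}|_L(k))$ controls sections, and stability of the deleted arrangement bounds these. If the direct addition--deletion bookkeeping proves delicate, the fallback is to quote the Grauert--M\"ulich theorem directly for the semistable $\mathcal{D}'$ and use the parity of $d$ (odd vs.\ even) to force the splitting gap of $\mathcal{D}$ into the stable range, which is the cleanest route and the one I would write up.
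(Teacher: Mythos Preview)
Your approach has a fundamental logical gap: you are trying to deduce (semi)stability of $\mathcal{D}$ from control over its generic splitting type, but Grauert--M\"ulich is a one-way implication. Semistability implies $b-a\le 1$, not conversely; for instance $\mathcal{O}\oplus\mathcal{O}(-1)$ has splitting type $(0,1)$ yet is not stable. So even if your parity bookkeeping were correct (it is not: $a+b=d-1$, so for $d$ odd the gap $b-a$ is \emph{even}, and $c_1(\mathcal{D})=1-d$ is \emph{even}, the opposite of what you wrote), arriving at ``$b-a\le 1$'' would prove nothing about stability. The same objection applies to your fallback in the last paragraph. Your treatment of (d) gestures in the right direction by invoking the cohomological criterion and an exact sequence, but the sequence you wrote, $0\to\mathcal{D}(-1)\to\mathcal{D}'\to\cdots$, is not the one that exists, and the twist in the criterion is off (for $d$ even, $c_1(\mathcal{D})$ is odd, so stability is $H^0(\mathcal{D}_{\mathrm{norm}})=0$, not $H^0(\mathcal{D}_{\mathrm{norm}}(-1))=0$).

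The paper's proof is short and avoids splitting types entirely. Starting from the standard sequence $0\to\mathcal{D}'(-1)\to\mathcal{D}\to\mathcal{O}_{\PP^1}(1-|\mathcal{A}''|)\to 0$ (which yields (a) and (c)), one \emph{dualizes}, using $\mathcal{E}^\vee\cong\mathcal{E}(-c_1(\mathcal{E}))$ for rank-two bundles, to obtain
\[
0\to\mathcal{D}\to\mathcal{D}'\to\mathcal{O}_{\PP^1}(-d+|\mathcal{A}''|+1)\to 0.
\]
This injects $\mathcal{D}$ directly into $\mathcal{D}'$, so $H^0(\mathcal{D}(k))\subseteq H^0(\mathcal{D}'(k))$ for every $k$. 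Now apply Lemma~\ref{stable lemma}: stability of $\mathcal{D}'$ is exactly the vanishing $H^0(\mathcal{D}'_{\mathrm{norm}})=0$, and one checks that the twist needed for (semi)stability of $\mathcal{D}$ is at most that for $\mathcal{D}'$, so the required vanishing for $\mathcal{D}$ follows immediately in both (b) and (d). No appeal to Grauert--M\"ulich, no splitting-type arithmetic, and no dependence on $|\mathcal{A}''|$ is needed.
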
 

\begin{proof}
According to \cite[Theorem 3.2]{hal}, there is an exact sequence 
\[
0 \to \mD' (-1) \to \mD \to \mO_{\PP^1} (1 - |\mA''|) \to 0. 
\]
It implies parts (a) and (c). Using that for any vector bundle $\cE$ of rank two on $\PP^2$ one has $\cE^{\vee} \cong \cE (c_1 (\cE))$,  dualizing gives the exact sequence (see also \cite[Proposition 5.1]{FV2})
\begin{equation}
   \label{eq:dual hal}
0 \to  \mD \to \mD' \to \mO_{\PP^1} (-d + |\mA''| + 1) \to 0. 
\end{equation} 
Applying Lemma \ref{stable lemma}, parts (b) and (d) follow. 
\end{proof}

\begin{remark}
Lemma \ref{hal thm}(b) improves \cite[Theorem 4.5(b)] {hal} by eliminating any assumption on  $\mA''$. Note that in this case stability and semistability of $\mD'$ are equivalent by Lemma \ref{stable lemma}. 
\end{remark}

As a first consequence, we get information on sufficiently general line arrangements. 

\begin{proposition} 
        \label{star config type}
Let $\mathcal A_{d}$ be a configuration of  $d$ lines in $\mathbb P^2$ such that no three lines of $\mathcal A_{d}$ meet in a point.  Then the splitting type for $\mathcal A_{d}$ is
\[
\left ( \left \lfloor \frac{d-1}{2} \right \rfloor,  \left \lceil\frac{d-1}{2} \right \rceil \right ).
\]
Moreover, $\mathcal A_{d}$ is free if and only if $d \le 3$. 
\end{proposition}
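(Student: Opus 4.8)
The plan is to handle the two assertions separately. For the splitting type, I would first recall from Lemma~\ref{m, t and u} that $m_{\mathcal A_d} = a_{\mathcal A_d}$ and $a_{\mathcal A_d} + b_{\mathcal A_d} = d-1$, so it suffices to compute $a_{\mathcal A_d}$, i.e.\ to show that $\bigl\lfloor \frac{d-1}{2}\bigr\rfloor = a_{\mathcal A_d}$. Passing to the dual side, let $Z$ be the set of $d$ points dual to $\mathcal A_d$; the hypothesis that no three lines of $\mathcal A_d$ meet in a point translates exactly into $Z$ being in linear general position (no three points collinear). I would then induct on $d$ using Lemma~\ref{hal thm}: writing $\mathcal A_d = (\mathcal A', \mathcal A_d, \mathcal A'')$ where $\mathcal A'$ is obtained by deleting one line $L$ and $\mathcal A''$ is the restriction to $L$, genericity guarantees $|\mathcal A''| = d-1$ (the other $d-1$ lines meet $L$ in distinct points), which comfortably exceeds both $\frac{d+1}{2}$ and $\frac{d}{2}$ for $d$ large enough. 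So if $\mathcal D'$ is stable (the odd case) or semistable (the even case), then $\mathcal D_{\mathcal A_d}$ is stable; combined with the Grauert--M\"ulich estimate $b_{\mathcal A_d} - a_{\mathcal A_d} \le 1$ for semistable bundles (as used in Proposition~\ref{prop:unexpected, so unstable}), stability forces the splitting type to be as balanced as possible, namely $\bigl(\lfloor\frac{d-1}{2}\rfloor, \lceil\frac{d-1}{2}\rceil\bigr)$. I would anchor the induction at a small value of $d$ (say $d=3$ or $d=4$), where the splitting type is computed directly.

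For the base cases and small $d$, note that for $d \le 3$ the bound $|\mathcal A''| > \frac{d+1}{2}$ or $> \frac{d}{2}$ may fail, so I would just check directly: three general lines form a star/triangle configuration whose derivation module is well understood (it is free with exponents $(1,1)$, giving splitting type $(1,1) = (\lfloor 1\rfloor, \lceil 1\rceil)$), and similarly $d=1,2$ are immediate. Alternatively, and perhaps more cleanly, one can invoke Corollary~\ref{cor:dZ lin gen position} (points in linearly general position have no unexpected curves) together with Lemma~\ref{m, t and u}: no unexpected curve means $m_Z \ge t_Z$, and for points in linear general position one computes $t_Z = \lfloor\frac{|Z|-1}{2}\rfloor$ directly from the Hilbert function $h_Z(j) = \min\{\binom{j+2}{2}, d\}$; combined with $m_Z \le t_Z$ from Lemma~\ref{m, t and u}(d) this pins down $m_Z = \lfloor\frac{d-1}{2}\rfloor = a_Z$, hence $b_Z = d-1-a_Z = \lceil\frac{d-1}{2}\rceil$. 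I would likely present this Hilbert-function argument as the main line and use Lemma~\ref{hal thm} only as a remark, since it sidesteps stability entirely. The main obstacle here is making sure the cited results (in particular Corollary~\ref{cor:dZ lin gen position}) genuinely apply to \emph{all} $d$, including the degenerate small cases, and verifying the Hilbert function of $d$ points in linear general position has the claimed shape — this is standard but should be stated carefully.

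For the freeness claim, one direction is trivial: for $d \le 3$ every line arrangement (in particular a generic one) is free, since arrangements of at most $3$ lines are always free (a near-pencil or the coordinate triangle, with explicit exponents). For the converse, suppose $d \ge 4$ and $\mathcal A_d$ is free with exponents $(a, b)$, $a + b = d-1$. Freeness implies the derivation bundle $\mathcal D_{\mathcal A_d}$ splits globally as $\mathcal O_{\mathbb P^2}(-a) \oplus \mathcal O_{\mathbb P^2}(-b)$, and hence its restriction to \emph{every} line, in particular a general one, is $\mathcal O_{\mathbb P^1}(-a)\oplus\mathcal O_{\mathbb P^1}(-b)$; so the exponents equal the splitting type, namely $(a,b) = \bigl(\lfloor\frac{d-1}{2}\rfloor, \lceil\frac{d-1}{2}\rceil\bigr)$. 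Now I would apply Terao's addition--deletion criterion, or more directly a point-count: for a free arrangement the exponents satisfy $ab = \sum_{p} (m_p - 1)$ where the sum runs over the singular points $p$ of $\mathcal A_d$ and $m_p$ is the multiplicity. Under the genericity hypothesis every singular point is a double point, so $m_p - 1 = 1$ for each, and there are exactly $\binom{d}{2}$ of them; thus freeness would force $\bigl\lfloor\frac{d-1}{2}\bigr\rfloor\cdot\bigl\lceil\frac{d-1}{2}\bigr\rceil = \binom{d}{2} = \frac{d(d-1)}{2}$. A direct check shows the left side is roughly $d^2/4$ while the right side is roughly $d^2/2$, so equality fails for all $d \ge 4$ (e.g.\ $d=4$: $1\cdot 2 = 2 \ne 6$), giving the contradiction. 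I expect the freeness converse to be the easier half once the point-count identity $ab = \sum_p(m_p-1)$ is invoked; the genuinely delicate part of the whole proof is the computation of the splitting type for all $d$, and there the cleanest route is the Hilbert-function / no-unexpected-curves argument sketched above rather than the stability induction.
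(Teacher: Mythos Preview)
Your first approach to the splitting type --- induct via Lemma~\ref{hal thm} to obtain stability of $\mD_{\mA_d}$, then apply Grauert--M\"ulich to force the balanced splitting --- is exactly what the paper does and is correct. The paper anchors at $d=3$ (where $\mD_3 \cong \mO_{\PP^2}(-1)^2$ is visibly semistable) and then bootstraps; your sketch of this is fine.

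Your preferred ``cleaner'' route, however, has two genuine gaps. First, it is circular: Corollary~\ref{cor:dZ lin gen position} is \emph{deduced from} Proposition~\ref{star config type} in the paper (look at its proof: ``Proposition~\ref{star config type} gives the asserted values of $m_Z$ and $u_Z$''), so you cannot invoke it here to prove the proposition. Second, the claim $h_Z(j) = \min\{\binom{j+2}{2}, d\}$ for points in linearly general position is false: six points on a smooth conic are in linearly general position but $h_Z(2)=5<6$. ``No three collinear'' simply does not force the generic Hilbert function, so you cannot read off $t_Z$ this way in general.

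For the freeness direction, your identity $ab = \sum_p(m_p-1)$ is wrong (already for $d=3$ it would give $1\cdot 1 = 3$). The correct relation is $c_2(\mD_Z) = (d-1)^2 - \sum_p(m_p-1)^2$ (see Equation~\eqref{eq:Chern degree}; the local Milnor number at an ordinary $m$-fold point is $(m-1)^2$), together with Theorem~\ref{thm:splitt crit}, which says freeness is equivalent to $c_2 = ab$. Under the hypothesis every crossing is a node, so $c_2 = (d-1)^2 - \binom{d}{2} = \binom{d-1}{2}$, and freeness would force $\lfloor\frac{d-1}{2}\rfloor\lceil\frac{d-1}{2}\rceil = \binom{d-1}{2}$; this fails for all $d\ge 4$ (e.g.\ $d=4$: $2\ne 3$). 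So your strategy is salvageable once the formula is corrected. The paper argues this half differently: it quotes from \cite{GHM} the minimal free resolution of the \emph{saturation} $\bar J_d$ of the Jacobian ideal for a star configuration, and observes directly that $J_d$ coincides with its saturation (i.e., $\mA_d$ is free) precisely when $d\le 3$.
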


\begin{proof}

Let $J_{d}$ be the Jacobian ideal of $\mathcal A_{d}$ and let $\bar J_{d}$ be its saturation.  By assumption, the lines in $\mathcal A_{d}$ form a star configuration. Thus, by  \cite{GHM} we know that the minimal free resolution of $\bar J_{d}$ is 
\[
0 \rightarrow R(-d)^{n-1} \rightarrow R(-d+1)^{n} \rightarrow \bar J_{d} \rightarrow 0.
\]
In particular, $J_{d}$ is saturated if and only if $d \leq 3$, so $\mathcal A_d$ is free if and only if $d \leq 3$.

Let us establish some notation.  This minimal free resolution for $J_d$ truncates to a short exact sequence
\[
0 \rightarrow E_d \rightarrow R(-d+1)^3 \rightarrow J_d \rightarrow 0.
\]
Let $\mathcal E_d$ be the sheafification of the reflexive module $E_d $.  Then $\mathcal D_d = \mathcal E_d(d-1)$ is the derivation bundle of  $\mathcal A_d$.  Note also that $(\mathcal D_d)_{norm} = \mathcal E_d(\frac{3d-3}{2})$ when $d$ is odd, and $(\mathcal D_d)_{norm} = \mathcal E_d(\frac{3d-4}{2})$ if $d$ is even.  
%Also, $(\mathcal D_{d-1})_{norm} = \mathcal E_{d-1}(\frac{3d-7}{2})$.  

First consider $d = 3$.  Then $\mathcal A_{d}$ is free and we have the minimal free resolution
\[
0 \rightarrow R(-3)^2 \rightarrow R(-2)^3 \rightarrow J_3 \rightarrow 0.
\]
Thus $\mathcal E_3 = \mathcal O_{\mathbb P^2}(-3)^2$, $\mathcal D_3 = \mathcal O_{\mathbb P^2}(-1)^2$ and $(\mathcal  D_3)_{norm} = \mathcal O_{\mathbb P^2}^2$.  By Lemma \ref{stable lemma}, $\mathcal D_3$ is semistable.
Clearly the splitting type for $\mathcal A_3$ is $(1,1)$ as claimed.  

Now assume that $d = 4$.  It  follows from Lemma \ref{hal thm} that $\mathcal D_4$ is stable, so the splitting type is as claimed thanks to the Grauert-M\"ulich theorem \cite{GM}.

Using Lemma \ref{hal thm}, we obtain by induction that $\mathcal D_d$ is stable for all $d \geq 4$.  Hence by the Grauert-M\"ulich theorem, the splitting type of $\mathcal D_d$ is as claimed.
\end{proof} 

This has the following consequence for the dual set of points. Recall that a set of points in $\PP^2$ is said to be in \emph{linearly general position} if no three of its points are on a line. Note that this is very different from assuming that $Z$ is a general set of points.

\begin{corollary} 
      \label{cor:dZ lin gen position} 
Let $Z$ be a set of  points in $\mathbb P^2$ in linear general position.  Then 
$m_Z = \left \lfloor \frac{|Z|-1}{2} \right \rfloor$, $u_Z = \left \lceil\frac{|Z|-1}{2} \right \rceil - 1$,  and   $Z$ does not admit an unexpected curve. Furthermore, for a general point $P$,   
\[
\dim [I_{Z+m_ZP}]_{m_Z+1} 
 = \begin{cases}
2 & \text{if $|Z|$ is odd}; \\
1 & \text{if $|Z|$ is even},  
\end{cases}
\]
and $[I_{Z + m_Z P}]_{m_Z +1}$ contains an irreducible form. 
\end{corollary}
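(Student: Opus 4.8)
The plan is to derive everything from Proposition~\ref{star config type} together with the dictionary between a set of points $Z$ and its dual line arrangement $\mathcal A_Z$. If $Z$ is in linearly general position, then no three of the dual lines are concurrent (a point lying on three of the dual lines would, dually, be a line through three of the points of $Z$), so $\mathcal A_Z = \mathcal A_{|Z|}$ in the notation of Proposition~\ref{star config type}. That proposition gives the splitting type $(a_Z,b_Z) = \left(\lfloor\frac{|Z|-1}{2}\rfloor, \lceil\frac{|Z|-1}{2}\rceil\right)$. By Lemma~\ref{m, t and u}(a),(c) we get $m_Z = a_Z = \lfloor\frac{|Z|-1}{2}\rfloor$ and $u_Z = b_Z - 1 = \lceil\frac{|Z|-1}{2}\rceil - 1$, which are the first two claimed formulas.

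Next I would rule out unexpected curves. By Lemma~\ref{m, t and u}(c), $m_Z < u_Z$ is equivalent to $2m_Z + 2 < |Z|$; but here $a_Z$ and $b_Z$ differ by at most one, so $2m_Z + 2 = 2a_Z + 2 \ge a_Z + b_Z + 1 = |Z|$, i.e. $m_Z \ge u_Z$. Hence $m_Z < u_Z$ fails, and by Theorem~\ref{u_ZTheorem} (equivalently Corollary~\ref{irredStructure}) $Z$ admits no unexpected curve. The dimension formula for $[I_{Z+m_ZP}]_{m_Z+1}$ then follows directly from Lemma~\ref{FVlemma} (or Remark~\ref{1 or 2}(i)): the value is $2$ precisely when $a_Z = b_Z$, which happens exactly when $|Z|$ is odd, and is $1$ when $a_Z = b_Z - 1$, i.e. when $|Z|$ is even.

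It remains to show that $[I_{Z+m_ZP}]_{m_Z+1}$ contains an irreducible form. When $|Z|$ is odd, $a_Z = b_Z$, so $u_Z = m_Z - 1$ and Corollary~\ref{for:balance gives irr} immediately provides an irreducible curve of degree $m_Z+1$ vanishing on $Z$ with multiplicity $m_Z$ at $P$. When $|Z|$ is even, $\dim[I_{Z+m_ZP}]_{m_Z+1} = 1$, so there is a unique curve $C = C_P(Z)$; I would apply Corollary~\ref{irred curve least deg}, which says every nonzero form in $[I_{Z+m_ZP}]_{m_Z+1}$ is irreducible if and only if $m_{Z-Q} = m_Z$ for every $Q \in Z$. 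For $Q \in Z$, the set $Z - Q$ is still in linearly general position and has $|Z| - 1$ (odd) points, so by the formula just proved $m_{Z-Q} = \lfloor\frac{|Z|-2}{2}\rfloor = \frac{|Z|-2}{2}$; and $m_Z = \lfloor\frac{|Z|-1}{2}\rfloor = \frac{|Z|-2}{2}$ as well. Thus $m_{Z-Q} = m_Z$ for all $Q \in Z$, and Corollary~\ref{irred curve least deg} finishes the even case.

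The main obstacle is simply making sure the two parity cases are handled by the right tool — Corollary~\ref{for:balance gives irr} is tailor-made for the balanced (odd) case, while the even case genuinely needs the deletion criterion of Corollary~\ref{irred curve least deg}, and one must check that removing a point of $Z$ preserves linear general position (immediate) and recompute $m_{Z-Q}$ via the odd-cardinality instance of the formula. Everything else is bookkeeping with the $(a_Z, b_Z)$ dictionary and Lemma~\ref{FVlemma}.
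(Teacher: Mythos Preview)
Your proof is correct and follows essentially the same route as the paper: Proposition~\ref{star config type} for the splitting type, Lemma~\ref{m, t and u} for $m_Z$ and $u_Z$, Theorem~\ref{u_ZTheorem} to rule out unexpected curves, and then Corollary~\ref{for:balance gives irr} in the odd case and Corollary~\ref{irred curve least deg} in the even case for irreducibility. You are slightly more explicit than the paper in deriving the dimension formula from Remark~\ref{1 or 2}(i) and in spelling out why $m_Z \ge u_Z$ forces the absence of unexpected curves, but the argument is the same.
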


\begin{proof} 
Notice that a set of points is in linearly general position if and only if the set of dual lines has the property that no three of them meet in a point. Hence, Proposition \ref{star config type} gives the asserted values of $m_Z$ and $u_Z$. Combined with Theorem \ref{mainThm1}, we get that $Z$ does not admit an unexpected curve. It remains to show the irreducibility statement. 

First, assume $Z$ is even. Then we have seen that, for each point $Q \in Z$, one has 
$m_Z = \frac{|Z|-2}{2} = m_{Z-Q}$. Hence, the unique curve determined by $[I_{Z + m_Z P}]_{m_Z +1}$ is irreducible by Corollary \ref{irred curve least deg}. 

Second, assume $Z$ is odd. Then $u_Z = m_Z -1$ and Corollary \ref{for:balance gives irr} gives the claim.  
%Choose a point $Q \notin Z$ such that $Z + Q$ is in linearly general position. Then $m_Z = \frac{|Z|-1}{2} = m_{Z+Q}$, and so $[I_{Z + Q + m_{Z + Q} P}]_{m_{Z+Q} +1} \subset [I_{Z + m_Z P}]_{m_Z +1}$. We just showed that the space on the left-hand side contains an irreducible form, which completes the argument. 
\end{proof} 

\begin{remark}
\begin{itemize}

\item[(i)] Corollary \ref{cor:dZ lin gen position} is a statement about a set of points. 
It would be interesting to have a more direct proof and to decide if the conclusion is also true if the base field has positive characteristic. 

\item[(ii)] The assumption that no three lines of $\mathcal A_d$ meet in a point (or the dual version, that the points are in linearly general position) allows for a clean result. Nevertheless, the proof requires much less. If the line arrangement can be built up from a set of lines with semistable syzygy bundle such that each line added meets the existing $e$ lines (say) in more than $\lfloor \frac{e+1}{2} \rfloor$ points then the same conclusion holds, thanks to Lemma~\ref{hal thm}.
\end{itemize}

\end{remark}

There are some further theoretical tools for determining splitting types, which we consider now. 
Let $\mA = \mA (f)$ be a line arrangement in $\PP^2$.   Let $L$ be one of the components of $\mA$ defined by a linear form $\ell$.  Let
$g = f/\ell$.  Then $\bar{g}$, the restriction of $g$ to $L$, is a polynomial of  the same degree as
$g$ though it is not necessarily reduced.  If $\bar{g}'$ is the radical of $\bar{g}$, then 
$\bar{g}'$ defines a hyperplane arrangement  of $L = \PP^1$, called the \emph{restriction}, which we
denote $\mA''$. Moreover, the arrangement defined by $g$ is often denoted by $\mA'$, and one  
refers to $(\mathcal A', \mathcal A, \mathcal A'')$ as a triple of hyperplane arrangements. 
Thus if $\mathcal A$ is a line arrangement then $\mathcal A'$ is obtained from 
$\mathcal A$ by removing a line $L$, and $\mathcal A''$ is the restriction of $\mathcal A'$ to $L$.
Notice that the arrangement $\mathcal A'' \subset \mathbb P^1$ is free with exponent $|\mathcal A''|-1$.

\begin{remark} \label{stefan}
A line arrangement $\mathcal A$ in $\mathbb P^2$ is {\em supersolvable} if it has a so-called {\em modular point}, i.e. a point $P$ with the property that if $\ell_1, \ell_2 \in \mathcal A$ and if $Q$ is the intersection of $\ell_1$ and $\ell_2$ then the line joining $P$ and $Q$ is a line of $\mathcal A$. A standard fact is that if $\mathcal A$ is a supersolvable line arrangement consisting of $d$ lines, $m$ of which pass through the  modular point $P$, then $\mathcal A$ is free, and the splitting type is $(m-1,d-m)$. We are grateful to S. Toh\^{a}neanu for pointing out that the computation of the splitting type is a simple application of the addition-deletion theorem (Theorem \ref{add-del} below) using induction on $d$, with  the base case being the case that all lines pass through a single point. 
\end{remark}

\begin{theorem} [{Addition-Deletion Theorem; see, e.g., \cite[Theorem 4.51]{OT}}] 
       \label{add-del}
Let $(\mathcal A', \mathcal A, \mathcal A'')$ be a triple of line arrangements. Then any two of the following imply the third:

\begin{itemize}

\item[] $\mathcal A$ is free with exponents $(a+1,b)$ (respectively, $(a, b+1)$);

\item[] $\mathcal A'$ is free with exponents $(a,b)$;

\item[] $\mathcal A''$ is free with exponent $(b)$ or $(a)$ (i.e. $\mathcal A'$ meets $\ell$ in $b+1$ (resp., $a+1$) points, ignoring multiplicity).

\end{itemize}

\end{theorem}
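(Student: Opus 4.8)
This is Terao's Addition–Deletion theorem, so I would give the classical argument, phrased through the derivation bundle so as to reuse the exact sequence already invoked in the proof of Lemma~\ref{hal thm}. Write $L$ for the line of $\mathcal A$ that is deleted, $\alpha$ for a defining linear form of $L$, $\mathcal A'=\mathcal A\setminus\{L\}$ and $\mathcal A''=\mathcal A'|_L$, an arrangement of $|\mathcal A''|$ points on $L\cong\PP^1$. Recall that $\mathcal A''$ is automatically free with exponent $|\mathcal A''|-1$, that a line arrangement $\mathcal B$ is free with exponents $(p,q)$, $p+q=|\mathcal B|-1$, exactly when $\mathcal D_{\mathcal B}\cong\mathcal O_{\PP^2}(-p)\oplus\mathcal O_{\PP^2}(-q)$, and that $\mathcal D_{\mathcal A}$ is always locally free of rank $2$. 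So the theorem reduces to: \emph{(addition)} if $\mathcal A'$ is free with exponents $(a,b)$ and $|\mathcal A''|=b+1$ then $\mathcal A$ is free with exponents $(a+1,b)$; \emph{(deletion)} if $\mathcal A$ is free with exponents $(a+1,b)$ and $|\mathcal A''|=b+1$ then $\mathcal A'$ is free with exponents $(a,b)$ — each together with its mirror (swap $a,b$ and replace $b+1$ by $a+1$). The remaining combination, $\mathcal A$ and $\mathcal A'$ free $\Rightarrow\mathcal A''$ free, is then immediate, since $\mathcal A''$ is free anyway and the value $|\mathcal A''|=b+1$ is forced by the exponent arithmetic.

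The engine is the short exact sequence of sheaves on $\PP^2$ recorded in the proof of Lemma~\ref{hal thm},
\[
0\longrightarrow \mathcal D_{\mathcal A'}(-1)\longrightarrow \mathcal D_{\mathcal A}\longrightarrow \mathcal O_L\bigl(1-|\mathcal A''|\bigr)\longrightarrow 0,
\]
together with its dual $0\to\mathcal D_{\mathcal A}\to\mathcal D_{\mathcal A'}\to\mathcal O_L(|\mathcal A''|+1-|\mathcal A|)\to 0$. For \emph{addition}, substitute $\mathcal D_{\mathcal A'}=\mathcal O(-a)\oplus\mathcal O(-b)$ and $|\mathcal A''|=b+1$ into the first sequence:
\[
0\longrightarrow \mathcal O(-a-1)\oplus\mathcal O(-b-1)\longrightarrow \mathcal D_{\mathcal A}\longrightarrow \mathcal O_L(-b)\longrightarrow 0 .
\]
Resolve the torsion sheaf on the right by $0\to\mathcal O(-b-1)\xrightarrow{\ \alpha\ }\mathcal O(-b)\to\mathcal O_L(-b)\to 0$ and form the fibre product $\mathcal G$ of $\mathcal D_{\mathcal A}\twoheadrightarrow\mathcal O_L(-b)$ with $\mathcal O(-b)\twoheadrightarrow\mathcal O_L(-b)$, giving
\[
0\to\mathcal O(-a-1)\oplus\mathcal O(-b-1)\to\mathcal G\to\mathcal O(-b)\to 0
\qquad\text{and}\qquad
0\to\mathcal O(-b-1)\to\mathcal G\to\mathcal D_{\mathcal A}\to 0 .
\]
Since $H^1(\PP^2,\mathcal O(k))=0$ for every $k$, the first sequence splits, so $\mathcal G\cong\mathcal O(-a-1)\oplus\mathcal O(-b-1)\oplus\mathcal O(-b)$. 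Because $\mathcal D_{\mathcal A}$ is locally free, the inclusion $\mathcal O(-b-1)\hookrightarrow\mathcal G$ in the second sequence is a sub-line-bundle, hence after rescaling has nonzero scalar component on the middle $\mathcal O(-b-1)$ summand of $\mathcal G$; an explicit complement then identifies $\mathcal D_{\mathcal A}=\mathcal G/\mathcal O(-b-1)\cong\mathcal O(-a-1)\oplus\mathcal O(-b)$, i.e. $\mathcal A$ is free with exponents $(a+1,b)$. \emph{Deletion} is the same computation with the dual sequence, feeding in a split $\mathcal D_{\mathcal A}$. The hypothesis $|\mathcal A''|=b+1$ is exactly what makes the twists in the resolution and the first Chern classes match; as a check one computes directly from the displayed sequence that $c_2(\mathcal D_{\mathcal A})=(a+1)b$, which is also the $c_2$ of the claimed splitting.

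The real obstacle is the last step — upgrading the surjection $\mathcal D_{\mathcal A}\twoheadrightarrow\mathcal O_L(-b)$ to an honest splitting of $\mathcal D_{\mathcal A}$, rather than just an extension. In the module-theoretic formulation this is the statement that the restriction map $\rho\colon D(\mathcal A)\to D(\mathcal A'')$, $\theta\mapsto\theta|_L$, is well defined (a local tangency check at the points of $\mathcal A''$) and surjective; once $\mathcal A'$ is free, surjectivity follows at once from $H^1(\PP^2,\mathcal D_{\mathcal A'}(k))=0$, but verifying that the induced quotient sheaf is locally free (no embedded torsion along $L$) — which is what actually forces the splitting — is where the three conditions genuinely interact and where care is needed. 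I would also remark that the purely combinatorial deletion–restriction identity $\pi(\mathcal A;t)=\pi(\mathcal A';t)+t\,\pi(\mathcal A'';t)$, combined with Terao's factorization theorem, yields only the numerical consistency of the exponents, not freeness, and so cannot shortcut the bundle argument above, though it is a useful sanity check.
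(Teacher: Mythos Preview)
The paper does not prove this theorem; it is quoted from Orlik--Terao \cite[Theorem~4.51]{OT} as a standard tool and invoked without argument in Section~\ref{sec:examples}. So there is no proof in the paper to compare your proposal against.

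That said, your bundle-theoretic route via the exact sequence from \cite[Theorem~3.2]{hal} (already used in Lemma~\ref{hal thm}) is a reasonable alternative to the module-theoretic argument in \cite{OT}. The one step that is too quick is the assertion that the sub-line-bundle $\mathcal O(-b-1)\hookrightarrow\mathcal G\cong\mathcal O(-a-1)\oplus\mathcal O(-b-1)\oplus\mathcal O(-b)$ automatically has nonzero scalar component on the $\mathcal O(-b-1)$ summand. This does hold, but it needs an argument: if that scalar vanishes, the inclusion factors through $\mathcal O(-a-1)\oplus\mathcal O(-b)$, and when $b>a$ the two remaining components are forms of positive degree on $\PP^2$, hence share a zero by B\'ezout, contradicting that the cokernel $\mathcal D_{\mathcal A}$ is locally free; when $a=b$ one of the two scalar components must be nonzero for the same reason. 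You flag this step yourself as the place ``where care is needed,'' so it is less a gap than a point where the sketch would need to be completed.
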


We use this result to study so-called \emph{Fermat arrangements} of lines \cite{U}. 
We note that these are also sometimes known as {\em monomial arrangements}
(see \cite[Example 10.6]{refSuciu} and \cite[page 247]{OT}).
These arrangements consist of $3 t$ lines ($t \ge 1$) that are defined by the linear factors of 
$f=(x^t-y^t)(x^t-z^t)(y^t-z^t)$. If  $t>3$ or $t=2$, there are $t^2$ points where exactly 3 lines cross and 3 points where exactly
$t$ lines cross, and no other crossing points. When $t=3$, there are 12 points where exactly 3 lines cross
and no other crossing points. When $t=1$ there is only one crossing point, and 3 lines cross there.
The set of  points 
$Z_t$ dual to the lines is defined by the ideal $(x^t+y^t+z^t, xyz)$
(i.e., the intersection of the Fermat $t$-ic with the coordinate axes) when $t$ is odd, and by
$(x^t-y^t,z)\cap (x^t-z^t,y)\cap(y^t-z^t,x)$ when $t$ is even.
Although the freeness is known (and the splitting types too, in terms of degrees of generators
of certain rings of invariants) \cite[Theorem 6.60, \& p.\ 247]{OT},
for the reader's convenience, we include a short proof here as part of the next result.

\begin{proposition} \label{FermatProp}
Suppose that the base field $K$ contains a primitive $t$-th root of unity. 
If $t>2$, then the  Fermat line configuration is free, with splitting type $(t+1, 2t-2)$. If $t \geq 5$, 
the dual set of points $Z=Z_t$ admits unexpected curves of degrees $t+2,\dots,2t-3$ and we have
$m_Z=t+1$, $u_Z=2t-3$ and, for $t\geq 5$, $t+1<t_Z\leq (3t-1)/2$.
The unexpected curve of degree $t+2$ is unique and irreducible.
\end{proposition}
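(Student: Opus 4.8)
The plan is to establish the four assertions in sequence, leaning heavily on the results already developed. First I would prove freeness and compute the splitting type. The Jacobian ideal of $f = (x^t-y^t)(x^t-z^t)(y^t-z^t)$ can be analyzed via the addition-deletion theorem (Theorem \ref{add-del}): I would build the Fermat arrangement up from a subarrangement one line at a time, or, more efficiently, identify a modular point. Indeed, each of the three "heavy" points (where $t$ lines cross, e.g.\ the point dual to a coordinate axis) is a modular point in the sense of Remark \ref{stefan}: any two of the remaining lines meet at one of the $t^2$ triple points, and the line through that triple point and the heavy point is again one of the $3t$ lines (this is the combinatorial heart of the monomial arrangement, easily checked using $t$-th roots of unity). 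By Remark \ref{stefan}, the arrangement is then free with splitting type $(m-1, d-m)$ where $d = 3t$ and $m = t$ is the number of lines through the modular point, giving $(t-1, 2t)$ --- wait, that must be reconciled: the correct count is that $t$ lines pass through the heavy point, but we must use $a_Z + b_Z = |Z| - 1 = 3t - 1$, so the splitting type is $(t+1, 2t-2)$. I would double-check the modular-point line count (it should be $t+2$ lines through the relevant point, accounting for coordinate lines appropriately, or alternatively invoke \cite[Theorem 6.60]{OT} directly as the excerpt permits); in any case the splitting type $(t+1, 2t-2)$ is what \cite{OT} records.

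Next, the invariants $m_Z$, $u_Z$, and the existence range of unexpected curves follow formally. From the splitting type, $m_Z = a_Z = t+1$ by Lemma \ref{m, t and u}(a), and $u_Z = b_Z - 1 = 2t - 3$ by Lemma \ref{m, t and u}(c). For the range of degrees: by Theorem \ref{u_ZTheorem} (equivalently Theorem \ref{mainThm1}), $Z$ admits unexpected curves of degree $j+1$ exactly for $m_Z \le j < u_Z$, i.e.\ degrees $t+2, \dots, 2t-3$, \emph{provided} $m_Z < t_Z$. So I must verify $m_Z < t_Z$, equivalently (by Lemma \ref{a_Z \& t_Z} and Theorem \ref{t_Z \& unexp}) that $h_Z(t_Z) = |Z| = 3t$. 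This is a Hilbert-function computation for $Z_t$, which is a complete intersection of type $(t,3)$ when $t$ is odd (cut out by $(x^t+y^t+z^t, xyz)$) and a union of three collinear-type pieces when $t$ is even; either way one computes $\Delta h_{Z_t}$ explicitly. For $t$ odd, the CI of type $(3,t)$ has $\Delta h_Z$ the convolution $(1,1,1)*(1,1,\dots,1)$ (with $t$ ones), which is symmetric, strictly increasing to its max $3$, plateauing, then decreasing to $0$ at degree $t+1$; a direct check gives $t_Z = \lceil (3t-1)/2 \rceil$ or nearby and $h_Z(t_Z) = 3t$. Since for $t \ge 5$ we have $\lceil(3t-1)/2\rceil > t+1 = m_Z$, this confirms $m_Z < t_Z$ and pins $t_Z$ in the claimed range $t+1 < t_Z \le (3t-1)/2$; the even case is analogous but requires handling the three linear factors of the defining ideal.

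Finally, for uniqueness and irreducibility of the degree-$(t+2)$ curve: uniqueness is immediate from Proposition \ref{prop:vector space decomposition} once we know $m_Z < u_Z$ (which holds since $t+1 < 2t-3$ for $t \ge 5$), as $\dim[I_{Z+m_ZP}]_{m_Z+1} = 1$ by Remark \ref{1 or 2}(i) using $a_Z < b_Z$. For irreducibility I would apply Corollary \ref{irred curve least deg}: the unique unexpected curve $C_P(Z)$ of degree $m_Z + 1$ is irreducible if and only if $m_{Z-Q} = m_Z$ for every $Q \in Z$. By Remark \ref{1 or 2}(ii), removing a point from $Z$ (of splitting type $(t+1, 2t-2)$) yields a set of splitting type $(t, 2t-2)$, so $m_{Z-Q} = t < t+1 = m_Z$ for \emph{every} $Q$ --- which would wrongly force reducibility. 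The resolution is that Corollary \ref{irred curve least deg} concerns removing an \emph{arbitrary} point; the point is that $C_P(Z)$ having a linear component through $P$ and $Q$ forces $m_{Z-Q} < m_Z$ only for that specific $Q$, and the structure theorem (Theorem \ref{thm:unexp curve structure}, Corollary \ref{irredStructure}) says $C_P(Z) = C_P(Z') \cup (\text{lines})$ with $C_P(Z')$ irreducible; so I must rule out nontrivial $Z''$. Here I would instead use Proposition \ref{paramprop2}: with $s_0, s_1, s_2$ the degree-$(t+1)$ syzygy of $\Jac(f)$ (which exists and can be written down explicitly from the free resolution, since the arrangement is free --- these are the monomial/Euler-type derivations of the Fermat arrangement), one checks that no dual linear form $\ell_i$ (i.e.\ none of $x \pm \zeta^k y$, etc.) divides any of $t_0 = ys_2 - zs_1$, $t_1 = -(xs_2-zs_0)$, $t_2 = xs_1 - ys_0$. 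This is the main obstacle: it requires an explicit description of the degree-$(t+1)$ derivation and a divisibility check over $K[x,y,z]$, which I expect to carry out by exhibiting the derivation concretely (e.g.\ a derivation like $\sum x^{t+1} \partial_x$ up to scalars adapted to the three factors) and evaluating. Granting that check, Proposition \ref{paramprop2} gives that $C_P(Z)$ is irreducible, completing the proof.
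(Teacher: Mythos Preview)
Your proposal has two genuine gaps, one in the freeness step and a larger one in the irreducibility argument.

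\textbf{Freeness and splitting type.} The Fermat arrangement is \emph{not} supersolvable: the heavy point $[0:0:1]$ fails to be modular because two lines from the pencil $x^t - z^t$ meet at $[0:1:0]$, and the line from $[0:0:1]$ to $[0:1:0]$ is $x=0$, which is not a factor of $f$. Only $t$ lines of $\mA(f)$ pass through any heavy point (there are no coordinate lines to ``account for''), so Remark~\ref{stefan} applied directly gives the wrong splitting type $(t-1,2t)$, as you noticed. The paper's fix is to enlarge the arrangement: set $g = xy\cdot f$. Now $[0:0:1]$ \emph{is} modular for $\mA(g)$ (the missing lines $x=0$ and $y=0$ have been supplied), and $t+2$ lines of $\mA(g)$ pass through it, so $\mA(g)$ is free of type $(t+1,2t)$ by Remark~\ref{stefan}. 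Two applications of addition--deletion (Theorem~\ref{add-del}), removing first $x$ and then $y$, give that $\mA(f)$ is free of type $(t+1,2t-2)$. Your fallback of citing \cite{OT} is of course legitimate, but then you have no argument of your own.

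\textbf{The bound on $t_Z$.} Your complete-intersection computation works (for odd $t$) but forces an unnecessary parity split. The paper instead quotes \cite[Theorem~III.1(a)]{Ha} to get that the $3t$ points of $Z$ impose independent conditions on forms of degree $\geq t+1$, so $h^0(\mI_Z(j+1)) = \binom{j+3}{2} - 3t$ for $j \geq t$; then $h^0(\mI_Z(j+1)) - \binom{j+1}{2} = 2j+3-3t$ immediately gives $t_Z > t+1$ for $t\geq 5$ and $t_Z \leq (3t-1)/2$.

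\textbf{Irreducibility.} This is the main content, and you leave it open. Your route through Proposition~\ref{paramprop2} is available in principle, but you neither exhibit the degree-$(t+1)$ syzygy nor perform the divisibility check, and this is not a formality. (Your aside about Remark~\ref{1 or 2}(ii) is also off: that remark concerns \emph{adding} a general point, and says nothing about removing a specific point of $Z$.) The paper takes a completely different route via symmetry. Suppose $C_P(Z)$ were reducible for general $P$; by Corollary~\ref{irred curve least deg} there is a fixed $q\in Z$ and a nonempty open set $V$ of points $P$ for which the line through $P$ and $q$ is a component of $C_P$. The group generated by the diagonal matrices $\operatorname{Diag}(1,1,\zeta^i)$ and coordinate permutations preserves $Z$ and acts transitively on it. For any $q'\in Z$, pick $\phi$ in this group with $\phi(q)=q'$; for $P\in V\cap\phi(V)$, uniqueness of $C_P$ forces $\phi(C_{\phi^{-1}(P)})=C_P$, so the line through $q'$ and $P$ is also a component. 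Intersecting the finitely many translates $\phi_{q'}(V)$ yields a nonempty open set of $P$ for which $C_P$ contains all $3t$ lines through $P$ and the points of $Z$; but $\deg C_P = t+2 < 3t$ for $t\geq 5$, a contradiction.
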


\begin{proof}
We first prove freeness. We will start with a slightly larger line arrangement, and produce the Fermat 
arrangement by removing two lines.  The configuration of lines defined by the factors of $g=xy(x^t-y^t)(x^t-z^t)(y^t-z^t)$
is supersolvable since every point of intersection of two of the lines
is on one of the lines through the point defined by $x=0$ and $y=0$.
Thus the line arrangement $\mA (g)$ is free (see Remark \ref{stefan}).

Now we  determine its splitting type, $(a,b)$, where $a \leq b$.
Observe that there are $d = 3t+2$ lines in $\mA = \mA (g)$, and the  modular point lies on $m = t+2$ lines. Hence by Remark \ref{stefan}, the splitting type of $\mA$ is $(t+1,2t)$. 

Next we successively remove the lines defined by $x$ and $y$ from $\mA$. First let $\mA'= \mA (\frac{g}{x})$ and let $A''$ be the arrangement obtained by restricting
$\mA'$ to $x=0$. Clearly $\mA''$ is free with type $t+1$, so by  the Addition-Deletion Theorem \ref{add-del} $\mA'$ is an arrangement 
which is free of type $(t+1,2t-1)$. Now delete $y$ from $\mA'$ and apply Addition-Deletion again
to see that $(x^t-y^t)(x^t-z^t)(y^t-z^t)$ gives a free arrangement of type $(t+1,2t-2)$. 

Thus for the dual set of points $Z$ we have that $m_Z = t+1$ and $u_Z = 2t -3$. 
By \cite[Theorem III.1(a)]{Ha},
the $3t$  points of $Z$  impose independent conditions on forms of degree $t+1$ or more,
so $h^0(\mathcal I_Z(j+1))=\binom{j+3}{2}-3t$ for $j+1\geq t+1$.  Thus, taking $j=m_Z=t+1$, we have $h^0(\mathcal I_Z(t+2))-\binom{t+2}{2}=5-t$ 
and since $t_Z\geq m_Z$, we see $t_Z>m_Z$ for $t\geq 5$. 
Taking $j=t+1+s$, we have $h^0(\mathcal I_Z(t+2+s))-\binom{t+2+s}{2}=\binom{t+4+s}{2}-3t-\binom{t+2+s}{2}=2s-t+5$. Since this is positive for
$s>(t-5)/2$, we have $t_Z\leq t+1+(t-3)/2=(3t-1)/2$. Thus $m_Z<t_Z\leq (3t-1)/2$ for $t\geq 5$.

Now Theorem \ref{mainThm1} gives that,  for $t \geq 5$, the set  $Z$ admits an unexpected curve of degree $j$  whenever  $t+2 \leq j \leq 2t-2$. By Corollary \ref{irredStructure}, the unexpected curve $C_P = C_P (Z)$ of degree 
$t+2$ is unique. 

It remains to prove that $C_P$ is irreducible. 
For each point $q \in Z$, consider the set $A_q$ of points $P \in \mathbb P^2$ such 
that $[I_{(m_Z-1)P+Z-q}]_{m_Z} \neq 0$. If none of the sets $A_q$, $q\in Z$, has 
closure containing a nonempty open set, then $C_P$ is irreducible for general 
$P$ by Corollary \ref{irred curve least deg}. To prove that this is indeed the case, we argue by 
contradiction. 

Assume that $A_q$ has closure containing a nonempty open 
set for some $q$. Then by upper-semicontinuity $A_q$ contains a nonempty open set 
$V = V_q \subseteq U$ such that for all $P \in V$ the line through $P$ and $q$ is a 
component of $C_P$ (see Corollary \ref{irredStructure}).

Note that the points of $Z$ all are of the form $(0,1,\alpha^j)$ or cyclic permutations thereof,
where $\alpha$ is a primitive root of $x^t-1=0$. Thus the diagonal matrices of the form $\operatorname{Diag}(1,1,\alpha^i)$,
together with permutations of the variables, give a transitive action on $Z$
by linear automorphisms of $\PP^2$. Let $q\neq q'\in Z$ and let
$\phi=\phi_{q'}$ be one of these linear automorphisms, chosen such that $\phi(q)=q'$.

For each $P\in \phi(V)\cap V$, we have that the line $L_{q,P}$ through $q$ and $P$ is a component of $C_P$
(since $P\in V$). But $P\in \phi(V)$, so $P=\phi(Q)$ for some 
$Q\in V$, and $L_{q,Q}$ is a component of $C_Q$ (since $Q\in V$).
Uniqueness tells us that $\phi(C_Q)= C_P$, and 
so $\phi(L_{q,Q})=L_{\phi(q),\phi(Q)}=L_{\phi(q),P}$ is also a component of $C_P$.

Let 
$$
W=\Big(\bigcap_{q'}\phi_{q'}(V)\Big)\cap V, 
$$ 
where the intersection is over all points $q'\in Z - q$. 
By the argument above, for each $P\in W$, every line through $P$ and a point of $Z$ is a component of
$C_P$. Thus for a general point $P$, $C_P$ has $3t$ linear components,
hence $3t\leq \deg(C_P)=t+2$. Since $t\geq 5$, this is impossible and so
$C_P$ is irreducible.
\end{proof}

\begin{remark} \label{KleinAndWiman}
For $t \geq 3$, the Fermat line arrangement has the remarkable property that wherever two of the lines cross there is at least one more line through the crossing point. Apart from the trivial case of 3 or more concurrent lines, only two other complex line arrangements are known with that property. One, due to F.\ Klein in 1879, has 21 lines with 49 crossing points, 21 of which are where exactly 4 lines cross and 28 of which are where exactly  3 lines cross. The other, due to A.\ Wiman in 1896, has 45 lines with 201 crossing points, 36 of which are where exactly 5 lines cross, 45 of which are where exactly 4 lines cross and 120 of which are where exactly 3 lines cross. 
A Macaulay2 calculation shows both arrangements are free, and (as noted in \cite[Example 4.1.6]{brianlect}) their splitting types are (respectively) (9,11) and (19,25); for a more conceptual verification see \cite{ilardi}.
Using this information, as well as Macaulay2 \cite{M2} to compute the Hilbert function, we conclude:

\begin{itemize}

\item If $Z$ is the set of 21 points dual to the 21 lines of the Klein configuration, then $m_Z=9$, $u_Z = 10$ and $t_Z=10$, so $Z$ has an unexpected curve in degree 10.

\item If $Z$ is the set of 45 points dual to the 45 lines of the Wiman configuration, then $m_Z=19$, $u_Z = 24$, and $t_Z=22$, so $Z$ has unexpected curves in degrees 20, 21, 22, 23 and 24.

\end{itemize}

\noindent Moreover, by Proposition \ref{paramprop2}, the unexpected curve in degree $m_Z+1$ is irreducible for both the Klein and the Wiman line arrangements.
See \cite{BDHHSS} for a detailed discussion of these line arrangements and for additional references.
\end{remark}

We now describe another infinite family of sets of points in which each set has an irreducible 
unexpected curve. This family is defined over the field of rational numbers. 
We begin by describing the family of dual  line arrangements.

\begin{example} \label{surprise}
 Let $\mathcal A$ be the arrangement of five lines defined by the form $xyz(x+y)(x-y)$. We will denote by $a$ the line $x-y = 0$, by $d$ the line $x+y =0$, by $i$ the line at infinity ($z =0$), and by $h_1$ and $v_1$ the $x$ and $y$ axes, respectively. We remark in passing that there is some flexibility in the choice of these five lines, but that an arbitrary  configuration of five lines with the same intersection lattice is {\em not} always going to lead to arrangements with the properties that we will describe. (For example, replacing $x-y$ by any other line through the origin will fail to satisfy the requirement below that $h_3$ passes through $d \cap v_2$.)

We will add lines to $\mathcal A$, and define the line arrangements $\mathcal A_k$ inductively, where $k$ is the total number of lines that we have added to $\mathcal A$. In what follows, for simplicity we will refer to the lines containing the point of intersection of $i$ and $v_1$ as ``vertical lines," and the lines containing the point of intersection of $i$ and $h_1$ as ``horizontal lines."

$\mathcal A_1$ is obtained by adding to $\mathcal A$  an arbitrary vertical line, $v_2$. The next three lines added to $\mathcal A_1$ are then determined: $h_2$ is the horizontal line through $a \cap v_2$, $v_3$ is the vertical line through $d \cap h_2$, and $h_3$ is the horizontal line through $a \cap v_3$. The key fact is that $h_3$ also passes through $d \cap v_2$. This gives the arrangements $\mathcal A_1, \mathcal A_2,\mathcal A_3, \mathcal A_4$.

We continue in this way, taking an arbitrary vertical line $v_4$ and adding a horizontal line $h_4$, a vertical line $v_5$, and another horizontal line $h_5$ in the manner just described to obtain configurations $\mathcal A_5, \mathcal A_6, \mathcal A_7, \mathcal A_8$. Of special interest to us will be the configurations $\mathcal A_n$ where $n$ is a multiple of 4. In particular, $\mathcal A_{4(k+1)}$ is obtained from $\mathcal A_{4k}$ by adding the lines $v_{2k+2}, h_{2k+2}, v_{2k+3}, h_{2k+3}$ in that order. See Figure~\ref{fig:line-config}
for an example of the line configuration. 

Notice that $\mathcal A_4$ is the $B_3$ arrangement, so our example includes the one studied in \cite{DIV} as a special case.

\begin{comment}
\begin{figure}[!ht]    
    \includegraphics[scale=.4]{line-config}
    \caption{The line arrangement $\mathcal{A}_{12}$.  }
         \label{fig:line-config}
\end{figure}
\end{comment}

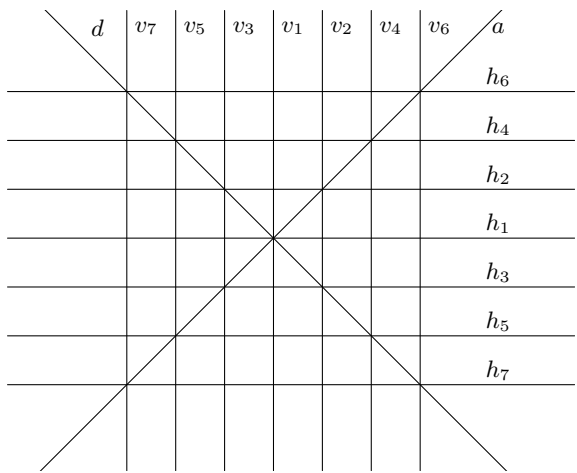
\begin{figure}[htbp]
\begin{center}
\begin{tikzpicture}[line cap=round,line join=round,>=triangle 45,x=.65cm,y=.65cm]
\clip(-5.44,-4.76) rectangle (6.16,4.66);
\draw [domain=-5.44:6.16] plot(\x,{(-0.0-1.0*\x)/-1.0});
\draw (0.0,-4.76) -- (0.0,4.66);
\draw (-3,-4.76) -- (-3,4.66);
\draw (3,-4.76) -- (3,4.66);
\draw [domain=-5.44:6.16] plot(\x,{(-0.0-0.0*\x)/1.0});
%\draw [domain=-5.44:6.16] plot(\x,{-2.0*\x});
%\draw [domain=-5.44:6.16] plot(\x,{(-0.0--2.0*\x)/1.0});
\draw [domain=-5.44:6.16] plot(\x,{(-0.0--1.0*\x)/-1.0});
\draw [domain=-5.44:6.16] plot(\x,{(-3.0-0.0*\x)/-1.0});
\draw [domain=-5.44:6.16] plot(\x,{(-2.0-0.0*\x)/-1.0});
\draw [domain=-5.44:6.16] plot(\x,{(-1.0-0.0*\x)/-1.0});
\draw [domain=-5.44:6.16] plot(\x,{(--1.0--0.0*\x)/-1.0});
\draw [domain=-5.44:6.16] plot(\x,{(--2.0--0.0*\x)/-1.0});
\draw [domain=-5.44:6.16] plot(\x,{(--3.0--0.0*\x)/-1.0});
\draw (-2.0,-4.76) -- (-2.0,4.66);
\draw (-1.0,-4.76) -- (-1.0,4.66);
\draw (1.0,-4.76) -- (1.0,4.66);
\draw (2.0,-4.76) -- (2.0,4.66);
%\draw [domain=-5.44:6.16] plot(\x,{(--1.0--1.0*\x)/1.0});
%\draw [domain=-5.44:6.16] plot(\x,{(--2.0--1.0*\x)/1.0});
%\draw [domain=-5.44:6.16] plot(\x,{(-1.0--1.0*\x)/1.0});
%\draw [domain=-5.44:6.16] plot(\x,{(-4.0--2.0*\x)/2.0});
\begin{scriptsize}
\draw (-3.6,4.3) node {$d$};
\draw (-2.6,4.3) node {$v_7$};
\draw (-1.6,4.3) node {$v_5$};
\draw (-.6,4.3) node {$v_3$};
\draw (.4,4.3) node {$v_1$};
\draw (1.4,4.3) node {$v_2$};
\draw (2.4,4.3) node {$v_4$};
\draw (3.4,4.3) node {$v_6$};
\draw (4.6,4.3) node {$a$};
\draw (4.6,3.3) node {$h_6$};
\draw (4.6,2.3) node {$h_4$};
\draw (4.6,1.3) node {$h_2$};
\draw (4.6,.3) node {$h_1$};
\draw (4.6,-.7) node {$h_3$};
\draw (4.6,-1.7) node {$h_5$};
\draw (4.6,-2.7) node {$h_7$};
\end{scriptsize}

\end{tikzpicture}
\caption{The line arrangement $\mathcal{A}_{12}$ (the line at infinity, $z=0$ and denoted $i$, is not shown).}
 \label{fig:line-config}
 \end{center}
\end{figure}

%\begin{figure}[!ht]    
%    \includegraphics[scale=.5]{point-config}
%    \caption{The point configuration dual to $\mathcal{A}_{12}$.  }
%        \label{fig:point-config}
%\end{figure}

One can easily check using Theorem~\ref{add-del} that these configurations are all free, with splitting types as follows:

\begin{itemize}
\item $(2k+1,2k+3)$ for $\mathcal A_{4k}$;

\item $(2k+2,2k+3)$ for $\mathcal A_{4k+1}$;

\item $(2k+3,2k+3)$ for $\mathcal A_{4k+2}$;

\item $(2k+3,2k+4)$ for $\mathcal A_{4k+3}$;

\end{itemize}
\end{example}

Let us denote by $Z_n$ the set of $n+5$ points dual to the line arrangement $\mathcal A_n$. 

\begin{proposition}
    \label{prop:unexpected irr}
If $k \ge 1$, then $Z_{4k}$ has multiplicity index $m_{Z_{4k}} = 2k +1$, speciality index $u_{Z_{4k}} = 2k +2$, and $Z_{4k}$ admits a unique unexpected curve. It is irreducible and has degree $m_{Z_{4k}} + 1 = 2k +2$. 
\end{proposition}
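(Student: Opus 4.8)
The plan is to leverage the splitting type data already established for $\mathcal A_{4k}$. From Example~\ref{surprise} we know $\mathcal A_{4k}$ is free with splitting type $(2k+1, 2k+3)$, so $a_{Z_{4k}} = 2k+1$ and $b_{Z_{4k}} = 2k+3$. By Lemma~\ref{m, t and u}(a),(c) this immediately gives $m_{Z_{4k}} = a_{Z_{4k}} = 2k+1$ and $u_{Z_{4k}} = b_{Z_{4k}} - 1 = 2k+2$. In particular $m_{Z_{4k}} < u_{Z_{4k}}$ (indeed $u_{Z_{4k}} = m_{Z_{4k}} + 1$), and $|Z_{4k}| = 4k+5 = 2m_{Z_{4k}} + u_{Z_{4k}}\!\! +\, 2$, consistent with $u_Z = |Z| - m_Z - 2$.

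The next step is to invoke the geometric criterion Theorem~\ref{thm:geomVersion} (Corollary~\ref{cor:geomVersion}): $Z_{4k}$ admits an unexpected curve if and only if $2m_Z + 2 < |Z|$ and no subset of $m_Z + 2$ of the points is collinear. The first condition is $2(2k+1)+2 = 4k+4 < 4k+5$, which holds. For the second, collinearity of $r$ points of $Z_{4k}$ is dual to the statement that $r$ lines of $\mathcal A_{4k}$ pass through a common point; so I need to check that no point of the arrangement $\mathcal A_{4k}$ lies on more than $m_Z + 1 = 2k+2$ of its lines. This requires a careful but elementary inspection of the combinatorics of $\mathcal A_{4k}$ as constructed in Example~\ref{surprise}: the high-multiplicity points are the two ``points at infinity'' where all the vertical lines (together with $d$) meet and where all the horizontal lines (together with $a$) meet, plus the point where $i$, and the original coordinate-type structure interact. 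Counting vertical lines: $d, v_1, \dots, v_{2k+1}$ together with possibly $i$ — one checks the vertical pencil has exactly $2k+2$ or fewer lines through its center, and similarly for the horizontal pencil, so no pencil exceeds $2k+2 = m_Z+1$. Once this combinatorial bound is verified, Corollary~\ref{cor:geomVersion} gives that $Z_{4k}$ admits unexpected curves, in degrees $j$ with $m_Z < j \le u_Z$, i.e.\ only $j = 2k+2$; uniqueness of the unexpected curve of degree $m_Z+1$ is then Theorem~\ref{thm:unexp curve structure} (or Corollary~\ref{cor:unexpected curves only in initial deg}).

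Finally, to establish irreducibility of $C_P(Z_{4k})$ I would use the stability machinery of Section~\ref{sec:examples}. Since $\mathcal A_{4k}$ is built from $\mathcal A$ (five lines) by successively adding lines, I would show inductively, using Lemma~\ref{hal thm}, that the derivation bundle $\mathcal D_{Z_{4k}}$ is stable — at each stage the added line meets the existing arrangement in more than half-plus-one of the existing points (this is where the specific incidence pattern of the $v_i, h_i$ matters, and it should follow from the same combinatorial analysis as above). Alternatively, and perhaps more directly, I would apply Proposition~\ref{paramprop2}: since $\mathcal A_{4k}$ is free, $\Jac(f)$ has a syzygy with all coefficients $s_i$ of degree $m_Z = 2k+1$, and I would check that none of the linear forms $\ell_i$ dual to the points of $Z_{4k}$ divides all three of $t_0 = y s_2 - z s_1$, $t_1 = -(x s_2 - z s_0)$, $t_2 = x s_1 - y s_0$; equivalently, using Corollary~\ref{irred curve least deg}, that $m_{Z_{4k} - Q} = m_{Z_{4k}}$ for every $Q \in Z_{4k}$. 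Because removing one line from $\mathcal A_{4k}$ typically does not drop the smaller exponent $a_Z$ (one should verify this using the splitting-type formulas for the related arrangements $\mathcal A_{4k-1}$ etc., or a deletion argument via Theorem~\ref{add-del}), this should go through. The main obstacle I anticipate is the careful bookkeeping of the incidence combinatorics of $\mathcal A_{4k}$ — both the no-large-pencil claim and the claim that deleting any single line preserves $a_Z$ — since the construction in Example~\ref{surprise} is recursive and the dual points are not in general position; getting a clean inductive statement about the pencils and the deletions is the crux.
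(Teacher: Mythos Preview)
Your overall strategy matches the paper's: read off $m_Z$ and $u_Z$ from the splitting type, invoke the existence/uniqueness results, and then establish irreducibility via Corollary~\ref{irred curve least deg} by showing $m_{Z_{4k}-Q}=m_{Z_{4k}}$ for every $Q\in Z_{4k}$, i.e.\ that deleting any line from $\mathcal A_{4k}$ leaves the smaller splitting exponent equal to $2k+1$. Your use of Corollary~\ref{cor:geomVersion} for existence, checking that no point of $\mathcal A_{4k}$ has more than $2k+2$ lines through it, is in fact a bit more explicit than the paper, which simply cites Theorem~\ref{mainThm1}.

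However, your option (a) cannot work: the derivation bundle $\mathcal D_{Z_{4k}}$ is \emph{not} stable, nor even semistable, since its generic splitting type has gap $b_Z-a_Z=2$; indeed Proposition~\ref{prop:unexpected, so unstable} says that any $Z$ admitting an unexpected curve has a non-semistable derivation bundle. So an inductive stability argument for $\mathcal D_{Z_{4k}}$ itself is a dead end.

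Your option (c) is the right one, and Addition--Deletion handles every line $L\neq i$: each such $L$ meets the remaining $4k+4$ lines in exactly $2k+2$ points, so Theorem~\ref{add-del} gives that $\mathcal A_{4k}\setminus L$ is free of type $(2k+1,2k+2)$. The genuine obstacle you did not isolate is the line $i$ at infinity: it meets the rest of $\mathcal A_{4k}$ in only \emph{four} points (the two pencil centers and the two diagonal points), so for $k\ge 2$ Addition--Deletion does not apply, and $\mathcal A_{4k}\setminus i$ need not even be free. The paper overcomes this by proving stability of the derivation bundle of a carefully chosen \emph{sub}arrangement (namely $\mathcal A(h)$ where $z(x^2-y^2)h$ defines $\mathcal A_{4k}$), then bootstrapping via Lemma~\ref{hal thm}, Grauert--M\"ulich, and Lemma~\ref{lem:change of index} to pin down the splitting type of $\mathcal A_{4k}\setminus i$ as $(2k+1,2k+2)$. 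So stability does enter, but for an auxiliary bundle, not for $\mathcal D_{Z_{4k}}$.
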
 

\begin{proof}
Since $\mA_{4k}$ has splitting type $(2k+1,2k+3)$, we get the claimed  values of $m_{Z_{4k}}$ and $u_{Z_{4k}}$. Now Theorem \ref{mainThm1} and Proposition \ref{prop:vector space decomposition} give that $Z_{4k}$ admits a unique unexpected curve  of degree $2 k+2$. It remains to show its irreducibility. 

To this end we use Corollary \ref{irred curve least deg}. It shows that we are done once we have proven that removing any line $L$ from the arrangement $\mA_{4k}$ gives an arrangement  $\mA_{4k}\setminus L$, with splitting type $(2k+1, 2k+2)$.

First, let $L$ be any line of $\mA_{4k}$ other than the line at infinity $i$, defined by $z = 0$. Then $L$ meets the other lines of $\mA_{4k}$ in $2k + 2$ points. Hence Addition-Deletion yields that $\mA_{4k} \setminus L$ is a free arrangement with splitting type $(2k+1, 2k+2)$, as claimed. 

Second, consider the line $i$, and set  $\mA' = \mA_{4k} \setminus i$. The line $i$ meets the lines in $\mA'$ in four points. Hence, if $k = 1$ (i.e., $\mA_4$ is the B3 configuration), then we conclude as in the first case that $\mA'$ has splitting type $(3, 4)$, as desired. Let $k \ge 2$. Now we need a different argument. 

Let $h$ be the product of $4 k + 3$ linear forms such that $\mA_{4k} = \mA (z (x^2 - y^2) h)$, and so $\mA' = \mA ( (x^2 - y^2) h)$. As observed above, the arrangement  $\mA (z (x - y) h)$ is free with splitting type $(2k+1, 2k+2)$. Since the line defined by $x-y$ meets $\mA (z h)$ in $2k+2$ points, we see that $\mA (z h)$ is free with splitting type $(2k+1, 2k+1)$. The line $z = 0$ meets the lines of $\mA (h)$ in two points. Hence, the logarithmic bundles are related by the exact sequence (see Sequence \ref{eq:dual hal})
\[
0 \to \mD (h z) \to \mD (h) \to \mO_{\PP^1} (-4 k) \to 0. 
\]
Since $\mD (h z) \cong \mO_{\PP^2}^2 (-2k-1)$ and $\mD (h)_{norm} = \mD (h) (2k)$, we conclude that $H^0 (\mD (h)_{norm}) = 0$, and so $\mD (h)$ is stable by Lemma \ref{stable lemma}. Now Lemma \ref{hal thm} shows that $\mA ((x - y)h)$ is semistable. Hence, its splitting type is $(2k+1, 2k +1)$ by the Grauert-M\"ulich theorem. We have already seen that $\mA_{4k} = \mA (z (x^2 - y^2) h)$ has splitting type $(2k+1, 2k+3)$. Using this information, Lemma \ref{lem:change of index} yields that $\mA' = \mA ( (x^2 - y^2) h)$ has splitting type $(2k+1, 2k+2)$. This completes the argument.  
\end{proof}

\begin{remark}
Let $Z \subset \PP^2$ be a set of points with $2m_Z +2 \le |Z|$. Let $P$ be a general point. Then  $[I_{Z+m_Z P}]_{m_Z + 1}$ determines a unique curve $C_P$.    This curve depends on $P$, and only the degree is necessarily invariant as  $P$ moves. 
Lemma \ref{lem:change of index}  shows that {\color{blue} for any given $P$,} if $Q \in C_P(Z)$ then $m_{Z+Q,P} = m_Z$. Notice that this is not necessarily equal to $m_{Z+Q}$. However, if there is a point $Q \notin Z$ such that $Q \in \bigcap_{P \in \mathbb P^2} C_P(Z)$ then we do obtain $m_Z = m_{Z+Q}$. 

We find it very surprising that such a point $Q$ can exist, i.e. that there can be a new point common to every curve in the family $\{ C_P \}$ (which is not a linear system) as $P$ varies in $\mathbb P^2$. Nevertheless, we saw this already in Example \ref{example20}, and Corollary \ref{irred curve least deg} shows that this has to happen even for \emph{each} point $Q$ of $Z$ when passing from $Z - Q$ to $Z$, provided $2m_Z +3 \le |Z|$ and the curve $C_P$ is irreducible. Indeed, the converse is true as well, and we used it to prove the irreducibility of the unexpected curve in Proposition \ref{prop:unexpected irr}. 
\end{remark}

%%%%%%%%%%%%%%%%%%%%%%%%%%%%%%%%%%%%%%%%%%%%%%%%%%%%%%

\section{Connections and corrections}
\label{sec:connections}

The paper \cite{DIV} introduced connections between the splitting type of the syzygy bundle and two seemingly unrelated topics: the Strong Lefschetz Property for certain ideals of powers of linear forms and Terao's conjecture for planar arrangements. The first version of the current paper was inspired by \cite{DIV}, but it pointed out some inaccuracies in that paper. The paper \cite{DI} continued this investigation by extending somewhat the results of \cite{DIV} and correcting most of the issues that we had pointed out. Thus in this section it is important to keep on  record the  example from our first version that was cited in \cite{DI} as motivating their changes (see Example \ref{ctrex to DIV} below), and to expand on the new observations in \cite{DI} about the connections to unexpected curves. 

\subsection{SLP}

We first recall the main definition.

\begin{definition}
An artinian algebra $A = R/I$ {\em satisfies the Strong Lefschetz Property (SLP) at range $k$ in degree $d$} if, for a general linear form $L$, the homomorphism $\times L^k : [A]_d \rightarrow [A]_{d+k}$ has maximal rank. We say that $A$ {\em fails SLP at  range $k$ in degree $d$ by $\delta > 0$} if, for a general linear form $L$, the multiplication $\times L^k : [A]_d \rightarrow [A]_{d+k}$ has rank $\min \{ h_A(d), h_A (d+k) \} - \delta$.
\end{definition}

We also recall the following important result.

\begin{theorem}[\cite{EI}]   
       \label{thm:inverse-system}
Let $\wp_1, \dots, \wp_m$ be ideals of  $m$ distinct points in $\mathbb P^{n-1}$.   Choose positive integers $a_1,\dots,a_m$, and let $(l_1^{a_1} ,\dots,l_m^{a_m}) \subset R = k[x_1,\dots,x_n]$ be the ideal generated by powers of the  linear forms that are dual to the points  $\wp_i$. 
Then for any integer $j \geq \max\{ a_i\}$,
\[
\dim_K \left [R/ (l_1^{a_1}, \dots, l_n^{a_m} )  \right ]_j =
\dim_K \left [ \wp_1^{j-a_1 +1} \cap \dots \cap \wp_n^{j-a_m+1} \right ]_j .
\]
\end{theorem}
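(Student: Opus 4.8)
The plan is to give the classical apolarity (Macaulay inverse system) proof, working one graded degree at a time. Assume first $\operatorname{char} K = 0$ and let $R = K[x_1,\dots,x_n]$ act on itself by $x_i \mapsto \partial/\partial x_i$; for $f,g \in [R]_j$ the scalar $B_j(f,g) := f(\partial_{x_1},\dots,\partial_{x_n})\,g$ defines a symmetric bilinear form on $[R]_j$ which is diagonal in the monomial basis with entries $B_j(x^{\alpha},x^{\alpha}) = \alpha!$, hence perfect. For a homogeneous ideal $I \subseteq R$ write $[I^{\perp}]_j \subseteq [R]_j$ for the $B_j$-orthogonal complement of $[I]_j$. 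Then $\dim_K [R/I]_j = \dim_K [R]_j - \dim_K [I]_j = \dim_K [I^{\perp}]_j$; and since $[I_1 + \dots + I_m]_j = \sum_i [I_i]_j$ and orthogonal complements turn sums into intersections in a perfect pairing, $[(I_1 + \dots + I_m)^{\perp}]_j = \bigcap_{i=1}^m [I_i^{\perp}]_j$. (In positive characteristic one replaces the action of $R$ on itself by the action of $R$ on its graded $K$-dual, the divided power algebra, with $x_i$ acting by contraction; everything above remains valid.)

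The heart of the matter is the classical lemma: if $\ell = c_1 x_1 + \dots + c_n x_n \in [R]_1$ is the linear form \emph{dual} to the point $P = [c_1 : \dots : c_n]$ with ideal $\wp$, and $a \ge 1$, then for every $j \ge a$
\[
[(\ell^{a})^{\perp}]_j = [\,\wp^{\,j-a+1}\,]_j
\]
as subspaces of $[R]_j$. For the proof, observe that for $h \in [R]_{j-a}$ and $g \in [R]_j$ one has $B_j(\ell^{a} h, g) = h(\partial)\big(\ell(\partial)^{a} g\big)$, which by perfectness of $B_{j-a}$ vanishes for all $h$ exactly when $\ell(\partial)^{a} g = 0$; thus $[(\ell^{a})^{\perp}]_j = \ker\big(\partial_{c}^{\,a} \colon [R]_j \to [R]_{j-a}\big)$, where $\partial_{c} = \ell(\partial)$ is the directional derivative along $c = (c_1,\dots,c_n)$. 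Now choose linear coordinates $u_1,\dots,u_n$ on $K^n$ whose first coordinate direction is $c$; then $\partial_{c} = \partial/\partial u_1$, so $\ker \partial_c^{\,a} = \{\, g \in [R]_j : \deg_{u_1} g \le a-1 \,\}$, which is precisely the degree-$j$ component of the ideal $(u_2,\dots,u_n)^{\,j-a+1}$. Since $(u_2,\dots,u_n)$ is the ideal of the point with $u$-coordinates $[1:0:\dots:0]$, namely $P$, this ideal is $\wp$, which proves the lemma.

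Granting the lemma, the theorem follows by assembling the pieces. Put $I = (l_1^{a_1},\dots,l_m^{a_m}) = (l_1^{a_1}) + \dots + (l_m^{a_m})$, and take $j \ge \max_i a_i$, which guarantees $j - a_i + 1 \ge 1$ for every $i$, so each $\wp_i^{\,j-a_i+1}$ is an honest power. Then
\[
\dim_K [R/I]_j = \dim_K [I^{\perp}]_j = \dim_K \bigcap_{i=1}^m [(l_i^{a_i})^{\perp}]_j = \dim_K \bigcap_{i=1}^m [\,\wp_i^{\,j-a_i+1}\,]_j = \dim_K \Big[\, \bigcap_{i=1}^m \wp_i^{\,j-a_i+1} \,\Big]_j,
\]
the last equality being the trivial fact that extracting the degree-$j$ component commutes with intersecting ideals. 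This is exactly the asserted identity.

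I expect the only real difficulty to be bookkeeping with conventions rather than anything conceptual. Two points deserve care: in positive characteristic the argument must be run with the divided power algebra and with $a$-fold contraction by $\ell$ in place of $\partial_c^{\,a}$ (since $\partial/\partial u_1$ fails to be faithful in high degree); and one must check that "$\ell$ dual to $\wp$" is meant in the naive sense that the coefficients of $\ell$ are the coordinates of the point $P$ — which is precisely what the directional-derivative computation produces. It is also worth noting that the ideals $(l_i^{a_i})$ are not $\mathfrak m$-primary, so their full Macaulay duals are infinite-dimensional; this is harmless, because the entire proof takes place inside the single finite-dimensional space $[R]_j$.
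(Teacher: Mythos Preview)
The paper does not supply its own proof of this statement: it is quoted verbatim as a result of Emsalem--Iarrobino \cite{EI} and used as a black box. Your argument is the standard apolarity (Macaulay inverse system) proof --- precisely the mechanism behind the original result --- and it is correct as written, including the observation that in positive characteristic one should replace partial differentiation by contraction (equivalently, work in the divided power algebra) so that the degree-$j$ pairing remains perfect. There is nothing to compare; you have simply filled in a proof the paper chose to outsource.
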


The first version of this paper gave the following example, cited in \cite{DI} without details.

\begin{example} \label{ctrex to DIV}

Let $1 \leq a \leq b-1$.  Define the arrangement $\mathcal A_{a,b}$ by the lines
\[
\begin{array}{l}
	z, \\
	x, x+z, x+2z, ..., x+(a-1)z, \\
	y, y+z, y+2z, ..., y+(b-1)z 
\end{array}
\]
It is easy to see $\mathcal A_{a,b}$ is supersolveable, hence free.  Moreover, using
addition-deletion  (or Remark \ref{stefan}) it is easy to see that the splitting type is $(a,b)$.
Let $Z$ be the set of points dual to these lines. For a concrete example, we will take $a=3$ and $b = 13$
(see Figure \ref{DualA3-13}).

%\vspace{.2in}

\begin{figure}[htbp]
\begin{center}
\begin{tikzpicture}[scale = .5]
\node at (0,0) {$\bullet$};
\node at (1,0) {$\bullet$};
\node at (2,0) {$\bullet$};
\node at (3,0) {$\bullet$};
\node at (4,0) {$\bullet$};
\node at (5,0) {$\bullet$};
\node at (6,0) {$\bullet$};
\node at (7,0) {$\bullet$};
\node at (8,0) {$\bullet$};
\node at (9,0) {$\bullet$};
\node at (10,0) {$\bullet$};
\node at (11,0) {$\bullet$};
\node at (12,0) {$\bullet$};
\node at (13,0) {$\bullet$};
\node at (0,1) {$\bullet$};
\node at (0,2) {$\bullet$};
\node at (0,3) {$\bullet$};
%\node at (5.5,2.5) {\huge{ $\bullet$}};
%\node at (6.5,2.5) {($a$)};
\draw (-1,0) -- (14,0);
\draw (0,-1) -- (0,4);
\draw[snake=brace,segment aspect=0.5] (-.8,.8) -- (-.8,3.2);
\draw[snake=brace,segment aspect=0.5] (13.2,-.8) -- (.8,-.8);
\node at (-1.3,2) {$3$};
\node at (7,-1.5) {$13$};
\end{tikzpicture}
\caption{The points $Z$ dual to $\mathcal A_{3,13}$.}
\label{DualA3-13}
\end{center}
\end{figure}
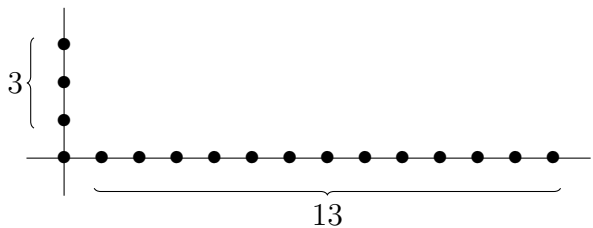

%\vspace{.1in}

\noindent The associated splitting type  is $(3,13)$; thus the derivation bundle is unstable.
It is not hard to compute the Hilbert function of this set of points and to verify that $t_Z = 3$. Since the splitting type immediately gives $m_Z = 3$, we see from Theorem \ref{mainThm1} that $Z$ does not admit an unexpected curve.

Consider the ideal
\[
I = \langle x^8,(x+z)^8,(x+2z)^8,y^8, (y+z)^8,\dots,(y+12z)^8,z^8 \rangle.
\]
Its Hilbert function is 
\[
[1, 3, 6, 10, 15, 21, 28, 36, 33, 27, 19, 12, 7, 3, 1],
\]
as can be verified either on a computer or by hand. For a general linear form $L$, the Hilbert function of $R/(I,L^2)$ is
\[
[1, 3, 5, 7, 9, 11, 13, 15, 5].
\]
Since 
\[
[R/I]_{i-2} \stackrel{\times L^2}{\longrightarrow} [R/I]_i \rightarrow [R/(I,L^2)]_i \rightarrow 0
\]
is exact, a comparison of these two Hilbert functions shows that $\times L^2 : [R/I]_{i-2} \rightarrow [R/I]_{i}$ has maximal rank for all $i$. Thus $R/I$ {\em does} have SLP in range 2. This shows that in order for SLP to fail in range 2, it is not enough to have an unbalanced splitting type. 
%The missing ingredient, provided in \cite[Proposition 3.10]{DI} (see below) but not in \cite{DIV}, is that the ideal of $Z$ must be generated (in this case) in degree $\leq 8 = \frac{3+13}{2}$, which is clearly not the case here. 
\end{example}

The authors of \cite{DI} prove the following correction of a result in \cite{DIV}, based on this example. 

\begin{proposition} [{\cite[Proposition 3.10]{DI}}]
Let $I \subset R = \mathbb C[x,y,z]$ be an artinian ideal generated by $2d+1$ polynomials $\ell_1^d,\dots,\ell_{2d+1}^d$, where $\ell_i$ are distinct linear forms. Let $Z$ be the corresponding points dual to the $\ell_i$. If $Z$ contains no more than $d+1$ points on a line then the following conditions are equivalent:

\begin{itemize}
\item[(i)] The algebra $R/I$ fails the SLP at range 2 in degree $d-2$;

\item[(ii)] The derivation bundle $\mathcal D_0(Z)$ is unstable with splitting type $(d-s,d+s)$ for some $s \geq 1$.
\end{itemize}
\end{proposition}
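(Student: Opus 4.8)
The plan is to exploit the bridge theorem between inverse systems (Theorem~\ref{thm:inverse-system} of \cite{EI}) and unexpected curves, together with the characterization in Theorem~\ref{u_ZTheorem} and the dictionary $m_Z=a_Z$, $u_Z=b_Z-1$ from Lemma~\ref{m, t and u}. Let $Z=\{P_1,\dots,P_{2d+1}\}$ be the dual points and set $I=(\ell_1^d,\dots,\ell_{2d+1}^d)$. Since $|Z|=2d+1$ is odd, the splitting type $(a_Z,b_Z)$ satisfies $a_Z+b_Z=2d$, so it is automatically of the form $(d-s,d+s)$ for some $s\ge 0$, and ``unstable'' means exactly $s\ge 1$. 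Thus condition (ii) says precisely $m_Z=a_Z<d\le b_Z=u_Z+1$, i.e.\ $m_Z<d-1<u_Z+1$ is not quite it — more carefully (ii) says $a_Z\le d-1$ and $a_Z<b_Z$, equivalently $m_Z\le d-1$ and $m_Z<u_Z$. Meanwhile the hypothesis that $Z$ has no more than $d+1$ collinear points is, by Corollary~\ref{cor:geomVersion}, the combinatorial input we need to turn $m_Z<u_Z$ into the actual existence of an unexpected curve — but one must be careful about the exact collinearity bound ($m_Z+2$ vs.\ $d+1$); the fact that $2m_Z+2\le |Z|=2d+1$ gives $m_Z\le d-1$, so ``at most $d+1=m_Z+2$ (at most) collinear'' needs to be matched to ``no $m_Z+2$ or more collinear'', and I would check that the hypothesis in the stated bound of $d+1$ is strong enough, possibly invoking Proposition~\ref{prop:small tZ} to handle the borderline case.

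First I would translate (i) into a statement about dimensions. Using the exact sequence
\[
[R/I]_{d-4}\xrightarrow{\ \times L^2\ }[R/I]_{d-2}\to [R/(I,L^2)]_{d-2}\to 0,
\]
the SLP at range $2$ in degree $d-2$ means $\times L^2$ has maximal rank there, i.e.\ $\dim_K[R/(I,L^2)]_{d-2}=\max\{0,\ h_{R/I}(d-2)-h_{R/I}(d-4)\}$ — or rather the failure by $\delta>0$ is measured by the drop below $\min\{h_{R/I}(d-2),h_{R/I}(d)\}$. I would then apply Theorem~\ref{thm:inverse-system} with all $a_i=d$ and $j=d-2$... — but $j=d-2<d=\max a_i$, so the hypothesis $j\ge\max\{a_i\}$ of that theorem is \emph{not} satisfied, and this is exactly the subtlety. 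The correct route is the one used in \cite{DIV,DI}: apply the inverse system correspondence at a degree where it is valid, namely relate $[R/(I,L^k)]$ to the ideal of a fat point. Concretely, $L^2$ is (up to scalar) the square of a general linear form, dual to a general point $P$, and modding out by $(L^2)$ corresponds on the Macaulay-dual side to intersecting with $\wp_P^{\,2}$ — more precisely one uses that $[R/(I,L^2)]_{d-2}$ is dual to $[\wp_1\cap\cdots\cap\wp_{2d+1}\cap \wp_P^{\,3}]$ in an appropriate degree, i.e.\ to $[I_{Z+\text{(fat point)}P}]$. Getting the bookkeeping of degrees and multiplicities exactly right here — so that failure of SLP at range $2$ in degree $d-2$ becomes precisely $h^0(\mathcal I_{Z+(d-1)P}(d))>\max\{0,h^0(\mathcal I_Z(d))-\binom{d}{2}\}$, i.e.\ $Z$ has an unexpected curve of degree $d=m_Z+1$ (so $m_Z=d-1$) — is the heart of the argument.

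With that dictionary in place, the logical structure is short. For (i)$\Rightarrow$(ii): failure of SLP gives an unexpected curve, so by Theorem~\ref{u_ZTheorem} we get $m_Z<t_Z$ and in particular $m_Z<u_Z$; since $|Z|$ is odd the splitting type is $(d-m_Z\!-1+\dots)$ — one computes $m_Z\le u_Z-1$ and $a_Z+b_Z=2d$ force $(a_Z,b_Z)=(d-s,d+s)$ with $s=d-1-m_Z... $ wait, rather $s=(b_Z-a_Z)/2\ge 1$; I would pin down that the relevant degree where SLP fails is exactly $d-2$ to conclude $m_Z=d-1$ hence $s\ge1$. For (ii)$\Rightarrow$(i): given the unstable splitting type $(d-s,d+s)$, we have $m_Z\le d-1$ and $m_Z<u_Z$; the collinearity hypothesis (at most $d+1$ collinear points) combined with Corollary~\ref{cor:geomVersion} yields that $Z$ \emph{does} admit an unexpected curve, and one then reads off from Theorem~\ref{u_ZTheorem} that $d$ lies in the range $(m_Z,u_Z]$ of unexpected degrees (using $u_Z=|Z|-m_Z-2=2d-1-m_Z\ge d$ when $m_Z\le d-1$), so there is an unexpected curve of degree $d=(d-2)+2$, which via the inverse-system dictionary is exactly the statement that $R/I$ fails SLP at range $2$ in degree $d-2$.

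The main obstacle, I expect, is not the abstract equivalence but the precise degree/multiplicity bookkeeping in the inverse-system step: Theorem~\ref{thm:inverse-system} applies in degrees $j\ge d$, but SLP is being tested in degree $d-2$, so one must either quote the refined form of the Emsalem--Iarrobino correspondence that handles $(I,L^k)$ directly (as in \cite{DI}), or bootstrap from the valid degree down. A secondary but real annoyance is reconciling the collinearity hypothesis ``no more than $d+1$ points on a line'' with the exact threshold ``$m_Z+2$ or more collinear'' in Corollary~\ref{cor:geomVersion} and Proposition~\ref{prop:small tZ}; since $m_Z$ can be strictly less than $d-1$, one should check that the given bound always suffices (and, if the equivalence genuinely needs $m_Z=d-1$, explain why the unstable splitting type forces that equality under the collinearity hypothesis — this is where Proposition~\ref{prop:small tZ}, ruling out $h_Z(t_Z)<|Z|$, does the work via Theorem~\ref{t_Z & unexp}).
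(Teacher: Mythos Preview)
The paper does not give its own proof of this proposition; it is quoted from \cite{DI} and then superseded by the paper's Theorem~\ref{SLP condition}. Your plan to derive it from the paper's machinery is essentially the right one, but two points in your execution need fixing.

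First, your worry about the degree hypothesis in Theorem~\ref{thm:inverse-system} is misplaced. The theorem is applied not in degree $d-2$ but in degree $d$: the cokernel of $\times L^2\colon[R/I]_{d-2}\to[R/I]_d$ is $[R/(I,L^2)]_d$, and with $a_i=d$ for the $\ell_i^d$ and $a_{2d+2}=2$ for $L^2$ the hypothesis $j\ge\max\{a_i\}$ holds at $j=d$. This is exactly Theorem~\ref{SLP condition} with $j=d-1$, and it gives directly that (i) holds iff $Z$ has an unexpected curve of degree $d$. No bootstrapping from a higher degree is needed. Your repeated assertion that this forces $m_Z=d-1$ is also wrong: it gives only $m_Z\le d-1$, since the degrees of unexpected curves fill the whole interval $m_Z+1,\dots,u_Z$.

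Second, the collinearity discrepancy you flag is a genuine gap that you identify but do not close. Here is the missing step for (ii)$\Rightarrow$(i). Suppose $m_Z\le d-1$ but some line $L$ contains $k\ge m_Z+2$ points of $Z$. Any curve $C$ defined by a nonzero element of $[I_{Z+m_ZP}]_{m_Z+1}$ meets $L$ (which avoids the general $P$) in at least $k>m_Z+1=\deg C$ points, so $L$ is a component of $C$; the residual $C-L$ has degree $m_Z$ and multiplicity $m_Z$ at $P$, hence is a union of $m_Z$ lines through $P$, each containing at most one of the remaining $2d+1-k$ points of $Z$. Thus $m_Z\ge 2d+1-k$, and combining with $k\ge m_Z+2$ gives $k\ge d+2$, contradicting the hypothesis that at most $d+1$ points lie on a line. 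Hence under (ii) no $m_Z+2$ points of $Z$ are collinear, Corollary~\ref{cor:geomVersion} applies, and since $u_Z=2d-1-m_Z\ge d$ the degree $d$ lies in the unexpected range, giving (i). For (i)$\Rightarrow$(ii) the collinearity hypothesis is not needed at all: an unexpected curve of degree $d$ forces $m_Z+1\le d$, i.e.\ $a_Z\le d-1$, so $s\ge1$.
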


\noindent The new ingredient in this result compared to \cite[Proposition 7.2]{DIV}  is the condition on points on a line. The authors observe that it is related to the question of whether the forms $\ell_i^d$ are all linearly independent, via Theorem \ref{thm:inverse-system}, but we omit the details here.
In particular, it no longer applies to Example \ref{ctrex to DIV} because of the combination of the numerical constraint and the condition on collinear points. Note that we are maintaining their notation, so their $d$ (the degree of the forms) is not the same as our $d$ (the number of points).

In the following result, we generalize this in two ways. First, there is no numerical assumption. Second, we show that failure of SLP is equivalent to the existence of an unexpected curve.
%\end{remark}

\begin{theorem} \label{SLP condition}
Let $\mathcal A(f)$ be a line arrangement in $\mathbb P^2$, where $f = L_1\cdots L_d$, and let $Z$ be the set of points in $\mathbb P^2$ dual to these lines.  Then $Z$ has an unexpected curve of degree $j+1$ if and only if $R/(L_1^{j+1},\dots,L_d^{j+1})$ fails the SLP in range 2 and degree $j-1$.
\end{theorem}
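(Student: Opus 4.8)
The plan is to translate each side of the equivalence into a statement about the Hilbert function of $A = R/I$, where $I = (L_1^{j+1}, \dots, L_d^{j+1})$, and then to match the two via the apolarity (inverse system) dictionary of Theorem \ref{thm:inverse-system}. Fix a general linear form $L$ and let $P$ be the general point of $\PP^2$ dual to it; write $\wp_i$ for the ideal of the point $P_i$ dual to $L_i$ (so $I_Z = \wp_1 \cap \cdots \cap \wp_d$) and $\wp_P$ for the ideal of $P$. I would first record the exact sequence attached to multiplication by $L^2$,
\[
0 \to [(I : L^2)/I]_{j-1} \to [A]_{j-1} \xrightarrow{\ \times L^2\ } [A]_{j+1} \to [R/(I + L^2R)]_{j+1} \to 0,
\]
which gives $\operatorname{rank}\big(\times L^2 \colon [A]_{j-1} \to [A]_{j+1}\big) = \dim[A]_{j+1} - \dim[R/(I + L^2R)]_{j+1}$.

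Next I would evaluate the three relevant dimensions. Since $I$ is generated in degree $j+1$, $[A]_{j-1} = [R]_{j-1}$ has dimension $\binom{j+1}{2}$. Applying Theorem \ref{thm:inverse-system} to $L_1, \dots, L_d$ with exponents all equal to $j+1$ and evaluation degree $j+1$ gives $\dim[A]_{j+1} = \dim[\wp_1 \cap \cdots \cap \wp_d]_{j+1} = h^0(\mathcal I_Z(j+1))$. Applying it instead to $L_1, \dots, L_d, L$ with exponents $j+1, \dots, j+1, 2$ (the hypothesis is met because $j+1 = \max\{j+1, 2\}$) gives $\dim[R/(I+L^2R)]_{j+1} = \dim[\wp_1 \cap \cdots \cap \wp_d \cap \wp_P^{\,j}]_{j+1} = h^0(\mathcal I_{Z+jP}(j+1))$. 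Hence $\operatorname{rank}(\times L^2) = h^0(\mathcal I_Z(j+1)) - h^0(\mathcal I_{Z+jP}(j+1))$.

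Finally I would unwind the definition of maximal rank. The map $\times L^2$ has maximal rank exactly when $\operatorname{rank}(\times L^2) = \min\{\binom{j+1}{2}, h^0(\mathcal I_Z(j+1))\}$, so $A$ fails the SLP in range $2$ and degree $j-1$ precisely when
\[
h^0(\mathcal I_Z(j+1)) - h^0(\mathcal I_{Z+jP}(j+1)) < \min\Big\{\binom{j+1}{2},\ h^0(\mathcal I_Z(j+1))\Big\}.
\]
Using $a - \min\{b, a\} = \max\{a - b, 0\}$, this is literally inequality \eqref{eq:unexpected}, i.e. the condition that $Z$ admit an unexpected curve of degree $j+1$. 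I do not expect a genuine obstacle: the one real move is running the apolarity dictionary on the mixed-exponent ideal $I + L^2R$, and the rest is bookkeeping — one need only check that the genericity of $L$ matches the genericity of $P$ in Definition \ref{def:unexpected curve} (both amount to the generic, i.e. minimal, value of the relevant dimensions, by semicontinuity) and dispose of the degenerate case in which $L_1, \dots, L_d$ are concurrent, so that $A$ fails to be artinian; but then $Z$ is collinear, hence $m_Z = 0 = t_Z$ and by Theorem \ref{u_ZTheorem} there is no unexpected curve, while the rank formula above shows $\times L^2$ does have maximal rank, so the asserted equivalence holds trivially.
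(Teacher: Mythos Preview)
Your proof is correct and follows essentially the same approach as the paper's: both compute $\dim[A]_{j-1}=\binom{j+1}{2}$ directly, use the Emsalem--Iarrobino inverse system theorem to identify $\dim[A]_{j+1}=h^0(\mathcal I_Z(j+1))$ and $\dim[R/(I+L^2R)]_{j+1}=h^0(\mathcal I_{Z+jP}(j+1))$, and then observe that the maximal-rank condition for $\times L^2$ is exactly the negation of inequality \eqref{eq:unexpected}. The only cosmetic difference is that the paper phrases the comparison via the cokernel dimension while you phrase it via the rank, and you add a remark disposing of the non-artinian (collinear) case, which the paper does not mention.
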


\begin{proof}
Let $P$ be a general point in $\mathbb P^2$, and let $L$ be the  linear form dual to $P$. Consider the multiplication map  
\[
\times L^2 : [R/(L_1^{j+1},\dots,L_d^{j+1})]_{j-1} \rightarrow [R/(L_1^{j+1},\dots,L_d^{j+1})]_{j+1}. 
\]
Clearly $\dim_K [R/(L_1^{j+1},\dots,L_d^{j+1})]_{j-1} = \binom{j+1}{2}$. By Macaulay duality, 
\[
\dim_K [R/(L_1^{j+1},\dots,L_d^{j+1})]_{j+1} = h^0(\mathcal I_Z(j+1)).
\]
Hence, the expected dimension of the cokernel is $\max \left \{ h^0(\mathcal I_Z(j+1)) - \binom{j+1}{2}, 0 \right \}$. In other words,  $R/(L_1^{j+1},\dots,L_d^{j+1})$ fails the SLP in range 2 and degree $j-1$ if and only if 
\[
\dim_K (\coker (\times L^2))  > \max \left \{ h^0(\mathcal I_Z(j+1)) - \binom{j+1}{2}, 0 \right \}.
\]

Now, the cokernel of the considered multiplication by $L^2$ is $[R/(L_1^{j+1},\dots,L_d^{j+1},L^2)]_{j+1}$. By Theorem \ref{thm:inverse-system}, its dimension is $h^0((\mathcal I_Z \otimes I_P^{j})(j+1))$. Thus, we have shown that 
$R/(L_1^{j+1},\dots,L_d^{j+1})$ fails the SLP in range 2 and degree $j-1$ if and only if 
\[
h^0((\mathcal I_Z \otimes I_P^{j})(j+1)) > \max \left \{ h^0(\mathcal I_Z(j+1)) - \binom{j+1}{2}, 0 \right \}, 
\]
that is, $Z$ admits an unexpected curve of degree $j+1$. 
\end{proof}

\begin{remark}
The last result in \cite{DI}, namely Corollary 3.13, claims to recover our Theorem \ref{SLP condition}. This is not quite true.  Their result makes an assumption on the relation between the number of points, $2d+1$, and the degree $d$ where SLP fails in range 2, as well as an assumption that not too many of the points of $Z$ are collinear (because they need the points to impose independent conditions on forms of degree $d$). We know that if there is an unexpected curve of degree $j+1$ then {\em as a consequence} $h_Z(t_Z) = |Z|$, i.e. the points of $Z$ impose independent conditions on forms of degree $t_Z$. But there is no guarantee that $j+1 \geq t_Z$. We only know (Theorem \ref{mainThm1} and Lemma \ref{m, t and u}) that $a_Z < t_Z$ and hence $t_Z \leq u_Z$. 
\end{remark}

\begin{corollary} \label{explain DIV}
Let $\mathcal A(f)$ be a line arrangement in $\mathbb P^2$ with splitting type $(a,b)$, where $2 \le a \le b$. Let $f = L_1\cdots L_d$ and assume that the ideal generated by the $(a+1)$-st partial derivatives of $f$ is artinian. 
Then   $b-a \geq 2$ if and only if  $R/(L_1^{a+1},\dots,L_d^{a+1})$ fails the SLP at range 2 in degree $a-1$. 
\end{corollary}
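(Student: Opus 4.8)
The plan is to deduce this directly from Theorem \ref{SLP condition} by taking $j+1 = a+1$, i.e. $j = a$. With this substitution, Theorem \ref{SLP condition} says: $Z$ has an unexpected curve of degree $a+1$ if and only if $R/(L_1^{a+1},\dots,L_d^{a+1})$ fails the SLP in range $2$ and degree $a-1$. So the entire task reduces to showing that, under the hypotheses $2 \le a \le b$ (where $(a,b) = (a_Z, b_Z)$ is the splitting type) and the Artinian hypothesis on the ideal of $(a+1)$-st partials, we have the equivalence: $Z$ admits an unexpected curve of degree $a+1$ $\iff$ $b - a \ge 2$.

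First I would handle the forward direction. Suppose $Z$ admits an unexpected curve of degree $a_Z + 1 = m_Z + 1$ (recall $m_Z = a_Z$ by Lemma \ref{m, t and u}(a)). By Theorem \ref{mainThm1} (equivalently Theorem \ref{u_ZTheorem}), $Z$ admits an unexpected curve at all if and only if $m_Z < t_Z$, and then unexpected curves occur exactly in degrees $m_Z + 1, \dots, u_Z$; combined with Lemma \ref{m, t and u}(c), which gives $u_Z = b_Z - 1$, the existence of \emph{any} unexpected curve forces $m_Z < u_Z = b_Z - 1$, i.e. $a_Z \le b_Z - 2$, i.e. $b - a \ge 2$. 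Conversely, suppose $b - a \ge 2$, i.e. $m_Z = a_Z < b_Z - 1 = u_Z$. By Lemma \ref{m, t and u}(d) we have $m_Z \le t_Z$; I need to upgrade this to $m_Z < t_Z$ so that Theorem \ref{mainThm1} yields an unexpected curve in degree $m_Z + 1 = a + 1$. By Lemma \ref{a_Z \& t_Z}, it suffices to verify $h_Z(t_Z) = |Z|$; and by Theorem \ref{t_Z \& unexp} (contrapositive), if $h_Z(t_Z) < |Z|$ then $Z$ admits no unexpected curves, so in particular $m_Z = t_Z$ by Theorem \ref{thm:tZ = dZ} — but then $m_Z = t_Z$ together with Proposition \ref{prop:small tZ} forces $|Z| = 2t_Z + 2$ (complete intersection case) or a long collinear subset (the "line" case), and in both of these cases one computes $u_Z = |Z| - m_Z - 2 = m_Z$ or less, contradicting $m_Z < u_Z$; so $h_Z(t_Z) = |Z|$, hence $m_Z < t_Z$, hence an unexpected curve of degree $a+1$ exists. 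This is exactly the argument already carried out inside the proof of Corollary \ref{irredStructure}(b) and Corollary \ref{cor:dZ lin gen position}, so I would cite those or reproduce the two-case analysis.

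The remaining point is to confirm that the hypotheses of Corollary \ref{explain DIV} are compatible with applying Theorem \ref{SLP condition} with $j = a$: the theorem as stated in the excerpt has no Artinian hypothesis and no lower bound on $a$, so strictly speaking the substitution is immediate. The role of the Artinian assumption on the ideal of $(a+1)$-st partials (and of $a \ge 2$) is to make the statement about SLP "at range $2$ in degree $a - 1$" meaningful/nonvacuous — in particular it ensures $a - 1 \ge 1$ so that degree $a-1$ is a genuine positive degree where multiplication by $L^2$ is being tested, matching Definition \ref{def:unexpected curve}'s requirement $j > 0$ (here $j = a \ge 2 > 0$). I would include one sentence noting that $a \ge 2$ gives $j = a \ge 2$, so Remark \ref{interesting} is not an obstruction. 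The main obstacle — though it is not really an obstacle since the work is all done earlier — is the careful bookkeeping in the converse direction to rule out $h_Z(t_Z) < |Z|$; everything else is a direct translation through Theorem \ref{SLP condition} and Lemma \ref{m, t and u}.

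\begin{proof}
By Lemma \ref{m, t and u}(a) and (c), the splitting type $(a,b)$ satisfies $a = m_Z$ and $b - 1 = u_Z$. Apply Theorem \ref{SLP condition} with $j + 1 = a + 1$, i.e.\ $j = a$ (note $j = a \ge 2 > 0$, so this is an admissible degree): $R/(L_1^{a+1},\dots,L_d^{a+1})$ fails the SLP at range $2$ in degree $a - 1$ if and only if $Z$ admits an unexpected curve of degree $a + 1 = m_Z + 1$. It therefore suffices to prove that $Z$ admits an unexpected curve of degree $m_Z + 1$ if and only if $b - a \ge 2$, i.e.\ $m_Z < u_Z$.

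If $Z$ admits an unexpected curve, then by Theorem \ref{mainThm1} we have $m_Z < t_Z$, and the range of degrees of unexpected curves is $m_Z + 1 \le j \le u_Z$; in particular $m_Z < u_Z$, so $b - a = (u_Z + 1) - m_Z \ge 2$.

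Conversely, assume $b - a \ge 2$, i.e.\ $m_Z < u_Z$. By Lemma \ref{m, t and u}(d), $m_Z \le t_Z$. We claim $m_Z < t_Z$. By Lemma \ref{a_Z & t_Z}, it is enough to show $h_Z(t_Z) = |Z|$. Suppose instead $h_Z(t_Z) < |Z|$. By Theorem \ref{thm:tZ = dZ}, $m_Z = t_Z$, and by Proposition \ref{prop:small tZ} one of the following holds: either $Z$ is a transverse complete intersection of a conic with a curve of degree $t_Z + 1$, in which case $|Z| = 2t_Z + 2 = 2m_Z + 2$ and hence $u_Z = |Z| - m_Z - 2 = m_Z$, contradicting $m_Z < u_Z$; or there is a line containing exactly $|Z| - t_Z \ge t_Z + 2$ of the points of $Z$, and then (as in the proof of Corollary \ref{irredStructure}(b), case (b)(ii)) this line is a component of the curve determined by $[I_{Z + m_Z P}]_{m_Z+1}$ not passing through $P$, forcing $m_{Z'} = 0$ for the residual subscheme $Z'$; by Corollary \ref{cor:change of t_Z} this gives $m_Z \le t_Z \le t_{Z'} + |Z''| = m_Z$, so $m_Z = t_Z$, and again Proposition \ref{prop:small tZ} together with $|Z| - m_Z \ge m_Z + 2$ yields $u_Z \le m_Z$, a contradiction. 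Hence $h_Z(t_Z) = |Z|$, so $m_Z < t_Z$, and Theorem \ref{mainThm1} gives that $Z$ admits an unexpected curve of degree $m_Z + 1 = a + 1$.
\end{proof}
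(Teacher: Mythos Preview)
There is a genuine gap in your converse direction, specifically in case (b)(ii) of Proposition \ref{prop:small tZ}. Your final claim ``Proposition \ref{prop:small tZ} together with $|Z| - m_Z \ge m_Z + 2$ yields $u_Z \le m_Z$'' is simply false: the inequality $|Z| - m_Z \ge m_Z + 2$ rewrites as $|Z| \ge 2m_Z + 2$, which gives $u_Z = |Z| - m_Z - 2 \ge m_Z$, not $u_Z \le m_Z$. In fact case (b)(ii) is perfectly compatible with $m_Z < u_Z$: Example \ref{ctrex to DIV} has splitting type $(3,13)$, so $m_Z = 3 < u_Z = 12$, yet $t_Z = 3 = m_Z$ and there are $14$ collinear points of $Z$. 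So your case analysis cannot close without an additional hypothesis, and the statement of the corollary without the Artinian condition is actually false.

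The hypothesis you never use --- that the ideal of $(a+1)$-st partial derivatives of $f$ is Artinian --- is precisely what rules out case (b)(ii). If some $a+2$ of the lines $L_i$ passed through a common point $Q$, then $f$ would vanish to order at least $a+2$ at $Q$, hence every $(a+1)$-st partial of $f$ would vanish at $Q$, contradicting the Artinian hypothesis. Dually, no $m_Z + 2 = a + 2$ points of $Z$ are collinear. With this in hand, the paper's proof is immediate: the condition $b - a \ge 2$ is equivalent to $2m_Z + 2 < |Z|$, and together with ``no $m_Z + 2$ points collinear'' this is exactly the criterion for an unexpected curve in Theorem \ref{thm:geomVersion} (Corollary \ref{cor:geomVersion}); then Theorem \ref{SLP condition} with $j = a$ finishes. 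Your forward direction is fine and matches the paper; the fix for the converse is to invoke the Artinian hypothesis rather than attempt the case split.
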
  

\begin{proof}
The condition on the ideal of partial derivatives guarantees that no  $a+2$ of the lines pass through any point of $\mathbb P^2$. Thus no  $a+2$ of the dual points, $Z$, lie on a line, so we can apply Theorem \ref{thm:geomVersion} with $j=a=m_Z$. Then the result  follows from Theorem \ref{SLP condition}.
\end{proof}

\subsection{Terao's Conjecture}

It is natural to wonder to what extent numerical invariants of a line arrangement are determined by its combinatorial properties. The latter are captured by the \emph{incidence lattice} of the arrangement. It consists of all intersections of lines, ordered by reverse inclusion.  For example, if $\mathcal{A} (f)$ and $\mathcal{A} (g)$  are two line arrangements in $\PP^2$ with the same incidence lattice, then it follows that the Jacobian ideals of $f$ and $g$ have the same degree. 

One of the main open problems is to decide whether freeness of hyperplane arrangements is a combinatorial property. It is open even for line arrangements. 

\begin{conjecture}[Terao]
    \label{conj:Terao} 
Freeness of a line arrangement  depends  only on its incidence lattice. 
\end{conjecture}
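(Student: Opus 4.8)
The plan is to route the conjecture entirely through the dictionary developed above, which converts freeness of a line arrangement $\mathcal{A}$ into a statement about the multiplicity index $m_Z = a_Z$ of the dual point set $Z$ (Lemma \ref{m, t and u}(a)). Since, as observed just before the conjecture, two arrangements with the same incidence lattice $\Lambda$ share the same number of lines $d$ and the same Jacobian degree, and since every singularity of a line arrangement is an ordinary multiple point whose local Tjurina number is read off from $\Lambda$, the total Tjurina number $\tau$ and hence both Chern numbers of the derivation bundle $\mathcal{D}_Z$ are determined by $\Lambda$ alone. The first step is therefore to fix $\Lambda$ and treat $d-1$ and $\tau$ as combinatorial constants, isolating the single integer $m_Z$ as the only splitting-type datum not manifestly combinatorial (note $b_Z = d-1-m_Z$ by Lemma \ref{m, t and u}(c), so the whole pair $(a_Z,b_Z)$ is governed by $m_Z$).

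Second, I would invoke Corollary \ref{cor:points to Terao}, which reduces the conjecture to proving that \emph{having maximal multiplicity index is a combinatorial property}. To make this quantitative I would combine the present results with a du Plessis--Wall type inequality, $m_Z(d-1-m_Z) \le (d-1)^2 - \tau$, with equality exactly when $\mathcal{A}$ is free. Because $m_Z = a_Z \le \frac{d-1}{2}$ and the left-hand side is increasing there, this bounds $m_Z$ above by a $\Lambda$-determined threshold $r_0$, with freeness equivalent to $m_Z = r_0$. The task then collapses to showing that the arrangement attains $m_Z = r_0$ precisely when its combinatorial type allows it. I would attack this on the realization space $V_\Lambda$ of all arrangements with lattice $\Lambda$: since the locus $\{\,Z : \dim[I_{Z+jP}]_{j+1} > 0\,\} = \{m_Z \le j\}$ is closed for each $j$, the index $m_Z$ attains its maximal value on a dense open subset of each irreducible component of $V_\Lambda$, and Corollary \ref{cor:suff Terao} together with the Lefschetz equivalences of Proposition \ref{prop:Terao equiv} and Theorem \ref{SLP condition} would identify this maximal-$m_Z$ locus with the free locus on that component.

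The hard part — and it is the whole content of the conjecture — is the final step: showing that the generic value of $m_Z$ is the same on \emph{every} irreducible component of $V_\Lambda$ and equals $r_0$. Semicontinuity controls $m_Z$ only \emph{within} a component; nothing in the present toolkit prevents the generic splitting type of $\mathcal{D}_Z$ from jumping between two components of $V_\Lambda$, and it is exactly such a jump (a free realization on one component, a non-free realization with strictly smaller $m_Z$ on another) that a hypothetical counterexample to Terao would display. Closing this gap would require either a purely combinatorial lower bound $m_Z \ge r_0$ valid for \emph{every} realization, which would force constancy and settle the conjecture, or a deformation argument connecting all realizations through free arrangements; I see no way to supply either with the methods available here. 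Consequently this proposal establishes the reduction rigorously and pins the difficulty to the single statement ``maximal multiplicity index is combinatorial,'' but that statement — and hence Terao's conjecture itself — remains the genuine obstacle and is left open.
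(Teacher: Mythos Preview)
The statement is a \emph{conjecture}, and the paper does not prove it; it is presented as an open problem, and the surrounding material (Proposition~\ref{prop:split type free}, Corollary~\ref{cor:type of free comb}, Proposition~\ref{prop:Terao equiv}, Corollaries~\ref{cor:suff Terao} and~\ref{cor:points to Terao}) offers only reductions and reformulations. So there is no ``paper's own proof'' to compare against.

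Your proposal is consistent with this: you correctly recover the paper's reduction (freeness $\Leftrightarrow$ $c_2(\mathcal D_Z)=a_Zb_Z$ via Theorem~\ref{thm:splitt crit} and Equation~\eqref{eq:Chern degree}, hence freeness is combinatorial once the splitting type is), and you correctly identify the remaining obstruction as the statement that the multiplicity index attains its combinatorially determined maximum on \emph{every} realization with a given lattice --- exactly the content of Corollary~\ref{cor:points to Terao}. Your semicontinuity/realization-space discussion is a reasonable gloss on why the difficulty localizes to comparing irreducible components of $V_\Lambda$, but, as you yourself say, nothing here forces the generic $m_Z$ to agree across components, and that is the whole conjecture. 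In short: the proposal is not a proof and does not claim to be one; it accurately restates the paper's reductions and honestly flags the open gap. There is no error to correct, but also no advance beyond what the paper already records.
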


The connection between Terao's conjecture and the multiplication by the square of a general linear form on certain quotient algebras was first studied in \cite{DIV}.
Here we want to use our earlier results to state an equivalent version of this conjecture. At the same time we remark on the relevant results and assertions of \cite{DIV}. We need some preparation. 

Consider a vector bundle $\mathcal{E}$ on $\PP^2$ of rank two. As pointed out above, its restriction to a general line $L$ has the form 
$\mathcal{O}_L(-a) \oplus \mathcal{O}_L(-b)$ for some integers $a\leq b$. The pair $(a, b)$ is the (generic) \emph{splitting type} of $\cE$. If $\mathcal E$ splits as a direct sum of line bundles,  then $c_2 (\cE) = ab$, where $c_2 (\cE)$ denotes the second Chern class of $\cE$. The converse is true as well.

\begin{theorem}[{\cite[Theorem 1.45]{Y}}]
  \label{thm:splitt crit}
 For every rank two vector bundle $\cE$ on $\PP^2$ with generic splitting type $(a, b)$,  one has $c_2 (\cE) \ge ab$.  Furthermore, equality is true if and only if $\cE$ splits as a direct sum of line bundles
\end{theorem}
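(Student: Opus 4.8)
The plan is to reduce to the case of a normalized bundle and then extract the generic splitting type from the cohomology of the restriction to a general line. First I would recall that for a rank two bundle $\cE$ on $\PP^2$, tensoring by a line bundle $\mathcal{O}_{\PP^2}(k)$ shifts the splitting type to $(a-k, b-k)$ and changes the Chern classes in a controlled way; in particular the quantity $c_2(\cE) - ab$ is invariant under such twists (since $c_1$ shifts by $2k$ and $c_2$ by $k c_1 + k^2$, and $ab$ shifts by exactly the same amount as one checks directly). Hence, without loss of generality, I may assume $\cE$ is normalized, i.e. $-1 \le c_1(\cE) \le 0$, equivalently $a + b \in \{0, 1\}$ after the twist. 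So it suffices to prove $c_2(\cE) \ge ab$ with equality characterizing split bundles, in the normalized range.

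The key step is to compute $c_2$ from a general hyperplane section. Let $L \subset \PP^2$ be a general line and consider the restriction exact sequence
\[
0 \to \cE(-1) \to \cE \to \cE|_L \to 0.
\]
Since $\cE|_L \cong \mathcal{O}_L(-a) \oplus \mathcal{O}_L(-b)$, the long exact cohomology sequence relates $h^i(\cE(j))$ to $h^i(\cE(j-1))$ and the (computable) cohomology of $\mathcal{O}_L(-a+j) \oplus \mathcal{O}_L(-b+j)$. Iterating, one controls $\chi(\cE(j)) = h^0 - h^1 + h^2$ and, using the normalization together with Serre duality $h^2(\cE(j)) = h^0(\cE^\vee(-3-j)) = h^0(\cE(c_1 - 3 - j))$, one can pin down $h^0$ and $h^2$ in the critical degrees and read off a lower bound for $h^1(\cE(j))$ in terms of $a$, $b$. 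Comparing this with the Riemann--Roch expression
\[
\chi(\cE(j)) = \frac{1}{2}\big( (c_1 + 2j)(c_1 + 2j + 3) \big) - c_2 + \text{(rank terms)}
\]
— i.e. the rank-two Hirzebruch--Riemann--Roch on $\PP^2$ — yields the inequality $c_2 \ge ab$, because the "error" $c_2 - ab$ equals a sum of nonnegative $h^1$'s (roughly $h^1(\cE(j))$ for $a-2 \le j \le b-2$ or a single such term depending on parity). This is precisely the strategy behind the cited result \cite[Theorem 1.45]{Y}, and I would either reproduce that computation or simply invoke it; since it is quoted as a known theorem, citing it is legitimate, but the instructive content is the bookkeeping just sketched.

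For the equality case: if $\cE = \mathcal{O}_{\PP^2}(-a) \oplus \mathcal{O}_{\PP^2}(-b)$ then $c_2 = ab$ is immediate, giving one direction. Conversely, if $c_2 = ab$, the computation above forces $h^1(\cE(j)) = 0$ in the relevant range; in particular one obtains a nonzero section of a suitable twist $\cE(k)$ with $k = a$ (or $k = b$) that vanishes nowhere — the vanishing $h^1$ kills the obstruction to lifting a generator of $\mathcal{O}_L(-a+a) = \mathcal{O}_L$ from the general line to all of $\PP^2$, and genericity of $L$ forces the section to be nowhere zero on $\PP^2$. A nowhere-vanishing section of $\cE(a)$ gives a sub-line-bundle $\mathcal{O}_{\PP^2} \hookrightarrow \cE(a)$ with locally free quotient, hence $\cE(a) \cong \mathcal{O}_{\PP^2} \oplus \mathcal{O}_{\PP^2}(a - b)$ since the extension class lives in $H^1(\PP^2, \mathcal{O}_{\PP^2}(a-b)) = 0$, and untwisting gives $\cE \cong \mathcal{O}_{\PP^2}(-a) \oplus \mathcal{O}_{\PP^2}(-b)$.

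The main obstacle will be the bookkeeping in the middle step: carefully tracking the jumps in $h^0$, $h^1$, $h^2$ through the restriction sequences over the range of twists, handling the two parity cases for $c_1$ separately, and making sure the "nowhere vanishing" conclusion in the equality case is justified (one needs that the zero locus of the lifted section, being expected to have codimension two, is actually empty — this uses that its degree equals $c_2$ of the appropriate twist, which has been forced to vanish). Since \cite{Y} is available, I would state the inequality and splitting criterion as a direct quotation and only include as much of the cohomological argument as is needed to keep the exposition self-contained.
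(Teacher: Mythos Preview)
The paper does not give its own proof of this statement: Theorem~\ref{thm:splitt crit} is quoted directly from \cite[Theorem 1.45]{Y} and used as a black box (in the proof of Proposition~\ref{prop:split type free}), so there is nothing in the paper to compare your argument against.

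That said, your outline is a correct and standard route to the result. The twist-invariance of $c_2(\cE)-ab$ is right (with $c_1=-(a+b)$ one checks both $c_2$ and $ab$ shift by $kc_1+k^2$ under $\cE\mapsto\cE(k)$), and the restriction/Riemann--Roch bookkeeping is exactly how the inequality is usually derived. For the equality case, one small slip: the extension class for $0\to\mathcal{O}_{\PP^2}\to\cE(a)\to\mathcal{O}_{\PP^2}(a-b)\to 0$ lives in $\Ext^1(\mathcal{O}_{\PP^2}(a-b),\mathcal{O}_{\PP^2})\cong H^1(\PP^2,\mathcal{O}_{\PP^2}(b-a))$, not $H^1(\mathcal{O}_{\PP^2}(a-b))$; either way it vanishes since $H^1$ of every line bundle on $\PP^2$ is zero, so the conclusion stands. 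Your justification that the lifted section is nowhere vanishing---because its zero scheme has length $c_2(\cE(a))=c_2-a(a+b)+a^2=c_2-ab=0$---is the right way to close that step.
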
 

Recall that the derivation bundle $\mD (f)$ of a line arrangement $\mA (f)$ is the sheafification of the module $D(f)$, defined by the exact sequence 
\[
0 \to D(f) \to R^3 \to \Jac (f) (\deg f -1) \to 0. 
\]
It follows that 
\begin{equation}
   \label{eq:Chern degree} 
   c_2 (\mD (f) ) = (\deg f - 1)^2 -  \deg \Jac (f). 
\end{equation}

We are ready to establish the following result, which is implicitly used in \cite{DIV}.  

\begin{proposition}
    \label{prop:split type free} 
Let $\mA (f)$ and $\mA (g)$ be two line arrangements with the same incidence lattice. Assume $\mA (f)$ is free with splitting type $(a, b)$. Then one has: 
\begin{itemize}
\item[(a)] $\mA (g)$ is free if and only if $\mD (g)$ has the same splitting type as $\mD (f)$.    

\item[(b)] If $\mA (g)$ is not free, then the splitting type of $\mD (g)$ is $(a - s, b+s)$ for some positive integer $s$. 
\end{itemize}
\end{proposition}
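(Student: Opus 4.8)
The plan is to exploit the fact that the two arrangements $\mA(f)$ and $\mA(g)$ share an incidence lattice, which by the remark preceding Theorem \ref{thm:splitt crit} forces $\deg \Jac(f) = \deg \Jac(g)$ and of course $\deg f = \deg g$. Combining this with formula \eqref{eq:Chern degree}, I would first record the crucial numerical consequence
\[
c_2(\mD(g)) = c_2(\mD(f)) = (\deg f - 1)^2 - \deg \Jac(f).
\]
Next, since $\mA(f)$ is free, $\mD(f)$ splits as a direct sum of line bundles, so by Theorem \ref{thm:splitt crit} (the ``only if'' direction, applied to $\mD(f)$) we get $c_2(\mD(f)) = ab$, where $(a,b)$ is the splitting type of $\mD(f)$. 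Therefore $c_2(\mD(g)) = ab$ as well. Writing the splitting type of $\mD(g)$ as $(a', b')$, we know $a' + b' = \deg f - 1 = a + b$ (this is the relation $a_Z + b_Z = |Z| - 1$ recorded among the facts about $\mathcal D_Z$), and Theorem \ref{thm:splitt crit} (the inequality, applied to $\mD(g)$) gives $a'b' \ge c_2(\mD(g)) = ab$.

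For part (a): if $\mD(g)$ has the same splitting type as $\mD(f)$, then $a'b' = ab = c_2(\mD(g))$, so by the ``if'' direction of Theorem \ref{thm:splitt crit}, $\mD(g)$ splits as a direct sum of line bundles; I would then invoke the standard fact (this is essentially the definition of freeness together with the identification of $D(g)$ with the syzygy module of $J$, as recorded in the appendix facts) that a line arrangement is free precisely when its derivation bundle splits as a sum of line bundles, hence $\mA(g)$ is free. Conversely, if $\mA(g)$ is free, then $\mD(g)$ splits as a sum of line bundles, so equality holds in Theorem \ref{thm:splitt crit}, i.e. $a'b' = c_2(\mD(g)) = ab$; combined with $a' + b' = a + b$ and $a' \le b'$, $a \le b$, the only solution is $(a', b') = (a, b)$.

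For part (b): suppose $\mA(g)$ is not free. By part (a), $\mD(g)$ does not have splitting type $(a,b)$, so $(a', b') \ne (a, b)$; together with $a' + b' = a + b$ and both pairs being ordered, this forces $a' < a \le b < b'$, so $a' = a - s$ and $b' = b + s$ for the positive integer $s := a - a'$, as claimed. I expect the only point requiring care — the potential main obstacle — is making precise the equivalence ``$\mA(g)$ free $\iff \mD(g)$ a direct sum of line bundles,'' which should be cited from the appendix material (where $D(f)$ is identified with the syzygy module of the Jacobian ideal, so freeness of the arrangement, i.e. the Jacobian ideal being saturated, corresponds to $D_0(f)$ — equivalently $\mD(g)$ — being free/split); everything else is a short deduction from the Chern-class bookkeeping and Theorem \ref{thm:splitt crit}.
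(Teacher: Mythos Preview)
Your approach is essentially the same as the paper's, but there are two errors in the execution. First, you have the inequality in Theorem \ref{thm:splitt crit} backward: the theorem says $c_2(\cE) \ge a'b'$ (where $(a',b')$ is the generic splitting type), not $a'b' \ge c_2(\cE)$. Second, and more importantly, your deduction in part (b) has a gap: from $(a',b') \ne (a,b)$, $a'+b' = a+b$, and the ordering conventions $a'\le b'$, $a\le b$ alone, you \emph{cannot} conclude $a' < a$ --- for instance, $(a,b) = (2,6)$ and $(a',b') = (4,4)$ satisfy all three conditions with $a' > a$.

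What rules out $a' > a$ is precisely the (correctly oriented) inequality: since $\mA(g)$ is not free, $\mD(g)$ does not split, so Theorem \ref{thm:splitt crit} gives the strict inequality $c_2(\mD(g)) > a'b'$, i.e.\ $ab > a'b'$; with the sum $a'+b'=a+b$ fixed, this forces $|b'-a'| > |b-a|$, hence $a' < a$. The paper makes this step explicit by writing $ab - (a-s)(b+s)$ and observing that $t \mapsto t(d-1-t)$ is strictly increasing on $(-\infty,\tfrac{d-1}{2}]$, but your product/sum argument works just as well once the inequality is actually invoked with the correct sign.
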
 

\begin{proof}
Set $d = \deg f$. By Theorem \ref{thm:splitt crit}, since $\mathcal A(f)$ is free we get  $c_2 (\mD (f)) = ab$.  Since the arrangements have the same incidence lattice, Equation \eqref{eq:Chern degree}  gives $c_2 (\mD (g)) = c_2 (\mD (f))$.  Combining, we obtain $c_2 (\mD (g)) = ab$. 

Since $f$ and $g$ have the same degree, the sum of the integers in the splitting type for $\mD(f)$ must be equal to the sum for $\mD(g)$, i.e. the splitting type for $\mD(g)$ is $(a-s,b+s)$ for some integer $s$, where $a-s \leq b+s$.
Combined with Theorem \ref{thm:splitt crit}, and using the fact that $a+b+1=d$, we obtain 
\[
0 \le c_2 (\mD (g)) - (a-s) (b+s)  = a b - (a-s) (b+s) = a (d-1-a) - (a-s) (d-1-a+s). 
\]
Since the function $h (t) = t (d-1-t)$ is strictly increasing on the interval $(-\infty, \frac{d-1}{2}]$, and since both $a$ and $a-s$ lie in this interval, we conclude that  $s \ge 0$ and that $c_2 (\mD (g)) - (a-s) (b+s)  = 0$ if and only if $s = 0$. Hence Theorem \ref{thm:splitt crit} gives that $\mD (g)$ is free if and only if $s = 0$ and that $s > 0$ otherwise. 
\end{proof}

As an immediate consequence we get: 

\begin{corollary} 
     \label{cor:type of free comb}
If the splitting type of a line arrangement is a combinatorial property, then Terao's conjecture is true for line arrangements. 
\end{corollary}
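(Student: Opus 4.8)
The statement to prove is Corollary~\ref{cor:type of free comb}: if the splitting type of a line arrangement is a combinatorial property, then Terao's conjecture holds for line arrangements.

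The plan is to deduce this directly from Proposition~\ref{prop:split type free}. Suppose that the splitting type is a combinatorial property, and let $\mA(f)$ and $\mA(g)$ be two line arrangements with the same incidence lattice, with $\mA(f)$ free. By Theorem~\ref{thm:splitt crit}, freeness of $\mA(f)$ together with Theorem~\ref{thm:splitt crit} forces $c_2(\mD(f)) = ab$ where $(a,b)$ is the splitting type of $\mD(f)$. By hypothesis, since $\mA(f)$ and $\mA(g)$ share the same incidence lattice, $\mD(g)$ has the same splitting type $(a,b)$ as $\mD(f)$. Now Proposition~\ref{prop:split type free}(a) applies: since $\mA(f)$ is free and $\mD(g)$ has the same splitting type as $\mD(f)$, we conclude that $\mA(g)$ is free as well. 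Thus freeness depends only on the incidence lattice, which is exactly Terao's conjecture for line arrangements.

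The only subtlety is that Proposition~\ref{prop:split type free}(a) is stated for a pair of arrangements with the same incidence lattice one of which is free, so the hypothesis of the corollary (that splitting type is combinatorial) is precisely what is needed to invoke that equivalence; the proof is then essentially immediate. I do not anticipate a genuine obstacle here — the work has all been done in establishing Proposition~\ref{prop:split type free}, which in turn rests on Yoshinaga's splitting criterion (Theorem~\ref{thm:splitt crit}) and the combinatorial invariance of $\deg \Jac(f)$ via Equation~\eqref{eq:Chern degree}. The one point to state carefully is that combinatorial invariance of the splitting type for \emph{all} arrangements (in particular for the possibly non-free $\mA(g)$, whose splitting type then agrees with the free $\mA(f)$) is what rules out the case $s > 0$ of Proposition~\ref{prop:split type free}(b).

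Here is the proof:

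\begin{proof}
Assume that the splitting type of a line arrangement depends only on its incidence lattice. Let $\mA(f)$ and $\mA(g)$ be line arrangements with the same incidence lattice, and suppose $\mA(f)$ is free. Let $(a,b)$ be the splitting type of $\mD(f)$. By our assumption, $\mD(g)$ also has splitting type $(a,b)$. Hence, by Proposition~\ref{prop:split type free}(a), $\mA(g)$ is free. Therefore freeness of a line arrangement depends only on its incidence lattice, which is Terao's conjecture for line arrangements.
\end{proof}
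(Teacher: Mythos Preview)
Your proof is correct and matches the paper's approach exactly: the paper states this corollary as an immediate consequence of Proposition~\ref{prop:split type free}, and your argument invokes part~(a) of that proposition in precisely the intended way.
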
 

Thus we propose the following question:

\begin{question}
Is the splitting type a combinatorial invariant for arbitrary arrangements?
\end{question}

Using a Lefschetz-like property, we give a statement that is equivalent to  Terao's conjecture. 

\begin{proposition} 
   \label{prop:Terao equiv} 
The following two conditions are equivalent: 
\begin{itemize}

\item[(a)] Terao's conjecture is true. 

\item[(b)] If $\mA (f)$ is any free line arrangement with splitting type $(a, b)$, then, for every line arrangement $\mA (g)$ with the same incidence lattice as $\mA (f)$, the multiplication map 
\[
[R/J]_{b-2} \stackrel{\times L^2}{\longrightarrow} [R/J]_b
\]
is surjective, where $J = (\ell_1^b,\dots, \ell_{a+b+1}^b, L_1^b,\ldots,L_{b-a}^b)$ with $g = \ell_1 \cdots \ell_{a+b+1}$ and general linear forms $L, L_1,\ldots,L_{b-a} \in R$. 
\end{itemize}
\end{proposition}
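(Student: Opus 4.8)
The plan is to translate both sides of the claimed equivalence into statements about the second Chern class of the derivation bundle $\mathcal D(g)$, using the machinery already assembled in the excerpt. First I would fix a free arrangement $\mathcal A(f)$ with splitting type $(a,b)$ and an arbitrary arrangement $\mathcal A(g)$ sharing its incidence lattice, and set $d=\deg f = a+b+1$. By Proposition~\ref{prop:split type free}, $\mathcal A(g)$ is free if and only if $\mathcal D(g)$ has splitting type $(a,b)$, and otherwise its splitting type is $(a-s,b+s)$ for some $s\ge 1$. So Terao's conjecture is equivalent to: for every such $g$, the splitting type of $\mathcal A(g)$ is $(a,b)$ — i.e., condition (ii) of Theorem~\ref{thm:intro}/Theorem~\ref{thm:introequiv} type reasoning applies with $m=b-1$.

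Next I would carry out the dictionary between the multiplication map in (b) and an unexpected curve. The ideal in (b) is $J=(\ell_1^b,\dots,\ell_{a+b+1}^b,L_1^b,\dots,L_{b-a}^b)$, the ideal of $b$-th powers of the linear forms dual to the $d=a+b+1$ points of the dual point set $Z_g$ together with $b-a$ further general linear forms; equivalently, writing $Y = Z_g \cup \{b-a \text{ general points}\}$, $J$ is generated by $b$-th powers of linear forms dual to the points of $Y$. Note $|Y| = (a+b+1)+(b-a) = 2b+1$, so $|Y|$ is odd and $t_Y \le b-1$. The map in (b) is $\times L^2\colon [R/J]_{b-2}\to[R/J]_b$ where $L$ is dual to a general point $P$. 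By Theorem~\ref{SLP condition}, with $j+1 = b$ (so $j = b-1$, and the ``degree $j-1$'' there is $b-2$), this map fails to be surjective — equivalently $R/J$ fails SLP at range $2$ in degree $b-2$ — if and only if $Y$ (the dual points) admits an unexpected curve of degree $b$. So (b) asserts: for every $g$ with the given incidence lattice, the point set $Y$ admits \emph{no} unexpected curve of degree $b$.

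Then I would connect ``no unexpected curve of degree $b$ for $Y$'' back to the splitting type of $\mathcal A(g)$. By Remark~\ref{1 or 2}(ii), adjoining $b-a$ general points to $Z_g$ shifts the splitting type: if $\mathcal A(g)$ has splitting type $(a',b')$ with $a'\le b'$ and $a'+b' = a+b$, then $Y$ has splitting type obtained by incrementing $a'$ successively $b-a$ times (respecting the inequality). In the free case $(a',b')=(a,b)$, after $b-a$ general additions $Y$ has splitting type roughly $(b,b)$ or $(b-1,b)$ — in any case $a_Y = b_Y$ or $a_Y = b_Y-1$, so $b_Y - a_Y \le 1$, and by Proposition~\ref{prop:unexpected, so unstable} (or Theorem~\ref{mainThm1}) $Y$ has no unexpected curve. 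If instead $(a',b') = (a-s,b+s)$ with $s\ge 1$, then adding $b-a$ general points gives, carefully tracking the increments, splitting type $(a_Y,b_Y)$ with $b_Y - a_Y = 2s \ge 2$; moreover one checks $h_Y(t_Y) = |Y|$ (since $|Y|$ is odd and $t_Y$ is forced to be maximal for a set of $2b+1$ points not admitting too many collinear points — here using that generality of the added points keeps collinear subsets small, combined with Lemma~\ref{bound for t} and Proposition~\ref{prop:small tZ}), so by Theorem~\ref{thm:introequiv} the conditions $a_Y \le m < b_Y - 1$ with $m = b-1$ and $h_Y(t_Y)=|Y|$ both hold, hence $Y$ does admit an unexpected curve of degree $b$. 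Putting these together: (b) holds for $g$ $\iff$ $\mathcal A(g)$ is free $\iff$ (via Proposition~\ref{prop:split type free}) the expected combinatorial behaviour; quantifying over all $g$ gives the equivalence with Terao's conjecture.

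The main obstacle I anticipate is the bookkeeping around the collinearity hypothesis and the value of $t_Y$: Theorem~\ref{thm:introequiv} requires $h_Y(t_Y) = |Y|$, and I need this to hold automatically for $Y = Z_g \cup \{\text{general points}\}$ in the non-free case so that the ``unexpected curve exists'' conclusion is not blocked by condition (ii). This requires showing that no subset of $m_Y + 2$ points of $Y$ is collinear — which must be extracted from the generality of the $b-a$ added points plus control of collinear subsets of $Z_g$ coming from the fixed incidence lattice (in particular $\mathcal A(f)$ being free bounds the multiplicities of points of $\mathcal A(g)$, hence bounds collinear subsets of $Z_g$). A secondary nuisance is verifying the exact splitting-type increments when repeatedly adding general points near the ``balanced'' case $a'=b'$, where the convention $(a',b')\mapsto(a',b'+1)$ in Remark~\ref{1 or 2}(ii) must be applied with care; I would handle this by a short induction using Lemma~\ref{FVlemma} directly rather than quoting the remark verbatim.
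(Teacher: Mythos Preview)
Your setup---using Proposition~\ref{prop:split type free} to write the splitting type of $\mathcal{A}(g)$ as $(a-s,b+s)$, then adjoining $b-a$ general points to obtain $Y$ with splitting type $(b-s,b+s)$---is exactly what the paper does. The divergence is at the translation step. You invoke Theorem~\ref{SLP condition}, writing ``fails to be surjective --- equivalently $R/J$ fails SLP at range $2$ in degree $b-2$ --- if and only if $Y$ admits an unexpected curve of degree $b$.'' But Theorem~\ref{SLP condition} equates failure of \emph{maximal rank} with existence of an unexpected curve, and here $\dim[R/J]_{b-2}=\binom{b}{2}\le h^0(\mathcal{I}_Y(b))=\dim[R/J]_b$, so maximal rank means injectivity, not surjectivity; the two coincide only when $h_Y(b)=|Y|$. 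This is essentially the obstacle you flag later (it is equivalent, for $|Y|=2b+1$ odd, to $h_Y(t_Y)=|Y|$), so the gap is real and appears earlier than you place it.

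The paper sidesteps the issue entirely by not routing through unexpected curves. It uses only the ingredient \emph{inside} the proof of Theorem~\ref{SLP condition}: the cokernel of $\times L^2$ is $[R/(J,L^2)]_b\cong[I_{Y+(b-1)P}]_b$ by Theorem~\ref{thm:inverse-system}. Hence surjectivity is simply $[I_{Y+(b-1)P}]_b=0$, i.e., $m_Y>b-1$, i.e., $b-s\ge b$, i.e., $s=0$, i.e., $\mathcal{A}(g)$ is free. No collinearity bound, no $t_Y$ computation, no appeal to Theorem~\ref{thm:introequiv}. Your approach can be completed: the obstacle reduces to showing that a free arrangement with exponents $(a,b)$ has no point of multiplicity $\ge b+2$, which follows by observing that $m$ collinear points in $Z_f$ give a curve (the line plus $|Z_f|-m$ lines through $P$) forcing $m_{Z_f}\le |Z_f|-m$, hence $m\le b+1$; this then rules out $h_Y(t_Y)<|Y|$ when $s\ge 1$. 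But this is extra work the direct argument avoids.
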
 

\begin{proof}
Let $\mA (f)$ be a free line arrangement with splitting type $(a, b)$, and let $\mA (g)$ be a line 
arrangement with the same incidence lattice as $\mA (f)$. By Proposition \ref{prop:split type free}, 
the splitting type of $\mA (g)$ is $(a-s, b+s)$ for some integer $s \ge 0$. Let 
$L, L_1,\ldots,L_{b-a} \in R$ be general linear forms, and set $h = L_1 \cdots L_{b-a}$. 
Proposition  \ref{conj:change of dZ} and Lemma \ref{m, t and u}(a) 
give  that $\mA (gh)$ has splitting type $(b-s, b+s)$. Denote by $Z$   the set of points in $\PP^2$ 
that is dual to $\mA (g h)$. It  has multiplicity index $m_Z = b -s$. 
The cokernel of the 
multiplication map 
\[
[R/J]_{b-2} \stackrel{\times L^2}{\longrightarrow} [R/J]_b
\]
is $[R/(J, L^2)]_b$. By Theorem \ref{thm:inverse-system}, this is isomorphic to $[I_{Z + (b-1)P}]_b$, where  $P \in \PP^2$  is the point that is dual to $L$.  It follows that the 
above map is surjective if and only if $m_Z = b$, that is, $s = 0$, which means that   $\mA (g)$ has 
the same splitting type as $\mA (f)$. By Proposition \ref{prop:split type free}, the latter is equivalent 
to $\mA (g)$ being free, which concludes the argument. 
\end{proof} 

Similar arguments give a sufficient condition. 

\begin{corollary} 
   \label{cor:suff Terao}
Consider the following condition
\begin{itemize}
\item[(*)] Let $f = \ell'_1 \cdots \ell'_{2k+1}$ and $g =   \ell_1 \cdots \ell_{2k+1}$ be  products of $2k+1$ linear forms in $R$, and let $L \in R$ be a general linear form. Assume that the multiplication map 
\[
[R/I]_{k-2} \stackrel{\times L^2}{\longrightarrow} [R/I]_k
\]
is surjective, where $I = (\ell'^k_1,\dots, \ell'^k_{2k+1})$.  

If the line arrangements ${\mathcal A} (f)$ and ${\mathcal A} (g)$ have the same incidence lattices, then the 
multiplication map 
\[
[R/J]_{k-2} \stackrel{\times L^2}{\longrightarrow} [R/J]_k
\]
is also surjective, where $J =   (\ell_1^k,\dots, \ell_{2k+1}^k)$. 
\end{itemize} 
If Condition (*) is true for any two sets of $2 k+1$ linear forms, then Terao's conjecture is true. 
\end{corollary}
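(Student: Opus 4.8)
The plan is to reduce the statement to Proposition \ref{prop:Terao equiv}, which characterizes Terao's conjecture by the surjectivity of a certain multiplication-by-$L^2$ map. The key point is that Condition (*) is, apparently, a statement only about arrangements of an \emph{odd} number of lines with a \emph{balanced} exponent $(k,k)$, whereas Proposition \ref{prop:Terao equiv}(b) deals with $\mathcal{A}(g)$ of possibly even size and with $f$ of splitting type $(a,b)$ with $a<b$. So the first step is to pass from the free arrangement $\mathcal{A}(f)$ with splitting type $(a,b)$ to an auxiliary balanced arrangement. Concretely, I would take $\mathcal{A}(f)$ free with exponents $(a,b)$, $a\le b$, and let $\mathcal{A}(g)$ be any arrangement with the same incidence lattice; by Proposition \ref{prop:split type free} its splitting type is $(a-s,b+s)$ for some $s\ge 0$, and we must show $s=0$. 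Adding $b-a$ general lines $L_1,\dots,L_{b-a}$ to both arrangements, Corollary \ref{conj:change of dZ} and Lemma \ref{m, t and u}(a) give that $\mathcal{A}(f L_1\cdots L_{b-a})$ is free with balanced splitting type $(b,b)$ and $2b+1$ lines, while $\mathcal{A}(g L_1\cdots L_{b-a})$ has splitting type $(b-s,b+s)$. Set $k=b$; then $f':=f L_1\cdots L_{b-a}$ and $g':=g L_1\cdots L_{b-a}$ are products of $2k+1$ linear forms with the same incidence lattice (adding general lines to two arrangements with the same lattice produces arrangements with the same lattice).

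Second step: apply the hypothesis that Condition (*) holds to the pair $f',g'$. The map $[R/I']_{k-2}\xrightarrow{\times L^2}[R/I']_k$ with $I'=(\ell_1'^{\,k},\dots,\ell_{2k+1}'^{\,k})$ (the $k$-th powers of the linear forms of $f'$) is surjective: indeed, by Theorem \ref{thm:inverse-system} its cokernel $[R/(I',L^2)]_k$ is isomorphic to $[I_{W+(k-1)P}]_k$ where $W$ is the point set dual to $\mathcal{A}(f')$ and $P$ is dual to $L$, and this equals $\dim[I_W\cap I_P^{k-1}]_k$; since $m_W=a_W=k$ (the splitting type of $\mathcal{A}(f')$ being $(k,k)$), Lemma \ref{FVlemma} gives $\dim[I_{W+(k-1)P}]_k=0$, i.e.\ the map is surjective. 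Hence Condition (*) forces the corresponding map for $g'$, namely $[R/J']_{k-2}\xrightarrow{\times L^2}[R/J']_k$ with $J'=(\ell_1^{\,k},\dots,\ell_{2k+1}^{\,k})$, to be surjective as well. Running the same Theorem \ref{thm:inverse-system} / Lemma \ref{FVlemma} computation in reverse: its cokernel is $[I_{Z+(k-1)P}]_k$ where $Z$ is the point set dual to $g'$, and surjectivity means $\dim[I_{Z+(k-1)P}]_k=0$, which by Lemma \ref{FVlemma} forces $m_Z=a_Z\ge k$. But $a_Z=b-s=k-s\le k$, so $s=0$.

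Third step: conclude. Having $s=0$ means $\mathcal{A}(g)$ has the same splitting type as $\mathcal{A}(f)$, hence by Proposition \ref{prop:split type free}(a) $\mathcal{A}(g)$ is free. Since $f,g$ were an arbitrary pair of line arrangements with the same incidence lattice, one of which is free, this is exactly Terao's conjecture for line arrangements.

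The main obstacle I expect is bookkeeping around the reduction to the balanced odd case: one must be careful that adding $b-a$ general lines does not disturb the hypothesis (the new arrangements still have the same incidence lattice, and the general-position choice of the $L_i$ is compatible with applying Corollary \ref{conj:change of dZ} iteratively, since at each stage the relevant set $\bigcap_P C_P$ is finite) and that the parity works out so that $2k+1$ lines are obtained — this holds automatically since $a+b+1=\deg f$ and $(a+(b-a))+b+1=2b+1$. A secondary subtlety is making sure the direction of Theorem \ref{thm:inverse-system} used to pass from ``map surjective'' to ``$\dim[I_{Z+(k-1)P}]_k=0$'' is applied with $j=k$ in the notation there, and that the hypothesis ``$j\ge\max\{a_i\}$'' (here $k\ge k$) is met, which it is on the nose. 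None of these steps involves a genuinely hard computation; the content is entirely in the two translations via Theorem \ref{thm:inverse-system} together with Lemma \ref{FVlemma}, both of which are available from the earlier parts of the paper.
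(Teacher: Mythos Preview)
Your proposal is correct and follows essentially the same route as the paper's proof: reduce to the balanced odd case by appending $b-a$ general lines, verify surjectivity for the enlarged $f$-arrangement via its splitting type and the inverse-system dictionary (Theorem \ref{thm:inverse-system} and Lemma \ref{FVlemma}), invoke Condition~(*), and read off $s=0$ for $g$ by the same translation. One harmless slip: the enlarged arrangement $\mathcal{A}(fL_1\cdots L_{b-a})$ need not be \emph{free} (a general line meets the existing $a+b+1$ lines in $a+b+1>b+1$ points, so addition--deletion does not apply), but you only use its splitting type $(b,b)$, so the argument is unaffected.
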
 

\begin{proof} 
Adopt the notation of the proof of Proposition \ref{prop:Terao equiv}. In particular, let  $\mA (f)$ and 
$\mA (g)$ be two line arrangements with the same incidence lattice, where $\mA (f)$ is free with 
splitting type $(a, b)$.   Let $\ell'_1,\ldots,\ell'_{a+b+1}$ be linear forms such that $f = \ell'_1 \cdots 
\ell'_{a+b+1}$. We will use Condition (*) by considering the ideal 
$I = (\ell'^b_1,\dots, \ell'^b_{a+b+1}, L_1^b,\ldots,L_{b-a}^b)$. Indeed, the arrangement 
$\mA (f h)$ has splitting type $(b, b)$. Hence the multiplication map 
\[
[R/I]_{b-2} \stackrel{\times L^2}{\longrightarrow} [R/I]_b
\]
is surjective. Since $L_1,\ldots,L_{b-a}$ are general linear forms, the arrangements $\mA (f h)$ and $\mA (g h)$ also have the same incidence lattice. Therefore, Condition (*) gives that  the  multiplication map 
\[
[R/J]_{b-2} \stackrel{\times L^2}{\longrightarrow} [R/J]_b
\]
is surjective, where $J =  (\ell^b_1,\dots, \ell^b_{a+b+1}, L_1^b,\ldots,L_{b-a}^b)$. As above, it follows that $\mA (g)$ must be a free arrangement, as desired. 
\end{proof}

\begin{remark}\label{DIadjustmentRemark}
(i) In \cite{DIV} the authors conjecture that the above Condition (*) is always satisfied if one replaces surjectivity of the multiplication maps by maximal rank. An assumption on collinearity for the dual points was added in \cite{DI}. Moreover, they claim that this modification of Condition (*) is equivalent to Terao's conjecture,  whereas we claim only one direction.

(ii) We have seen in Example \ref{ctrex to DIV}  that injectivity of the multiplication map is not enough to draw a conclusion on the splitting type. One  needs surjectivity as stated in Condition~(*). However, it is not clear (to us) whether Condition (*) is in fact equivalent to Terao's conjecture. 
\end{remark}

Returning to sets of points, we conclude with the dual version of Corollary \ref{cor:suff Terao}. 

\begin{corollary}
     \label{cor:points to Terao}
If, for sets of  $2k + 1$ points of $\PP^2$, having (maximal) multiplicity index $k$ is a combinatorial property, then  Terao's conjecture is true for line arrangements.   
\end{corollary}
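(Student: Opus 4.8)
The plan is to reduce the statement to Proposition \ref{prop:split type free} together with the rule governing how the splitting type changes when a general line (equivalently, a general point on the dual side) is added, namely Remark \ref{1 or 2}(ii); this is in effect an unwinding of Proposition \ref{prop:Terao equiv}, but carried out entirely on the ``points'' side so that no reference to Lefschetz properties is needed. Let $\mA(f)$ be a free line arrangement with splitting type $(a,b)$, $a\le b$, and let $\mA(g)$ be any line arrangement with the same incidence lattice; then both consist of $a+b+1$ lines, say $g=\ell_1\cdots\ell_{a+b+1}$. I would choose linear forms $L_1,\ldots,L_{b-a}$ that are general with respect to both arrangements simultaneously (the ``bad'' loci for $f$ and for $g$ are each a proper Zariski-closed subset of the space of $(b-a)$-tuples of lines, so a general tuple avoids both), set $h=L_1\cdots L_{b-a}$, and let $Z_f$ and $Z_g$ be the sets of $2b+1$ points dual to $\mA(fh)$ and $\mA(gh)$.

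The first thing to verify is that $\mA(fh)$ and $\mA(gh)$ have the same incidence lattice: matching each added line with itself extends the given lattice isomorphism, since for general $L_i$ every new intersection point is a simple crossing lying on no further line of the arrangement --- identically for $f$ and $g$ --- while the incidences among the old lines are preserved by hypothesis. Next I would track splitting types. By Remark \ref{1 or 2}(ii), adding a general point to a set of splitting type $(c,d)$, $c\le d$, produces $(c+1,d)$ when $c<d$ and $(c,d+1)$ when $c=d$. Starting from $(a,b)$ and adding $b-a$ general points increments the smaller entry at each step, so $Z_f$ has splitting type $(b,b)$ and hence $m_{Z_f}=b$ by Lemma \ref{m, t and u}(a). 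By Proposition \ref{prop:split type free}, $\mA(g)$ has splitting type $(a-s,b+s)$ for some integer $s\ge 0$, with $s=0$ exactly when $\mA(g)$ is free; adding the same $b-a$ general points then gives $Z_g$ splitting type $(b-s,b+s)$ (the gap $b-a+2s$ is at least $b-a$, so every step still increments the smaller entry), so $m_{Z_g}=b-s$.

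To conclude: $|Z_f|=|Z_g|=2b+1$, and $m_{Z_f}=b=\lfloor (|Z_f|-1)/2 \rfloor$ is the maximal possible multiplicity index by Lemma \ref{m, t and u}(d); since $Z_f$ and $Z_g$ are combinatorially the same (their dual line arrangements share an incidence lattice), the hypothesis applied with $k=b$ forces $m_{Z_g}=b$, whence $s=0$ and $\mA(g)$ is free by Proposition \ref{prop:split type free}(a). As $\mA(f)$ (free) and $\mA(g)$ (same incidence lattice) were arbitrary, Terao's conjecture for line arrangements follows. I expect the only genuinely delicate point to be the incidence-lattice check in the second paragraph --- that adding identical general lines to two combinatorially equivalent arrangements yields combinatorially equivalent arrangements --- together with making precise what ``combinatorial property of a set of points'' should mean here (a property invariant under replacing the point set by one whose dual arrangement has the same incidence lattice); the rest is bookkeeping with Proposition \ref{prop:split type free} and Remark \ref{1 or 2}(ii).
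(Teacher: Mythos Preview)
Your proof is correct and follows the same approach that the paper leaves implicit: the paper simply calls this ``the dual version of Corollary \ref{cor:suff Terao}'' without writing out a proof, and your argument is precisely the unwinding of that corollary's proof on the points side (adding $b-a$ general lines to both $\mA(f)$ and $\mA(g)$, tracking splitting types via Remark \ref{1 or 2}(ii)/Corollary \ref{conj:change of dZ}, and concluding via Proposition \ref{prop:split type free}). The incidence-lattice check you flag is exactly the one the paper also invokes in the proof of Corollary \ref{cor:suff Terao}.
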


%%%%%%%%%%%%%%%%%%%%%%%%%%%%%%%%%%%%%%%%%%%%%%%%%%%%%%

\section*{Appendix}

\setcounter{theorem}{0}
\setcounter{equation}{0}
\renewcommand{\thesection}{A}

We derive some facts on line configurations for which we could not find a reference in the literature. 

Define the submodule 
$D(Z)\subset R\frac{\partial}{\partial x}\oplus R\frac{\partial}{\partial y}\oplus R\frac{\partial}{\partial z}\cong R^3$ 
to be the $K$-linear derivations $\delta$ such that $\delta(f)\in Rf$.
In particular,  $D(Z)$ contains the Euler derivation 
$\delta_E =x\frac{\partial}{\partial x}+y\frac{\partial}{\partial y}+z\frac{\partial}{\partial z}$,
and $\delta_E$ generates a submodule $R\delta_E \cong R(-1)$. 
We can now define the quotient $D_0(Z)=D(Z)/R\delta_E$. 

Define the \emph{Jacobian ideal} of $f \in R$ as $J = \Jac (f) = (f, f_x, f_y, f_z)$. Let $J' = (f_x,f_y,f_z)$. For $\delta \in D(Z)$, we may view $\delta$ as a triple $(g_1,g_2,g_3)^T$ of  polynomials such that $(g_1 \frac{\partial}{\partial x} + g_2 \frac{\partial}{\partial y} + g_3 \frac{\partial}{\partial z} ) (f) = h f$, for some  $h \in R$ (possibly zero) that depends on $\delta$.  Then the module $D(Z)$ can be described by the exact sequence
\begin{equation} \label{LES}
0 \longrightarrow D(Z) \to R^3  \stackrel{\varphi}{\longrightarrow} (R/f R) (d-1)  \longrightarrow (R/J)(d-1) \longrightarrow 0, 
\end{equation}
where $\varphi ((g_1, g_2, g_3)^T) = g_1 f_x + g_2 f_y + g_3 f_z\!\! \mod f$. Notice that the image of $\varphi$ is $J/fR (d-1)$. 
Using this, consider the commutative diagram
\[
\begin{array}{cccccccccc}
&&&& 0 && 0 \\
&&  && \downarrow && \downarrow \\
&& 0 && E && D(Z)(1-d) \\
&& \downarrow && \downarrow && \downarrow \\
0 & \rightarrow & R(-d) & \rightarrow & R(-d) \oplus R(1-d)^3 & \rightarrow & R(1-d)^3 & \rightarrow & 0 \\
&& \downarrow && \downarrow && \downarrow \\
0 & \rightarrow & R(-d) & \stackrel{\times f}{\longrightarrow} & J & \rightarrow & J/fR & \rightarrow & 0 \\  
&& \downarrow && \downarrow && \downarrow \\
&& 0 && 0 && 0
\end{array}
\]
where $E$ is the syzygy module associated to $J$. Then from the Snake Lemma we see that $D(Z)$ is isomorphic to a twist of the syzygy module of $J$. In particular, $D(Z)$ is reflexive. 
Its sheafification $\widetilde{D(Z)}$ is a locally free sheaf of rank three. Since the Euler derivation corresponds to a global non-vanishing section, it follows that the sheafification of $D_0(Z)$  is a locally free sheaf on $\PP^2$ of rank two, which we will denote by $\mathcal D_Z$.  We  call $\mathcal D_Z$ the {\em derivation bundle of~$Z$}.

Moreover, from the commutative diagram
\[
\begin{array}{cccccccccccc}
&& 0 && 0 \\
&& \downarrow && \downarrow \\
0 & \rightarrow & R \delta_E & \rightarrow & R \delta_E & \rightarrow & 0 \\
&& \downarrow && \downarrow && \downarrow \\
0 & \rightarrow & D(Z) & \rightarrow & R^3 & \stackrel{\varphi}{\longrightarrow} & J/fR (d-1) & \rightarrow & 0 \\
&& \downarrow && \downarrow && \downarrow \\
&& D_0(Z) && R^3/R \delta_E && J/fR(d-1) \\
&& \downarrow && \downarrow && \downarrow \\
&& 0 && 0 && 0
\end{array}
\]
we get the exact sequence
\[
0 \longrightarrow D_0 (Z) \to R^3/R \delta_E  \longrightarrow (J/fR)(d-1) \longrightarrow 0. 
\]
Notice that the sheafification of $R^3/R \delta_E$ is isomorphic to the tangent bundle, $\mathcal T_{\mathbb P^2}$, of $\PP^2$ twisted by $(-1)$. Thus, $\mathcal D_Z$  is a  subbundle of this twisted tangent bundle. We now compute its first Chern class. 

Sheafifying the above exact sequence, we obtain
\[
0 \to \mathcal O_{\mathbb P^2}(-1) \to \mathcal J (d-1) \to (\mathcal J/f \mathcal O_{\mathbb P^2}) (d-1) \to 0
\]
where $\mathcal J$ is the sheafification of $J$.
Thus, we get 
\[
c_1 ((\mathcal J/f \mathcal O_{\mathbb P^2}) (d-1)) = d-1 - (-1) = d. 
\]
Hence, the sequence 
\[
0 \longrightarrow D_0 (Z) \to R^3/R \delta_E  \longrightarrow (J/fR)(d-1) \longrightarrow 0 
\]
gives, after sheafifying, 
\[
c_1 (\mathcal D_Z) = c_1 (\mathcal T_{\mathbb P^2}(-1)) - c_1 (\mathcal J/f \mathcal O_{\PP^2} (d-1)) = 1 - d.
\]

Now let $J' = (f_x, f_y,f_z)$. Let $E' = Syz(J')(d-1)$ be the twisted syzygy module of $J'$, which is reflexive of rank 2. Consider the following commutative diagram. 
\[
\begin{array}{cccccccccccc}
&&&& 0&& 0 \\
&&&&\downarrow && \downarrow \\
&&&& E && D_0(Z) \\
&&&& \downarrow &&  \downarrow \\
0 & \rightarrow & R(-1) & \stackrel{[x \ y \ z]^T}{\longrightarrow} & R^3 & \rightarrow & R^3/R \delta_E & \rightarrow & 0 \\
&& \phantom{\alpha} \downarrow { \alpha} && \phantom{\beta} \downarrow {\beta} && \downarrow \\
0 & \rightarrow & R(-1) & \stackrel{\cdot f}{\longrightarrow} & J(d-1) & \rightarrow & (J/fR)(d-1) & \rightarrow & 0 \\
&&&&&& \downarrow \\
&&&&&& 0
\end{array}
\]
where $\alpha$ is multiplication by $d$ and $\beta$ is the presentation matrix for $R/J'$. When $\hbox{char}(K)$ does not divide $d$, we have that $\alpha$ is an isomorphism and $J = J'$. It follows that $D_0(Z) \cong E$.  When $\hbox{char}(K)$ does  divide $d$, $\alpha$ is the zero map and we obtain 
\[
\begin{array}{cccccccccccc}
&& 0 && 0&& 0 \\
&& \downarrow &&\downarrow && \downarrow \\
&& R(-1) && E && D_0(Z) \\
&& \downarrow && \downarrow &&  \downarrow \\
0 & \rightarrow & R(-1) & \stackrel{[x \ y \ z]^T}{\longrightarrow} & R^3 & \rightarrow & R^3/R \delta_E & \rightarrow & 0 \\
&& \phantom{\cdot 0} \downarrow { \cdot 0} && \phantom{\beta} \downarrow {\beta} && \downarrow \\
0 & \rightarrow & R(-1) & \stackrel{\cdot f}{\longrightarrow} & J(d-1) & \rightarrow & (J/fR)(d-1) & \rightarrow & 0 \\
&& \downarrow && \downarrow && \downarrow \\
&&R(-1) && J/J' && 0 \\
&& \downarrow && \downarrow \\
&& 0 && 0
\end{array}
\]
so the Snake Lemma gives the long exact sequence
\[
0 \rightarrow R(-1) \rightarrow E \rightarrow D_0(Z) \rightarrow R(-1) \rightarrow J/J' \rightarrow 0.
\]

These calculations produce the following lemma.

\begin{lemma} 
          \label{syzygy bundle}
Let $Z$ be a  set of $d$ points dual to a line arrangement defined by a product, $f$, of linear forms. 
Let $J' = (f_x,f_y,f_z)$ and $J = (f_x,f_y,f_z,f)$, and let $\mathcal D_Z$ be the associated derivation 
bundle. Let $L$ be a general line. Then $\mathcal D_Z|_L$ splits as a direct sum 
$\mathcal O_{\mathbb P^1}(-a_Z) \oplus \mathcal O_{\mathbb P^1}(-b_Z)$ with $a_Z+b_Z = d-1$. 
Furthermore, if $\mathcal E = \widetilde{E}$ is the syzygy bundle of $J'(d-1)$, then $\mathcal D_Z$ 
is isomorphic to $\mathcal E$ if and only if $\hbox{char}(K)$ does not divide $d$. If 
$\hbox{char}(K)$ does divide $d$ then $\mathcal E$ and $\mathcal D_Z$ are related by the exact sequence
\[
0 \rightarrow \mathcal O_{\mathbb P^1}(-1) \rightarrow \mathcal E \rightarrow \mathcal D_Z \rightarrow \mathcal O_{\mathbb P^1}(-1) \rightarrow \widetilde{J/J'} \rightarrow 0.
\]
\end{lemma}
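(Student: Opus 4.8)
The plan is to assemble the statement from the commutative-diagram computations carried out just before it in the appendix, treating the two characteristic cases separately but in a uniform framework. First I would recall that we have already established, via the Snake Lemma applied to the first big commutative diagram in the appendix, that $D(Z)$ is (a twist of) the syzygy module of $J = (f_x, f_y, f_z, f)$, hence reflexive of rank three, and that its sheafification $\widetilde{D(Z)}$ is locally free of rank three. Since the Euler derivation $\delta_E$ corresponds to a nowhere-vanishing global section of $\widetilde{D(Z)}$, the quotient $\mathcal D_Z$, which is the sheafification of $D_0(Z) = D(Z)/R\delta_E$, is locally free of rank two. This gives the rank-two claim; the computation $c_1(\mathcal D_Z) = 1 - d$ was already obtained from the sheafified exact sequence $0 \to \mathcal O_{\mathbb P^2}(-1) \to \mathcal J(d-1) \to (\mathcal J/f\mathcal O_{\mathbb P^2})(d-1) \to 0$ and the sequence relating $D_0(Z)$ to $\mathcal T_{\mathbb P^2}(-1)$.

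Next I would deduce the splitting type assertion. Restricting the rank-two bundle $\mathcal D_Z$ to a general line $L \cong \mathbb P^1$, Grothendieck's theorem on vector bundles over $\mathbb P^1$ gives $\mathcal D_Z|_L \cong \mathcal O_{\mathbb P^1}(-a_Z) \oplus \mathcal O_{\mathbb P^1}(-b_Z)$ for suitable integers $a_Z \le b_Z$; then $-a_Z - b_Z = c_1(\mathcal D_Z|_L) = c_1(\mathcal D_Z) = 1 - d$, so $a_Z + b_Z = d - 1$. (That these invariants are well defined for a \emph{general} line, i.e.\ the generic splitting type exists and is semicontinuous, is the standard fact recorded already in the body of the paper.)

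The core of the proof is the identification of $\mathcal D_Z$ with the syzygy bundle $\mathcal E = \widetilde{E}$ of $J'(d-1)$. For this I would invoke the last commutative diagram in the appendix, whose vertical arrows are $\alpha$ (multiplication by $d$ on $R(-1)$) and $\beta$ (the presentation matrix of $R/J'$). When $\operatorname{char}(K) \nmid d$, the map $\alpha$ is an isomorphism and $J = J'$, so the Snake Lemma forces $D_0(Z) \cong E$ and hence $\mathcal D_Z \cong \mathcal E$. Conversely, if $\operatorname{char}(K) \mid d$, then $\alpha$ is the zero map, and the enlarged commutative diagram produces, via the Snake Lemma, the four-term (in fact five-term) exact sequence
\[
0 \rightarrow R(-1) \rightarrow E \rightarrow D_0(Z) \rightarrow R(-1) \rightarrow J/J' \rightarrow 0;
\]
sheafifying yields the displayed sequence
\[
0 \rightarrow \mathcal O_{\mathbb P^1}(-1) \rightarrow \mathcal E \rightarrow \mathcal D_Z \rightarrow \mathcal O_{\mathbb P^1}(-1) \rightarrow \widetilde{J/J'} \rightarrow 0,
\]
which in particular shows $\mathcal D_Z \not\cong \mathcal E$ in this case (they have different ranks of global sections / different Chern data unless $J = J'$), completing the "if and only if." The main obstacle I anticipate is purely bookkeeping: verifying that the Snake Lemma connecting maps in the enlarged diagram are exactly the maps claimed (that the left copy of $R(-1)$ maps as stated and that the cokernel term is genuinely $J/J'$ and not a further quotient), and making sure that sheafification is exact on these reflexive modules so that no extra correction terms appear. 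None of this requires new ideas beyond the diagram chases already displayed; it is a matter of carefully tracking the identifications. I expect no serious difficulty, so the writeup is essentially a consolidation of the appendix computations into the single statement.
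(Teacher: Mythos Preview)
Your proposal is correct and follows essentially the same approach as the paper: the lemma is a consolidation of the diagram chases in the appendix, using the Snake Lemma on the diagram with the map $\alpha$ (multiplication by $d$) to separate the two characteristic cases, together with the $c_1$ computation and Grothendieck splitting to get $a_Z+b_Z=d-1$. One caution: your justification of the ``only if'' direction via ``different Chern data'' is not quite right, since both $\mathcal E$ and $\mathcal D_Z$ are rank-two bundles with $c_1=1-d$; the paper does not argue this direction explicitly either, so you should not over-claim here---simply presenting the five-term sequence as the relationship in the bad-characteristic case (as the paper does) is sufficient.
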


\begin{definition}
We shall call the ordered pair $(a_Z,b_Z)$, with $a_Z \leq b_Z$,  the {\em splitting type of~$Z$}. 
\end{definition}

\begin{remark}
When $\operatorname{char}(K)$ does not divide $\deg(f)$, so $J = J'$, we can see the identification of $Syz(J)$ with $\{ \delta \in D(Z) \ | \ \delta(f) = 0 \}$ more directly.  Indeed, it is not hard to show  that that we have an isomorphism of $R$-modules
\[
D(Z) \rightarrow R \delta_E \oplus [Syz(\Jac(f))](d-1)
\]
defined by 
\[
\delta = (g_1,g_2,g_3) \mapsto \frac{1}{d} h \delta_E + \left (\delta - \frac{1}{d} h \delta_E \right )
\]
(with $h$ defined as above in terms of $\delta$). It follows that $D_0 (Z)  \cong Syz (J) (d-1)$.
Notice that the isomorphism is defined if and only if if the degree $d$ is a unit of $R$.
We thus have the exact sequence of sheaves
\[
0 \rightarrow \mathcal D_Z \rightarrow \mathcal O_{\PP^2}^3 \rightarrow \mathcal J(d-1) \rightarrow 0
\]
where $\mathcal J$ is the sheafification of $J$. This identification of $\mathcal D_Z$ with the syzygy bundle of $J$ is often very useful.
\end{remark}


\begin{thebibliography}{BDHHSS}

\bibitem[BDHHSS]{BDHHSS}
Th.\ Bauer, S.\ Di Rocco, B.\ Harbourne, J.\ Huizenga, A.\ Seceleanu, T.\ Szemberg,
{\it Negative curves on symmetric blowups of the projective plane, resurgences and Waldschmidt constants}, 
Preprint 2016, arXiv:1609.08648.

\bibitem[BGM]{BGM}
A.\ Bigatti, A.\ V. Geramita and J.\ Migliore, 
{\it Geometric consequences of extremal behavior in a theorem of Macaulay}, 
Trans.\ Amer.\ Math.\ Soc.  {\bf 346} (1994),  203--235.


\bibitem[Ca]{Camp} 
G.\ Campanella, 
{\it Standard bases of perfect homogeneous polynomial ideals of height 2},
J.\  Algebra {\bf  101} (1986) 47--60.



\bibitem[CM]{refCM}
C.\ Ciliberto, R.\ Miranda, 
{\it The Segre and Harbourne--Hirschowitz conjectures}, in:
Applications of Algebraic Geometry to Coding Theory, Physics and Computation (Eilat, 2001), 
NATO Sci.\ Ser.\ II Math.\ Phys.\ Chem.\ {\bf 36}, Kluwer Academic, Dordrecht (2001), 37--51.


\bibitem[CoCoA]{cocoa}   CoCoA: a system for doing  Computations in Commutative Algebra, Available at {\tt http://cocoa.dima.unige.it.}

\bibitem[Da]{Da}
E.\ D.\ Davis, 
{\it 0-dimensional subschemes of $\PP^2$: New application of Castelnuovo's function}, 
 Ann.\ Univ.\ Ferrara Sez.\ VII (N.S.) {\bf 32} (1986), 93--107. 


\bibitem[DGM]{DGM} 
E.\ D.\ Davis, A.\ V. Geramita and P.\  Maroscia,  
{\it Perfect Homogeneous Ideals: Dubreil's Theorems Revisited}, 
Bull.\ Sci.\ Math.\ (2) {\bf 108} (1984),  143--185.


\bibitem[DI]{DI} R.\ Di Gennaro and G.\ Ilardi, {\it More on ``singular hypersurfaces characterizing the Lefschetz properties''}, Preprint, 2016.

\bibitem[DIV]{DIV} 
R.\ Di Gennaro, G.\ Ilardi and J.\ Vall\`es,  
{\it Singular hypersurfaces characterizing the Lefschetz properties}, 
J.\ London Math.\ Soc. (2) {\bf 89} (2014), no.\ 1, 194--212 (arXiv:1210.2292).


\bibitem[EI]{EI} 
J.\ Emsalem and A.\ Iarrobino, 
{\it Inverse system of a symbolic power $I$}, 
J.\ Algebra {\bf 174} (1995), 1080--1090.

\bibitem[FV]{FV2} 
D.\ Faenzi and J.\ Vall\`es, 
{\it Logarithmic bundles and line arrangements, an approach via the standard construction}, 
J.\ London Math.\ Soc.\ (2) {\bf 90} (2014), 675--694.

\bibitem[GHM]{GHM} 
A.V.\ Geramita, B.\ Harbourne and J.\ Migliore, 
{\it Star configurations in $\mathbb P^n$}, 
J.\ Algebra {\bf 376} (2013), 279--299. 

\bibitem[G]{G}
A.\ Gimigliano,  {\it On linear systems of plane curves}, Thesis, Queen's University, Kingston, 1987.

\bibitem[GM]{GM}
H.\ Grauert  and G.\ M\"ulich, {\it Vektorb\"undel vom Rang 2 \"uber dem $n$-dimensionalen 
komplex-projektiven Raum},  Manuscripta Math. {\bf 16} (1975), 75--100.


\bibitem[Ha1]{Ha1} 
B.\ Harbourne, 
{\it  The geometry of rational surfaces and Hilbert functions of points in the plane}, Proceedings of
the 1984 Vancouver Conference in Algebraic Geometry, CMS Conf. Proc. {\bf 6},  Amer. Math. Soc., Providence,
RI (1986),  95--111.

\bibitem[Ha2]{Ha} 
B.\ Harbourne, 
{\it Anticanonical rational surfaces}, 
Trans.\ Amer.\ Math.\ Soc.\ {\bf 349} (1997) 1191--1208. 

\bibitem[Ha3]{brianlect}
B. Harbourne,
{\em Asymptotics of linear systems with connections to line arrangements}, Lecture notes, Spring, 2016 miniPAGES. Preprint.



\bibitem[H1]{Hrt} 
R.\  Hartshorne,  {\it Algebraic Geometry},  Springer-Verlag, New York, 1977.

\bibitem[H2]{H} 
R.\  Hartshorne, {\it Stable reflexive sheaves},  Math. Ann. {\bf 254} (1980), 121--176.

\bibitem[Hi]{Hi}
A.\ Hirschowitz, {\it Une conjecture pour la cohomologie des diviseurs sur les surfaces rationelles g\'en\'eriques}, J.
Reine Angew. Math. {\bf 397} (1989), 208--213.

\bibitem[I]{ilardi}
G. Ilardi, {\em Jacobian ideals, arrangements and the Lefschetz properties}, preprint.

%\bibitem[Kl]{Kl}
%S.\ Kleiman, {\it  Bertini and his two fundamental theorems. Studies in the history of modern mathematics, III},  Rend. Circ. Mat. Palermo (2) Suppl. No. {\bf 55} (1998), 9--37.


\bibitem[M2]{M2}
D.\ R.\ Grayson and M.\ E.\ Stillman, 
{\it Macaulay2, a software system for research in algebraic geometry},
Available at \url{http://www.math.uiuc.edu/Macaulay2/}. 



\bibitem[OSS]{OSS}
C.\ Okonek, M. Schneider, and H.\  Spindler, {\it Vector bundles on complex projective spaces},  Progress in Mathematics {\bf 3},  Birkh\"auser, Boston, Mass., 1980.

\bibitem[OT]{OT} 
P.\ Orlik and H.\ Terao, {\it Arrangement of hyperplanes}, Grundlehren
der Mathematischen Wissenschaften {\bf 300}, 
Springer-Verlag, Berlin, 1992.

\bibitem[S]{hal} H.\ Schenck,  
{\it Elementary modifications and line configurations in $\mathbb P^2$}, 
Comment.\ Math.\ Helv.\ {\bf 78} (2003), 447--462. 

\bibitem[Se]{segre}
B.\ Segre, {\it Alcune questioni su insiemi finiti di punti in geometria algebrica},  Atti del Convegno Internazionale di Geometria Algebrica (Torino, 1961),  Rattero, Turin, 1962, 15--33. 

\bibitem[Su]{refSuciu}
A.\ Suciu, {\it Fundamental groups, Alexander invariants, and cohomology jumping loci}, pp.\ 179--223, in:
Contemporary Math.\ {\bf 538}, Topology of Algebraic Varieties and Singularities, editors
J.\ I.\ Cogolludo-August\'in, E.\ Hironaka, 2011.

\bibitem[U]{U}
G.\ A.\ Urz\'ua, 
{\it Arrangements of curves and algebraic surfaces},
Thesis (Ph.D.) University of Michigan,  2008. 

\bibitem[Y]{Y}
M.\ Yoshinaga, {\em Freeness of hyperplane arrangements and related topics}, 
Ann.\ Fac.\ Sci.\ Toulouse Math.\ (6) {\bf 23} (2014), 483--512.


\end{thebibliography}
\end{document}